\newtheorem{theorem}{Theorem}[section]
\newtheorem{lemma}[theorem]{Lemma}
\newtheorem{proposition}[theorem]{Proposition}
\newtheorem{corollary}[theorem]{Corollary}
\theoremstyle{definition}
\newtheorem{definition}{Definition}[section]
\theoremstyle{remark}
\newtheorem{remark}{Remark}[section]
\theoremstyle{example}
\def\SL{\mathrm{SL}}
\def\HT{\mathrm{HT}}
\def\Act{\mathrm{Act}}
\def\U{\mathrm{U}}
\def\Igs{\mathrm{Igs}}
\def\bb{\mathbb}
\def\Detale{\mathrm{D}_{\text{\'et}}}
\def\mf{\mathfrak}
\def\ra{\rightarrow}
\def\la{\leftarrow}
\def\lim{\mathop{\rm lim}\nolimits}
\def\colim{\mathop{\rm colim}\nolimits}
\def\Spec{\mathop{\rm Spec}}
\def\Spa{\mathop{\rm Spa}}
\def\Spd{\mathop{\rm Spd}}
\def\Sh{\mathop{\textit{Sh}}\nolimits}
\def\Bun{\mathrm{Bun}}
\def\GSpin{\mathrm{GSpin}}
\def\Frob{\mathrm{Frob}}
\def\Hck{\mathrm{Hck}}
\def\Perf{\mathrm{Perf}}
\def\det{\mathrm{det}}
\def\dim{\mathrm{dim}}
\def\End{\mathrm{End}}
\def\Sht{\mathrm{Sht}}
\def\Rep{\mathrm{Rep}}
\def\D{\mathrm{D}}
\def\tr{\mathrm{tr}}
\def\GL{\mathrm{GL}}
\def\GSO{\mathrm{GSO}}
\def\LLC{\mathrm{LLC}}
\def\GU{\mathrm{GU}}
\def\WD{\mathrm{WD}}
\def\Gal{\mathrm{Gal}}
\def\Sh{\mathrm{Sh}}
\newcommand{\Dlis}{\mathrm{D}_{\mathrm{lis}}}
\newcommand{\ul}{\underline}
\newcommand{\Pl}{\mathrm{Pl}}
\newcommand{\mc}{\mathcal}
\newcommand{\GSp}{\mathrm{GSp}}
\newcommand{\Sp}{\mathrm{Sp}}
\newcommand{\SU}{\mathrm{SU}}
\newcommand{\SO}{\mathrm{SO}}
\newcommand{\ol}{\overline}
\title{Compatibility of the Fargues-Scholze and Gan-Takeda Local Langlands}
\author{Linus Hamann}
\begin{document}
\begin{abstract}
Given a prime $p$, a finite extension $L/\mathbb{Q}_{p}$, a connected $p$-adic reductive group $G/L$, and a smooth irreducible representation $\pi$ of $G(L)$, Fargues-Scholze \cite{FS} recently attached a semisimple $L$-parameter to such $\pi$, giving a general candidate for the local Langlands correspondence. It is natural to ask whether this construction is compatible with known instances of the correspondence after semisimplification. For $G = \GL_{n}$ and its inner forms,  Fargues-Scholze and Hansen-Kaletha-Weinstein \cite{KW} showed that the correspondence is compatible with the correspondence of Harris-Taylor/Henniart \cite{He,HT}. We verify a similar compatibility for $G = \GSp_{4}$ and its unique non-split inner form $G = \GU_{2}(D)$, where $D$ is the quaternion division algebra over $L$, assuming that $L/\mathbb{Q}_{p}$ is unramified and $p > 2$. In this case, the local Langlands correspondence has been constructed by Gan-Takeda and Gan-Tantono \cite{GT1,GT2}. Analogous to the case of $\GL_{n}$ and its inner forms, this compatibility is proven by describing the Weil group action on the cohomology of a local Shimura variety associated to $\GSp_{4}$, using basic uniformization of abelian type Shimura varieties due to Shen \cite{She}, combined with various global results of Kret-Shin \cite{KS} and Sorensen \cite{So} on Galois representations in the cohomology of global Shimura varieties associated to inner forms of $\GSp_{4}$ over a totally real field. After showing the parameters are the same, we apply some ideas from the geometry of the Fargues-Scholze construction explored recently by Hansen \cite{Han}. This allows us to give a more precise description of the cohomology of this local Shimura variety, verifying a strong form of the Kottwitz conjecture in the process. 
\end{abstract} 
\maketitle
\tableofcontents
\section{Introduction}
\subsection{Background and Main Theorems}
Fix distinct primes $\ell \neq p$, let $\mathbb{Q}_{p}$ denote the $p$-adic numbers, and let $G/\mathbb{Q}_{p}$ be a connected reductive group. Set $\mathbb{C}_{p} := \hat{\overline{\mathbb{Q}}}_{p}$ to be the completion of the algebraic closure of $\mathbb{Q}_{p}$. We fix an isomorphism $i: \overline{\mathbb{Q}}_{\ell} \xrightarrow{\simeq} \mathbb{C}$. Let $W_{\mathbb{Q}_{p}}$ be the Weil group of $\mathbb{Q}_{p}$ and set $\hat{G}$ to be the reductive group over $\overline{\mathbb{Q}}_{\ell}$ with root datum dual to $G$. Let $Q$ be the finite quotient through which $W_{\mathbb{Q}_{p}}$ acts on $\hat{G}$. We define the $L$-group $^LG := Q \ltimes \hat{G}$. We let $\Pi(G)$ denote the set of isomorphism classes of smooth irreducible representations of the $p$-adic group $G(\mathbb{Q}_{p})$, and let $\Phi(G)$ denote the set of $L$-parameters, i.e the set of conjugacy classes of homomorphisms
\[ \phi: W_{\mathbb{Q}_{p}} \times \SL_{2}(\mathbb{C}) \rightarrow \phantom{}^{L}G(\mathbb{C}) \]
where $\SL_{2}(\mathbb{C})$ acts via an algebraic representation and $W_{\mathbb{Q}_{p}}$ acts via a continuous semisimple homomorphism in a way that commutes with the natural projection $\phantom{}^{L}G(\mathbb{C}) \rightarrow Q$, where $\phantom{}^{L}G(\mathbb{C})$ is endowed with the discrete topology. The local Langlands correspondence is a conjectural map 
\[ \LLC_{G}: \Pi(G) \rightarrow \Phi(G) \]
\[ \pi \mapsto \phi_{\pi} \]
that builds a bridge between $L$-parameters and the smooth irreducible representations of $G(\mathbb{Q}_{p})$. Conjecturally (under some additional constraints on $\Phi(G)$ if $G$ is not quasi-split), these maps should be surjective with finite fibers called $L$-packets and satisfy various properties such as compatibility with products, maps of $L$-groups, character twists, as well as $L$, $\epsilon$, and $\gamma$-factors. Moreover, one expects that the correspondence is uniquely characterized by some such finite list of properties.
\\\\
In general, the existence and uniqueness of such a correspondence is completely unknown. However, very recently, Fargues and Scholze \cite{FS}, using the action of the excursion algebra on $\ell$-adic sheaves on the moduli space of $G$-bundles on the Fargues-Fontaine curve, were able to construct a completely general candidate, analogous to the work of V. Lafforgue in the function field setting \cite{VL}. Namely, they construct a map
\[ \LLC_{G}^{\mathrm{FS}}: \Pi(G) \rightarrow \Phi^{\mathrm{ss}}(G) \]
\[ \pi \mapsto \phi_{\pi}^{\mathrm{FS}}, \]
where $\Phi^{\mathrm{ss}}(G)$ denotes the set of conjugacy classes of continuous semisimple maps
\[ \phi: W_{\mathbb{Q}_{p}} \rightarrow \phantom{}^{L}G(\overline{\mathbb{Q}}_{\ell}) \]
that commute with the projection $\phantom{}^{L}G(\overline{\mathbb{Q}}_{\ell}) \rightarrow Q$ as above. Fargues and Scholze showed that their map has several good properties such as compatibility with parabolic induction; however, one would also like to check that this correspondence agrees with known instances of the local Langlands correspondence. Precisely, given a candidate for the local Langlands correspondence
\[ \LLC_{G}: \Pi(G) \rightarrow \Phi(G) \]
\[ \pi \mapsto \phi_{\pi} \]
we expect a commutative diagram of the form
\begin{equation*}
\begin{tikzcd}[ampersand replacement=\&]
            \Pi(G)  \ar[rr, "\LLC_{G}"] \arrow[drr,"\LLC^{\mathrm{FS}}_{G}"] \& \&   \Phi(G) \ar[d,"(-)^{\mathrm{ss}}"] \\
            \& \& \Phi^{\mathrm{ss}}(G)
        \end{tikzcd}
\end{equation*} 
where the semisimplification map $(-)^{\mathrm{ss}}$ precomposes an $L$-parameter $\phi \in \Phi(G)$ with the map  \[g \in W_{\mathbb{Q}_{p}} \mapsto (g,\begin{pmatrix}
|g|^{\frac{1}{2}} & 0 \\
0 & |g|^{\frac{-1}{2}}  
\end{pmatrix}) \in W_{\mathbb{Q}_{p}} \times \SL_{2}(\mathbb{C}) \]  
and then applies the fixed isomorphism $i^{-1}: \mathbb{C} \xrightarrow{\simeq} \overline{\mathbb{Q}}_{\ell}$. Here $|\cdot|: W_{\mathbb{Q}_{p}} \rightarrow W_{\mathbb{Q}_{p}}^{ab} \simeq \mathbb{Q}_{p}^{*} \rightarrow \mathbb{C}^{*}$ is the norm character. We make the following definition.
\begin{definition}
For $\pi \in \Pi(G)$, we say that a local Langlands correspondence $\LLC_{G}$ is compatible with the Fargues-Scholze local Langlands correspondence if we have an equality: $\phi^{\mathrm{FS}}_{\pi} = \phi_{\pi}^{\mathrm{ss}}$, as conjugacy classes of semi-simple L-parameters.
\end{definition}
For $\GL_{n}$, the local Langlands correspondence was constructed by Harris-Taylor/Henniart \cite{He, HT} and is uniquely characterized by the preservation of $L$, $\epsilon$, and $\gamma$-factors. In this case, compatibility with the Fargues-Scholze local Langlands correspondence follows from the description of the cohomology of the Lubin-Tate and Drinfeld towers proven in \cite{HT} and was verified by Fargues and Scholze \cite[Theorem~I.9.6 (ix)]{FS}. The main goal of this note is to extend compatibility of the correspondence to $\GSp_{4}$ and its unique non-split inner form. To this end, we now fix a finite extension $L/\mathbb{Q}_{p}$ and set $G$ to be $\mathrm{Res}_{L/\mathbb{Q}_{p}}\GSp_{4}$ and $J$ to be its unique non-split inner form $\mathrm{Res}_{L/\mathbb{Q}_{p}}\GU_{2}(D)$, where $D/L$ is the quaternion division algebra. In this case, the local Langlands correspondence has been constructed by Gan-Takeda and Gan-Tantono, respectively \cite{GT1, GT2}. It is constructed from the local Langlands correspondence for $\GL_{n}$ and theta lifting and admits a similar unique characterization in terms of the preservation of $L$, $\epsilon$, and $\gamma$ factors. We note that we can and do identify $\Phi(G)$ and $\Phi(J)$ with a subset of homomorphisms:
\[ \phi: W_{L} \times \SL_{2}(\mathbb{C}) \rightarrow \hat{G}(\mathbb{C}) = \GSpin_{5}(\mathbb{C}) \simeq \GSp_{4}(\mathbb{C}). \]
This allows us to introduce a bit of terminology. Namely, we say that a parameter $\phi$ in $\Phi(G)$ or $\Phi(J)$ is supercuspidal if the  $\SL_{2}(\mathbb{C})$-factor acts trivially and $\phi$ does not factor through any proper Levi subgroup of $\GSp_{4}$. This terminology is justified by the fact that this is precisely the case when the $L$-packets over $\phi$ contain only supercuspidal representations. In what follows, we will often abuse notation and drop the superscript $(-)^{\mathrm{ss}}$ when speaking about such parameters, as in this case it merely corresponds to forgetting the trivially acting $\SL_{2}(\mathbb{C})$-factor and applying the isomorphism $i^{-1}$. We now come to our main theorem.
\begin{theorem}
The following is true.
\begin{enumerate}
    \item For any $\pi \in \Pi(G)$ (resp. $\rho \in \Pi(J)$) such that the Gan-Takeda (resp. Gan-Tantono) parameter is not supercuspidal, we have that the Gan-Takeda (resp. Gan-Tantono) correspondence is compatible with the Fargues-Scholze correspondence. 
    \item If $L/\mathbb{Q}_{p}$ is unramified and $p > 2$, we have, for all $\pi \in \Pi(G)$ (resp. $\rho \in \Pi(J)$) such that the Gan-Takeda (resp. Gan-Tantono) parameter is supercuspidal, that the Gan-Takeda (resp. Gan-Tantono) correspondence is compatible with the Fargues-Scholze correspondence. 
\end{enumerate}
\end{theorem}
\begin{remark}
As will be explained more below, the restrictions in the case where the parameter is supercuspidal are necessary to apply basic uniformization of the generic fiber of abelian type Shimura varieties due to Shen \cite{She}. If one were not to impose this assumption, the relevant Shimura varieties would have bad reduction at $p$, which, to the best of our knowledge, prevents the methods of Shen from working. In particular, if one could establish the expected description of basic locus in the sense of the isomorphism (2) of Definition 4.1, for Shimura varieties associated to the group $\mathrm{Res}_{F/\mathbb{Q}}\mathbf{G}$, where $\mathbf{G}$ is an inner form of $\GSp_{4}$ over a totally real field $F$ with an inert prime $p$ such that $F_{p} \simeq L$ for $L$ any extension, then our result would hold in complete generality.  
\end{remark}
As mentioned above, the proof of compatibility for $\GL_{n}$ uses the results of Harris-Taylor \cite{HT} on the cohomology of the Lubin-Tate/Drinfeld Towers. In particular, if one looks at the rigid generic fiber of the Lubin-Tate tower
\[ \lim_{m \rightarrow \infty} \mathrm{LT}_{n,m,\breve{\mathbb{Q}}_{p}} \]
a tower of $n - 1$-dimensional rigid spaces over $\breve{\mathbb{Q}}_{p}$, for fixed $n \geq 1$ and varying $m \geq 1$, where $\breve{\mathbb{Q}}_{p}$ denotes the completion of the maximal unramified extension of $\mathbb{Q}_{p}$. The cohomology of this tower 
\[ R\Gamma_{c}(\mathrm{LT}_{n,\infty},\overline{\mathbb{Q}}_{\ell}) := \colim_{m \rightarrow \infty} R\Gamma_{c}(\mathrm{LT}_{n,m,\mathbb{C}_{p}},\overline{\mathbb{Q}}_{\ell}) \]
based changed to $\mathbb{C}_{p}$ carries commuting actions of $\GL_{n}(\mathbb{Q}_{p})$ and $D^{*}_{\frac{1}{n}}$, the units in the division algebra over $\mathbb{Q}_{p}$ of invariant $\frac{1}{n}$, as well as an action of the Weil group $W_{\mathbb{Q}_{p}}$. In particular, given $\pi \in \Pi(\GL_{n})$ (resp. $\rho \in \Pi(D^{*}_{\frac{1}{n}})$), we can consider the complexes
\[ R\Gamma_{c}(\mathrm{LT}_{n,\infty},\overline{\mathbb{Q}}_{\ell})[\pi] := R\Gamma_{c}(\mathrm{LT}_{n,\infty},\overline{\mathbb{Q}}_{\ell}) \otimes^{\mathbb{L}}_{\mathcal{H}(\GL_{n})} \pi \] 
and
\[ R\Gamma_{c}(\mathrm{LT}_{n,\infty},\overline{\mathbb{Q}}_{\ell})[\rho] := R\Gamma_{c}(\mathrm{LT}_{n,\infty},\overline{\mathbb{Q}}_{\ell}) \otimes^{\mathbb{L}}_{\mathcal{H}(D^*_{\frac{1}{n}})} \rho \] 
where $\mathcal{H}(\GL_{n}) := C^{\infty}_{c}(\GL_{n}(\mathbb{Q}_{p}),\overline{\mathbb{Q}}_{\ell})$ (resp. $\mathcal{H}(D^{*}_{\frac{1}{n}})$) is the usual smooth Hecke algebra of $G$ (resp. $D^{*}_{\frac{1}{n}}$). Then the key result of Harris-Taylor and later refined by Boyer is as follows.
\begin{theorem}{\cite{HT,Boy}} 
Fix $\pi \in \Pi(\GL_{n})$, a supercuspidal representation of $\GL_{n}(\mathbb{Q}_{p})$, let
\[ \mathrm{JL}: \Pi(D^{*}_{\frac{1}{n}}) \rightarrow \Pi(\GL_{n}(\mathbb{Q}_{p})) \]
be the map defined by the Jacquet-Langlands correspondence and $\rho := \mathrm{JL}^{-1}(\pi) \in \Pi(D_{\frac{1}{n}}^{*})$ a Jacquet-Langlands lift of $\pi$. Then the complexes $R\Gamma_{c}(G,b,\mu)[\pi]$ and $R\Gamma_{c}(G,b,\mu)[\rho]$ are concentrated in middle degree $n - 1$. The middle degree cohomology of $R\Gamma_{c}(G,b,\mu)[\pi]$ is isomorphic to
\[ \rho \boxtimes \phi_{\pi}^{\vee} \otimes |\cdot|^{(1 - n)/2} \]
as a $D^{*}_{\frac{1}{n}} \times W_{\mathbb{Q}_{p}}$ representation. Similarly, the middle degree cohomology of $R\Gamma_{c}(G,b,\mu)[\rho]$ is isomorphic to 
\[ \pi \boxtimes \phi_{\pi} \otimes |\cdot|^{(1 - n)/2} \] 
where $\phi_{\pi} \in \Phi^{\mathrm{ss}}(G)$ is the (semisimplified) L-parameter associated to $\pi$ by Harris-Taylor. 
\end{theorem}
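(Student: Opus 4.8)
Since this is a theorem of Harris--Taylor \cite{HT}, with the concentration in a single degree refined by Boyer \cite{Boy} and given a more streamlined treatment by Dat \cite{Dat}, I only indicate the strategy; the argument is global in nature. The plan is to first globalize $\pi$ (and, on the other side, $\rho$): one chooses a CM field $F = E F^{+}$ with $E$ imaginary quadratic in which $p$ splits, a central division algebra over $F$ of dimension $n^{2}$ with an involution of the second kind, and an associated unitary similitude group $\mathbf{G}/\mathbb{Q}$ which is anisotropic modulo centre at all but one archimedean place, is a division algebra at some auxiliary finite place (so that the associated Shimura varieties $\Shim_{K}$ are proper and smooth of dimension $n - 1$), and satisfies $\mathbf{G}(\mathbb{Q}_{p}) \simeq \mathbb{Q}_{p}^{*} \times \GL_{n}(\mathbb{Q}_{p}) \times (\text{compact factors})$. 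Using cyclic base change to $\GL_{n}/F$ and the stabilized trace formula, following Clozel and the arguments in \cite{HT}, one produces a cuspidal automorphic representation $\Pi$ of $\mathbf{G}(\mathbb{A})$ whose component at $p$ matches $\pi$ and whose components away from $p$ are prescribed and sufficiently generic, and symmetrically a representation realizing $\rho$ through the Jacquet--Langlands transfer.

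Next I would compute the $\ell$-adic cohomology $R\Gamma(\Shim_{K}, \overline{\mathbb{Q}}_{\ell})$ of the geometric Shimura variety, with coefficients in the local system attached to a suitable algebraic representation, in two ways. On the one hand, it decomposes under the Hecke action as $\bigoplus_{\Pi} \Pi_{f}^{K} \otimes R_{\ell}(\Pi)$, and the Langlands--Kottwitz method identifies $R_{\ell}(\Pi)|_{W_{\mathbb{Q}_{p}}}$, up to normalization and multiplicity, with the Harris--Taylor parameter $\phi_{\Pi_{p}}$ --- indeed this is part of the inductive mechanism by which $\LLC_{\GL_{n}}$ is constructed and characterized. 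On the other hand, $p$-adic uniformization (Rapoport--Zink) of the basic locus of the integral model at $p$, together with Mantovan's formula, expresses the same cohomology, via the Newton stratification, in terms of $R\Gamma_{c}(\mathrm{LT}_{n, \infty}, \overline{\mathbb{Q}}_{\ell})$ and the cohomology of the associated Igusa varieties, the non-basic strata contributing Lubin--Tate cohomology of smaller rank, parabolically induced. Matching the two descriptions, using strong multiplicity one for $\GL_{n}$, linear independence of characters, and the freedom to vary $\Pi$ away from $p$, one isolates the $\pi$-isotypic (resp. $\rho$-isotypic) part; since $\pi$ is supercuspidal, all proper-Levi contributions to $R\Gamma_{c}(\mathrm{LT}_{n, \infty}, \overline{\mathbb{Q}}_{\ell})[\pi]$ vanish, being parabolically induced, so only the basic stratum survives and one reads off the stated identity at the level of virtual $D^{*}_{\frac{1}{n}} \times W_{\mathbb{Q}_{p}}$-representations (and symmetrically for the $\GL_{n}(\mathbb{Q}_{p}) \times W_{\mathbb{Q}_{p}}$-action on the $[\rho]$-part). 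The cohomological shift by $n - 1$, the twist by $|\cdot|^{(1 - n)/2}$, and the appearance of $\phi_{\pi}^{\vee}$ versus $\phi_{\pi}$ on the two sides are then bookkeeping of the normalizations in the uniformization isomorphism and of Poincar\'e--Verdier duality on $\mathrm{LT}_{n, \infty}$.

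The main obstacle is upgrading this computation of the Euler characteristic to an honest concentration in the middle degree $n - 1$. The virtual statement is comparatively soft --- it follows from Langlands--Kottwitz together with Mantovan's formula, or in modern language from a Grothendieck--Lefschetz/averaging argument in the spirit of Scholze--Shin --- but ruling out cancellation between degrees requires genuine input. Following Harris--Taylor, one first shows that the $\Pi$-isotypic part of $R\Gamma(\Shim_{K}, \overline{\mathbb{Q}}_{\ell})$ is concentrated in degree $n - 1$ whenever $\Pi$ is supercuspidal at $p$; this uses temperedness of such $\Pi$ (from the classification of automorphic representations of $\mathbf{G}$) together with the purity supplied by properness and smoothness of $\Shim_{K}$, i.e. the Weil conjectures, and one then transfers the conclusion to $R\Gamma_{c}(\mathrm{LT}_{n, \infty}, \overline{\mathbb{Q}}_{\ell})[\pi]$ using that for supercuspidal $\pi$ only the basic stratum contributes. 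Boyer's alternative is to compute the full complex of nearby cycles on the Harris--Taylor Shimura variety together with its weight--monodromy filtration, which yields the concentration and also handles the non-supercuspidal refinements; Dat's is to replace part of this by a more direct, partially local argument. Any of these routes completes the proof, after which one transports the normalizations to obtain the precise statement above.
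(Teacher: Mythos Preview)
The paper does not give its own proof of this statement: Theorem~1.2 is stated in the introduction as a result of Harris--Taylor, Boyer, and Dat, cited with \cite{HT,Boy,Dat}, and is used as background input rather than re-proved. So there is no ``paper's proof'' to compare against.

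That said, your sketch is a faithful summary of the strategy in the cited references. The globalization via PEL unitary similitude groups over a CM field with $p$ split, the two computations of the Shimura variety cohomology (Langlands--Kottwitz on one side, Rapoport--Zink/Mantovan on the other), the elimination of non-basic contributions by supercuspidality, and the upgrade from a virtual identity to honest concentration via purity and temperedness (Harris--Taylor) or via nearby cycles and weight--monodromy (Boyer), are exactly the ingredients those papers use. Your remarks about the normalizations (the shift, the Tate twist, and which side carries $\phi_\pi$ versus $\phi_\pi^\vee$) are also consistent with the conventions the present paper adopts (cf.\ the ``Conventions and Notations'' section and Remark~3.8). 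Nothing to correct here; just be aware that in this paper the theorem functions as a black-box citation.
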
 
To see why this result is relevant for compatibility, we invoke the observation, due to Scholze-Weinstein \cite{SW1, SW2}, that the Lubin-Tate tower at infinite level
\[ \mathrm{LT}_{n,\infty} := \lim_{m \rightarrow \infty} \mathrm{LT}_{n,m,\breve{\mathbb{Q}}_{p}} \]
is representable by a space admitting a moduli interpretation as a space of shtukas over the Fargues-Fontaine curve; namely, the space denoted $\Sht(\GL_{n},b,\mu)_{\infty}$ in the notation of \cite{SW2}, where $b \in B(\GL_{n})$ is an element in the Kottwitz set of $\GL_{n}$ corresponding to a rank $n$ isocrystal of slope $\frac{1}{n}$ and $\mu = (1,0,\ldots,0,0)$ is a dominant cocharacter of $\GL_{n}$. If $X$ denotes the Fargues-Fontaine curve, then this this parametrizes modifications
\[ \mathcal{O}_{X}(-\frac{1}{n}) \rightarrow \mathcal{O}_{X}^{n}  \]
of type $(1,0,\ldots,0,0)$ (i.e this map is an embedding with cokernel a length $1$ torsion sheaf on $X$), where $\mathcal{O}_{X}(-\frac{1}{n})$ is the unique rank $n$ vector bundle on $X$ of slope $-\frac{1}{n}$. This interpretation allows one to relate the complex  
$R\Gamma_{c}(\mathrm{LT}_{n,\infty},\overline{\mathbb{Q}}_{\ell})[\pi]$ to the action of a Hecke operator $T_{\mu^{-1}}$ on $\Bun_{G}$, acting on a sheaf $\mathcal{F}_{\pi}$ constructed from the supercuspidal representation $\pi$, where $\mu^{-1} = (0,0,\ldots,0,-1)$ is a dominant inverse of $\mu$. The Fargues-Scholze parameter of $\pi$ is built from the action of the excursion algebra on the sheaf $\mathcal{F}_{\pi}$, which in turn is built from Hecke operators together with a factorization structure coming from geometric Satake. It thus is reasonable to expect that the cohomology group  $R\Gamma_{c}(\mathrm{LT}_{n,\infty},\overline{\mathbb{Q}}_{\ell})[\pi]$ should have $W_{\mathbb{Q}_{p}}$-action given by the Fargues-Scholze parameter $\phi_{\pi}^{\mathrm{FS}}: W_{\mathbb{Q}_{p}} \rightarrow \phantom{}^{L}\GL_{n}(\overline{\mathbb{Q}}_{\ell}) \simeq \GL_{n}(\overline{\mathbb{Q}}_{\ell})$ composed with the highest weight representation of $\phantom{}^{L}\GL_{n}(\overline{\mathbb{Q}}_{\ell})$ corresponding to the dominant cocharacter $\mu^{-1}$. However, this is just the dual of the standard representation of $\GL_{n}(\overline{\mathbb{Q}}_{\ell})$. Thus, using Theorem 1.2, we can see that
\[ \phi_{\pi}^{\vee} = (\phi_{\pi}^{\mathrm{FS}})^{\vee} \]
where the twist by the norm-character $|\cdot|^{(1 - n)/2}$ is cancelled out by a perverse normalization (also related to the middle degree being the relevant one) in the definition of the Hecke operator $T_{\mu^{-1}}$. This implies compatibility for supercuspidal $\pi$, which, by using compatibility of the Fargues-Scholze/semi-simplified Harris-Taylor correspondence with parabolic induction, is enough to conclude the general case. In a similar fashion, using the description of the $\rho$-isotypic part one can prove compatibility for the inner form $D^{*}_{\frac{1}{n}}$. 
\\\\
One may expect, given the above sketch of compatibility for $\GL_{n}$, that, to prove Theorem 1.1, one must similarly provide a description of the cohomology of the $\pi \in \Pi(G)$ (resp. $\rho \in \Pi(J)$)-isotypic part of a local Shimura variety/shtuka space at infinite level associated to $G$. In the case where the associated $L$-parameters are supercuspidal, this is the content of the Kottwitz conjecture \cite[Conjecture~7.3]{RV}.  To this end, we consider the cohomology of a shtuka space associated to the group $G = \mathrm{Res}_{L/\mathbb{Q}_{p}}\GSp_{4}$. Namely, the space denoted $\Sht(G,b,\mu)_{\infty}$, where $\mu$ is the Siegel cocharacter and $b \in B(G)$ is a basic element in the Kottwitz set of $G$, corresponding to a rank $4$ isocrystal with a polarization and automorphism group equal to $J = \mathrm{Res}_{L/\mathbb{Q}_{p}}(\GU_{2}(D))$. This space carries a commuting $G(\mathbb{Q}_{p})$ and $J(\mathbb{Q}_{p})$ action. The quotients
\[ \Sht(G,b,\mu)_{K} := \Sht(G,b,\mu)_{\infty}/\underline{K} \]
for varying compact open $K \subset G(\mathbb{Q}_{p})$ are, as before, representable by rigid analytic varieties over $\Spa(\breve{L})$ of dimension $3$, where $\breve{L} := L\breve{\mathbb{Q}}_{p}$. They define a tower of local Shimura varieties in the sense of Rapoport-Viehmann \cite{RV}, which uniformize the basic locus of certain global Shimura varieties analogous to the Lubin-Tate case described above. Letting $\Sht(G,b,\mu)_{K,\mathbb{C}_{p}}$ be the base-change of these spaces to $\mathbb{C}_{p}$, we can then consider the analog of the complexes described above 
\[ R\Gamma_{c}(G,b,\mu) := \colim_{K \rightarrow 1} R\Gamma_{c}(\Sht(G,b,\mu)_{K,\mathbb{C}_{p}},\overline{\mathbb{Q}}_{\ell}). \]
This complex is concentrated in degrees $0 \leq i \leq 6 = 2\dim(\Sht(G,b,\mu)_{K})$ and admits an action of $G(\mathbb{Q}_{p}) \times J(\mathbb{Q}_{p}) \times W_{L}$. This allows one to consider the $\rho$ and $\pi$-isotypic parts, i.e we set
\[ R\Gamma_{c}(G,b,\mu)[\rho] := R\Gamma_{c}(G,b,\mu) \otimes^{\mathbb{L}}_{\mathcal{H}(J)}   \rho \]
and 
\[ R\Gamma_{c}(G,b,\mu)[\pi] := R\Gamma_{c}(G,b,\mu) \otimes^{\mathbb{L}}_{\mathcal{H}(G)}   \pi, \]
where $\mathcal{H}(G)$ (resp. $\mathcal{H}(J)$) are the usual smooth Hecke algebra of $G$ (resp. $J$). To deduce compatibility, one needs to realize the (semi-simplified) L-parameter $\phi_{\pi}$ (resp. $\phi_{\rho}$) of Gan-Takeda (resp. Gan-Tantono) in these two cohomology groups. We will sketch how to do this in the next section using uniformization and global methods. Interestingly,  after knowing compatibility, one can use ideas from the geometry of the Fargues-Scholze construction to provide a more precise of the complexes $R\Gamma_{c}(G,b,\mu)[\rho]$ and $R\Gamma_{c}(G,b,\mu)[\pi]$. Namely, recent work of Hansen \cite{Han} allows us to deduce that if $\phi_{\rho}$ (resp. $\phi_{\pi}$) is supercuspidal the complexes $R\Gamma_{c}(G,b,\mu)[\rho]$ (resp. $R\Gamma_{c}(G,b,\mu)[\pi]$) are concentrated in middle degree $3$. It then follows from work of Hansen-Kaletha-Weinstein \cite{KW} on a weakening of the Kottwitz conjecture and work of Fargues-Scholze \cite[Section~X.2]{FS} describing the Hecke action on objects with supercuspidal Fargues-Scholze parameter that one can actually deduce a strong form of the Kottwitz conjecture for these representations. To state this result, we first note that, if we are given a supercuspidal parameter $\phi: W_{L} \rightarrow \GSp_{4}(\overline{\mathbb{Q}}_{\ell})$, even though the parameter $\phi$ is irreducible, its composition with the standard embedding $\mathrm{std}: \GSp_{4}(\overline{\mathbb{Q}}_{\ell}) \hookrightarrow \GL_{4}(\overline{\mathbb{Q}}_{\ell})$ may not be. In particular, the size of the $L$-packets $\Pi_{\phi}(G) := \LLC_{G}^{-1}(\phi)$ and $\Pi_{\phi}(J) := \LLC_{J}^{-1}(\phi)$ over $\phi$ are governed by this.
\begin{enumerate}
    \item (stable) $\mathrm{std}\circ \phi$ is irreducible. In this case, the $L$-packets each contain one supercuspidal member.
    \item (endoscopic) $\mathrm{std}\circ \phi \simeq \phi_{1} \oplus \phi_{2}$, where $\phi_{i}: W_{L} \rightarrow \GL_{2}(\bar{\mathbb{Q}}_{\ell})$ are distinct irreducible $2$-dimensional representations with $\det(\phi_{1}) = \det(\phi_{2})$. In this case, the $L$-packets over $\phi$ each contain two supercuspidal members.
\end{enumerate}
This allows us to state our main consequence of Theorem 1.1, which (almost) verifies the strong form of the Kottwitz conjecture for $\GSp_{4}/L$ and $\GU_{2}(D)/L$.
\begin{theorem}
Let $L/\mathbb{Q}_{p}$ be an unramified extension with $p > 2$. Let $\pi$ (resp. $\rho$) be members of the $L$-packet over a supercuspidal parameter $\phi: W_{L} \rightarrow \GSp_{4}(\overline{\mathbb{Q}}_{\ell})$ as above. Then the complexes
\[ R\Gamma_{c}(G,b,\mu)[\pi] \]
and 
\[ R\Gamma_{c}(G,b,\mu)[\rho] \]
are concentrated in middle degree $3$. 
\begin{enumerate}
\item If $\phi$ is stable supercuspidal, with singleton $L$-packets $\{\pi\} = \Pi_{\phi}(G)$ and $\{\rho\} = \Pi_{\phi}(J)$, then the cohomology of $R\Gamma_{c}(G,b,\mu)[\pi]$ in middle degree is isomorphic to 
\[ \rho \boxtimes (\mathrm{std}\circ \phi)^{\vee} \otimes |\cdot|^{-3/2}   \]
as a $J(\mathbb{Q}_{p}) \times W_{L}$-module, and the cohomology of $R\Gamma_{c}(G,b,\mu)[\rho]$ in middle degree is isomorphic to
\[ \pi \boxtimes \mathrm{std}\circ \phi \otimes |\cdot|^{-3/2} \]
as a $G(\mathbb{Q}_{p}) \times W_{L}$-module.
\item If $\phi$ is an endoscopic parameter, with $L$-packets $\Pi_{\phi}(G) = \{\pi^{+},\pi^{-}\}$ and $\Pi_{\phi}(J) = \{\rho_{1},\rho_{2}\}$\footnote{For an explanation of the notation, see the discussion at the end of section 2.2.}, the cohomology of $R\Gamma_{c}(G,b,\mu)[\pi]$ in middle degree is isomorphic to
\[ \rho_{1} \boxtimes \phi_{1}^{\vee} \otimes |\cdot|^{-3/2} \oplus \rho_{2} \boxtimes \phi_{2}^{\vee} \otimes |\cdot|^{-3/2} \]
or
\[ \rho_{1} \boxtimes \phi_{2}^{\vee} \otimes |\cdot|^{-3/2} \oplus \rho_{2} \boxtimes \phi_{1}^{\vee} \otimes |\cdot|^{-3/2} \]
as a $J(\mathbb{Q}_{p}) \times W_{L}$-module. Similarly, the cohomology of $R\Gamma_{c}(G,b,\mu)[\rho]$ in middle degree is isomorphic to
\[  \pi^{+} \boxtimes \phi_{1} \otimes |\cdot|^{-3/2} \oplus \pi^{-} \boxtimes \phi_{2} \otimes |\cdot|^{-3/2} \]
or 
\[  \pi^{+} \boxtimes \phi_{2} \otimes |\cdot|^{-3/2} \oplus \pi^{-} \boxtimes \phi_{1} \otimes |\cdot|^{-3/2} \]
as a $G(\mathbb{Q}_{p}) \times W_{L}$-module. Here we write $\mathrm{std}\circ \phi \simeq \phi_{1} \oplus \phi_{2}$, with $\phi_{i}$ distinct irreducible $2$-dimensional representations of $W_{L}$ and $\det(\phi_{1}) = \det(\phi_{2})$. 
\\\\
Moreover, both possibilities for the cohomology of $R\Gamma_{c}(G,b,\mu)[\rho]$ (resp. $R\Gamma_{c}(G,b,\mu)[\pi]$) in the endoscopic case occur for some choice of representation $\rho \in \Pi_{\phi}(J)$ (resp. $\pi \in \Pi_{\phi}(G)$). In particular, knowing the precise form of either $R\Gamma_{c}(G,b,\mu)[\rho]$ or $R\Gamma_{c}(G,b,\mu)[\pi]$ for some $\rho \in \Pi_{\phi}(J)$ or $\pi \in \Pi_{\phi}(G)$ determines the precise form of the cohomology in all other cases. 
\end{enumerate}
\end{theorem}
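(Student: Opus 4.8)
The plan is to combine the identification of Fargues--Scholze parameters from Theorem 1.1 with the geometry of the Hecke operators on $\Bun_{G}$ following Hansen \cite{Han} and Fargues--Scholze \cite[Section~X.2]{FS}, in two stages: first prove concentration in middle degree, then pin down the cohomology via a weak Kottwitz conjecture plus an endoscopic bookkeeping argument. The starting point is that by Theorem 1.1 the Gan--Takeda/Gan--Tantono parameter of $\pi$ (resp. $\rho$) agrees with its Fargues--Scholze parameter $\phi_{\pi}^{\mathrm{FS}}$ (resp. $\phi_{\rho}^{\mathrm{FS}}$), and by hypothesis this parameter $\phi$ is supercuspidal, hence in particular does not factor through a proper parabolic of $\GSp_{4}$.

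First I would establish concentration in middle degree $3$. Translating through the Scholze--Weinstein shtuka interpretation, $R\Gamma_{c}(G,b,\mu)[\pi]$ computes (a shift of) the stalk at $b$ of the Hecke operator $T_{\mu^{-1}}$ applied to the sheaf $\mathcal{F}_{\pi}$ on $\Bun_{G}$ built from $\pi$. Since $\phi_{\pi}^{\mathrm{FS}} = \phi$ is supercuspidal, $\mathcal{F}_{\pi}$ is (after the appropriate idempotent decomposition of $\D_{\mathrm{lis}}(\Bun_{G})$ along the spectral Bernstein center) supported on the union of $\Bun_{G}^{1}$ and $\Bun_{G}^{b}$, i.e. concentrated on the open and closed points corresponding to the trivial and the basic isocrystal; this is exactly the input Hansen uses in \cite{Han}. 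The Hecke operator $T_{\mu^{-1}}$ for the minuscule Siegel cocharacter then has the property that it is perverse $t$-exact for the appropriate normalization, and since $\mathcal{F}_{\pi}$ restricted to the relevant strata is perverse (being, up to shift, $\pi$ placed on a point), its image is perverse as well; restricting back to the $b$-stratum and using that $\dim \Sht(G,b,\mu)_{K} = 3$ gives concentration in degree $3$. The same argument with the roles of $G$ and $J$ swapped handles $R\Gamma_{c}(G,b,\mu)[\rho]$.

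Next I would identify the middle cohomology. By the Hansen--Kaletha--Weinstein \cite{KW} weak Kottwitz conjecture, the virtual $J(\mathbb{Q}_p) \times W_L$-representation $\sum_i (-1)^i H^i_c(G,b,\mu)[\pi]$ equals, up to the twist $|\cdot|^{-3/2}$ and sign, a sum $\bigoplus_{\rho'} \rho' \boxtimes (r_{-\mu} \circ \phi_{\rho'})$ over the members $\rho'$ of the $L$-packet of $\phi$ on $J$, where $r_{-\mu} \circ \phi$ is $(\mathrm{std} \circ \phi)^{\vee}$ as the Siegel minuscule representation of $\GSpin_5 \simeq \GSp_4$ is the (dual of the) standard $4$-dimensional one. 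Combined with concentration in middle degree, the sign issue disappears and we get the middle cohomology exactly; in the stable case the packet is a singleton $\{\rho\}$ and we read off $\rho \boxtimes (\mathrm{std}\circ\phi)^{\vee}\otimes|\cdot|^{-3/2}$ directly. In the endoscopic case the packet has two members $\{\rho_1,\rho_2\}$ and $\mathrm{std}\circ\phi = \phi_1 \oplus \phi_2$, so the weak form only tells us the total is $\rho_1 \boxtimes (\phi_1^{\vee}\oplus\phi_2^{\vee}) \oplus \rho_2 \boxtimes(\phi_1^{\vee}\oplus\phi_2^{\vee})$ up to matching; to break the symmetry and show each $\rho_j$ pairs with a single $\phi_i^{\vee}$ I would invoke the results of Fargues--Scholze \cite[Section~X.2]{FS} describing the Hecke action on objects with supercuspidal parameter, which forces the $W_L$-isotypic pieces to be multiplicity-free in the $J(\mathbb{Q}_p)$-direction and shows the association is governed by the component group $S_\phi$ and the characters indexing $\pi^{\pm}$, $\rho_1,\rho_2$. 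This gives the two listed possibilities and, by the same component-group compatibility applied to both $G$ and $J$ simultaneously, shows that fixing the cohomology for one representation fixes it for all others.

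The main obstacle is the last step: breaking the endoscopic symmetry. Concentration in middle degree and the weak Kottwitz conjecture are essentially off-the-shelf once Theorem 1.1 is in hand, but determining which irreducible summand of $\mathrm{std}\circ\phi$ is glued to which member of the $J$- and $G$-packets requires genuinely understanding the Fargues--Scholze Hecke action in terms of the internal structure of the packet, i.e. tracking the $S_\phi \simeq (\mathbb{Z}/2)$-grading through the geometric Satake equivalence and the spectral action; it is exactly here that \cite[Section~X.2]{FS} is indispensable, and verifying that both possibilities genuinely occur (rather than being excluded) is the delicate point that prevents a fully uniform statement.
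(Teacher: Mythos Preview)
Your overall architecture matches the paper's: compatibility (Theorem~1.1) $\Rightarrow$ the Fargues--Scholze parameter is supercuspidal $\Rightarrow$ concentration in middle degree via Hansen $\Rightarrow$ identify the middle cohomology by combining the weak Kottwitz result of Hansen--Kaletha--Weinstein with the spectral action from \cite[Section~X.2]{FS}. That is exactly the route taken.

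There is, however, a real gap in how you allocate the work between these last two ingredients. The result of \cite{KW} (Theorem~3.13 in the paper) gives an identity \emph{only} in the Grothendieck group of admissible $G(\mathbb{Q}_p)$- (resp.\ $J(\mathbb{Q}_p)$-) representations; it says nothing about the $W_L$-action. In the stable case you write that the weak Kottwitz conjecture lets you ``read off $\rho \boxtimes (\mathrm{std}\circ\phi)^{\vee}\otimes|\cdot|^{-3/2}$ directly'', but all it actually gives you is $-4\rho$ (resp.\ $-4\pi$) as a virtual group representation. The Weil group action is supplied by the spectral action: Corollary~3.11 decomposes $T_{\mu}j_{b!}(\mathcal{F}_{\rho})$ as $\bigoplus_i \Act_{W_i}(\rho)\otimes\sigma_i$ according to the $S_\phi$-isotypic decomposition of $\mathrm{std}\circ\phi$, and it is this that produces the $W_L$-structure. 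So \cite[Section~X.2]{FS} is not just the tiebreaker in the endoscopic case; it is what furnishes the Galois part in \emph{both} cases, with \cite{KW} then pinning down the multiplicity space $\Act_{W_i}(\rho)$.

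In the endoscopic case the paper makes your ``$S_\phi$-grading'' argument precise as follows: Corollary~3.11 gives $R\Gamma_c(G,b,\mu)[\rho]\simeq \Act_{p_1}(\rho)[-3]\boxtimes\phi_1|\cdot|^{-3/2}\oplus \Act_{p_2}(\rho)[-3]\boxtimes\phi_2|\cdot|^{-3/2}$; concentration in degree $3$ plus the Grothendieck group identity $-2\pi^+ -2\pi^-$ forces each $\Act_{p_i}(\rho)$ to be a single irreducible from $\{\pi^+,\pi^-\}$. The ``one case determines all'' assertion is then a computation with the monoidal property $\Act_W\circ\Act_{W'}\simeq\Act_{W\otimes W'}$ of the spectral action (e.g.\ $\Act_{p_1^{\vee}}(\pi^+)\simeq \Act_{p_1^{\vee}}\Act_{p_1}(\rho_1)\simeq \Act_{p_+}(\rho_1)\simeq\rho_1$), rather than an abstract component-group compatibility. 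Finally, your perverse-exactness sketch for concentration is not quite Hansen's argument: Theorem~3.17 uses basic uniformization at $p$ together with the duality isomorphism $j_{b!}\mathcal{F}_\rho\simeq Rj_{b*}\mathcal{F}_\rho$ (valid because the parameter is supercuspidal), not perverse $t$-exactness per se.
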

\begin{remark}
\begin{enumerate}
\item Results of this form when $L = \mathbb{Q}_{p}$ have also been announced by Ito-Mieda \cite{IM}.
\item If one knew Arthur's multiplicity formula for inner forms of $\GSp_{4}$ over totally real fields, one should be able to determine the cohomology in the endoscopic case more precisely, using basic uniformization and the more precise description of the cohomology of the global Shimura variety this multiplicity formula would provide (See for example \cite[Section~3.2]{Ng} for this kind of analysis in the case of unitary groups.). However, to our knowledge the multiplicity formula is unknown in this case. In the case that $L = \mathbb{Q}_{p}$, one can apply what is known about the multiplicity formula for $\GSp_{4}/\mathbb{Q}$ \cite{Ar,GeTa}. This is carried out by Ito-Mieda \cite{IM}. The correct answer, for the $\rho$-isotypic part, should be that, if $\rho = \rho_{1}$, we are in the first case, and if $\rho = \rho_{2}$, we are in the second case. Similarly, for the $\pi$-isotypic part, if $\pi = \pi^{+}$ is the unique generic member of the $L$-packet for a fixed choice of Whittaker datum, we should be in the first case and, if $\pi = \pi^{-}$, we should be in the second case. It might also be possible to show this using a weaker argument. Our analysis reduces us to checking that $R\Gamma_{c}(G,b,\mu)[\rho_{1}]$ admits a sub-quotient isomorphic to $\pi_{+} \boxtimes \phi_{1} \otimes |\cdot|^{-3/2}$, which may be possible to show through basic uniformization and a small global argument.  
\item We hope that the perspective we take on the Kottwitz conjecture in this paper will help provide further advancements in our knowledge of the cohomology of local Shimura varieties. In particular, by invoking the use of these very general geometric tools from the Fargues-Scholze construction, we require global input only to show compatibility, which, as we will see in the next section, requires substantially less than the input needed to determine the precise form of the cohomology such as a multiplicity formula for the automorphic spectrum.   
\end{enumerate}
\end{remark}
We will now conclude the introduction by providing a sketch of the proof of Theorem 1.1.
\subsection{Proof Sketch of the Main Theorems}
As before, we set $G = \mathrm{Res}_{L/\mathbb{Q}_{p}}\GSp_{4}$ and $J = \mathrm{Res}_{L/\mathbb{Q}_{p}}\GU_{2}(D)$.
Similar to the case of $\GL_{n}$, the idea behind proving compatibility for $G$ and $J$ is to use the compatibility of the Fargues-Scholze local Langlands correspondence with parabolic induction to reduce to the case where $\pi$ is a supercuspidal representation. However, this is a little bit more subtle than the case of $\GL_{n}$. Unlike $\GL_{n}$, the local Langlands correspondence for these groups is not a bijection. As seen before, the $L$-packets can be either of size $1$ or $2$. Given an $L$-parameter $\phi: W_{L} \times \SL_{2}(\overline{\mathbb{Q}}_{\ell}) \rightarrow \GSp_{4}(\overline{\mathbb{Q}}_{\ell})$, there are three distinct possibilities. 
\begin{enumerate}
    \item The $L$-packets $\Pi(G)_{\phi}$ and $\Pi(J)_{\phi}$ do not contain any supercuspidal representations. 
    \item The $L$-packets $\Pi(G)_{\phi}$ and $\Pi(J)_{\phi}$ contain a mix of supercuspidal and non-supercuspidal representations.
    \item The $L$-packets $\Pi(G)_{\phi}$ and $\Pi(J)_{\phi}$ contain only supercuspidals.
\end{enumerate}
Case $(1)$ is straight forward. Since compatibility is known for $\GL_{n}$ and its inner forms and any proper Levi subgroup of $G$ (resp. $J$) is a product of such groups, it follows from compatibility of the Fargues-Scholze correspondence with parabolic induction and products that the correspondences are compatible for any representation lying in such an $L$-packet.
\\\\
Case $(2)$ is a bit more subtle, here $\phi^{\mathrm{ss}}$ factors through a Levi subgroup of $\GSp_{4}(\overline{\mathbb{Q}}_{\ell})$, but $\phi$ itself does not. In particular, the restriction to the $\SL_{2}$ factor of $\phi$ is non-trivial. In this case, we can write $\Pi_{\phi}(G) = \{\pi_{disc},\pi_{sc}\}$ (resp. $\Pi_{\phi}(J) := \{\rho_{disc},\rho_{sc}\}$ or $\Pi_{\phi}(J) = \{\rho^{1}_{disc},\rho^{2}_{disc}\}$, depending on whether the parameter is of Saito-Kurokawa or Howe-Piatetski--Schapiro type), where $\pi_{disc}$ (resp. $\rho^{1}_{disc}$, $\rho^{2}_{disc}$, and $\rho_{disc}$) are non-supercuspidal (essentially) discrete series representation of $G$ (resp. $J$), and $\pi_{sc}$ (resp. $\rho_{sc}$) is a supercuspidal representation of $G(\mathbb{Q}_{p})$ (resp. $J(\mathbb{Q}_{p})$) if the parameter is endoscopic. In the stable case (the Soudry type parameters), we can write $\Pi_{\phi}(G) = \{\pi'_{disc}\}$ (resp. $\Pi_{\phi}(J) = \{\rho'_{sc}\}$), where $\pi'_{disc}$ is a discrete series non-supercuspidal and $\rho'_{sc}$ is a supercuspidal representation. We explain the idea in the Saito-Kurokawwa case. The key observation is that $\pi_{disc}$ (resp. $\rho_{disc}$) is an irreducible sub-quotient of a parabolic induction, so, in this case, we can apply the same argument as in case (1) to deduce compatibility of the two correspondences. It remains to see that the same is true for $\pi_{sc}$. To do this, we use a description of the $\rho$-isotypic part of the shtuka space $\Sht(G,b,\mu)_{\infty}$ introduced in section $1.1$. Namely, we consider the complex
\[ R\Gamma_{c}(G,b,\mu)[\rho_{disc}] \simeq R\Gamma_{c}(G,b,\mu) \otimes^{\mathbb{L}}_{\mathcal{H}(G)} \rho_{disc}  \]
of $G(\mathbb{Q}_{p}) \times W_{L}$-modules. Recent work of Hansen-Kaletha-Weinstein \cite{KW} then tells us the form of this cohomology group (or rather a small variant thereof) as a $J(\mathbb{Q}_{p})$-representation. In particular, if we let $R\Gamma_{c}(G,b,\mu)[\rho_{disc}]_{sc}$ denote the summand of $R\Gamma_{c}(G,b,\mu)[\rho_{disc}]$ given by the supercuspidal Bernstein components of  $G(\mathbb{Q}_{p})$, then in the Grothendieck group of admissible $G(\mathbb{Q}_{p})$-representations of finite length $R\Gamma_{c}(G,b,\mu)[\rho_{disc}]_{sc}$ is equal to  $-2\pi_{sc}$.
\\\\ Similar to the case of $G = \GL_{n}$, this complex describes the action of the Hecke operator $T_{\mu}$  acting on a sheaf $\mathcal{F}_{\rho_{disc}}$ constructed from $\rho_{disc}$ on $\Bun_{G}$. Moreover, the complex $R\Gamma_{c}(G,b,\mu)[\rho_{disc}]_{sc}$ can be interpreted as a complex of sheaves on the open Harder-Narasimhan(=HN)-strata $\Bun_{G}^{\mathbf{1}} \subset \Bun_{G}$ corresponding to the trivial $G$-bundle on the Fargues-Fontaine curve $X$. It follows from the above description in the Grothendieck group that the excursion algebra will act on this complex via eigenvalues valued in the parameter $\phi_{\pi_{sc}}^{\mathrm{FS}}$. However, since the excursion algebra is built from Hecke operators, it will also commute with the action of Hecke operators on $\mathcal{F}_{\rho_{disc}}$. This allows us to conclude that it also must act via eigenvalues valued in $\phi_{\rho_{disc}}^{\mathrm{FS}}$ giving a chain of equalities
\[ \phi_{\pi_{sc}}^{\mathrm{FS}} = \phi_{\rho_{disc}}^{\mathrm{FS}} = \phi_{\rho_{disc}}^{\mathrm{ss}} = \phi_{\pi_{sc}}^{\mathrm{ss}}\] 
of conjugacy classes of parameters, where the first equality follows from the previous argument and the second equality follows from the above analysis of induced representations. Similarly, one deduces compatibility for $\rho_{sc}$ by applying a similar argument to the $\pi$-isotypic part. 
\\\\
Case (3) is by far the most involved and takes up the majority of the paper. This is the case in which the $L$-parameter $\phi$ is supercuspidal. First, one can make a reduction to showing compatibility for just  $\rho \in \Pi(J)$ with supercuspidal Gan-Tantono parameter $\phi$, by using the commutation of Hecke operators and the excursion algebra similar to what was done in case $(2)$. Now, the key point again is that the complex $R\Gamma_{c}(G,b,\mu)[\rho]$ describes the action of the Hecke operator $T_{\mu}$ on a sheaf $\mathcal{F}_{\rho}$ on $\Bun_{G}$. To make further progress towards compatibility, we use that the Hecke operators can be in turn described using the spectral action of the derived category of perfect complexes on the stack of $L$-parameters, as constructed in \cite[Chapter~X]{FS}. In particular, a Hecke operator defines a vector bundle on the stack of $L$-parameters, whose action on the sheaf $\mathcal{F}_{\rho}$ via the spectral action is precisely $T_{\mu}$. Using this, we argue using the support of the spectral action of certain averaging operators, considered by \cite{AL} in the case of $\GL_{n}$, to show that, if $\mathrm{std}\circ \phi \otimes |\cdot|^{-3/2}$ occurs as a $W_{L}$-stable sub-quotient of the complex 
\[ \bigoplus_{\rho' \in \Pi_{\phi}(J)} R\Gamma_{c}(G,b,\mu)[\rho'] \]
we have an equality: 
\[ \mathrm{std}\circ \phi_{\rho}^{\mathrm{FS}} = \mathrm{std}\circ \phi\]
for all $\rho \in \Pi_{\phi}(J)$. Now a $\GSp_{4}$-valued parameter is in turn determined by its composition with $\mathrm{std}$ and its similitude character, which is precisely the central character of $\rho$. Therefore, since the Fargues-Scholze correspondence is compatible with central characters, this is enough to conclude that $\phi = \phi_{\rho}^{\mathrm{FS}}$. This reduces the question of showing compatibility for $\rho \in \Pi(\GU_{2}(D))$ with supercuspidal Gan-Tantono parameter $\phi$ to the following.
\begin{proposition}
Let $\phi$ be a supercuspidal parameter with associated $L$-packet $\Pi_{\phi}(J)$. Then the direct summand of
\[ \bigoplus_{\rho' \in \Pi_{\phi}(J)} R\Gamma_{c}(G,b,\mu)[\rho'] \]
given by the supercuspidal Bernstein components of $G(\mathbb{Q}_{p})$, denoted 
\[ \bigoplus_{\rho' \in \Pi_{\phi}(J)} R\Gamma_{c}(G,b,\mu)[\rho']_{sc}, \]
is concentrated in middle degree $3$ and admits a non-zero $W_{L}$-stable sub-quotient with $W_{L}$-action given by $\mathrm{std}\circ \phi \otimes |\cdot|^{-3/2}$.
\end{proposition}
Just as one does in proving Theorem 1.2, the key idea is to directly relate the complex 
\[ \bigoplus_{\rho' \in \Pi_{\phi}(J)} R\Gamma_{c}(G,b,\mu)[\rho']_{sc} \]
to the cohomology of a global Shimura variety using basic uniformization of the generic fiber as proven by Shen \cite{She} and an analogue of Boyer's trick \cite{Boy1}. This allows us to in turn prove Proposition 1.4 using global results on Galois representations in the cohomology of these Shimura varieties due to Kret-Shin \cite{KS} and Sorensen \cite{So}. More specifically, in this case the relevant Shimura datum is given by $(\mathbf{G},X)$, where $\mathbf{G}$ is a $\mathbb{Q}$-inner form of $\mathbf{G}^{*} := \mathrm{Res}_{F/\mathbb{Q}}(\GSp_{4})$ for $F/\mathbb{Q}$ a totally real extension with $p$ inert and $F_{p} \simeq L$. The relevant uniformization result is then applicable if $L/\mathbb{Q}_{p}$ is an unramified extension and $p > 2$. To state the key consequence of this uniformization result, we introduce some notation. We let $\mathbb{A}$ and $\mathbb{A}_{f}$ denote the adeles and finite adeles of $\mathbb{Q}$, respectively. If $K^{p} \subset \mathbf{G}(\mathbb{A}^{p}_{f})$ denotes the level away from $p$ and $K_{p} \subset G(\mathbb{Q}_{p})$ denotes the level at $p$, we let $\mathcal{S}(\mathbf{G},X)_{K_{p}K^{p}}$ be the rigid analytic Shimura variety over $\mathbb{C}_{p}$ of level $K_{p}K^{p}$. We set $\xi$ be a regular weight of an algebraic representation $\mathcal{V}_{\xi}$ of $\mathbf{G}$ over $\mathbb{Q}$ and let $\mathcal{L}_{\xi}$ denote the associated $\overline{\mathbb{Q}}_{\ell}$ local system on $\mathcal{S}(\mathbf{G},X)_{K_{p}K^{p}}$. We then define 
\[ R\Gamma_{c}(\mathcal{S}(\mathbf{G},X)_{K^{p}},\mathcal{L}_{\xi}) := \colim_{K_{p} \rightarrow \{1\}} R\Gamma_{c}(\mathcal{S}(\mathbf{G},X)_{K^{p}K_{p}},\mathcal{L}_{\xi})   \] 
Now we choose $X$ so that inverse of the Hodge cocharacter transported under the isomorphism $\ol{\mathbb{Q}}_{p} \simeq \mathbb{C}$ identifies with the Siegel cocharacter defined above.
The basic uniformization result of Shen then furnishes a $\mathbb{Q}$-inner form $\mathbf{G}'$ of $\mathbf{G}$ satisfying that $\mathbf{G}'_{\mathbb{Q}_{p}} \simeq J$ together with a  $G(\mathbb{Q}_{p}) \times W_{L}$-invariant map
\[ \Theta: R\Gamma_{c}(G,b,\mu) \otimes^{\mathbb{L}}_ {\mathcal{H}(J_{b})} \mathcal{A}(\mathbf{G}'(\mathbb{Q})\backslash \mathbf{G}'(\mathbb{A}_{f})/K^{p},\mathcal{L}_{\xi}) \rightarrow R\Gamma_{c}(\mathcal{S}(\mathbf{G},X)_{K^{p}},\mathcal{L}_{\xi})  \]
functorial in the level $K^{p}$. Here $\mathcal{A}(\mathbf{G}'(\mathbb{Q})\backslash \mathbf{G}'(\mathbb{A}_{f})/K^{p},\mathcal{L}_{\xi})$ denotes the space of algebraic automorphic forms  of level $K^{p}$ valued in the algebraic representation $\mathcal{V}_{\xi}$ in the sense of \cite{Gross}. We want to use this uniformization map to apply global results on the cohomology of $R\Gamma_{c}(\mathcal{S}(\mathbf{G},X)_{K^{p}},\mathcal{L}_{\xi})$ to study the action of $W_{L}$ on $R\Gamma_{c}(G,b,\mu)$. To do this, we show an analogue of Boyer's trick, which says that the non-basic Newton strata of the adic flag variety $\mathcal{F}\ell_{G,\mu^{-1}} := (G/P_{\mu^{-1}})^{ad}$ are parabolically induced as spaces with $G(\mathbb{Q}_{p})$-action. Using the Hodge-Tate period map from the Shimura variety $\mathcal{S}(\mathbf{G},X)_{K^{p}}$ to $\mathcal{F}\ell_{G,\mu^{-1}}$, this implies that, if we pass to the part of the cohomology on both sides where $G(\mathbb{Q}_{p})$ given by the supercuspidal Bernstein components, we get a $W_{L} \times G(\mathbb{Q}_{p})$-equivariant isomorphism:
\[ \Theta_{sc}: R\Gamma_{c}(G,b,\mu)_{sc} \otimes^{\mathbb{L}}_ {\mathcal{H}(J_{b})} \mathcal{A}(\mathbf{G}'(\mathbb{Q})\backslash \mathbf{G}'(\mathbb{A}_{f})/K^{p},\mathcal{L}_{\xi}) \xrightarrow{\simeq} R\Gamma_{c}(\mathcal{S}(\mathbf{G},X)_{K^{p}},\mathcal{L}_{\xi})_{sc}  \]
After showing this, we fix a $\rho$ having supercuspidal Gan-Tantono parameter $\phi$, and, via an argument using the simple trace formula, choose a globalization of $\rho$ to a cuspidal automorphic representation $\Pi'$ of $\mathbf{G}'$, which occurs as a $J(\mathbb{Q}_{p})$-stable direct summand of $\mathcal{A}(\mathbf{G}'(\mathbb{Q})\backslash \mathbf{G}'(\mathbb{A}_{f})/K^{p},\mathcal{L}_{\xi})$ and is an unramified twist of Steinberg at some non-empty set of places $S_{st}$, for some sufficiently large regular weight $\xi$ and sufficiently small level $K^{p}$. We set $S$ to be a finite set of places outside of which $\Pi'$ is unramified. The Hecke eigenvalues of $\Pi'$ then define a maximal ideal $\mathfrak{m} \subset \mathbb{T}^{S}$ in the abstract commutative Hecke algebra of $\mathbf{G}'$ away from the finite places $S$. Regarding both sides of $\Theta$ as $\mathbb{T}^{S}$-modules, we can localize at $\mathfrak{m}$ to get a map 
\[ \Theta_{\mathfrak{m}}: (R\Gamma_{c}(G,b,\mu) \otimes^{\mathbb{L}}_ {\mathcal{H}(J_{b})} \mathcal{A}(\mathbf{G}'(\mathbb{Q})\backslash \mathbf{G}'(\mathbb{A}_{f})/K^{p},\mathcal{L}_{\xi}))_{\mathfrak{m}} \rightarrow R\Gamma_{c}(\mathcal{S}(\mathbf{G},X)_{K^{p}},\mathcal{L}_{\xi})_{\mathfrak{m}}  \]
We write $K^{p} = K^{S_{st} \cup \{p\}}K_{\{p\} \cup S_{st}}$
for $K^{S_{st} \cup \{p\}} \subset \mathbf{G}(\mathbb{A}_{f}^{S_{st} \cup \{p\}})$. Taking colimits on both sides as $K_{\{p\} \cup S_{st}} \rightarrow \{1\}$, we see that $\Theta_{\mathfrak{m}}$ induces a map:
\[ (R\Gamma_{c}(G,b,\mu) \otimes^{\mathbb{L}}_ {\mathcal{H}(J_{b})} \mathcal{A}(\mathbf{G}'(\mathbb{Q})\backslash \mathbf{G}'(\mathbb{A}_{f})/K^{S_{st} \cup \{p\}},\mathcal{L}_{\xi}))_{\mathfrak{m}} \rightarrow R\Gamma_{c}(\mathcal{S}(\mathbf{G},X)_{K^{S_{st} \cup \{p\}}},\mathcal{L}_{\xi})_{\mathfrak{m}}  \]
Since we know that $\Theta_{\mathrm{sc}}$ is an isomorphism, we have an isomorphism
\[ (R\Gamma_{c}(G,b,\mu)_{sc} \otimes^{\mathbb{L}}_ {\mathcal{H}(J_{b})} \mathcal{A}(\mathbf{G}'(\mathbb{Q})\backslash \mathbf{G}'(\mathbb{A}_{f})/K^{S_{st} \cup \{p\}},\mathcal{L}_{\xi}))_{\mathfrak{m}} \xrightarrow{\simeq} R\Gamma_{c}(\mathcal{S}(\mathbf{G},X)_{K^{S_{st} \cup \{p\}}},\mathcal{L}_{\xi})_{\mathfrak{m},sc}  \]
Noting that $\mathcal{A}(\mathbf{G}'(\mathbb{Q})\backslash\mathbf{G}'(\mathbb{A}_{f})/K^{p},\mathcal{L}_{\xi})$ is semi-simple, we can project to the summand where $\mathbf{G}(F_{v}) \simeq \mathbf{G}'(F_{v})$ acts via an unramified twist of the  Steinberg representation for all $v \in S_{st}$. This implies that we have an isomorphism
\[ \Theta_{\mathfrak{m},sc}^{st}: (R\Gamma_{c}(G,b,\mu)_{sc} \otimes^{\mathbb{L}}_ {\mathcal{H}(J_{b})} \mathcal{A}(\mathbf{G}'(\mathbb{Q})\backslash \mathbf{G}'(\mathbb{A}_{f})/K^{S_{st} \cup \{p\}},\mathcal{L}_{\xi}))^{st}_{\mathfrak{m}} \xrightarrow{\simeq} R\Gamma_{c}(\mathcal{S}(\mathbf{G},X)_{K^{S_{st} \cup \{p\}}},\mathcal{L}_{\xi})^{st}_{\mathfrak{m},sc}  \]
The key point is now, by analyzing the simple twisted trace formula of Kottwitz-Shelstad \cite{KoShe} and stable trace formulas of Arthur \cite{Ar1}, we can prove a strong multiplicity one type result (Proposition 5.4), for cuspidal automorphic representations that are unramified twists of Steinberg at some sufficiently large non-empty set of places and regular of weight $\xi$ at infinity. This implies that the representations of  $\mathbf{G}'$ occurring on LHS of $\Theta_{\mathfrak{m}}^{st}$ must have local constituent at $p$ with Langlands parameter $\phi$, since we localized at the Hecke eigensystem defined by $\Pi'$ at the unramified places. 
and, since the local constituents of the automorphic representations of $\mathbf{G}'$ at $p$ occurring in the LHS are all in the $L$-packet $\Pi_{\phi}(J)$ by the strong multipicity one result, we can reduce Proposition 1.4 to showing that $R\Gamma_{c}(\mathcal{S}(\mathbf{G},X)_{K^{p}},\mathcal{L}_{\xi})^{st}_{\mathfrak{m},sc}$ is concentrated in degree $3$ and has $W_{L}$-action given (up to multiplicity) by $\mathrm{std}\circ \phi \otimes |\cdot|^{-3/2}$. This follows from the analysis carried out in  Kret-Shin \cite{KS}. In particular, it follows from their results that this complex will be concentrated in degree $3$ and that the traces of Frobenius in $\Gamma_{F} := \Gal(\overline{F}/F)$ on the \'etale cohomology of the associated global Shimura variety over $\overline{F}$  are given by $\mathrm{std}\circ \phi_{\tau_{v}}$, where $\tau_{v}$ are the local constituents of some weak transfer $\tau$ of $\Pi'$ to an automorphic representation of $\mathrm{Res}_{F/\mathbb{Q}}\GSp_{4} =: \mathbf{G}^{*}$ and $\phi_{\tau_{v}}$ is the associated Gan-Takeda parameter. This allows one, up to multiplicities, to describe the Galois action on the global Shimura variety in terms of $\mathrm{std}\circ \rho_{\tau}$, where $\rho_{\tau}$ is a global $\GSp_{4}(\overline{\mathbb{Q}}_{\ell})$-valued representation of the absolute Galois group of $F$ constructed by Sorensen \cite{So} from $\tau$ characterized by the property that $i\WD(\mathrm{std}\circ \rho_{\tau})|_{W_{F_{v}}}^{F-s.s.} \simeq \phi_{\tau_{v}} \otimes |\cdot|^{-3/2}$ for all but finitely many places $v$ of $F$. This would give one precisely the desired description of the $W_{L}$-action on $R\Gamma_{c}(G,b,\mu)[\rho]_{sc}$ if one knew that $\phi_{\tau_{p}} = \phi_{\rho}$. Since $\Pi'$ is globalization of $\rho$, one needs to choose $\tau$ to be a strong transfer of $\Pi'$ at the prime $p$. This latter goal is accomplished using analysis of the simple trace formula as done in Kret-Shin \cite[Section~6]{KS} combined with the character identities proven by Chan-Gan \cite{CG}. These results on strong transfers also aid us in deducing the strong multiplicity one type result mentioned above. 
\\\\
In section $2$, we give an overview of the Gan-Takeda and Gan-Tantono local Langlands correspondence, putting it in the framework of the refined local Langlands correspondence of Kaletha in preparation for applications to the Kottwitz conjecture. In section $3$, we describe the Fargues-Scholze local Langlands correspondence and related ideas, giving the proof of compatibility in cases (1) and (2) and reducing case (3) to Proposition 1.4, via some properties of the spectral action discussed in section $3.2$. In section $4$, we discuss basic uniformization of the relevant Shimura varieties and prove the aforementioned analogue of Boyer's trick, showing that the uniformization map $\Theta_{sc}$ is an isomorphism. In section $5$, we analyze the simple trace formula with fixed central character in a fashion similar to Kret-Shin \cite{KS} to deduce the existence of the required strong transfers, as well as combine this with analysis of the simple twisted trace formula to deduce the required strong multiplicity one result. In section $6$, we apply the results of section $5$ combined with results of Kret-Shin \cite{KS} and Sorensen \cite{So} to compute the relevant Galois action on the global Shimura variety. Finally, in section $7$, we put the results of the previous sections together to prove Proposition 1.4. We then conclude with the application to the proofs of Theorem 1.1 and 1.3, as well as formally deduce compatibility for the local Langlands correspondence for $\Sp_{4}$ and its non quasi-split inner form $\SU_{2}(D)$, as constructed by Gan-Takeda \cite{GT3} and Choiy \cite{Cho}, respectively. We finish the section with a brief discussion of an application to the cohomology of the related (non-minuscule) local shtuka spaces. 
\section*{Acknowledgments}
It is a pleasure to thank my advisor David Hansen for giving me this project and for numerous suggestions and ideas related to it, as well as Alexander Bertoloni-Meli, Eric Chen, Tasho Kaletha, Arthur-C\'esar Le-Bras, Yoichi Mieda, Jack Sempliner, and Zhiyu Zhang for some nice conversations related to this work. Special thanks also go to Peter Scholze for sharing with me the draft of \cite{FS}, pointing out some errors, and generously initiating me in these ideas during my masters thesis, as well as Thomas Haines, Hao Peng, and Sug Woo Shin for pointing out some errors in section $3$ and section $5$. Lastly, I would like to thank the MPIM Bonn for their hospitality during part of the completion of this project and the anonymous referee for pointing out several mistakes and inaccuracies. 

\section*{Conventions and Notations}
For a diamond or $v$-stack, we freely use the formalism in \cite{Ecod,FS} of $\ell$-adic cohomology of diamonds and $v$-stacks. We will fix isomorphisms $i: \overline{\mathbb{Q}}_{\ell} \xrightarrow{\simeq} \mathbb{C}$ and $j: \overline{\mathbb{Q}}_{p} \xrightarrow{\simeq} \mathbb{C}$ and use the (geometric) normalization of local class field theory that sends the Frobenius to the inverse of the uniformizer. For a supercuspidal $L$-parameter, we will often abuse notation and use $\phi$ to denote both the $L$-parameter and the semisimplified parameter $\phi^{\mathrm{ss}}$, as in this case this merely corresponds to forgetting the trivially acting $\SL_{2}(\mathbb{C})$-factor and applying the isomorphism $i$. For a reductive group $H/\mathbb{Q}_{p}$, we will write $R\mathcal{H}om_{H(\mathbb{Q}_{p})}(-,\ol{\mathbb{Q}}_{\ell}): \D(H(\mathbb{Q}_{p}),\ol{\mathbb{Q}}_{\ell})^{\mathrm{op}} \rightarrow  \D(H(\mathbb{Q}_{p}),\ol{\mathbb{Q}}_{\ell})$ for the derived smooth duality functor, where $\D(H(\mathbb{Q}_{p}),\ol{\mathbb{Q}}_{\ell})$ is the unbounded derived category of smooth representations on $\ol{\mathbb{Q}}_{\ell}$ vector spaces. Namely, it is derived functor induced by the left exact smooth duality functor $V \mapsto (V^{*})^{\mathrm{sm}}$, where $(V^{*})^{\mathrm{sm}}$ is the set of all functions $f: V \ra \ol{\mathbb{Q}}_{\ell}$ such that there exists $K \subset H(\mathbb{Q}_{p})$ a compact open such that for all $v \in V$ and $k \in K$ we have that $f(kv) = f(v)$. Normally, in the literature the space $\Sht(G,b,\mu)_{\infty}$ parametrizes modifications $\mathcal{E}_{0} \dashrightarrow \mathcal{E}_{b}$ with meromorphy $\mu$. For us, it will denote the space parametrizing modifications of type $\mu^{-1}$. This convention limits the appearances of duals (cf. Remark 3.8).
\section{Local Langlands for $\GSp_{4}$ and $\GU_{2}(D)$}
\subsection{Local Langlands for $\GSp_{4}$}
In this section, we will describe the local Langlands correspondence of Gan-Takeda for the group $G := \GSp_{4}/L$, where $L/\mathbb{Q}_{p}$ is a finite extension. We fix a choice of Whittaker datum $\mathfrak{m} := (B,\psi)$ throughout section $2$, where $B$ is the Borel and $\psi$ is a generic additive character of $L$.
\\\\
As before, we consider the set $\Phi(G)$ of admissible homomorphisms 
\[ \phi: W_{L} \times \SL_{2}(\mathbb{C}) \rightarrow \hat{G}(\mathbb{C}) = \GSpin_{5}(\mathbb{C}) \simeq \GSp_{4}(\mathbb{C}) \]
taken up to $\hat{G}$-conjugacy, where $W_{L}$ acts via a continuous semisimple homomorphism with respect to the discrete topology and $\SL_{2}(\mathbb{C})$ acts via an algebraic representation. Similarly, let $\Pi(G)$ denote the isomorphism classes of smooth irreducible representations of the group $G(L)$. We can now state the main theorem of Gan-Takeda. 
\begin{theorem}{\cite{GT1}}
There is a surjective finite to one map 
\[ \LLC_{G}: \Pi(\GSp_{4}) \rightarrow \Phi(\GSp_{4}) \]
\[ \pi \mapsto \phi_{\pi} \]
with the following properties:
\begin{enumerate}
    \item $\pi$ is an (essentially) discrete series representation of $\GSp_{4}(L)$ if and only if its $L$-parameter does not factor through any proper Levi subgroup of $\GSp_{4}(\mathbb{C})$.
    \item Given an $L$-parameter $\phi$, we set $S_{\phi} := Z_{\hat{G}}(\mathrm{Im}(\phi))$ to be the centralizer of $\phi$. The fiber $\Pi_{\phi}(G)$ can be naturally parametrized by the set of irreducible characters of the component group  
    \[ A_{\phi} := \pi_{0}(S_{\phi}) \simeq \pi_{0}(S_{\phi}/Z(\GSp_{4}))   \]
    which is either trivial or equal to $\mathbb{Z}/2\mathbb{Z}$. When $A_{\phi} = \mathbb{Z}/2\mathbb{Z}$, exactly one of the two representations in $\Pi_{\phi}(G)$ is generic for the fixed choice of Whittaker datum, and is indexed by the trivial character of $A_{\phi}$.
    \item The similitude character $\mathrm{sim}(\phi_{\pi})$ of $\GSp_{4}(L)$ is equal to the central character $\omega_{\pi}$ via the isomorphism given by local class field theory.
    \item Given a character $\chi$ of $L^{*}$ and letting $\lambda: \GSp_{4} \rightarrow L^{*}$ be the similitude character of $\GSp_{4}(L)$, we have, via local class field theory, that the $L$-parameter of $\pi \otimes (\chi \circ \lambda)$ is equal to $\phi_{\pi} \otimes \chi$.
    \item If $\pi \in \Pi(\GSp_{4})$ is a generic supercuspidal representation or non-supercuspidal representation then, for any $r \geq 1$ and any smooth irreducible representation $\sigma$ of $\GL_{r}(L)$, we have that
    \[ \gamma(s, \pi \times \sigma,\psi) = \gamma(s,\phi_{\pi} \otimes \phi_{\sigma},\psi) \]
    \[ L(s, \pi \times \sigma,\psi) = L(s,\phi_{\pi} \otimes \phi_{\sigma},\psi) \]
    \[ \epsilon(s, \pi \times \sigma,\psi) = \epsilon(s,\phi_{\pi} \otimes \phi_{\sigma},\psi), \]
    where the RHS are the Artin local factors associated to the representations of $W_{L} \times \SL_{2}(\mathbb{C})$, and the LHS are the local factors of Shahidi \cite{Sh} with respect to the morphisms of $L$-groups defined in \cite[Section~4]{GT1}.  In the case that $r \leq 2$ and $\pi$ is a non-generic supercuspidal representation, we have the same relationship where the LHS is given by the local factors defined by Townsend \cite{NT}. 
\end{enumerate}
The map $\LLC_{G}$ is uniquely determined by the properties (1),(3), and (5), where one can take $r \leq 2$ in $(v)$.
\begin{remark}
When the paper of Gan-Takeda was released there was no good theory of $L$,$\epsilon$, and $\gamma$ factors for nongeneric supercuspidal representations satisfying the usual properties. (See the 10-Commandents in \cite{LR}) Instead, to uniquely characterize the correspondence for these representations, they use an equality between the Plancharel measure on the family of inductions from $\GSpin_{5}(L) \times \GL_{r}(L) \simeq \GSp_{4}(L) \times \GL_{r}(L)$ to $\GSpin_{2r + 5}(L)$ for $r \leq 2$. However, this theory of $L$,$\epsilon$, and $\gamma$ factors was later constructed by Nelson Townsend in his PhD thesis \cite{NT}.
\end{remark}
\end{theorem}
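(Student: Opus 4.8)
The plan is to construct $\LLC_G$ by descent from the local Langlands correspondence for general linear groups through the local theta correspondence for similitude dual pairs, proving existence first and uniqueness last; throughout one uses freely the $\GL_n$-correspondence (for $n\le 4$) and its compatibility with $L$-, $\epsilon$- and $\gamma$-factors. First I would dispose of the non-supercuspidal $\pi$. By the Langlands classification together with the Sally--Tadi\'c description of the reducibility of representations parabolically induced from proper Levi subgroups (and Shahidi's analysis of the tempered spectrum), every non-supercuspidal $\pi$ is an irreducible subquotient of a representation induced from a (limit of) discrete series of a proper Levi $M$; since each such $M$ is a product of copies of $\GL_1$ and $\GL_2$, the $\GL_n$-correspondence assigns parameters to the inducing data, and one declares $\phi_\pi$ to be the composite $W_L\times\SL_2(\mathbb{C})\to{}^{L}M\to{}^{L}\GSp_4$, where the $\SL_2$-factor records which Langlands quotient, or — for the genuinely discrete but non-supercuspidal packets of Steinberg, Saito--Kurokawa and Howe--Piatetski-Shapiro type — which residual point of the relevant intertwining operator, $\pi$ is realised by. Property (1) on this locus is then immediate: a parameter factors through a proper Levi precisely when the inducing representation is irreducible, i.e.\ when $\pi$ is not discrete series.

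\textbf{Supercuspidal parameters via theta.} For $\pi$ supercuspidal I would build $\phi_\pi$ from first occurrence in the theta tower of the similitude dual pairs $(\GSp_4,\GO(V))$. When $\pi$ is generic it occurs on the split six-dimensional orthogonal tower, and since $\GSO_{3,3}$ is essentially $\GL_4$, the big theta lift $\Theta(\pi,\GSO_{3,3})$ encodes a representation $\tau$ of $\GL_4(L)$ with $\tau^\vee\simeq\tau\otimes\omega_\pi^{-1}$; thus its $\GL_4$-parameter is valued in $\GSp_4(\mathbb{C})$ and one sets $\phi_\pi$ equal to it. When $\pi$ is non-generic — equivalently, a theta lift from an anisotropic four-dimensional orthogonal similitude group — the parameter is instead $\phi_1\oplus\phi_2$ for the two two-dimensional pieces produced by the $\GL_2$-correspondence, and these are exactly the endoscopic packets. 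The component-group assertion in (2) follows by computing $S_\phi=Z_{\hat{G}}(\mathrm{Im}(\phi))$: one sees $A_\phi$ is trivial or $\mathbb{Z}/2\mathbb{Z}$ according as $\mathrm{std}\circ\phi$ is irreducible or splits into two inequivalent pieces of equal determinant, and the identification of the generic member with the trivial character uses that theta lifts to split orthogonal similitude groups are generic.

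\textbf{The characterizing properties.} Properties (3) and (4) come from the behaviour of the theta correspondence under twists by characters of the similitude line together with the corresponding identities on the $\GL_n$-side. Property (5) is the technical core: for generic $\pi$ one matches Shahidi's local coefficients attached to the inductions from $\GSpin_5(L)\times\GL_r(L)\simeq\GSp_4(L)\times\GL_r(L)$ into $\GSpin_{2r+5}(L)$ against the Artin $\gamma$-factors of $\phi_\pi\otimes\phi_\sigma$, using multiplicativity of $\gamma$-factors to reduce to $r\le 2$ and the theta identity to transfer the comparison from $\GL_4$; for non-generic supercuspidal $\pi$ one uses Townsend's $L$-, $\epsilon$- and $\gamma$-factors, engineered so that the same equalities hold, together with the equality of Plancherel measures that served as the original substitute characterization.

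\textbf{Uniqueness and the main obstacle.} Suppose $\LLC_G$ and $\LLC_G'$ both satisfy (1), (3) and (5) with $r\le 2$. By the reduction step and the $\GL_n$-correspondence they agree off the supercuspidal locus, so it suffices that a $\GSp_4(\mathbb{C})$-valued parameter of a supercuspidal — i.e.\ a four-dimensional symplectic-similitude representation of $W_L$ — be determined by the family $\{\gamma(s,\phi\otimes\phi_\sigma,\psi):\sigma\in\Pi(\GL_1)\cup\Pi(\GL_2)\}$ together with the similitude character supplied by (3). This is a stability/converse statement in the spirit of Henniart's numerical local Langlands, in which highly ramified $\GL_1$-twists pin down the behaviour on wild inertia and $\GL_2$-twists separate the remaining constituents. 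I expect this rigidity input — and, to a lesser extent, the bookkeeping of local factors for the non-generic supercuspidals — to be the main obstacle; everything else is an orchestration of the $\GL_n$-correspondence, the theta machinery (Howe duality and conservation relations in the low ranks at issue), and the Langlands classification.
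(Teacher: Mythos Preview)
The paper does not prove this theorem; it is quoted verbatim as a result of Gan--Takeda \cite{GT1} and used as input for the rest of the argument. Your sketch is a faithful outline of the strategy actually employed in \cite{GT1} (Langlands classification plus Sally--Tadi\'c for the non-supercuspidals, theta lifting to $\GSO_{3,3}$ and to four-dimensional orthogonal similitude groups for the supercuspidals, and a stability/converse argument for uniqueness), so there is nothing to compare against in the present paper.
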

We now make the following definition.
\begin{definition}
Write $\mathrm{std}:\GSp_{4} \hookrightarrow \GL_{4}$ for the standard embedding. We say a discrete $L$-parameter is stable if the $L$-packet $\Pi_{\phi}(G)$ has size $1$ and is endoscopic if it has size $2$. Equivalently, by Theorem 2.1 (2), this is equivalent to saying that the character group $A_{\phi}^{\vee}$ of the component group $A_{\phi}$ has cardinality $1$ or $2$, respectively. By \cite[Lemma~6.2]{GT1}, this can be characterized as follows.
\begin{itemize}
    \item (stable) $\mathrm{std}\circ \phi$ is an irreducible representation of $W_{L} \times \SL_{2}(\mathbb{C})$. In this case, $S_{\phi} = Z(\hat{G}) = \mathbb{G}_{m}$, so $A_{\phi}$ is trivial.
    \item (endoscopic) $\mathrm{std}\circ \phi \simeq \phi_{1} \oplus \phi_{2}$, where the $\phi_{i}: W_{L} \times \SL_{2}(\mathbb{C}) \rightarrow \GL_{2}(\mathbb{C})$ for $i = 1,2$ are distinct irreducible $2$-dimensional representations of $W_{L} \times \SL_{2}(\mathbb{C})$ with $\det(\phi_{1}) = \det(\phi_{2})$. In this case, $A_{\phi} \simeq \mathbb{Z}/2\mathbb{Z}$. We recall that $\GSp_{4}(\mathbb{C})$ has a unique endoscopic group $\GSO_{2,2}$, and the endoscopic parameters lie in the image of the map $\Phi(\GSO_{2,2}) \rightarrow \Phi(\GSp_{4})$. More specifically, the dual group of $\GSO_{2,2}$ is $\GSpin_{4}$ and one has an identification $\GSpin_{4} \simeq (\GL_{2}(\mathbb{C}) \times \GL_{2}(\mathbb{C}))^{0} := \{(g_{1},g_{2}) \in \GL_{2}(\mathbb{C}) \times \GL_{2}(\mathbb{C}) |  \det(g_{1}) = \det(g_{2}) \}$ and the map $\Phi(\GSO_{2,2}) \rightarrow \Phi(\GSp_{4}) \xrightarrow{\mathrm{std} \circ} \Phi(\GL_{4})$ comes from the inclusion $(\GL_{2}(\mathbb{C}) \times \GL_{2}(\mathbb{C}))^{0} \subset \GL_{4}(\mathbb{C})$. Using this, we can compute that one has an identification
    \[ S_{\phi} \simeq \{(a,b) \in \mathbb{C}^{*} \times \mathbb{C}^{*}| a^{2} = b^{2} \} \subset (\GL_{2}(\mathbb{C}) \times \GL_{2}(\mathbb{C}))^{0} \]
    where the center $Z(\GSp_{4})(\mathbb{C}) \simeq \mathbb{C}^{*}$ embeds diagonally. 
\end{itemize}
\end{definition}
For an $L$-parameter $\phi$ we see, by Theorem 2.1 (2), that the size of the $L$-packet $\Pi_{\phi}(G)$ is at most $2$, this allows us to subdivide into three cases:
\begin{enumerate}
    \item The $L$-packet $\Pi_{\phi}(G)$ does not contain any supercuspidal representations.
    \item The $L$-packet $\Pi_{\phi}(G)$ contains one supercuspidal and one non-supercuspidal.
    \item The $L$-packet contains only supercuspidals. 
\end{enumerate}
In Case $(1)$ the parameter will not be discrete. Case $(2)$ is where the parameter $\phi$ does not factor through a Levi-subgroup so it is discrete, but its semisimplification $\phi^{\mathrm{ss}}$ as defined in section $1$ does. Case $(2)$ does not occur when the parameter is a stable discrete parameter, by definition. The relevant case is when the parameter is discrete endoscopic. To understand this, we let $\nu(n)$ denote the unique $n$-dimensional irreducible representation of $\SL_{2}(\mathbb{C})$ then there are two cases:
\begin{enumerate}
    \item (Saito-Kurokawa Type) We have $\mathrm{std}\circ \phi = \phi_{0} \oplus \chi \boxtimes \nu(2)$, where $\phi_{0}$ is a $2$-dimensional irreducible representation of $W_{L}$ and $\chi$ is a character, with $\chi^{2} = \det(\phi_{0})$. Therefore, the semisimplification $\phi^{\mathrm{ss}}$ satisfies: $\mathrm{std}\circ \phi^{\mathrm{ss}} = \phi_{0} \oplus \chi \otimes |\cdot|^{\frac{1}{2}} \oplus \chi \otimes |\cdot|^{-\frac{1}{2}}$.
    \item (Howe-Piatetski--Shapiro Type) We have $\mathrm{std}\circ \phi = \chi_{1} \boxtimes \nu(2) \oplus \chi_{2} \boxtimes \nu(2)$, where $\chi_{1}$ and $\chi_{2}$ are distinct characters of $W_{L}$ satisfying $\chi_{1}^{2} = \chi_{2}^{2}$. Therefore, the semisimplification $\phi^{\mathrm{ss}}$ satisfies: $\mathrm{std}\circ \phi^{\mathrm{ss}} = \chi_{1} \otimes |\cdot|^{\frac{1}{2}} \oplus \chi_{1} \otimes |\cdot|^{-\frac{1}{2}} \oplus \chi_{2} \otimes |\cdot|^{\frac{1}{2}} \oplus \chi_{2} \otimes |\cdot|^{-\frac{1}{2}}$.
 \end{enumerate}
 We will say a parameter is mixed supercuspidal if it is of the above form or of Soudry type. More precisely, we say that a parameter is of Soudry type if we have that $\mathrm{std}\circ \phi = \phi_{0} \boxtimes \nu(2)$, where $\phi_{0}$ is a $2$-dimensional irreducible representation of $W_{L}$. In particular, it is stable. In this case, the semi-simplification satisfies $\mathrm{std}\circ \phi^{\mathrm{ss}} = \phi_{0} \otimes |\cdot|^{\frac{1}{2}} \oplus \phi_{0} \otimes |\cdot|^{-\frac{1}{2}}$. This together with the supercuspidal parameters and the almost unipotent parameters (i.e where $\mathrm{std} \circ \phi = \chi \boxtimes \nu(4)$ for $\chi$ a $1$-dimensional representation of $W_{L}$) gives a complete list of the discrete parameters (i.e those that do not factor through a proper Levi), and as we will see in the next section the mixed supercuspidal case will be precisely when $\Pi_{\phi}(G)$ and $\Pi_{\phi}(J)$ contain a mix of supercuspidal and non-supercuspidal representations (However, in the Soudry type case, the $L$-packet $\Pi_{\phi}(G)$ will be a single non-supercuspidal discrete series representation).
\begin{remark}
\begin{enumerate}
    \item The terminology for the parameters is explained by Arthur's classification \cite{Ar} of the global automorphic representations of $\GSp_{4}$ appearing in the papers \cite{Ku}, \cite{HPS}, and \cite{Soudry}, respectively.
    \item We will mention in the next section how to distinguish the three cases discussed above via the number of supercuspidals in the $L$-packet $\Pi_{\phi}(\GU_{2}(D))$ defined by the Gan-Tantono local Langlands correspondence.
\end{enumerate}
\end{remark}
Case (iii) in the above division of parameters is the situation where the parameter $\phi$ is supercuspidal as defined in the introduction. In particular, in the supercuspidal case the restriction of the parameter $\phi$ to the $\SL_{2}(\mathbb{C})$ factor is trivial, so the irreducible representations occurring in the decomposition of $\mathrm{std}\circ \phi$ are just representations of $W_{L}$.
\\\\\
For the purposes of applying the weak form of the Kottwitz Conjecture proven in Hansen-Kaletha-Weinstein \cite{KW}, we formulate this correspondence in terms of the refined local Langlands of Kaletha \cite{Ka} with respect to the fixed choice of Whittaker datum $\mathfrak{m}$. Now, given a discrete parameter $\phi$, we have by Theorem 2.1 (2) a correspondence between the $L$-packet $\Pi_{\phi}(G)$ and the set of irreducible characters $A^{\vee}_{\phi}$. This in turn gives rise to an irreducible character of the group $S_{\phi}$ via the composition:
\[ S_{\phi} \rightarrow \pi_{0}(S_{\phi}) = A_{\phi} \]
This allows us to make the following definition.
\begin{definition}
For $\phi$ a discrete parameter and $\pi \in \Pi_{\phi}(G)$, we denote the character of $S_{\phi}$ described above by $\tau_{\pi}$.
\end{definition}
\subsection{Local Langlands for $\GU_{2}(D)$}
In this section, we describe the local Langlands correspondence for the unique non-split inner form $J = \GU_{2}(D)$, the group of similitudes of the unique $2$-dimensional Hermitian vector space over the quaternion division algebra $D/L$. As in the previous section, we let $\Pi(J)$ denote the set of irreducible admissible representations of $J$, and $\Phi(J)$ be the set of $L$-parameters of $J$. This is a subset of the previous set $\Phi(\GSp_{4})$ as we will now explain. The group $J$ has a unique up to conjugacy minimal parabolic whose Levi factor is
\[ D^{*} \times \GL_{1}. \]
This defines a form of the Siegel parabolic of $\GSp_{4}$ and it determines a dual parabolic subgroup $P^{\vee}(\mathbb{C})$ in the dual group $\GSp_{4}(\mathbb{C})$ of $\GU_{2}(D)$. This is the Heisenberg parabolic subgroup of $\GSp_{4}(\mathbb{C})$, its conjugacy class is said to be relevant for $J$ while all other conjugacy classes of proper parabolics are said to be irrelevant. We say $\phi \in \Phi(\GSp_{4})$ is relevant if it does not factor through any irrelevant parabolic subgroups of $\GSp_{4}(\mathbb{C})$. We define $\Phi(J)$ to be the subset of relevant $\phi$ in $\Phi(\GSp_{4})$. We set $B_{\phi} := \pi_{0}(Z_{\Sp_{4}}(\mathrm{Im}(\phi)))$. One has an exact sequence:
\[ \langle \pm 1 \rangle \rightarrow B_{\phi} \rightarrow A_{\phi} \rightarrow 0. \]
Implying that one has an injection on the group of irreducible characters $\hat{A}_{\phi} \hookrightarrow \hat{B}_{\phi}$, which identifies $\hat{A}_{\phi}$ as the subgroup of (index at most $2$) of characters trivial on the image of the center $Z(\Sp_{4})(\mathbb{C}) $. One can check that $\hat{B}_{\phi} \neq \hat{A}_{\phi}$ if and only if $\phi$ is relevant for $\GU_{2}(D)$. Now we can state the main theorem of Gan and Tantono.
\begin{theorem}{\cite{GT2}}
There is a natural surjective finite-to-one map
\[ \LLC_{J}: \Pi(\GU_{2}(D)) \rightarrow \Phi(\GU_{2}(D)) \]
\[ \rho \mapsto \phi_{\rho} \]
with the following properties:
\begin{enumerate}
    \item $\rho$ is an (essentially) discrete series representation of $\GU_{2}(D)$ if and only if its parameter $\phi_{\rho}$ does not factor through any proper Levi subgroup of $\GSp_{4}(\mathbb{C})$.
    \item For an $L$-parameter $\phi$, the fiber $\Pi_{\phi}(J)$ can be naturally parametrized by the set $\hat{B}_{\phi} \setminus \hat{A}_{\phi}$. This set has size either $1$ or $2$.
    \item The similitude character $\mathrm{sim}(\phi_{\rho})$ of $\phi_{\rho}$ is equal to the central character $\omega_{\rho}$ of $\rho$, via the isomorphism given by local class field theory.
    \item Given a character $\chi$ of $L^{*}$ and letting $\lambda: \GU_{2}(D) \rightarrow L^{*}$ be the similitude character of $\GU_{2}(D)$, we have, via local class field theory, that the $L$-parameter of $\rho \otimes (\chi \circ \lambda)$ is equal to $\phi_{\rho} \otimes \chi$.
    \item If $\rho \in \Pi(\GU_{2}(D))$ is a non-supercuspidal representation then, for any smooth irreducible representation $\sigma$ of $\GL_{r}(L)$, we have that
    \[ \gamma(s, \rho \times \sigma,\psi) = \gamma(s,\phi_{\rho} \otimes \phi_{\sigma},\psi) \]
    \[ L(s, \rho \times \sigma,\psi) = L(s,\phi_{\rho} \otimes \phi_{\sigma},\psi) \]
    \[ \epsilon(s, \pi \times \sigma,\psi) = \epsilon(s,\phi_{\rho} \otimes \phi_{\sigma},\psi) \]
    where the RHS are the Artin local factors associated to the representations of $W_{L} \times \SL_{2}(\mathbb{C})$ and the LHS are the local factors of Shahidi, as defined in \cite[Section~8]{GT2}. 
    \item Suppose that $\rho$ is a supercuspidal representation. For any irreducible supercuspidal representation $\sigma$ of $\GL_{r}(L)$ with $L$-parameter $\phi_{\sigma}$, if $\mu(s,\rho \boxtimes \sigma, \psi)$ denotes the Plancharel measure associated to the family of induced representations $I_{P}(\pi \boxtimes \sigma,s)$ on $\GSpin_{r+4,r+1}$, where we have regarded $\rho \boxtimes \sigma$ as a representation of the Levi subgroup $\GSpin_{4,1} \times \GL_{r} \simeq \GU_{2}(D) \times \GL_{r}$, then $\mu(s,\rho \boxtimes \sigma)$ is equal to
    \[ \gamma(s,\phi_{\rho}^{\vee} \otimes \phi_{\sigma},\psi) \cdot \gamma(-s,\phi_{\rho} \otimes \phi_{\sigma}^{\vee}, \overline{\psi}) \cdot \gamma(2s, \mathrm{Sym}^{2}\phi_{\sigma} \otimes \mathrm{sim}(\phi_{\rho})^{-1},\psi)) \cdot \gamma(-2s,\mathrm{Sym}^{2}\phi_{\sigma}^{\vee} \otimes \mathrm{sim}(\phi_{\rho}), \overline{\psi}) \]
 The map $\LLC_{J}$ is uniquely determined by the properties (1), (3), (5), and (6), with $r \leq 4$ in (5) and (6). 
\end{enumerate}
\end{theorem}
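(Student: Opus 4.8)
The plan, following Gan--Tantono, is to build $\LLC_{J}$ from the local theta correspondence, in close parallel to Gan--Takeda's treatment of $\GSp_{4}$ in Theorem 2.1 but with the split special orthogonal similitude groups replaced by their quaternionic inner forms. Concretely, for $\rho \in \Pi(\GU_{2}(D))$ one forms the theta lift $\Theta(\rho)$ along the dual pair $(\GU_{2}(D), \GO(V_{6}))$, where $V_{6}/L$ is the $6$-dimensional quadratic similitude space with $\GSO(V_{6})$ the inner form of $\GSO_{3,3}$ built from $\GL_{2}(D)$ in the same way $\GSO_{3,3}$ is built from $\GL_{4}$. Writing $\Theta(\rho)$ as (essentially) a representation $\tau$ of $\GL_{2}(D)$ together with a similitude character, one sets $\mathrm{std}\circ\phi_{\rho}$ to be the $\GL_{4}$-parameter of the Jacquet--Langlands transfer $\mathrm{JL}(\tau)$ under the Harris--Taylor/Henniart correspondence, takes $\mathrm{sim}(\phi_{\rho}) = \omega_{\rho}$ (forced by property $(3)$), and reconstructs $\phi_{\rho}$ itself from $\mathrm{std}\circ\phi_{\rho}$ and this multiplier, using that a $\GSp_{4}(\C)$-valued parameter is determined by its standard composite together with its similitude character once that composite is symplectic-similitude of the right type.

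One must then check this is well defined and has the stated structure. Non-vanishing of the relevant lift (so that $\phi_{\rho}$ exists) comes from the tower/conservation-relation machinery for the quaternionic orthogonal towers together with non-vanishing at first occurrence; self-duality of symplectic type with the correct multiplier ensures the output factors through $\GSp_{4}(\C)$ and is relevant for $\GU_{2}(D)$; and property $(1)$ follows by tracking first occurrence against the Levi structure. For property $(2)$, one uses Howe duality, multiplicity one for the similitude theta correspondence, and see-saw identities to identify which representations across the various towers — the $6$-dimensional one above, the $4$-dimensional ones whose similitude groups are built from pairs of quaternion algebras, and the low-dimensional towers handling degenerate parameters — share a given $\phi$; the combinatorics of these choices is exactly the component group $B_{\phi} = \pi_{0}(Z_{\Sp_{4}}(\mathrm{Im}(\phi)))$, and the members landing in the non-split group rather than in $\GSp_{4}$ are precisely those indexed by characters nontrivial on the image of $Z(\Sp_{4})(\C)$, i.e.\ by $\hat{B}_{\phi}\setminus\hat{A}_{\phi}$, of size $1$ or $2$ according as $\phi$ is stable or endoscopic. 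Properties $(3)$ and $(4)$ then fall out of the compatibility of the theta correspondence with similitude characters and with twisting along the similitude factor.

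For the arithmetic identities, property $(5)$ for non-supercuspidal $\rho$ is reduced, via the Langlands classification and the compatibility of the doubling and Rankin--Selberg zeta integrals with theta, to the corresponding identities for $\GSp_{4}$ (Theorem 2.1(5)) and for $\GL_{n}$; property $(6)$ for supercuspidal $\rho$ is proven by computing the Plancherel measure of the family $I_{P}(\rho\boxtimes\sigma, s)$ on $\GSpin_{r+4,r+1}$ through Shahidi's theory adapted to similitude groups (equivalently, transported through the theta tower to the $\GL$-side) and matching it term by term with the product of $\gamma$-factors on the spectral side. Uniqueness then follows: $(5)$ with $r \le 4$ determines $\mathrm{std}\circ\phi_{\rho}^{\mathrm{ss}}$ by the local converse theorem for $\GL_{4}$, and with $(1)$ and the similitude character this pins down $\phi_{\rho}$ for non-supercuspidal $\rho$; for supercuspidal $\rho$, $(6)$ with $r \le 4$ recovers the $\gamma$-factors $\gamma(s,\phi_{\rho}\otimes\phi_{\sigma},\psi)$ and hence $\mathrm{std}\circ\phi_{\rho}$ (semisimple in this case), while $(3)$ supplies the remaining similitude character.

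The main obstacle is the theta-correspondence input for the \emph{non-split} group: establishing non-vanishing and Howe duality for the similitude theta correspondences attached to $\GU_{2}(D)$ and the quaternionic orthogonal towers — promoting the known isometry-group statements to similitude groups, and controlling first occurrence uniformly across all relevant towers, including the degenerate ones — together with the Plancherel-measure computation underlying $(6)$, on which the tightness of the characterization for the supercuspidal members rests. Once these ingredients are available, properties $(1)$--$(4)$ and the uniqueness argument are formal consequences of the theta machinery.
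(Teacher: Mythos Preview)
Your proposal is a reasonable sketch of the Gan--Tantono argument from \cite{GT2}, but note that the paper under review does not itself prove this theorem: it is stated as a citation of the main result of \cite{GT2}, with no proof given. So there is no ``paper's own proof'' to compare against; the statement is quoted as established input, and your outline correctly captures the strategy of the cited reference (theta lifting to quaternionic orthogonal towers, Jacquet--Langlands transfer to $\GL_{4}$, and the Plancherel-measure characterization for supercuspidals).
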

We now further elaborate on the structure of the $L$-packets $\Pi_{\phi}(J) := \LLC_{J}^{-1}(\phi)$ in the case where the parameter $\phi$ is mixed supercuspidal. If the parameter $\phi$ is of this form, then, it follows from \cite[Proposition~5.4]{GT2}, the description of $\LLC_{J}$ provided in \cite[Section~7]{GT2}, and the description of the non-gernic supercuspidal representations of $\GSp_{4}$ provided in \cite[Theorem~1.1 (i)]{GanTakedaTheta}, that the $L$-packet $\Pi_{\phi}(J)$ has following structure, as alluded to in Remark 2.2 (ii). 
\begin{enumerate}
    \item(Saito-Kurokawa Type) The $L$-packet $\Pi_{\phi}(J) = \{\rho_{disc},\rho_{sc}\}$ contains one supercuspidal representation $\rho_{sc}$ and one non-supercuspidal representation $\rho_{disc}$.
    \item(Howe-Piatetski--Shapiro Type) The $L$-packet $\Pi_{\phi}(J) = \{\rho_{disc}^{1},\rho_{disc}^{2}\}$ contains no supercuspidal representations.
    \item(Soudry Type) The $L$-packet $\Pi_{\phi}(J) = \{\rho_{sc}\}$ contains one supercuspidal and the $L$-packet $\Pi_{\phi}(G) = \{\pi_{disc}\}$ contains one non-supercuspidal (essentially) discrete series representation. 
\end{enumerate}
These will be the only cases in which the $L$-packets contain a mix of supercuspidal and non-supercuspidal representations, justifying the terminology of calling them mixed supercuspidal introduced in the previous section. In particular, the only other examples of discrete non-supercuspidal parameters are those of almost unipotent type, and in the case the packets $\Pi_{\phi}(G)$ (resp. $\Pi_{\phi}(J)$) will consist of a single (essentially) discrete series non-supercuspidal representation of $G(L)$ (resp. $J(L)$) (See \cite[Proposition~7.1 (v)]{GT2}).

We now would also like to briefly comment on the structure of the set $\hat{B}_{\phi} \setminus \hat{A}_{\phi}$, confirming the expectation that the size of the $L$-packets $\Pi_{\phi}(G)$ and $\Pi_{\phi}(J)$ is always the same.
\begin{itemize}
    \item (stable) In the case that the $L$-parameter $\phi$ is stable, we have that $B_{\phi} = \mathbb{Z}/2\mathbb{Z}$ and, as noted in section 2.1, $A_{\phi} = 1$. This means the set $\hat{B}_{\phi} \setminus \hat{A}_{\phi}$ consists of one element corresponding to the non-trivial character. 
    \item (endoscopic) In the case that the parameter $\phi$ is endoscopic, we have that the decomposition $\mathrm{std}\circ \phi \simeq \phi_{1} \oplus \phi_{2}$ induces an exact sequence
    \[ A_{\phi} = \mathbb{Z}/2\mathbb{Z} \xrightarrow{\Delta} B_{\phi} = Z(\SL_{2}) \times Z(\SL_{2}) = \mathbb{Z}/2\mathbb{Z} \times \mathbb{Z}/2\mathbb{Z} \]
    and so the set $\hat{B}_{\phi} \setminus \hat{A}_{\phi}$ has size $2$ and is indexed by two characters $\eta_{+-}$ and $\eta_{-+}$ each non-trivial on one of the two $\mathbb{C}^{*}$-factors under the isomorphism 
    \[ S_{\phi} \simeq \{(a,b) \in \mathbb{C}^{*} \times \mathbb{C}^{*}| a^{2} = b^{2} \} \subset (\GL_{2}(\mathbb{C}) \times \GL_{2}(\mathbb{C}))^{0} \]
    from section 2.1. 
\end{itemize}
We now wish to put this local Langlands correspondence for the inner form in the context of the refined local Langlands correspondence of Kaletha \cite{Ka}. We consider the Kottwitz set $B(G)$ \cite{Ko,RR} and let $b \in B(G)$ be the basic element whose associated $\sigma$-centralizer $J_{b} = J$. We take this to be the basic element whose slope homorphism is the dominant rational cocharacter of $G$ given by $(1/2,1/2,1/2,1/2)$. Let $Z(\GSp_{4}) \simeq \mathbb{G}_{m}$ be the center. We recall that we have an isomorphism $\pi_{1}(G) \simeq X_*(Z(\hat{G})) \simeq \mathbb{Z}$ and that the $\kappa$-invariant of $b$ is sent to the element $1 \in \mathbb{Z}$ under this isomorphism. This indexes the identity representation of $\mathbb{G}_{m}$, denoted  $id_{\mathbb{G}_{m}}$. Thus, given a discrete parameter $\phi: W_{L} \times \SL_{2}(\mathbb{C}) \rightarrow \GSp_{4}(\mathbb{C})$, the refined local Langlands correspondence asserts bijections
\[ \Pi_{\phi}(G) \longleftrightarrow \{ \text{irreducible algebraic representations $\tau$ of $S_{\phi}$ s.t }   \tau|_{Z(\hat{G})} = \mathbf{1} \} \]
\[ \pi \mapsto \tau_{\pi} \]
\[ \Pi_{\phi}(J) \longleftrightarrow \{\text{irreducible algebraic representations $\tau$ of $S_{\phi}$ s.t }   \tau|_{Z(\hat{G})} = id_{\mathbb{G}_{m}} \}\]
\[ \rho \mapsto \tau_{\rho} \]
where $\mathbf{1}$ is the trivial representation. In section $2.1$, we saw how for $\pi \in \Pi_{\phi}(G)$ to construct the desired $\tau_{\pi}$. Here it is uniquely pinned down by the property that the trivial representation corresponds to the unique $\mf{m}$-generic representation. In the case of the inner form, the situation is a bit more tricky. Consider $\rho \in \Pi(J)$ with associated $L$-parameter $\phi_{\rho}$. If $\phi_{\rho}$ is stable then $S_{\phi} = \mathbb{G}_{m}$ and $\tau_{\rho}$ is simply $id_{\mathbb{C}^{*}}$. If $\phi$ is endoscopic, then, as noted in section 2.1, we have an inclusion:
\[ Z(\GSp_{4})(\mathbb{C}) = \mathbb{C}^{*} \xrightarrow{\Delta} S_{\phi} \simeq \{(a,b) \in \mathbb{C}^{*} \times \mathbb{C}^{*}| a^{2} = b^{2} \}. \]
We consider the characters $\tau_{i}: S_{\phi} \rightarrow \mathbb{C}^{*}$ for $i = 1,2$ given by projecting to the first and second coordinate. These satisfy the property that $\tau_{i}|_{\mathbb{C}^{*}} = id_{\mathbb{C}^{*}}$ on the diagonally embedded center as desired. Similarly, under the parametrization of Gan-Tantono $\Pi_{\phi}(J) = \{\rho_{+-},\rho_{-+}\}$, where $\rho_{+-}$ and $\rho_{-+}$ correspond to the characters $\eta_{+-}$ and $\eta_{-+}$ described above. Specifically, if $\pi_{1}$ and $\pi_{2}$ are the unique discrete series representations of $\GL_{2}(L)$ in the $L$-packet over $\phi_{1}$ and $\phi_{2}$ then
\[ \rho_{+-} := \theta(\mathrm{JL}^{-1}(\pi_{2}) \boxtimes \pi_{1}) \text{ and } \rho_{-+} := \theta(\mathrm{JL}^{-1}(\pi_{1}) \boxtimes \pi_{2}) \]
where $\theta$ denotes the non-zero local theta lift from $D^{*} \times \GL_{2}(F)$ to $\GU_{2}(D)$, as in \cite[Proposition~5.4]{GT2}, and $\mathrm{JL}: \Pi(D^{*}) \ra \Pi(\GL_{2})$ is the Jacquet-Langlands correspondence. Now we would like to match these two representations with $\tau_{1}$ and $\tau_{2}$ under the refined local Langlands of Kaletha. Suppose we fix such a matching. We consider a refined endoscopic datum $\mf{c}$ for the quasi-split reductive group $G$, which we recall is a tuple $(H,s,\mc{H},\eta)$ which consists of
\begin{itemize}
    \item a quasi-split group $H$ over $F$,
    \item an extension $\mc{H}$ of $W_F$ by $\widehat{H}$ such that the map $W_F \to \mathrm{Out}(\widehat{H})$ coincides with the map $\rho_H: W_F \to \mathrm{Out}(\widehat{H})$ induced by the action of $W_F$ on $\widehat{H} \subset {}^LH$,
    \item an element $s \in Z(\widehat{H})^{\Gamma}$,
    \item an $L$-homomorphism $\eta: \mc{H} \to {}^LG$,
\end{itemize}
satisfying the condition:
\begin{itemize}
    \item we have $\eta(\widehat{H}) = Z_{\widehat{G}}(s)^{\circ}$.
\end{itemize}
Considering $J$ as the $\sigma$-centralizer of the unique basic element $b_{1} \in B(G)$ of Kottwitz invariant $1 \in \bb{Z}$, this defines for us an extended pure inner twisting $(\xi,b_{1}): G \ra J_{b_{1}}$ in the sense of \cite[Section~5.2]{Kott1}, and we can attach a canonical transfer factor $\Delta[\mf{m},\mf{c},b_{1}]$ to this datum, as defined in \cite[Section~4.1]{Ka}. Given a test function $f \in C^{\infty}_{c}(J(\mathbb{Q}_{p}),\ol{\mathbb{Q}}_{\ell})$, we can use these transfer factors to say what it means for $f^{\mf{c}} \in C^{\infty}_{c}(H(\mathbb{Q}_{p}),\ol{\mathbb{Q}}_{\ell})$ to be matching in the sense that their stable orbital integrals normalized with respect to these transfer factors match up.

Now, suppose we have a discrete parameter $\phi \in \Phi(J)$ and a refined endoscopic datum $\mf{c}$, such that $\phi = \eta \circ \phi^{\mf{c}}$ as conjugacy classes of parameters for an $L$-parameter $\phi^{\mf{c}}: W_{L} \times \SL(2,\ol{\mathbb{Q}}_{\ell}) \ra \mathcal{H}$. Then the matching is uniquely described using the endoscopic character identities. This asserts an equality
\[ \Theta^{1}_{\phi^{\mf{c}}}(f^{\mf{c}}) = \sum_{\pi \in \Pi_{\phi^{\mf{c}}}(H)} \tr(1|\tau_{\pi})\theta_{\pi}(f^{\mf{c}}) = e(J)\sum_{\rho \in \Pi_{\phi}(J)} \mathrm{tr}(s|\tau_{\rho}) \theta_{\pi}(f) = \Theta_{\phi}^{s}(f), \]
where $e(J)$ is the Kottwitz sign of $J$, as defined in \cite{Kott} and $\theta_{\pi}$ denotes the Harish-Chandra character of $\pi$. Using the linear independence of the distributions $\theta_{\pi}$ and the fact that the packets $\Pi_{\phi}(J)$ are disjoint, we can see that the matching between $\rho \mapsto \tau_{\rho}$ is uniquely characterized by these relations. To show that there exists a matching between the representations $\tau_{1}$ and $\tau_{2}$ and $\rho_{+-}$ and $\rho_{-+}$, we need to show that these identities are satisfied under the parametrization of Gan-Takeda-Tantono. This will follow from the endoscopic character identities verified by Chan-Gan \cite{CG}. Namely, in the case that the parameter $\phi$ is stable, one only needs to consider the trivial endoscopic datum $\mf{c}_{\mathrm{triv}} = (G,1,\phantom{}^{L}G,\mathrm{id})$, and these identities follow from \cite[Proposition~11.1 (1)]{CG}. In the case that the parameter $\phi$ is endoscopic, the case $\mf{c}_{\mathrm{triv}} = (G,1,\phantom{}^{L}G,\mathrm{id})$ follows from \cite[Proposition~11.1 (1)]{CG}, but one also needs to consider the refined endoscopic datum given by $\mf{c} = (\GSO_{2,2},(1,-1),\GSpin_{4},i: \GSpin_{4} \ra \GSpin_{5} \simeq \GSp_{4})$, where $i$ is the map from before and $(1,-1) \in S_{\phi} \subset (\GL_{2}(\mathbb{C}) \times \GL_{2}(\mathbb{C}))^{0}$. Here we have committed an abuse of notation in the endoscopic datum, writing $\GSpin_{4}$ instead of its split extension $\GSpin_{4} \times W_{F}$ by the Weil group and similarly for $\GSp_{4}$.

In this case, the identities (up to a sign) follow from combining \cite[Proposition~11.1 (1)]{CG} as before and \cite[Proposition~11.1 (2)]{CG}, where we note that since $\tau_{1}$ and $\tau_{2}$ are by definition given by the two projection maps their traces against $(1,-1)$ have the opposite sign, which is consistent with \cite[Proposition~11.1 (2)]{CG}. 

In order for the above argument for deducing the refined local Langlands from the results of Chan-Gan to work, one needs to compare the transfer factors used in \cite[Section~4]{CG} with those of Kaletha, so that the notion of matching functions between $f$ and $f^{\mf{c}}$ coincide. We comment briefly on this now. When discussing the transfer factors $\Delta[\mf{m},\mf{c}]$ used for transferring between two quasi-split groups both Kaletha and Chan-Gan work with the Whittaker normalized transfer factors considered in \cite{KoShe2}. Kaletha on the one hand works with the renormalized factors $\Delta'_{\lambda}$ in the notation of \cite[Section~5.5]{KoShe2} (See \cite[Page~221]{Ka}) while Chan-Gan work with the original factors denoted $\Delta_{\lambda}$ considered in \cite[Section~5.3]{KoShe} (See \cite[Section~4.1]{CG}); however, as noted at the end of \cite[Section~5.5]{KoShe2}, when the element $s$ in the refined endoscopic datum satisfies $s = s^{-1}$, the two transfer factors agree, and this will always be true for the endoscopic datum mentioned above. We also note that Chan-Gan unlike Kaletha do not include the Weyl discriminant term $\Delta_{IV}$ in their definition of the Whittaker normalized transfer factors, as noted in \cite[Section~4.1]{CG}. Instead they include it in their definition of the orbital integrals used when defining the notion of matching functions. Moreover, both authors use the arithmetic normalization of local class field theory. These observations allow one to see that that the transfer factors $\Delta[\mf{m},\mf{c}]$ considered in \cite[Page~221]{Ka} and the transfer factors considered \cite{CG} give rise to the same definition of matching functions when transferring between two quasi-split groups.

Therefore, the main difficulty when comparing the two conventions of transfer factors lies when considering transfer between a quasi-split group and a non quasi-split inner form. In particular, one needs to compare the transfer factors considered in \cite[Section~4.3]{CG} with the transfer factor $\Delta[\mf{m},\mf{c},b_{1}]$ mentioned above. This is determined up to a norm one element in $\mathbb{C}$, as mentioned in \cite[Section~4]{CG}. For Chan-Gan, fixing this normalization is accomplished by matching the Levi subgroups $D^{*} \times \GL_{1} \subset \GU_{2}(D)$ and $\GL_{2} \times \GL_{1} \subset \GSp_{4}$ under the inner twisting, and then fixing the normalization by setting the transfer factor to be equal to the Kottwitz sign $e(\D^{*} \times \GL_{1}) = -1$ on the pairs $(\gamma,\delta)$ of strongly regular elements, where $\delta \in (D^{*} \times \GL_{1})(L)$ is the norm of an element $\gamma \in (\GL_{2} \times \GL_{1})(L)$, where we reiterate that Chan-Gan incorporate the Weyl determinant into their orbital integrals. Then one chooses normalization of the transfer factors between $\GSp_{4}$ and $\GU_{2}(D)$ to be compatible with the one on the Levi, as defined in \cite[Section~4.3]{CG}. Similarly, when contemplating the transfer factors between $\GU_{2}(D)$ and $\GSO_{2,2}$ for the endoscopic case, Chan-Gan perform the same procedure where one matches up the Levi $D^{*} \times \GL_{1} \subset \GU_{2}(D)$ with either $\GL_{2} \times \GL_{1} \subset \GSO_{2,2}$  or $\GL_{1} \times \GL_{2} \subset \GSO_{2,2}$, since both of these match up with $D^{*} \times \GL_{1}$ after passing to dual groups and applying the map $i: \GSpin_{4} \ra \GSp_{4}$. After fixing the choice, we can normalize the transfer factors such that their value is equal to $-1$ when evaluated on the strongly regular elements in norm correspondence between $D^{*} \times \GL_{1}$ and the other choice of Levi. However, the two choices determine transfer factors that differ up to a sign, since they are interchanged under the unique outer automorphism of $\GSO_{2,2}$ which switches the two factors, as remarked in \cite[Section~4.4]{CG}. Then for the endoscopic character identities in \cite[Proposition~11.1 (2)]{CG} an ad hoc choice is made for which Levi to match it with at the beginning of \cite[Section~11]{CG}. On the other hand, as mentioned above Kaletha normalizes his choice by realizing $\GU_{2}(D)$ as an extended pure inner twist of $\GSp_{4}$, where one obtains an identity 
\[ \Delta[\mf{m},\mf{c},b_{1}](\gamma,\delta) = \Delta[\mf{m},\mf{c}](\gamma,\delta^{*})\langle \mathrm{inv}[b_{1}](\delta^{*},\delta), s \rangle. \]
Here $\gamma$ and $\delta$ are strongly regular elements of $H(L)$ and $\GU_{2}(L)$, respectively, and the terms $\delta^{*} \in \GSp_{4}(F)$ and $\langle \mathrm{inv}[b_{1}](\delta^{*},\delta), s \rangle$ are as constructed in the discussion around \cite[Equation~6]{Ka} (Here Kaletha is discussing pure inner twists as opposed to extended pure inner twists, but the constructions are almost identical). The element $\langle \mathrm{inv}[b_{1}](\delta^{*},\delta), s \rangle$ is a certain cocycle evaluated on the element $s$,  and, by virtue of $s$ being an element of order $2$ in the cases of interest, this is given by $\pm 1$. This sign can in general depend on $(\delta^{*},\delta)$, but if  $H = G$ in the endoscopic datum $\mf{c}$ then its value is determined by evaluating $\kappa(b_{1}) \in X^{*}(Z(\hat{G})^{\Gamma})$ on the element $s \in Z(\hat{G})^{\Gamma}$ (see \cite[Fact~3.2.3]{KW}). By studying the restriction of the construction of $\langle \mathrm{inv}[b_{1}](\delta^{*},\delta), s \rangle$ to Levi subgroups (where one can check that if $(\gamma,\delta)$ lie in Levis which transfer to each other then $\delta^{*}$ can also be chosen to lie in the associated Levi of $\GSp_{4}$) and using the previous observations, one can conclude that the transfer factors used in \cite{Ka} and \cite{CG} differ by a sign. More specifically, we can for example look at the inclusion of the Levi $M = \GL_{2} \times \GL_{1} \subset \GSp_{4}$ corresponding to the Siegel parabolic. This induces a map of Kottwitz sets $B(\GL_{2} \times \GL_{1}) \rightarrow B(\GSp_{4})$. The element $b_{1}$ admits a (necessarily unique) basic lift to an element $b_{1,M} \in B(M)_{\mathrm{basic}}$ with the property that $J_{b_{1,M}} \simeq D^{*} \times \GL_{1}$ (See \cite[Lemma~4.24]{GH}). This defines an inner twisting $(\xi_{M},b_{1,M}): \GL_{2} \times \GL_{1} \ra D^{*} \times \GL_{1}$, which will be compatible with the inner twisting by $(\xi,b_{1})$ in the obvious sense. If $\delta^{*} \in \GSp_{4}(F)$ is an element which lies in the Siegel Levi $\GL_{2}(F) \times \GL_{1}(F)$ then the element $\delta$ can also be chosen to lie in $D^{*} \times \GL_{1}$, and the value of $\langle \mathrm{inv}[z](\delta^{*},\delta), s \rangle$ in this case is equal to $\kappa(b_{1,M}) \in X^{*}(Z(\hat{M}))$ on the element $s \in Z(\hat{G}) \ra Z(\hat{M})$ as in the case where $H = G$, and again this will just be some sign since $s$ has order $2$. We then can match the sign of the transfer functors up with the ad-hoc choice fixed by Chan-Gan in the endoscopic case after restricting to Levis by evaluating both transfer factors on strongly regular elements $(\gamma,\delta)$ which are in norm correspondence between $\GL_{2} \times \GL_{1} \subset \GSO_{2,2}$ or $\GL_{2} \times \GL_{1} \subset \GSO_{2,2}$, depending on the rather inexplicit choice Chan-Gan make for the statement of \cite[Proposition~11.1 (2)]{CG} and then dividing out the Weyl discriminat term in Kaletha's normalization. Determining the precise sign however, would require carefully tracing through all the constructions.

The above argument shows the refined local Langlands correspondence of Kaletha holds for the group $G$; however, in order to describe the exact matching between $\rho_{+-}$ and $\rho_{-+}$ with $\tau_{1}$ and $\tau_{2}$ one needs to exactly compare the transfer factors. The above argument only shows that there is some matching; nonetheless, for our purposes the choice ends up being irrelevant, so we denote the representations in the $L$-packet $\Pi_{\phi}(J)$ corresponding to the projections $\tau_{1}$ and $\tau_{2}$ by $\rho_{1}$ and $\rho_{2}$, respectively. Similarly, for the representations obtained by pre-composing a character with the composition 
\[ S_{\phi} \rightarrow A_{\phi} \]
we denote the elements of the $L$-packet $\Pi_{\phi}(G)$ corresponding to the trivial (non-trivial) character of $A_{\phi}$ by $\pi^{+}$ (resp. $\pi^{-}$). We note that, by Theorem 2.1 (2), $\pi^{+}$ can be characterized by the unique $\mathfrak{m}$-generic representation of this $L$-packet. 
\begin{definition}
Given a discrete $L$-parameter $\phi$ as above and $\rho \in \Pi_{\phi}(J)$, we let $\tau_{\rho}$ be the irreducible representation of $S_{\phi}$ associated to it via the matching described above. Given $\pi \in \Pi_{\phi}(G)$ and $\rho \in \Pi_{\phi}(J)$, we set
\[ \delta_{\pi,\rho} := \tau_{\pi}^{\vee} \otimes \tau_{\rho}\]
where $\tau_{\pi}^{\vee}$ denotes the contragredient. 
\end{definition}
\begin{remark}
Changing the choice of Whittaker datum scales the representations by a $1$-dimensional character of $S_{\phi}$ that is trivial when restricted to the center, so in particular this pairing is independent of the choice of Whittaker datum (See \cite[Lemma~2.3.3]{KW}). 
\end{remark}
\section{The Fargues-Scholze Local Langlands Correspondence}
We will now discuss the Fargues-Scholze local Langlands correspondence and deduce compatibility in the cases where the Gan-Takeda/Gan-Tantono parameter is not supercuspidal. We will then conclude by reducing the question of compatibility in the supercuspidal case to Proposition 1.4.
\subsection{Overview of the Fargues-Scholze Local Langlands Correspondence}
For now, let $G$ be any connected reductive group over $\mathbb{Q}_{p}$. Since we are going to be using geometric Satake, we fix a choice of the square root of $p$ in $\overline{\mathbb{Q}}_{\ell}$, so that half Tate-twists are well-defined. For us, we will always take $i^{-1}(\sqrt{p})$, where $i: \ol{\mathbb{Q}}_{\ell} \xrightarrow{\simeq} \mathbb{C}$ is the fixed isomorphism. Fargues-Scholze \cite{FS} consider the moduli space of $G$-bundles on the Fargues-Fontaine curve $X$, denoted $\Bun_{G}$. This moduli space is an Artin $v$-stack (in the sense of \cite[Section~IV.I]{FS}) and has the structure that the underlying points of its topological space $|\Bun_{G}|$ are in natural bijection with elements of the Kottwitz set $B(G)$, where the slopes of the $G$-isocrystal associated to $b \in B(G)$ are the negatives of the slopes of the associated vector bundle $\mathcal{E}_{b}$. Moreover, the specializations between points of $|\Bun_{G}|$ ire dictated by the partial ordering on $B(G)$ induced by the kappa invariant and the slope homomorphism \cite{Vi}. In particular, the connected components of $\Bun_{G}$ are in bijection with $B(G)_{basic} \xrightarrow{\simeq} \pi_{1}(G)_{\Gamma}$. Specifically, for any $b \in B(G)_{basic}$, there is a unique open Harder-Narasimhan strata $\Bun_{G}^{b} \subset \Bun_{G}$ dense inside the associated connected component. We recall that the elements of $B(G)_{basic}$ parametrize extended pure inner forms of $G$, via sending an element $b \in B(G)_{basic}$ to its $\sigma$-centralizer $J_{b}/\mathbb{Q}_{p}$. For such a basic $b$, we have an identification $\Bun_{G}^{b} \simeq  [\ast/\underline{J_{b}(\mathbb{Q}_{p})}] =: B\underline{J_{b}(\mathbb{Q}_{p})}$ of the HN-strata defined by $b$ and the classifying stack of $J_{b}(\mathbb{Q}_{p})$. For any Artin $v$-stack $Z$, Fargues-Scholze define a triangulated category $\D_{\blacksquare}(Z,\overline{\mathbb{Q}}_{\ell})$ of solid $\overline{\mathbb{Q}}_{\ell}$-sheaves \cite[Section~VII.1]{FS} and isolate a nice full subcategory $\Dlis(Z,\overline{\mathbb{Q}}_{\ell}) \subset \D_{\blacksquare}(Z,\overline{\mathbb{Q}}_{\ell})$ of lisse-\'etale $\overline{\mathbb{Q}}_{\ell}$-sheaves \cite[Section~VII.6.]{FS}, which may be roughly (up to to the difference between the $\ell$-adic and discrete topology) thought of as the unbounded derived category of \'etale $\overline{\mathbb{Q}}_{\ell}$ sheaves on $Z$, where one has made an enlargement to capture information about the topology of $p$-adic groups. In any case, the key point for us is that we have the following basic result.
\begin{lemma}{\cite[Proposition~VII.7.1]{FS}}
There is an equivalence of categories
\[ \Dlis(B\underline{J_{b}(\mathbb{Q}_{p})}),\overline{\mathbb{Q}}_{\ell})\simeq \D(J_{b}(\mathbb{Q}_{p}),\overline{\mathbb{Q}}_{\ell}) \]
where the RHS denotes unbounded derived category of smooth $J_{b}(\mathbb{Q}_{p})$-representations with coefficients in $\overline{\mathbb{Q}}_{\ell}$. Under this equivalence, Verdier duality corresponds to smooth duality. 
\end{lemma}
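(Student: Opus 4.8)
The statement is \cite[Proposition~VII.7.1]{FS}, and here is the shape of the argument I would give. The plan is to first identify $\Dlis(B\underline{K},\overline{\mathbb{Q}}_{\ell})$ with the derived category of smooth representations for a compact open subgroup $K \subset J_{b}(\mathbb{Q}_{p})$, and then pass to $J_{b}(\mathbb{Q}_{p})$ itself by $v$-descent along the \'etale cover $B\underline{K} \to B\underline{J_{b}(\mathbb{Q}_{p})}$, keeping track of duality throughout.

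For the compact case, fix a cofinal system of open normal subgroups $K_{n} \trianglelefteq K$, so that $\underline{K} = \lim_{n}\underline{K/K_{n}}$ and $B\underline{K} = \lim_{n} B\underline{K/K_{n}}$. Since $\overline{\mathbb{Q}}_{\ell}$ has characteristic zero, the group cohomology of each finite quotient $K/K_{n}$ vanishes in positive degrees, so $R\Gamma(B\underline{K/K_{n}},\overline{\mathbb{Q}}_{\ell}) = \overline{\mathbb{Q}}_{\ell}$ and $\Dlis(B\underline{K/K_{n}},\overline{\mathbb{Q}}_{\ell}) \simeq \D(\Rep_{\overline{\mathbb{Q}}_{\ell}}(K/K_{n}))$. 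The real work is to transport this to the limit inside the solid formalism of \cite[Section~VII]{FS}: pullback along $B\underline{K}\to \ast$ realizes $\Dlis(B\underline{K},\overline{\mathbb{Q}}_{\ell})$ as the derived category of $\overline{\mathbb{Q}}_{\ell}$-modules equipped with a continuous $\underline{K}$-action, and — because $\ell$ is invertible and everything in sight is assembled out of discrete and profinite pieces — such a continuous action is automatically smooth, yielding $\Dlis(B\underline{K},\overline{\mathbb{Q}}_{\ell}) \simeq \D(K,\overline{\mathbb{Q}}_{\ell})$. The objects $\overline{\mathbb{Q}}_{\ell}[K/K_{n}]$ compactly generate, which is what keeps the unbounded derived categories under control. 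I expect this step to be the main obstacle: one must know that $v$-descent genuinely holds for the \emph{lisse} subcategory $\Dlis$ (not merely for $\D_{\blacksquare}$), and that passing the finite-level equivalences to the limit produces the full category of smooth representations rather than a completion or the category of all continuous representations — this is precisely where the nuclearity and solidity bookkeeping of \cite[Section~VII]{FS} does its work.

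Granting the compact case, I would descend as follows. The map $\iota\colon B\underline{K}\to B\underline{J_{b}(\mathbb{Q}_{p})}$ is a separated \'etale surjection whose geometric fibre $\underline{J_{b}(\mathbb{Q}_{p})/K}$ is a disjoint union of points indexed by the countable discrete set $J_{b}(\mathbb{Q}_{p})/K$; in particular $\iota$ is cohomologically smooth of relative dimension $0$. Since $\Dlis$ satisfies $v$-descent, $\Dlis(B\underline{J_{b}(\mathbb{Q}_{p})},\overline{\mathbb{Q}}_{\ell})$ is the totalization of the cosimplicial category attached to the \v{C}ech nerve of $\iota$, whose $n$-fold terms are stacky quotients of finite products of copies of $\underline{J_{b}(\mathbb{Q}_{p})/K}$. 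Unwinding this recovers exactly the standard assertion that smooth representations form a stack for the topology generated by open subgroups — a smooth $J_{b}(\mathbb{Q}_{p})$-representation is a smooth $K$-representation together with compatible translation isomorphisms — and hence $\Dlis(B\underline{J_{b}(\mathbb{Q}_{p})},\overline{\mathbb{Q}}_{\ell}) \simeq \D(J_{b}(\mathbb{Q}_{p}),\overline{\mathbb{Q}}_{\ell})$, compatibly with restriction along $\iota$.

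It remains to match the two duality functors. Because $J_{b}(\mathbb{Q}_{p})$ is locally profinite, $B\underline{J_{b}(\mathbb{Q}_{p})}$ is cohomologically smooth of dimension $0$ over $\ast$, so its dualizing complex is the constant sheaf $\overline{\mathbb{Q}}_{\ell}$ and Verdier duality is simply $\mathcal{F}\mapsto R\SheafHom(\mathcal{F},\overline{\mathbb{Q}}_{\ell})$. On each $B\underline{K/K_{n}}$ this is the contragredient duality on $\Rep(K/K_{n})$; chasing it through the compact case and the descent (using that $R\SheafHom(-,\overline{\mathbb{Q}}_{\ell})$ commutes with the relevant filtered colimits on compact objects) identifies it with the derived smooth duality functor $R\mathcal{H}om_{J_{b}(\mathbb{Q}_{p})}(-,\overline{\mathbb{Q}}_{\ell})$, i.e.\ the derived functor of $V\mapsto (V^{\ast})^{\mathrm{sm}}$ of the Conventions. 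Once the compact case is in hand, the descent and duality steps are formal.
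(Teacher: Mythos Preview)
The paper does not give its own proof of this lemma; it simply cites \cite[Proposition~VII.7.1]{FS} and moves on, with Remark~3.1 explaining only why the enlargement to $\Dlis$ is necessary. So there is no argument in the paper to compare against, and your outline is a plausible reconstruction of the kind of argument one finds in Fargues--Scholze.

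Two technical points in your sketch deserve tightening. First, you invoke ``$\Dlis$ satisfies $v$-descent'' as a blanket fact. Full $v$-descent is established for $\D_{\blacksquare}$; for the lisse subcategory one has to check that the descended object lands back in $\Dlis$. For the specific cover $B\underline{K}\to B\underline{J_{b}(\mathbb{Q}_{p})}$, which is separated \'etale with discrete fibre $J_{b}(\mathbb{Q}_{p})/K$, this is unproblematic, but you should phrase it as descent along this particular map rather than appeal to a general principle. Second, your claim that the dualizing complex on $B\underline{J_{b}(\mathbb{Q}_{p})}$ is the constant sheaf $\overline{\mathbb{Q}}_{\ell}$ tacitly uses that $J_{b}$ is unimodular: in general the dualizing complex is the Haar measure sheaf, which is (noncanonically) trivial precisely when the group is unimodular. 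Since $J_{b}$ is reductive this holds, but it is worth saying, as otherwise smooth duality would acquire a twist by the modulus character and the statement as written would be false.
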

\begin{remark}
The main reason for constructing this category $\Dlis$ is that, if one were to take the usual definition for the category of \'etale $\overline{\mathbb{Q}}_{\ell}$-sheaves on $B\underline{J_{b}(\mathbb{Q}_{p})}$, this equivalence would no longer be true. In particular, one would obtain the bounded derived category of representations of $J_{b}(\mathbb{Q}_{p})$ admitting a $J_{b}(\mathbb{Q}_{p})$-stable $\overline{\mathbb{Z}}_{\ell}$-lattice, where the representation is continuous with respect to the $\ell$-adic topology on $\ol{\bb{Z}}_{\ell}$. This would limit the scope of the Fargues-Scholze LLC as, in general, one wants to consider the full category of smooth $\overline{\mathbb{Q}}_{\ell}$-representations of $J_{b}(\mathbb{Q}_{p})$, and hence the need for the enlargement of the derived category to $\Dlis$.   
\end{remark}
Lemma 3.1 tells us that, given an irreducible smooth representation $\pi$ of $G(\mathbb{Q}_{p})$, we can consider the associated sheaf, denoted $\mathcal{F}_{\pi}$, on $\Bun_{G}^{\mathbf{1}}$ the open HN-strata corresponding to the trivial element $\mathbf{1} \in B(G)$, and take the extension by zero along the open inclusion $j_{!}(\mathcal{F}_{\pi})$\footnote{The shriek push-forward is not in general well-defined in the context of solid $\ol{\mathbb{Q}}_{\ell}$-sheaves. However, for the inclusion of HN-strata into $\Bun_{G}$, its existence follows from \cite[Proposition~VII.7.3]{FS}, using \cite[Proposition~VII.6.7]{FS}.}. This realizes the representation $\pi$ in terms of a sheaf on the moduli space $\Bun_{G}$ in an analogous way to how the function-sheaf dictionary realizes cuspidal automorphic forms as functions associated to sheaves in the context of curves over finite fields. Following V. Lafforgue \cite{VL}, Fargues and Scholze construct a semisimple L-parameter associated to this sheaf by looking at the action of the excursion algebra on this category $\Dlis(\Bun_{G},\overline{\mathbb{Q}}_{\ell})$. This relies on a form of the geometric Satake correspondence for the $B_{dR}^{+}$-affine Grassmannians. For any finite set $I$, let $X^{I}$ be the product of $I$-copies of the diamond $X = \Spd(\Breve{\mathbb{Q}}_{p})/\Frob^{\mathbb{Z}}$. We then have the global Hecke stack
\[ \begin{tikzcd}
& & \arrow[dl,"h^{\leftarrow}"] \Hck \arrow[dr,"h^{\rightarrow} \times supp"] & & \\
& \Bun_{G} & & \Bun_{G} \times X^{I}  & 
\end{tikzcd} \]
defined as the functor that parametrizes, for $S$ a perfectoid space in characteristic $p$ together with a map $S \rightarrow X^{I}$ defining a tuple of Cartier divisors in the relative Fargues-Fontaine $X_{S}$ over $S$, corresponding to characteristic $0$ untilts $S_{i}^{\sharp}$ for $i \in I$ of $S$, a pair of $G$-torsors $\mathcal{E}_{1}$, $\mathcal{E}_{2}$ together with an isomorphism
\[ \beta:\mathcal{E}_{1}|_{X_{S} \setminus \bigcup_{i \in I} S_{i}^{\sharp}} \xrightarrow{\simeq} \mathcal{E}_{2}|_{X_{S} \setminus \bigcup_{i \in I} S_{i}^{\sharp}}\]
meromorphic along the $S_{i}^{\sharp}$, where $h^{\leftarrow}((\mathcal{E}_{1},\mathcal{E}_{2},i,(S_{i}^{\sharp})_{i \in I})) = \mathcal{E}_{1}$ and $h^{\rightarrow} \times supp((\mathcal{E}_{1},\mathcal{E}_{2},\beta,(S_{i}^{\sharp})_{i \in I})) = (\mathcal{E}_{2},(S_{i}^{\sharp})_{i \in I})$, as in \cite[Page~320]{FS}. We set $\phantom{}^{L}G^{I}$ to be $I$-copies of the Langlands dual group of $G$, i.e $^{L}G = Q \ltimes \hat{G}(\overline{\mathbb{Q}}_{\ell})$, where $\hat{G}$ is the reductive group having dual root datum to $G$ and is viewed as a reductive group over $\overline{\mathbb{Q}}_{\ell}$. The Weil group acts on $\hat{G}$ via the induced action on root datum through some finite quotient $Q$, which we now fix. Let $\Rep_{\overline{\mathbb{Q}}_{\ell}}(^{L}G^{I})$ denote the category of algebraic $\overline{\mathbb{Q}}_{\ell}$-representations of $I$-copies of $^{L}G$. For each element $W \in \Rep_{\overline{\mathbb{Q}}_{\ell}}(^{L}G^{I})$, the geometric Satake correspondence of Fargues-Scholze \cite[Chapter~VI]{FS} furnishes a solid $\overline{\mathbb{Q}}_{\ell}$-sheaf $\mathcal{S}_{W}$ on the global Hecke $\Hck$ (\cite[Page~319]{FS}). This is given by pulling back along the natural map $\Hck \ra \mathcal{H}\mathrm{ck}$ from the global Hecke stack to the local one, defined by restricting a modification to a formal neighborhood of the Cartier divisor (as in \cite[Page~16]{FS}), where the Satake category is by definition an abelian subcategory of all sheaves on $\mathcal{H}\mathrm{ck}_{G}$. This sheaf on the global Hecke stack allows us to define Hecke operators.
\begin{definition}
For each $W \in \Rep_{\overline{\mathbb{Q}}_{\ell}}(^{L}G^{I})$, we define the Hecke operator
\[ T_{W}: \Dlis(\Bun_{G},\overline{\mathbb{Q}}_{\ell}) \rightarrow \D_{\blacksquare}(\Bun_{G} \times X^{I}) \]
\[ A \mapsto R(h^{\rightarrow} \times supp)_{\natural}(h^{\leftarrow *}(A) \otimes^{\mathbb{L}} \mathcal{S}_{W})\]
where $\mathcal{S}_{W}$ is a solid $\overline{\mathbb{Q}}_{\ell}$-sheaf and the functor $R(h^{\rightarrow} \times supp)_{\natural}$ is the natural push-forward. I.e the left adjoint to the restriction functor in the category of solid $\overline{\mathbb{Q}}_{\ell}$-sheaves \cite[Proposition~VII.3.1]{FS}. 
\end{definition}
\begin{remark}
These satisfy various compatibilities with respect to composition and restriction to the diagonal. In particular, given two representations $V,W \in \Rep_{\overline{\mathbb{Q}}_{\ell}}(\phantom{}^{L}G)$, we have that
\[ (T_{V} \times id)(T_{W})(\cdot)|_{\Delta} \simeq T_{V \otimes W}(\cdot) \]
where $\Delta: X \rightarrow X^{2}$ is the diagonal map. 
\end{remark}
We then consider $\Dlis(\Bun_{G},\overline{\mathbb{Q}}_{\ell})^{BW_{\mathbb{Q}_{p}}^{I}}$, the category of objects in $\Dlis(\Bun_{G},\overline{\mathbb{Q}}_{\ell})$ with continuous action by $W_{\mathbb{Q}_{p}}^{I}$. Examples of objects in this category are objects of $\Dlis(\Bun_{G},\overline{\mathbb{Q}}_{\ell})$ tensored by a continuous representation of $W_{\mathbb{Q}_{p}}^{I}$, for a more precise description see \cite[Section~IX.1]{FS}.  With this in hand, we then have the following theorem of Fargues-Scholze.
\begin{theorem}{\cite[Theorem~I.7.2,Proposition~IX.2.1,Corollary~IX.2.3]{FS}}
The Hecke operator $T_{W}$ for $W \in \Rep_{\overline{\mathbb{Q}}_{\ell}}(\phantom{}^{L}G^{I})$   
\[ T_{W}: \Dlis(\Bun_{G},\overline{\mathbb{Q}}_{\ell}) \rightarrow \D_{\blacksquare}(\Bun_{G} \times X^{I}) \] 
induces a functor 
\[ \Dlis(\Bun_{G},\overline{\mathbb{Q}}_{\ell}) \rightarrow \Dlis(\Bun_{G},\overline{\mathbb{Q}}_{\ell})^{BW_{\mathbb{Q}_{p}}^{I}} \]
and the induced endofunctors of $\Dlis(\Bun_{G},\overline{\mathbb{Q}}_{\ell})$ given by forgetting the Weil group action preserve compact and ULA objects.
\end{theorem}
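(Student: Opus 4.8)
The plan is to prove the three assertions in turn: that for $A \in \Dlis(\Bun_G,\overline{\mathbb{Q}}_{\ell})$ the object $T_W(A)$ is, in the $X^I$-direction, ``locally constant with Weil descent datum'', so that $T_W$ factors through $\Dlis(\Bun_G,\overline{\mathbb{Q}}_{\ell})^{BW_{\mathbb{Q}_{p}}^{I}}$; and that the resulting endofunctors $\underline{T_W}$ of $\Dlis(\Bun_G,\overline{\mathbb{Q}}_{\ell})$ (forgetting the $W_{\mathbb{Q}_{p}}^{I}$-action) preserve compact objects and ULA objects. The starting point is geometric Satake \cite[Chapter~VI]{FS}: the sheaf $\mathcal{S}_{W}$ is supported on a bounded, quasi-compact closed substack $\Hck_{W} \subset \Hck$ over which both legs $h^{\leftarrow}\times supp$ and $h^{\rightarrow}\times supp$ are proper and representable in spatial diamonds — since the relevant Schubert varieties in the $B_{dR}^{+}$-affine Grassmannian are proper over $X^{I}$ — and $\mathcal{S}_{W}$ is ULA relative to $X^{I}$. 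Because $\mathcal{S}_{W}$ is supported on the proper locus $\Hck_{W}$, the natural pushforward $R(h^{\rightarrow}\times supp)_{\natural}$ coincides there with $R(h^{\rightarrow}\times supp)_{*}$ \cite[Proposition~VII.3.1]{FS}, so one may apply the usual six-functor compatibilities.

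Given $A \in \Dlis(\Bun_G,\overline{\mathbb{Q}}_{\ell})$, the pullback $h^{\leftarrow *}(A)$ is lisse on $\Hck$, hence $h^{\leftarrow *}(A) \otimes^{\mathbb{L}} \mathcal{S}_{W}$ is again ULA relative to $X^{I}$ on $\Hck_{W}$; proper pushforward preserves relative ULA-ness, so $T_W(A) = R(h^{\rightarrow}\times supp)_{*}(h^{\leftarrow *}(A) \otimes^{\mathbb{L}} \mathcal{S}_{W})$ is ULA relative to the projection $\Bun_G \times X^{I} \to X^{I}$, and one checks (using lisse-ness of $A$ and the geometry of the correspondence) that it is moreover lisse in the $\Bun_G$-direction. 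Now invoke the descent equivalence: lisse $\overline{\mathbb{Q}}_{\ell}$-sheaves on $X = \Spd(\breve{\mathbb{Q}}_{p})/\Frob^{\mathbb{Z}} = \Div^{1}$ are equivalent to continuous $\overline{\mathbb{Q}}_{\ell}$-representations of $W_{\mathbb{Q}_{p}}$ (the Weil group appears precisely because of the $\Frob^{\mathbb{Z}}$-quotient), and the relative version over $\Bun_G$ identifies objects of $\Dlis(\Bun_G\times X^{I},\overline{\mathbb{Q}}_{\ell})$ that are lisse over $\Bun_G$ and ULA over $X^{I}$ with objects of $\Dlis(\Bun_G,\overline{\mathbb{Q}}_{\ell})$ carrying a continuous $W_{\mathbb{Q}_{p}}^{I}$-action. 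This yields the factorization $T_W : \Dlis(\Bun_G,\overline{\mathbb{Q}}_{\ell}) \to \Dlis(\Bun_G,\overline{\mathbb{Q}}_{\ell})^{BW_{\mathbb{Q}_{p}}^{I}}$, and $\underline{T_W}(A)$ is recovered by restricting $T_W(A)$ along a geometric point $\Spd(C) \to X^{I}$.

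For preservation of compact objects, the key point is that $T_W$ admits a right adjoint which, up to a Tate twist and the $W_{\mathbb{Q}_{p}}^{I}$-action, is again a Hecke operator $T_{W^{\vee}}$: this follows from Verdier self-duality of the proper correspondence $\Hck_{W}$ over $\Bun_G \times X^{I}$ together with the duality $\mathcal{S}_{W}^{\vee} \simeq \mathcal{S}_{W^{\vee}}$ (up to twist) furnished by geometric Satake. Both $T_W$ and $T_{W^{\vee}}$, and likewise $\underline{T_W}$ and $\underline{T_{W^{\vee}}}$, are composites of colimit-preserving functors — the pullback $h^{\leftarrow *}$, tensoring with the dualizable $\mathcal{S}_{W}$, the left adjoint $R(h^{\rightarrow}\times supp)_{\natural}$, the descent equivalence, and the forgetful functor — hence preserve all colimits; a left adjoint whose right adjoint preserves colimits preserves compact objects. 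For preservation of ULA objects, if $A$ is ULA (over the point) then $h^{\leftarrow *}(A)$ is ULA over the point on $\Hck_{W}$, so $h^{\leftarrow *}(A) \otimes^{\mathbb{L}} \mathcal{S}_{W}$ is ULA over $X^{I}$, whence as above $T_W(A)$ is ULA relative to $\Bun_G \times X^{I} \to X^{I}$; since restriction along the geometric point $\Spd(C) \to X^{I}$ preserves relative ULA-ness (and, after descent, ULA-ness over the relevant absolute base), the underlying object $\underline{T_W}(A)$ is ULA.

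The main obstacle is the relative ULA statement for Satake sheaves in the mixed-characteristic $v$-stack setting and its propagation through the Hecke correspondence: one needs that $\mathcal{S}_{W}$ is ULA relative to $X^{I}$ (the substance of the geometric Satake equivalence of \cite[Chapter~VI]{FS}), that $h^{\leftarrow *}$ of a lisse sheaf is lisse and tensoring preserves relative ULA-ness, that $R(h^{\rightarrow}\times supp)_{\natural} \simeq R(h^{\rightarrow}\times supp)_{*}$ on the proper support and proper pushforward preserves relative ULA-ness \cite[Chapter~VII]{FS}, and that the ``ULA over $X^{I}$ $\Leftrightarrow$ lisse on $\Bun_G$ with continuous $W_{\mathbb{Q}_{p}}^{I}$-action'' descent equivalence holds \cite[Chapter~IX]{FS}; together with the duality properties of $\mathcal{S}_{W}$ needed for the adjointness argument. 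Once these inputs are granted, the bookkeeping above is formal.
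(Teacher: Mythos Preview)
The paper does not give its own proof of this statement: it is quoted verbatim as a black box from \cite[Theorem~I.7.2, Proposition~IX.2.1, Corollary~IX.2.3]{FS}, with only the remark that it should be viewed as an avatar of Drinfeld's Lemma (the \'etale fundamental group of $\Spd(\breve{\mathbb{Q}}_{p})/\Frob^{\mathbb{Z}}$ being $W_{\mathbb{Q}_{p}}$). There is therefore nothing in the paper to compare your argument against.

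That said, your outline is a fair summary of the Fargues--Scholze argument: the ULA property of the Satake sheaves over $X^{I}$, properness of the bounded Hecke stack over both legs so that $\natural$-pushforward agrees with $*$-pushforward, preservation of relative ULA under proper pushforward, and the identification of lisse sheaves on $\Bun_{G} \times X^{I}$ that are ULA over $X^{I}$ with $W_{\mathbb{Q}_{p}}^{I}$-equivariant objects of $\Dlis(\Bun_{G},\overline{\mathbb{Q}}_{\ell})$. The adjunction argument for compactness via $T_{W^{\vee}}$ is also correct in spirit. One small caution: the statement that $T_{W}(A)$ is ``lisse in the $\Bun_{G}$-direction'' is not automatic from the ingredients you list and is part of what \cite[Proposition~IX.2.1]{FS} establishes; you gesture at it but do not really argue it. For the purposes of this paper, however, the theorem is simply imported from \cite{FS} and no proof is expected.
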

\begin{remark}
This should be thought of as a manifestation of Drinfeld's Lemma, where (roughly) the \'etale fundamental group of $\Spd(\breve{\mathbb{Q}}_{p})/\Frob^{\mathbb{Z}} = X$ should be the same as $W_{\mathbb{Q}_{p}}$. 
\end{remark}
From now on, when talking about Hecke operators we shall always refer to this induced functor, which we will also abusively denote by $T_{W}$. Theorem 3.2 has direct implications for the cohomology of local Shimura varieties. To study this, consider a minuscule  cocharacter $\mu$ with field of definition $E$, and let $b \in B(G,\mu)$ be the unique basic element in the $\mu$-admissible locus (See \cite[Definition~2.3]{RV}). We say that the triple $(G,b,\mu)$ defines a local Shimura datum in the sense of Rapoport-Viehmann \cite{RV}. Attached to such a data, Scholze-Weinstein \cite{SW2} construct a tower of diamonds
\[ p_{K}: (\Sht(G,b,\mu)_{K})_{K \subset G(\mathbb{Q}_{p})} \rightarrow \Spd(\Breve{E})\]
for varying open compact $K \subset G(\mathbb{Q}_{p})$. This is obtained by considering the space $\Sht(G,b,\mu)_{\infty}$ which parametrizes modifications $ \mathcal{E}_{b} \rightarrow \mathcal{E}_{0}$ with meromorphy bounded by $\mu$, where $\mathcal{E}_{b}$ (resp. $\mathcal{E}_{0}$) is the bundle corresponding to $b \in B(G)$ (resp. the trivial bundle) on the Fargues-Fontaine curve. It has commuting actions by $G(\mathbb{Q}_{p})$ and $J_{b}(\mathbb{Q}_{p})$ given by acting via automorphisms on $\mathcal{E}_{0}$ and $\mathcal{E}_{b}$, respectively. The tower is then given by considering the quotients of this space for varying open compact $K \subset G(\mathbb{Q}_{p})$ under the action of $G(\mathbb{Q}_{p})$. 
\begin{definition}
Let $\Sht(G,b,\mu)_{K,\mathbb{C}_{p}}$ be the base-change of the above tower to $\mathbb{C}_{p}$. We define the complex 
\[ R\Gamma_{c}(G,b,\mu) := \colim_{K \rightarrow \{1\}} R\Gamma_{c}(\Sht(G,b,\mu)_{K,\mathbb{C}_{p}},\mathbb{Z}_{\ell}) \]
a colimit of $W_{E} \times J_{b}(\mathbb{Q}_{p})$-modules with a $G(\mathbb{Q}_{p})$-action, where $W_{E}$ is the Weil group of $E$. The complex $R\Gamma_{c}(\Sht(G,b,\mu)_{K,\mathbb{C}_{p}},\mathbb{Z}_{\ell})$ is naturally a smooth $J_{b}(\mathbb{Q}_{p})$-representation with a continuous action of $W_{E}$, by \cite[Theorem~IX.3.1]{FS} (cf. \cite[Proposition~2.3]{Han}). A priori it only has an action by the inertia group, but this space admits a non-effective Frobenius descent datum. We then define, for $\rho$ a smooth admissible $J_{b}(\mathbb{Q}_{p})$-representation, the complex 
\[ R\Gamma_{c}(G,b,\mu)[\rho] := \colim_{K \ra \{1\}} R\Gamma_{c}(G,b,\mu)_{K} \otimes^{\mathbb{L}}_{\mathcal{H}(J_{b})} \rho, \]
where $\mathcal{H}(J_{b})$ is the usual smooth Hecke algebra. We also define 
\[ R\Gamma_{c}^{\flat}(G,b,\mu)[\rho] := \colim_{K \ra \{1\}} R\mathcal{H}om_{J_{b}(\mathbb{Q}_{p})}(R\Gamma_{c}(G,b,\mu)_{K},\rho)[-2d](-d). \]
Similarly, for $\pi$ a smooth admissible $G(\mathbb{Q}_{p})$-representation, we define $R\Gamma_{c}(G,b,\mu)[\pi]$ and $R\Gamma_{c}^{\flat}(G,b,\mu)[\pi]$. 
\end{definition}
\begin{remark}
 We note that, by Hom-Tensor duality, $R\mathcal{H}om(R\Gamma_{c}(G,b,\mu)[\rho],\overline{\mathbb{Q}}_{\ell})[-2d](-d)$ is isomorphic to $R\Gamma_{c}^{\flat}(G,b,\mu)[\rho^{*}]$, where $\rho^{*}$ is the contragredient. We will end up using both of these cohomology groups throughout this manuscript. The former is more natural from the point of view of basic uniformization, while the latter is disposable to the results of Hansen-Kaletha-Weinstein \cite{KW} on the Kottwitz conjecture.
\end{remark}
To study these complexes, we specialize the above discussion of Hecke operators to the case where $W = V_{\mu^{-1}}$ is specified by the highest weight representation of highest weight $\mu^{-1}$ a dominant inverse of $\mu$ and $I = \{\ast\}$ is a singleton. The sheaf $\mathcal{S}_{W}$ will then be supported on the closed subspace $\Hck_{\leq \mu^{-1}} = \Hck_{\mu^{-1}}$ of $\Hck$, parametrizing modifications with meromorphy bounded by or equal to $\mu^{-1}$, where the equality follows by the minuscule assumption. The space $\Hck_{\mu^{-1}}$ is cohomologically smooth of dimension $d := \langle 2\rho_{G},\mu \rangle$, and the sheaf $\mathcal{S}_{W}$, as in the geometric Satake correspondence of \cite{MV}, behaves like the intersection cohomology of this space, so we have $\mathcal{S}_{W} \simeq \overline{\mathbb{Q}}_{\ell}[d](\frac{d}{2})$. This implies that, to study the action of the Hecke operator $T_{W}$ on $\Bun_{G}$, we can look at the restriction of the diagram defining the Hecke correspondence to this subspace
\[ \begin{tikzcd}
& & \arrow[dl,"h_{\mu^{-1}}^{\leftarrow}"] \Hck_{\mu^{-1}} \arrow[dr,"h_{\mu^{-1}}^{\rightarrow} \times supp"] & & \\
& \Bun_{G} & & \Bun_{G} \times \Spd(\Breve{E})/\Frob^{\mathbb{Z}}  & 
\end{tikzcd} \]
In particular, we have an isomorphism:
\[ T_{\mu^{-1}}(A) := T_{W}(A) \simeq R(h_{\mu^{-1}}^{\rightarrow} \times supp)_{\natural}(h_{\mu^{-1}}^{\leftarrow *}(A))[d](\frac{d}{2}). \]
Now consider a smooth admissible representation $\pi$ of $G(\mathbb{Q}_{p})$ and apply the Hecke operator to the sheaf:
\[ j_{!}(\mathcal{F}_{\pi}). \]
Then the fiber of $\Hck_{\mu^{-1}}$ of $h_{\mu^{-1}}^{\leftarrow}$ over $\Bun_{G}^{\mathbf{1}}$ is identified with
\[ [Gr_{G,\mu^{-1}}/\underline{G(\mathbb{Q}_{p})}] \]
the Schubert cell/variety associated to $\mu^{-1}$ in the $B_{dR}^{+}$-affine Grassmannian, quotiented out by $G(\mathbb{Q}_{p})$ acting on the trivial bundle via automorphisms. The sheaf
\[ T_{\mu^{-1}}j_{!}(\mathcal{F}_{\pi}) \]
is then supported on the HN-strata given by the Kottwitz elements in $B(G,\mu)$ since, by \cite[Proposition~A.9]{R}, any $G$-bundle occurring as a modification of type $\mu^{-1}$ of the trivial bundle has associated Kottwitz element lying in this set. We then consider the restriction
\[ j_{b}^{*}T_{\mu^{-1}}j_{!}(\mathcal{F}_{\pi}) \in \D(J_{b}(\mathbb{Q}_{p}),\overline{\mathbb{Q}}_{\ell})^{BW_{E}} \]
where $j_{b}: \Bun_{G}^{b} \hookrightarrow \Bun_{G}$ is the inclusion of the open HN-strata defined by $b$. The complex $j_{b}^{*}T_{\mu^{-1}}j_{!}(\mathcal{F}_{\pi})$ will be computed in terms of the cohomology of sheaves supported on the Newton strata
\[ [Gr_{G,\mu^{-1}}^{b}/\underline{G(\mathbb{Q}_{p})}] \]
parametrizing modifications of type $\mu^{-1}$ of the trivial bundle such that the resulting bundle has associated Kottwitz element of type $b$ after pulling back to each geometric point, modulo automorphisms of the trivial bundle. The space $\Sht(G,b,\mu)_{\infty}$ defined above is a pro-\'etale $\underline{J_{b}(\mathbb{Q}_{p})}$-torsor with respect to the $J_{b}(\mathbb{Q}_{p})$-action by automorphisms of $\mathcal{E}_{b}$
\[ \Sht(G,b,\mu)_{\infty} \rightarrow Gr_{G,\mu^{-1}}^{b} \]
over this Newton strata. Using this description of the infinite level Shimura variety, it then follows from base change (See \cite[Chapter~IX.3]{FS} for details) that we have an isomorphism 
\[ R\Gamma_{c}(G,b,\mu)[\pi][d](\frac{d}{2}) \simeq  j_{b} ^{*}T_{\mu^{-1}}j_{!}(\mathcal{F}_{\pi}) \in \D(J_{b}(\mathbb{Q}_{p}),\overline{\mathbb{Q}}_{\ell})^{BW_{E}}  \]
of $J_{b}(\mathbb{Q}_{p}) \times W_{E}$-modules. For our purposes, it will be also useful to have a description of the $\rho$-isotypic part of this cohomology in terms of Hecke operators, for $\rho$ a smooth irreducible representation of $J_{b}(\mathbb{Q}_{p})$. In particular, analysis similar to the above gives us an isomorphism
\[ R\Gamma_{c}(G,b,\mu)[\rho][d](\frac{d}{2}) \simeq j_{\mathbf{1}}^{*}T_{\mu}j_{b!}(\mathcal{F}_{\rho}) \]
as $G(\mathbb{Q}_{p}) \times W_{E}$-modules. We record these two isomorphisms as a corollary of the above discussion. 
\begin{corollary}{\label{cor: isotypcpartcalculation}}
Given a local Shimura datum $(G,b,\mu)$ as above and $\pi$ (resp. $\rho$) a smooth irreducible representation of $G(\mathbb{Q}_{p})$ (resp. $J_{b}(\mathbb{Q}_{p})$). There exists an isomorphism
\[ R\Gamma_{c}(G,b,\mu)[\rho][d](\frac{d}{2}) \simeq  j_{\mathbf{1}}^{*}T_{\mu}j_{b!}(\mathcal{F}_{\rho}) \]
of complexes of $G(\mathbb{Q}_{p}) \times W_{E}$-modules and an isomorphism
\[ R\Gamma_{c}(G,b,\mu)[\pi][d](\frac{d}{2}) \simeq j_{b}^{*}T_{\mu^{-1}}j_{\mathbf{1}!}(\mathcal{F}_{\pi}) \]
of complexes of $J_{b}(\mathbb{Q}_{p}) \times W_{E}$-modules. 
\end{corollary}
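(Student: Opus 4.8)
The plan is to recognize that the Corollary merely assembles two isomorphisms, the first of which, $R\Gamma_{c}(G,b,\mu)[\pi][d](\frac{d}{2}) \simeq j_{b}^{*}T_{\mu^{-1}}j_{\mathbf{1}!}(\mathcal{F}_{\pi})$, has already been extracted in the discussion preceding the statement, and the second of which, $R\Gamma_{c}(G,b,\mu)[\rho][d](\frac{d}{2}) \simeq j_{\mathbf{1}}^{*}T_{\mu}j_{b!}(\mathcal{F}_{\rho})$, is obtained by running the symmetric argument; so the task is to make that symmetry precise. Recall the mechanism behind the $[\pi]$-isomorphism: one applies $T_{\mu^{-1}}$ (with $I = \{\ast\}$ and $W = V_{\mu^{-1}}$) to $j_{\mathbf{1}!}(\mathcal{F}_{\pi})$ and restricts along $j_{b}^{*}$. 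Since $\mu$ is minuscule, $\mathcal{S}_{W} \simeq \overline{\mathbb{Q}}_{\ell}[d](\frac{d}{2})$ on the cohomologically smooth $\Hck_{\mu^{-1}}$, which produces the shift and Tate twist on the left-hand side; the fiber of $h_{\mu^{-1}}^{\leftarrow}$ over $\Bun_{G}^{\mathbf{1}}$ is $[Gr_{G,\mu^{-1}}/\underline{G(\mathbb{Q}_{p})}]$, by \cite[Proposition~A.9]{R} the support of $T_{\mu^{-1}}j_{\mathbf{1}!}(\mathcal{F}_{\pi})$ meets $\Bun_{G}^{b}$ in the Newton stratum $[Gr_{G,\mu^{-1}}^{b}/\underline{G(\mathbb{Q}_{p})}]$, and $\Sht(G,b,\mu)_{\infty} \rightarrow Gr_{G,\mu^{-1}}^{b}$ is a pro-\'etale $\underline{J_{b}(\mathbb{Q}_{p})}$-torsor over this stratum. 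Feeding this through base change for the push-forward $R(h_{\mu^{-1}}^{\rightarrow} \times supp)_{\natural}$, using that the Satake sheaves are ULA over $X$ — so that Drinfeld's lemma upgrades the a priori action by inertia on the cohomology over $\mathbb{C}_{p}$ to the asserted $W_{E}$-action — and converting sheaves on $\Bun_{G}^{b}$ into smooth $J_{b}(\mathbb{Q}_{p})$-representations via Lemma 3.1, yields precisely the $[\pi]$-isomorphism as $J_{b}(\mathbb{Q}_{p}) \times W_{E}$-modules.

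For the $[\rho]$-isomorphism I would carry out the mirror argument with the roles of the trivial bundle $\mathbf{1}$ and $\mathcal{E}_{b}$ exchanged: apply $T_{\mu}$ (with $W = V_{\mu}$) to $j_{b!}(\mathcal{F}_{\rho})$ and restrict along $j_{\mathbf{1}}^{*}$. Here $\mathcal{S}_{V_{\mu}} \simeq \overline{\mathbb{Q}}_{\ell}[d](\frac{d}{2})$ is supported on $\Hck_{\leq \mu} = \Hck_{\mu}$, the fiber of $h_{\mu}^{\leftarrow}$ over $\Bun_{G}^{b}$ is the appropriate Schubert-type stratum in the $B_{dR}^{+}$-affine Grassmannian based at $\mathcal{E}_{b}$, and the geometric input one needs is that $\Sht(G,b,\mu)_{\infty}$ — in our conventions the space of modifications $\mathcal{E}_{b} \rightarrow \mathcal{E}_{0}$ with meromorphy bounded by $\mu$ — is also a pro-\'etale $\underline{G(\mathbb{Q}_{p})}$-torsor, this time for the action of $G(\mathbb{Q}_{p})$ by automorphisms of the trivial bundle $\mathcal{E}_{0}$, over the Newton-type stratum of $\Hck_{\mu}$ that lies over $\Bun_{G}^{b}$ via $h_{\mu}^{\leftarrow}$ and over $\Bun_{G}^{\mathbf{1}}$ via $h_{\mu}^{\rightarrow} \times supp$. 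The same base-change and ULA package then gives $j_{\mathbf{1}}^{*}T_{\mu}j_{b!}(\mathcal{F}_{\rho}) \simeq R\Gamma_{c}(G,b,\mu)[\rho][d](\frac{d}{2})$ as $G(\mathbb{Q}_{p}) \times W_{E}$-modules, and the Corollary follows by combining the two.

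The one point that requires genuine care, and thus the main obstacle, is tracking the combinatorial data across the involution exchanging the two legs of the Hecke correspondence: one must check that the Schubert datum seen from the $\mathcal{E}_{b}$ side is the one cut out by $V_{\mu}$ and not by $V_{\mu^{-1}}$, that the structure group of the relevant torsor is $G(\mathbb{Q}_{p})$ rather than $J_{b}(\mathbb{Q}_{p})$, and that the cohomological shift and Tate twist $[d](\frac{d}{2})$ — where $d = \langle 2\rho_{G},\mu \rangle = \langle 2\rho_{G},\mu^{-1} \rangle$ — really is symmetric in the two descriptions. This is exactly where the convention fixed in Conventions and Notations, namely that $\Sht(G,b,\mu)_{\infty}$ parametrizes modifications of type $\mu^{-1}$, does its work, keeping dual cocharacters and contragredients from proliferating; everything else is a formal consequence of the construction of the Hecke operators $T_{W}$, Lemma 3.1, and the ULA formalism recalled in \cite[Chapter~IX.3]{FS}.
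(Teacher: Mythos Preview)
Your proposal is correct and follows essentially the same approach as the paper: the $[\pi]$-isomorphism is precisely the content of the discussion immediately preceding the Corollary, and the paper obtains the $[\rho]$-isomorphism by declaring that ``analysis similar to the above'' applies with the roles of $\mathbf{1}$ and $b$ (and of $\mu^{-1}$ and $\mu$) interchanged. You have simply made that symmetric argument explicit, correctly identifying the switch of torsor group from $J_{b}(\mathbb{Q}_{p})$ to $G(\mathbb{Q}_{p})$ and the role of the paper's convention on $\Sht(G,b,\mu)_{\infty}$ in keeping the cocharacters straight.
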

We have the following basic structural result which, in more generality, follows from the analysis in Fargues-Scholze, but, in the case of a local Shimura datum, also partially follows from standard finiteness results for rigid spaces (See \cite[Section~6]{RV}). In particular, one can show the following.  
\begin{theorem}{\cite[Corollary~I.7.3,Page~317]{FS}}
For a local Shimura datum $(G,b,\mu)$ as above, the cohomology groups of $R\Gamma_{c}^{\flat}(G,b,\mu)[\rho]$ and $R\Gamma_{c}(G,b,\mu)[\rho]$ are valued in smooth admissible $G(\mathbb{Q}_{p})$-representations of finite length with an action of $W_{E}$. Moreover, they are concentrated in degrees $0 \leq i \leq 2d$.
\end{theorem}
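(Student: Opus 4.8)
The statement is due to Fargues--Scholze, and the plan is to deduce it from the identification of $R\Gamma_c(G,b,\mu)[\rho]$ with a Hecke operator together with the structural properties of Hecke operators in Theorem~3.2. First I would dispose of the $\flat$-variant. By Hom--tensor duality (Remark~3.4), $R\Gamma_c^{\flat}(G,b,\mu)[\rho]$ is, up to the shift $[-2d]$ and twist $(-d)$, the derived smooth dual $R\mathcal{H}om_{G(\mathbb{Q}_p)}(R\Gamma_c(G,b,\mu)[\rho^{*}],\overline{\mathbb{Q}}_\ell)$; since smooth duality is exact on admissible smooth representations of finite length and only reverses cohomological degrees, the assertions for $R\Gamma_c^{\flat}(G,b,\mu)[\rho]$ follow from the corresponding ones for $R\Gamma_c(G,b,\mu)[\rho^{*}]$. (One may also read the admissibility and finiteness off directly from the definition of $R\Gamma_c^{\flat}$ and the standard finiteness of each $R\Gamma_c(\Sht(G,b,\mu)_{K,\mathbb{C}_p},\overline{\mathbb{Q}}_\ell)$, cf.\ \cite[Section~6]{RV}.) So it suffices to treat $R\Gamma_c(G,b,\mu)[\rho]$.

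For the finiteness I would invoke Corollary~3.3, which gives a $G(\mathbb{Q}_p)\times W_E$-equivariant isomorphism $R\Gamma_c(G,b,\mu)[\rho][d](\tfrac d2)\simeq j_{\mathbf 1}^{*}T_{\mu}\,j_{b!}(\mathcal{F}_{\rho})$, together with the equivalence $\Dlis(\Bun_G^{\mathbf 1},\overline{\mathbb{Q}}_\ell)\simeq\D(G(\mathbb{Q}_p),\overline{\mathbb{Q}}_\ell)$ of Lemma~3.1. When $\rho$ is admissible the sheaf $\mathcal{F}_{\rho}$ on $\Bun_G^{b}$ is universally locally acyclic (ULA) over the base --- ULA objects correspond under Lemma~3.1 precisely to admissible complexes --- and the extension by zero $j_{b!}$ (which exists for Harder--Narasimhan strata by \cite[Proposition~VII.7.3]{FS}), the Hecke operator $T_{\mu}$ (Theorem~3.2), and the open pullback $j_{\mathbf 1}^{*}$ all preserve ULA-ness; hence $R\Gamma_c(G,b,\mu)[\rho]$ has admissible cohomology carrying the asserted $W_E$-action (the one on the right-hand side of Corollary~3.3). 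Boundedness and finite length in each degree then follow by combining this with the finiteness of the rigid-analytic spaces $\Sht(G,b,\mu)_K$ as in \cite[Section~6]{RV}, which forces each cohomology group to be finitely generated; an admissible, finitely generated smooth representation has finite length. (When $\rho$ has finite length one can argue more directly: $\mathcal{F}_{\rho}$ is then also a compact object, $j_{b!}$ and $T_{\mu}$ preserve compactness by Theorem~3.2, and one obtains a perfect complex with admissible cohomology.)

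For the degree bound I would run through the same identity tracking the standard $t$-structure. The sheaf $\mathcal{F}_{\rho}$ lies in degree $0$, and $j_{b!}$ is $t$-exact since $j_b$ is an open immersion, so $j_{b!}(\mathcal{F}_{\rho})$ is still in degree $0$. In the formula $T_{\mu}(A)\simeq R(h_{\mu}^{\rightarrow}\times supp)_{\natural}\big(h_{\mu}^{\leftarrow *}A\big)[d](\tfrac d2)$ the pullback $h_{\mu}^{\leftarrow *}$ is $t$-exact and the shift $[d]$ carries us into degree $-d$; and since $\mu$ is minuscule the Schubert stack $\Hck_{\leq\mu}=\Hck_{\mu}$ is proper --- in fact smooth of relative dimension $d=\langle 2\rho_G,\mu\rangle$ --- over $\Bun_G\times X$, so $R(h_{\mu}^{\rightarrow}\times supp)_{\natural}=R(h_{\mu}^{\rightarrow}\times supp)_{!}$ has cohomological amplitude $[0,2d]$. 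Hence $j_{\mathbf 1}^{*}T_{\mu}\,j_{b!}(\mathcal{F}_{\rho})$ is concentrated in degrees $[-d,d]$, and undoing the shift $[-d]$ coming from Corollary~3.3 puts $R\Gamma_c(G,b,\mu)[\rho]$ in degrees $[0,2d]$. The step I expect to be the real obstacle --- which I would simply cite from \cite{FS} --- is Theorem~3.2, the preservation of compact and ULA objects by the Hecke operators, along with the identification of ULA-ness over the base with admissibility; both rest on the full geometric Satake equivalence for the $B_{dR}^{+}$-affine Grassmannian and on the smoothness and properness of $\Hck_{\leq\mu}$ in the minuscule case.
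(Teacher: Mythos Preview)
Your proposal is correct and follows essentially the same approach as the paper. The paper does not give a proof of this theorem---it simply cites \cite[Corollary~I.7.3, Page~317]{FS}---but the remark immediately following the statement sketches exactly your admissibility argument: ULA-ness of a sheaf on $\Bun_G$ is equivalent to admissibility of its stalks on HN-strata, so admissibility of $R\Gamma_c(G,b,\mu)[\rho]$ is a consequence of Theorem~3.2 and Corollary~3.3, just as you argue. Your reduction of the $\flat$-variant via Remark~3.4, the tracking of the standard $t$-structure through $j_{b!}$, $h_\mu^{\leftarrow *}$, and $R(h_\mu^{\rightarrow}\times supp)_\natural$ for the degree bound, and the passage from admissible-plus-finitely-generated to finite length, all fill in details the paper leaves to the cited reference.
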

\begin{remark}{\label{remark: ULAsheaves}}
A sheaf $\mathcal{F} \in \Dlis(\Bun_{G},\overline{\mathbb{Q}}_{\ell})$ being ULA is equivalent to its stalks at different HN-strata being valued in complexes of smooth admissible representations \cite[Theorem~V.7.1,Proposition~VII.7.9]{FS}, so indeed the admissibility of the above complex is a consequence of Theorem 3.2 and Corollary 3.3.
\end{remark}
Fargues-Scholze use the endofunctors defined by the Hecke algebra on $\Dlis(\Bun_{G},\overline{\mathbb{Q}}_{\ell})$  to define the excursion algebra.
\begin{definition}
For a finite set $I$, a representation $W \in \Rep_{\overline{\mathbb{Q}}_{\ell}}(^LG^{I})$, maps $\alpha: \overline{\mathbb{Q}}_{\ell} \rightarrow \Delta^{*}W$ and $\beta: \Delta^{*}W \rightarrow \overline{\mathbb{Q}}_{\ell}$, and elements $\gamma_{i} \in W_{\mathbb{Q}_{p}}$ for $i \in  I$, one defines the excursion operator on $\Dlis(\Bun_{G},\overline{\mathbb{Q}}_{\ell})$ to be the composition:
\[ id = T_{\overline{\mathbb{Q}}_{\ell}} \xrightarrow{\alpha} T_{\Delta^{*}W} = T_{W} \xrightarrow{(\gamma_{i})_{i \in I}} T_{W} = T_{\Delta^{*}W}  \xrightarrow{\beta} T_{\overline{\mathbb{Q}}_{\ell}} = id \]
where $\Delta^{*}W$ is the precomposition of $W$ with the diagonal embedding $\phantom{}^{L}G \ra \phantom{}^{L}G^{I}$.
\end{definition}
This defines a natural endomorphism of the identity functor on $\Dlis(\Bun_{G},\overline{\mathbb{Q}}_{\ell})$. If one looks at the induced endofunctor given by the inclusion $\D(G(\mathbb{Q}_{p}),\overline{\mathbb{Q}}_{\ell}) \subset \Dlis(\Bun_{G},\overline{\mathbb{Q}}_{\ell})$ induced by the open immersion $j: \Bun_{G}^{\mathbf{1}} \hookrightarrow \Bun_{G}$ 
then one obtains a natural endomorphism of the identity functor on $\D(G(\mathbb{Q}_{p}),\overline{\mathbb{Q}}_{\ell})$. In other words, we get a family of compatible endomorphisms for all complexes of smooth representations of $G(\mathbb{Q}_{p})$; namely, an element of the Bernstein center. One can verify that this excursion algebra satisfies similar properties to that considered by V. Lafforgue, so, using Lafforgue's reconstruction theorem \cite[Proposition~11.7]{VL}, one can show the following.
\begin{theorem}
To an irreducible smooth $\overline{\mathbb{Q}}_{\ell}$-representation $\pi$ of $G(\mathbb{Q}_{p})$ (or more generally $A \in \Dlis(\Bun_{G},\overline{\mathbb{Q}}_{\ell})$ any Schur-irreducible object (i.e $\End(A) = \overline{\mathbb{Q}}_{\ell}$)), there is a unique continuous semisimple map
\[ \phi^{\mathrm{FS}}_{\pi}: W_{\mathbb{Q}_{p}} \rightarrow \phantom{}^{L}G(\overline{\mathbb{Q}}_{\ell}) \]
characterized by the property that for all $I,W,\alpha,\beta$, and $\gamma_{i} \in W_{\mathbb{Q}_{p}}$ for $i \in I$, the corresponding endomorphism of $\pi$ defined above is given by multiplication by the scalar that results from the composite
\[ \overline{\mathbb{Q}}_{\ell} \xrightarrow{\alpha} \Delta^{*}W = W \xrightarrow{(\phi_{\pi}(\gamma_{i}))_{i \in I}} W = \Delta^{*}W \xrightarrow{\beta} \overline{\mathbb{Q}}_{\ell} \]
\end{theorem}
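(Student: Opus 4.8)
The plan is to construct $\phi^{\mathrm{FS}}_{\pi}$ by showing that the excursion operators of Definition~3.6 act on $\pi$ through a character of V.~Lafforgue's excursion algebra, and then to invoke his reconstruction theorem. The starting point is that each excursion datum $(I, W, \alpha, \beta, (\gamma_{i})_{i \in I})$ produces, via Definition~3.6 and restriction along the open immersion $j : \Bun_{G}^{\mathbf{1}} \hookrightarrow \Bun_{G}$, a natural endomorphism of the identity functor on $\D(G(\mathbb{Q}_{p}), \overline{\mathbb{Q}}_{\ell})$, i.e.\ an element of the Bernstein center. In particular it gives an endomorphism of the Schur-irreducible object $\pi$ (or $A$), and since $\End(\pi) = \overline{\mathbb{Q}}_{\ell}$ this endomorphism is multiplication by a well-defined scalar $S_{I,W,\alpha,\beta,(\gamma_{i})}(\pi) \in \overline{\mathbb{Q}}_{\ell}$. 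Thus one obtains a function on the set of excursion data.

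The substance of the proof is to verify that this function is a character of the excursion algebra $\mathrm{Exc}(W_{\mathbb{Q}_{p}}, \hat{G})$ of \cite{VL}, i.e.\ that it satisfies the defining relations: additivity in $W$ under direct sums; compatibility with composition of the intertwiners $\alpha$ and $\beta$; the fusion relation comparing the datum over $I \sqcup J$ with the product over $I$ and $J$; the reductions when some $\gamma_{i}$ is trivial or when two indices carry the same Weil element; and continuity of $(\gamma_{i})_{i} \mapsto S_{I,W,\alpha,\beta,(\gamma_{i})}(\pi)$. Each of these is inherited from the corresponding structural property of Hecke operators recalled above: the monoidal structure of geometric Satake, which gives $T_{V \otimes W} \simeq (T_{V} \times \id)(T_{W})(\cdot)|_{\Delta}$ as in Remark~3.4; compatibility of the Satake functor with direct sums and with the intertwiners (which are by definition maps of $\phantom{}^{L}G^{I}$-representations); and, crucially, the $W_{\mathbb{Q}_{p}}^{I}$-equivariance of $T_{W}$ furnished by Theorem~3.2 --- a geometric incarnation of Drinfeld's lemma --- which is what allows the elements $\gamma_{i}$ to be inserted and which governs how the Weil-group variables fuse under restriction to the diagonal.

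Granting that this function is a character of $\mathrm{Exc}(W_{\mathbb{Q}_{p}}, \hat{G})$, the reconstruction theorem of V.~Lafforgue \cite[Proposition~11.7]{VL} produces a unique conjugacy class of continuous semisimple homomorphisms $\phi : W_{\mathbb{Q}_{p}} \to \phantom{}^{L}G(\overline{\mathbb{Q}}_{\ell})$, commuting with the projection to $Q$, such that for every datum one has
\[ S_{I,W,\alpha,\beta,(\gamma_{i})}(\pi) = \Big( \overline{\mathbb{Q}}_{\ell} \xrightarrow{\alpha} W \xrightarrow{(\phi(\gamma_{i}))_{i \in I}} W \xrightarrow{\beta} \overline{\mathbb{Q}}_{\ell} \Big). \]
One then sets $\phi^{\mathrm{FS}}_{\pi} := \phi$. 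Uniqueness is part of this output: the functions $g \mapsto \beta \circ \phi(g) \circ \alpha$, as $W, \alpha, \beta$ vary, separate $\hat{G}$-conjugacy classes of semisimple homomorphisms, so $\phi$ is pinned down by the scalars, hence by the excursion action on $\pi$; semisimplicity and continuity are likewise built into \cite[Proposition~11.7]{VL}.

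The main obstacle is the middle step: establishing the full package of compatibilities of the Hecke operators $T_{W}$ at the level needed --- in particular that $T_{W}$ is well-defined as a functor landing in $\Dlis(\Bun_{G}, \overline{\mathbb{Q}}_{\ell})^{BW_{\mathbb{Q}_{p}}^{I}}$ (Theorem~3.2), which rests on the ULA property of the Satake sheaves over $X^{I}$ and the resulting Drinfeld-lemma statement, together with the fusion compatibility of these Weil actions under $\Delta : X \to X^{I}$. Once these geometric inputs are secured, verifying the excursion-algebra relations and passing to scalars is formal, and the appeal to \cite{VL} completes the argument.
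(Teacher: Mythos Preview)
Your proposal is correct and follows essentially the same approach as the paper, which does not give a detailed proof but states that one verifies the excursion operators satisfy the relations of V.~Lafforgue's excursion algebra and then invokes his reconstruction theorem \cite[Proposition~11.7]{VL}. Your write-up is in fact more detailed than the paper's own discussion, which simply records the Bernstein-center interpretation and defers the verification of the excursion-algebra relations to \cite{FS}.
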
 
By further studying the geometry of $\Bun_{G}$ and the Hecke stacks, one can deduce various good properties of this correspondence.
\begin{theorem}{\cite[Theorem~I.9.6]{FS}}
The mapping defined above 
\[ \pi \mapsto \phi^{\mathrm{FS}}_{\pi} \]
enjoys the following properties:
\begin{enumerate}
    \item (Compatibility with Local Class Field Theory) If $G = T$ is a torus, then $\pi \mapsto \phi_{\pi}$ is the usual local Langlands correspondence 
    \item The correspondence is compatible with character twists, passage to contragredients, and central characters.
    \item (Compatibility with products) Given two irreducible representations $\pi_{1}$ and $\pi_{2}$ of two connected reductive groups $G_{1}$ and $G_{2}$ over $\mathbb{Q}_{p}$, respectively. We have
    \[ \pi_{1} \boxtimes \pi_{2} \mapsto \phi^{\mathrm{FS}}_{\pi_{1}} \times \phi^{\mathrm{FS}}_{\pi_{2}}\]
    under the Fargues-Scholze local Langlands correspondence for $G_{1} \times G_{2}$. 
    \item (Compatibility with parabolic induction) Given a parabolic subgroup $P \subset G$ with Levi factor $M$ and a representation $\pi_{M}$ of $M$, then the semisimple L-parameter corresponding to any sub-quotient of $ind_{P}^{G}(\pi_{M})$ the (normalized) parabolic induction is the composition
    \[ W_{\mathbb{Q}_{p}}\xrightarrow{\phi^{\mathrm{FS}}_{\pi_{M}}} \\  ^{L}M(\overline{\mathbb{Q}}_{\ell}) \rightarrow ^{L}G(\overline{\mathbb{Q}}_{\ell}) \]
    where the map $\phantom{}^{L}M(\overline{\mathbb{Q}}_{\ell}) \rightarrow \phantom{}^{L}G(\overline{\mathbb{Q}}_{\ell})$ is the natural embedding. 
    \item (Compatibility with Harris-Taylor/Henniart LLC)
    For $G = \GL_{n}$ or an inner form of $G$ the semisimple L-parameter associated to $\pi$ is the (semi-simplified) parameter $\phi_{\pi}^{\mathrm{ss}}$ associated to $\pi$ by Harris-Taylor/Henniart.
    \item (Compatibility with Restriction of Scalars) The above story works the same for $G'$ a connected reductive group over any finite extension $E'/\mathbb{Q}_{p}$, where one then gets a semisimple L-parameter valued on $W_{E'}$. If $G = \mathrm{Res}_{E'/\mathbb{Q}_{p}}G'$ is the Weil restriction of some $G'/E'$ then $L$-parameters for $G/\mathbb{Q}_{p}$ agree with $L$-parameters for $G'/E'$ in the usual sense.
    \item (Compatibility with Isogenies) If $G' \rightarrow G$ is a map of reductive groups inducing an isomorphism of adjoint groups, $\pi$ is an irreducible smooth representation of $G(E)$ and $\pi'$ is an irreducible constituent of $\pi|_{G'(E)}$ then $\phi_{\pi'}$ is the image of $\phi_{\pi}$ under the induced map $\hat{G} \rightarrow \hat{G'}$. 
\end{enumerate}
\end{theorem}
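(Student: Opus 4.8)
The plan is to verify the seven properties one at a time, in each case working with the geometric avatar $\mathcal{F}_\pi\in\Dlis(\Bun_G,\overline{\mathbb{Q}}_\ell)$ obtained by extension by zero from $\Bun_G^{\mathbf 1}$, and with the excursion operators of Definition 3.5, which by Theorem 3.6 compute $\phi^{\mathrm{FS}}_\pi$. Property (1) is the base case: when $G=T$ is a torus one has $\Bun_T\simeq\bigsqcup_{b}B\underline{T(\mathbb{Q}_p)}$ indexed by $B(T)=X_*(T)_\Gamma$, geometric Satake degenerates to the grading of $\Rep(\hat T)$ by $X_*(T)$, and the Hecke operators $T_W$ translate among the components while tensoring by a Weil representation whose class is dictated by local class field theory; comparing the two expressions for a general excursion operator identifies $\phi^{\mathrm{FS}}_\pi$ with the parameter attached to the character $\pi$ by local class field theory. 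Property (3) is then immediate from $\Bun_{G_1\times G_2}\simeq\Bun_{G_1}\times\Bun_{G_2}$, under which Hecke, hence excursion, operators factor as external products. Property (2) follows by naturality of the whole construction: a character twist tensors $\mathcal{F}_\pi$ by a rank-one local system and shifts every excursion operator by an abelian parameter; passage to contragredients is implemented by Verdier duality on $\Bun_G$, which under geometric Satake exchanges $W$ and $W^\vee$ and hence dualizes the parameter; and the statement on central characters is reduced to (1) by functoriality for the map $G\to G^{\mathrm{ab}}$ (and for $Z(G)\hookrightarrow G$).

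The substantive points are (4) and (5). For (4) I would use the correspondence $\Bun_M\xleftarrow{\mathfrak q}\Bun_P\xrightarrow{\mathfrak p}\Bun_G$ and the associated normalized constant-term and Eisenstein functors. The crucial input is that these functors commute with Hecke operators: this follows from the compatibility of geometric Satake for $M$ with that for $G$ through the constant-term (hyperbolic localization) functor on the $B_{dR}^{+}$-affine Grassmannians. Since, after restriction along $j^{\mathbf 1}$, normalized parabolic induction $\mathrm{ind}_P^G(\pi_M)$ is computed by the Eisenstein functor applied to $\mathcal{F}_{\pi_M}$, chasing an excursion operator through these identifications shows that $\phi^{\mathrm{FS}}$ of any subquotient of $\mathrm{ind}_P^G(\pi_M)$ is $\phi^{\mathrm{FS}}_{\pi_M}$ composed with the embedding ${}^LM\hookrightarrow{}^LG$.

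Property (5) is where external input is genuinely unavoidable, and it is the step I expect to be hardest. Here one invokes the Scholze--Weinstein identification of the Lubin--Tate tower at infinite level with $\Sht(\GL_n,b,\mu)_\infty$ (and the Drinfeld tower for the inner form), so that by the $\GL_n$ case of Corollary 3.3 its compactly supported cohomology is $j_b^*T_{\mu^{-1}}j_{\mathbf{1}!}(\mathcal{F}_\pi)[d](\frac{d}{2})$; combining this with the Harris--Taylor/Boyer/Dat computation (Theorem 1.2) that for supercuspidal $\pi$ the middle-degree cohomology of $R\Gamma_c(\mathrm{LT}_{n,\infty})[\pi]$ realizes $\phi_\pi^\vee\otimes|\cdot|^{(1-n)/2}$, against the fact that $T_{\mu^{-1}}$ acts through $\phi^{\mathrm{FS}}_\pi$ composed with the highest-weight representation $V_{\mu^{-1}}$ --- the dual of the standard representation of $\GL_n$ --- forces $\phi^{\mathrm{FS}}_\pi=\phi^{\mathrm{ss}}_\pi$ on supercuspidals, and then the general case follows from (4) and compatibility of the Harris--Taylor correspondence with parabolic induction. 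Properties (6) and (7) are formal afterthoughts: (6) follows from the identification of $\Bun_{\mathrm{Res}_{E'/\mathbb{Q}_p}G'}$ with $\Bun_{G'}$ over the Fargues--Fontaine curve for $E'$, together with Drinfeld's lemma and Shapiro's lemma matching $W_{\mathbb{Q}_p}$-parameters valued in ${}^L(\mathrm{Res}_{E'/\mathbb{Q}_p}G')$ with $W_{E'}$-parameters valued in ${}^LG'$; and (7) follows from the map $\Bun_{G'}\to\Bun_G$ induced by the isogeny, under which $\mathcal{F}_{\pi'}$ is controlled by the pullback of $\mathcal{F}_\pi$ and the Hecke operators correspond through the dual map $\hat G\to\widehat{G'}$.

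The main obstacle is therefore property (5). It is the only one forcing us to import both the construction of the $\GL_n$-correspondence and the precise Weil-group action on the cohomology of the Lubin--Tate/Drinfeld towers; moreover one must reconcile the perverse and Tate-twist normalizations built into $T_{\mu^{-1}}$ --- which are responsible for the $|\cdot|^{(1-n)/2}$ discrepancy and for the relevance of the middle degree --- with the automorphic normalization of $\phi_\pi$. Everything else amounts to unwinding the functoriality of the Hecke and excursion formalism developed in Sections 3.1--3.2.
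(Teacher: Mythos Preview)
The paper does not supply its own proof of this theorem: it is stated as a citation of \cite[Theorem~I.9.6]{FS}, and the only additional content the paper provides is the remark immediately following it. Your outline is broadly consistent with the structure of the Fargues--Scholze argument, so there is not much to compare; however, one point deserves flagging. In property (5) you treat $\GL_n$ and its inner forms on an equal footing, invoking the Lubin--Tate and Drinfeld towers and the Harris--Taylor/Boyer/Dat description of their cohomology. The paper's remark after the theorem explicitly cautions that the inner-form case is \emph{not} contained in \cite{FS}: it requires the work of Hansen--Kaletha--Weinstein \cite[Theorem~1.0.3]{KW} on the Kottwitz conjecture as additional input. Your sketch glosses over this distinction, and the Drinfeld-tower argument you gesture at does not by itself supply what is needed for a general inner form of $\GL_n$; the missing ingredient is precisely the weak Kottwitz conjecture result of \cite{KW}.
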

\begin{remark}
In (5), the compatibility of the Fargues-Scholze local Langlands correspondence with the Harris-Taylor/Henniart local Langlands correspondence for an arbitrary inner form of $\GL_{n}$ is not included in the paper of Fargues-Scholze \cite{FS}. However, it follows from the work of Hansen-Kaletha-Weinstein on the Kottwitz conjecture \cite[Theorem~1.0.3]{KW}.
\end{remark}
\subsection{The Spectral Action}
With these basic structural properties out of the way, we turn our attention to the "spectral action" on $\Dlis(\Bun_{G},\overline{\mathbb{Q}}_{\ell})$, which will be very important to proving compatibility of the two correspondences in the case where the parameter is supercuspidal, as well as deducing applications to the Kottwitz conjecture. We recall that an $L$-parameter over $\overline{\mathbb{Q}}_{\ell}$ can be thought of as a continuous (not necessarily semisimple, but usually Frobenius semi-simple) homomorphism 
\[ \phi: W_{\mathbb{Q}_{p}} \rightarrow \phantom{}^{L}G(\overline{\mathbb{Q}}_{\ell}) \]
commuting with the natural projection to $Q$. Assuming the Frobenius semi-simplicity of $\phi$, one can use the classical construction of Deligne-Grothendieck to see that this coincides with the definition given in section $2$ for $\GSp_{4}$ after applying the isomorphism $i$, where the monodromy operation is recovered through the exponential of the action of $W_{\mathbb{Q}_{p}}$ on the $\ell$-power roots of unity. Such a continuous map can be thought of as a continuous $1$-cocycle $W_{\mathbb{Q}_{p}} \rightarrow \hat{G}(\overline{\mathbb{Q}}_{\ell})$, with respect to the action of $W_{\mathbb{Q}_{p}}$ on $\hat{G}(\overline{\mathbb{Q}}_{\ell})$. If we let $A/\mathbb{Z}_{\ell}$ be any $\mathbb{Z}_{\ell}$-algebra endowed with a topology given by writing $A = \colim_{A' \subset A} A'$, where $A'$ is a finitely generated $\mathbb{Z}_{\ell}$-module with its $\ell$-adic topology, then we can defined a moduli space, denoted $\mathcal{Z}^{1}(W_{\mathbb{Q}_{p}},\hat{G})$, over $\mathbb{Z}_{\ell}$, whose $A$-points are the continuous $1$-cocycles $W_{\mathbb{Q}_{p}} \rightarrow \hat{G}(A)$ with respect to the natural action of $W_{\mathbb{Q}_{p}}$ on $\hat{G}(A)$. This defines a scheme considered in \cite{DH} and \cite{Zhu1} which, by \cite[Theorem~I.8.1]{FS}, can be written as a union of open and closed affine subschemes $\mathcal{Z}^{1}(W_{\mathbb{Q}_{p}}/P,\hat{G})$ as $P$ runs through subgroups of wild inertia of $W_{E}$, where each $\mathcal{Z}^{1}(W_{\mathbb{Q}_{p}}/P,\hat{G})$ is a flat local complete intersection over $\mathbb{Z}_{\ell}$ of dimension $\dim(G)$. This allows us to consider the Artin stack quotient $[\mathcal{Z}^{1}(W_{\mathbb{Q}_{p}},\hat{G})/\hat{G}]$, where $\hat{G}$ acts via conjugation. We then consider the base change to $\overline{\mathbb{Q}}_{\ell}$, denoted $[\mathcal{Z}^{1}(W_{\mathbb{Q}_{p}},\hat{G})_{\overline{\mathbb{Q}}_{\ell}}/\hat{G}]$ and referred to as the stack of Langlands parameters, as well as the category $\Perf([\mathcal{Z}^{1}(W_{\mathbb{Q}_{p}},\hat{G})_{\overline{\mathbb{Q}}_{\ell}}/\hat{G}])$ of perfect complexes on this space. We let $\Dlis(\Bun_{G},\overline{\mathbb{Q}}_{\ell})^{\omega}$ denote the triangulated sub-category of compact objects in $\Dlis(\Bun_{G},\overline{\mathbb{Q}}_{\ell})$ (which are precisely the objects with quasi-compact support on $\Bun_{G}$ and which restrict to compact objects in $\D(J_{b}(\mathbb{Q}_{p}),\overline{\mathbb{Q}}_{\ell})$ for all $b \in B(G)$ by \cite[Theorem~V.4.1,Proposition~VII.7.4]{FS}). We then have the key theorem of Fargues-Scholze.
\begin{theorem}{\cite[Corollary X.I.3]{FS}}
There exists a natural compactly supported $\overline{\mathbb{Q}}_{\ell}$-linear action of $\Perf(\mathcal{Z}^{1}(W_{\mathbb{Q}_{p}},\hat{G})_{\overline{\mathbb{Q}}_{\ell}}/\hat{G})$ on $\Dlis(\Bun_{G},\overline{\mathbb{Q}}_{\ell})^{\omega}$ satisfying the property that the restriction along the map
\[ \Rep_{\overline{\mathbb{Q}}_{\ell}}(\phantom{}^{L}G) \rightarrow \Perf([\mathcal{Z}^{1}(W_{\mathbb{Q}_{p}},\hat{G})_{\overline{\mathbb{Q}}_{\ell}}/\hat{G}])^{BW_{\mathbb{Q}_{p}}} \]
induces the action of Hecke operators
\[ \Rep_{\overline{\mathbb{Q}}_{\ell}}(\phantom{}^{L}G^{I}) \rightarrow \End(\Dlis(\Bun_{G},\overline{\mathbb{Q}}_{\ell})^{\omega})^{BW_{\mathbb{Q}_{p}}^{I}} \]
for a varying finite index set $I$.
\end{theorem}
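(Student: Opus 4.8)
This is \cite[Corollary~X.I.3]{FS}, so strictly we only need to cite it; but let us describe the mechanism, since the same circle of ideas drives the applications in Sections 3.2 and 7. The plan is to combine two inputs. On the geometric side we already have, by Definition 3.3, Theorem 3.2 and Corollary 3.3, the Hecke operators $T_{W}$ for \emph{all} finite sets $I$ and all $W \in \Rep_{\ol{\mathbb{Q}}_{\ell}}({}^{L}G^{I})$, together with their compatibilities: with composition, with maps of finite sets, and with restriction to diagonals (which on the Satake side is the tensor product, Remark 3.2), and with the fact that they preserve compact and ULA objects. On the spectral side we need a Tannakian-style identification of $\Perf([\mc{Z}^{1}(W_{\mathbb{Q}_{p}},\hat{G})_{\ol{\mathbb{Q}}_{\ell}}/\hat{G}])$ purely in terms of data of this shape. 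The action is then obtained by transport of structure.

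First I would build the bridge on the spectral side. For each $I$ there is the Satake functor
\[ \Rep_{\ol{\mathbb{Q}}_{\ell}}({}^{L}G^{I}) \longrightarrow \Perf([\mc{Z}^{1}/\hat{G}])^{BW_{\mathbb{Q}_{p}}^{I}}, \qquad W \mapsto W \times^{\hat{G}} \mc{Z}^{1}, \]
the associated vector bundle on $[\mc{Z}^{1}/\hat{G}]$, equipped with the tautological $W_{\mathbb{Q}_{p}}^{I}$-equivariance obtained by evaluating the universal $1$-cocycle at the $I$ chosen Weil elements; for $I$ a point this is symmetric monoidal, and for varying $I$ the collection is compatible with fusion and with maps of finite sets. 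The crucial structural input is that these Satake objects \emph{generate}: $\Perf([\mc{Z}^{1}/\hat{G}])$ is generated, under finite colimits, retracts and the $\Rep_{\ol{\mathbb{Q}}_{\ell}}(\hat{G})$-action, by the structure sheaf $\mc{O}_{\mc{Z}^{1}}$; and, more precisely, the $\Rep_{\ol{\mathbb{Q}}_{\ell}}(\hat{G})$-linear endomorphisms of $\mc{O}_{\mc{Z}^{1}}$ — the ring $\mc{O}(\mc{Z}^{1})$ with its $\hat{G}$-action — are computed by explicit Koszul-type complexes built from matrix coefficients, i.e. excursion data à la V.\ Lafforgue \cite{VL}, so that no morphisms are lost. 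To prove this one uses that $\mc{Z}^{1}(W_{\mathbb{Q}_{p}},\hat{G})$ is a union of open and closed affine subschemes $\mc{Z}^{1}(W_{\mathbb{Q}_{p}}/P,\hat{G})$, $P$ running over open subgroups of wild inertia, each a flat local complete intersection over $\mathbb{Z}_{\ell}$ of dimension $\dim G$ (\cite[Theorem~I.9.1]{FS}; cf.\ \cite{DH,Zhu1}); this reduces matters to the representation variety of a topologically finitely generated discrete group, where one argues by an explicit free resolution of the cocycle relations and then descends along the reductive group $\hat{G}$. This is the arithmetic counterpart of the generation theorems of Bezrukavnikov--Finkelberg, Arinkin--Gaitsgory and Nadler--Yun in the categorical geometric Langlands program.

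With this in place the construction is essentially formal. A compactly supported $\ol{\mathbb{Q}}_{\ell}$-linear action of $\Perf([\mc{Z}^{1}/\hat{G}])$ on a stable $\ol{\mathbb{Q}}_{\ell}$-linear category $\mc{C}$ is the same datum as a compatible family, indexed by finite sets $I$, of $\Rep_{\ol{\mathbb{Q}}_{\ell}}({}^{L}G^{I})$-module structures on $\mc{C}$ equipped with $W_{\mathbb{Q}_{p}}^{I}$-equivariance, subject to the fusion and functoriality constraints; the Hecke operators $T_{W}$ on $\Dlis(\Bun_{G},\ol{\mathbb{Q}}_{\ell})^{\omega}$ supply exactly such a datum, the identification of the $W_{\mathbb{Q}_{p}}^{I}$-equivariance being Drinfeld's lemma for $X^{I}$ (the input already behind Theorem 3.2), and the preservation of compact and ULA objects (Theorem 3.2, Corollary 3.3) being what makes the resulting action ``compactly supported''. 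One then verifies the asserted compatibilities: restricting along $\Rep_{\ol{\mathbb{Q}}_{\ell}}({}^{L}G) \to \Perf([\mc{Z}^{1}/\hat{G}])^{BW_{\mathbb{Q}_{p}}}$ recovers the operators $T_{V}$, and evaluating the action on $\End$ of the unit object recovers the excursion operators, hence the parameter $\phi_{\pi}^{\mathrm{FS}}$ of Theorem 3.4.

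The main obstacle is the spectral generation statement of the second paragraph: getting sufficient control of $\Perf$ of the stack of Langlands parameters to know both that the Satake objects generate and that the Hom-complexes between them are correctly computed by excursion-theoretic complexes. This rests on the fine geometry of the scheme $\mc{Z}^{1}(W_{\mathbb{Q}_{p}},\hat{G})$ — its flatness, its local-complete-intersection property, and the decomposition over wild-inertia subgroups — together with the homological algebra identifying the relevant mapping complexes with Koszul resolutions. Everything else is either already available to us from Section 3.1 or purely formal.
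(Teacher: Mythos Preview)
The paper does not prove this statement at all: it is simply quoted as \cite[Corollary~X.I.3]{FS}, with Remark~3.7 afterwards recording (i) the description of the map $\Rep_{\ol{\mathbb{Q}}_{\ell}}({}^{L}G)\to\Perf([\mc{Z}^{1}/\hat G])^{BW_{\mathbb{Q}_p}}$, (ii) the meaning of ``compactly supported'', and (iii) the key equivalence between giving such an action and giving a compatible family of $\Rep_{\ol{\mathbb{Q}}_{\ell}}(Q^{I})$-linear monoidal functors $\Rep_{\ol{\mathbb{Q}}_{\ell}}({}^{L}G^{I})\to\End_{\ol{\mathbb{Q}}_{\ell}}(\Dlis(\Bun_G,\ol{\mathbb{Q}}_{\ell})^{\omega})^{BW_{\mathbb{Q}_p}^{I}}$. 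Your opening sentence already says exactly this, so as a proof for the present paper your proposal is correct.

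Your additional sketch of the Fargues--Scholze argument is a reasonable expansion and is consistent with Remark~3.7(3): the reformulation you give (an action of $\Perf([\mc{Z}^{1}/\hat G])$ is the same as a fusion-compatible family of $W_{\mathbb{Q}_p}^{I}$-equivariant module structures) is precisely the content recorded there, and your identification of the hard step---the generation/presentation of $\Perf$ of the stack via Satake objects and excursion-type complexes, resting on the lci and wild-inertia decomposition of $\mc{Z}^{1}$---is the right place to locate the substance. One small caution: the paper phrases the equivalence in (iii) as $\Rep_{\ol{\mathbb{Q}}_{\ell}}(Q^{I})$-linearity rather than $\Rep_{\ol{\mathbb{Q}}_{\ell}}(\hat G)$-linearity, so if you keep your sketch you should align the linearity condition with that formulation.
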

\begin{remark}
\begin{enumerate}
    \item Here the map
    \[ \Rep_{\overline{\mathbb{Q}}_{\ell}}(\phantom{}^{L}G) \rightarrow \Perf([\mathcal{Z}^{1}(W_{\mathbb{Q}_{p}},\hat{G})_{\overline{\mathbb{Q}}_{\ell}}/\hat{G}])^{BW_{\mathbb{Q}_{p}}} \]
    associates to a representation $V$, with associated map $r_{V}: \phantom{}^{L}G(\overline{\mathbb{Q}}_{\ell}) \rightarrow GL(V)(\overline{\mathbb{Q}}_{\ell})$ a vector bundle on $[\mathcal{Z}^{1}(W_{\mathbb{Q}_{p}},\hat{G})_{\overline{\mathbb{Q}}_{\ell}}/\hat{G})]$ of rank equal to $\dim(V)$ with $W_{\mathbb{Q}_{p}}$-action. This bundle, denoted $C_{V}$, has the property that its evaluation at a $\overline{\mathbb{Q}}_{\ell}$-point corresponding to a parameter $\phi: W_{\mathbb{Q}_{p}} \rightarrow \phantom{}^{L}G(\overline{\mathbb{Q}}_{\ell})$ is precisely $r_{V} \circ \phi$, and it can be described as pulling back along the natural map to $[\mathrm{Spec}(\overline{\bb{Q}}_{\ell})/\hat{G}]$.
    \item The compactly supported condition means that, for all $\mathcal{F} \in \Dlis(\Bun_{G},\overline{\mathbb{Q}}_{\ell})^{\omega}$, the functor $\Perf([\mathcal{Z}^{1}(W_{\mathbb{Q}_{p}},\hat{G})_{\overline{\mathbb{Q}}_{\ell}}/\hat{G}]) \rightarrow \Dlis(\Bun_{G},\overline{\mathbb{Q}}_{\ell})^{\omega}$ induced by acting on $\mathcal{F}$ factors through an action of  $\Perf([\mathcal{Z}^{1}(W_{\mathbb{Q}_{p}}/P,\hat{G})_{\overline{\mathbb{Q}}_{\ell}}/\hat{G})])$, where $P$ is a subgroup of wild inertia. Fargues and Scholze state this action in terms of a $(\infty,1)$-category acting on an $(\infty,1)$-category, we suppress this technicality for simplicity. However, we note that this enhanced action has the concrete implication that if one has a morphism, cone, or homotopy limit/colimit in the derived category $\Perf([\mathcal{Z}^{1}(W_{\mathbb{Q}_{p}},\hat{G})_{\overline{\mathbb{Q}}_{\ell}}/\hat{G}])$ that acting on an object $A \in \Dlis(\Bun_{G},\overline{\mathbb{Q}}_{\ell})^{\omega}$ will produce a corresponding morphism, cone, or homotopy limit/colimit. \item In fact, Fargues-Scholze show that giving such a compactly supported $\overline{\mathbb{Q}}_{\ell}$-linear action is equivalent (when properly formulated) to giving $\Rep_{\overline{\mathbb{Q}}_{\ell}}(Q^{I})$-linear monoidal functors
    \[ \Rep_{\overline{\mathbb{Q}}_{\ell}}(\phantom{}^{L}G^{I}) \rightarrow \End_{\overline{\mathbb{Q}}_{\ell}}(\Dlis(\Bun_{G},\overline{\mathbb{Q}}_{\ell})^{\omega})^{BW^{I}_{\mathbb{Q}_{p}}}\]
    functorial in $I$. In the case that $I = \{*\}$, the fact that the Hecke action satisfies this monoidal property is precisely Remark 3.2.
    \end{enumerate}
\end{remark}
To study this spectral action, we consider, as in \cite[Section~VIII.3.]{FS}, the coarse quotient in the category of schemes
\[ \mathcal{Z}^{1}(W_{\mathbb{Q}_{p}},\hat{G})_{\overline{\mathbb{Q}}_{\ell}} / / \hat{G}\]
of 
$\mathcal{Z}^{1}(W_{\mathbb{Q}_{p}},\hat{G})_{\overline{\mathbb{Q}}_{\ell}}$ by the action of $\hat{G}$ via conjugation. Given an $L$-parameter $\phi:W_{\mathbb{Q}_{p}} \rightarrow \phantom{}^{L}G(\overline{\mathbb{Q}}_{\ell})$, it follows by \cite[Proposition~4.13]{DH} or \cite[Proposition~VIII.3.2]{FS}, that the $\hat{G}$-orbit of $\phi$ defines a closed $\overline{\mathbb{Q}}_{\ell}$-point of $\mathcal{Z}^{1}(W_{\mathbb{Q}_{p}},\hat{G})_{\overline{\mathbb{Q}}_{\ell}} / / \hat{G}$ if and only if $\phi$ is a semisimple parameter. Moreover, the natural map 
\[ \pi: [\mathcal{Z}^{1}(W_{\mathbb{Q}_{p}},\hat{G})_{\overline{\mathbb{Q}}_{\ell}}/\hat{G}] \rightarrow \mathcal{Z}^{1}(W_{\mathbb{Q}_{p}},\hat{G})_{\overline{\mathbb{Q}}_{\ell}} / / \hat{G}\]
evaluated on a $\overline{\mathbb{Q}}_{\ell}$-point in the stack quotient defined by an $L$-parameter $\phi$ defines a closed $\overline{\mathbb{Q}}_{\ell}$-point in the coarse moduli space given by its semisimplification $\phi^{\mathrm{ss}}$. We can fit excursion operators into this picture as follows. We let $\mathcal{Z}^{spec}(G,\overline{\mathbb{Q}}_{\ell}) := \mathcal{O}(\mathcal{Z}^{1}(W_{\mathbb{Q}_{p}},\hat{G})_{\overline{\mathbb{Q}}_{\ell}})^{\hat{G}}$ be the ring of functions on the stack of $L$-parameters/the coarse moduli space, which we refer to as the spectral Bernstein center. As noted above, the excursion operators define a family of commuting endomorphisms of the identity functor on $\Dlis(\Bun_{G},\overline{\mathbb{Q}}_{\ell})$. We let $\mathcal{Z}^{geom}(G,\overline{\mathbb{Q}}_{\ell})$ be the ring of such endomorphisms as in \cite[Definition~IX.0.2]{FS}, which we refer to as the geometric Bernstein center. In \cite[Corollary~IX.0.3]{FS}, Fargues and Scholze construct a canonical map of rings
\[ \mathcal{Z}^{spec}(G,\overline{\mathbb{Q}}_{\ell}) \rightarrow \mathcal{Z}^{geom}(G,\overline{\mathbb{Q}}_{\ell}) \]
which is given by excursion operators in the following sense. By \cite[Theorem~VIII.3.6]{FS}, there is an identification between $\mathcal{Z}^{spec}(G,\overline{\mathbb{Q}}_{\ell})$ and the algebra of excursion operators. In particular, an excursion operator, as in Definition 3.3, associated to the datum $I$, $W$, $\alpha$, $\beta$, and $\gamma_{i} \in W_{\mathbb{Q}_{p}}$ for $i \in I$ defines a function $f_{I,W,\alpha,\beta,(\gamma_{i})_{i \in I}} \in \mathcal{O}_{[Z^{1}(W_{\mathbb{Q}_{p}},\hat{G})_{\overline{\mathbb{Q}}_{\ell}}/\hat{G}]} = \mathcal{Z}^{spec}(G,\overline{\mathbb{Q}}_{\ell})$ on  $\mathcal{Z}^{1}(W_{\mathbb{Q}_{p}},\hat{G})_{\overline{\mathbb{Q}}_{\ell}} / / \hat{G}$, whose evaluation on the closed point of the coarse moduli space associated to a semisimple parameter $\phi: W_{\mathbb{Q}_{p}} \rightarrow \phantom{}^{L}G(\overline{\mathbb{Q}}_{\ell})$ is precisely the scalar that results from the endomorphism:
 \[ \overline{\mathbb{Q}}_{\ell} \xrightarrow{\alpha} \Delta^{*}W = W \xrightarrow{(\phi(\gamma_{i}))_{i \in I}} W = \Delta^{*}W \xrightarrow{\beta} \overline{\mathbb{Q}}_{\ell} \]
We note that multiplication by $f_{I,W,\alpha,\beta,(\gamma_{i})_{i \in I}}$ defines an endomorphism
\[ \mathcal{O}_{[Z^{1}(W_{\mathbb{Q}_{p}},\hat{G})_{\overline{\mathbb{Q}}_{\ell}}/\hat{G}]} \rightarrow \mathcal{O}_{[Z^{1}(W_{\mathbb{Q}_{p}},\hat{G})_{\overline{\mathbb{Q}}_{\ell}}/\hat{G}]} \]
of the structure sheaf on the Artin stack $[Z^{1}(W_{\mathbb{Q}_{p}},\hat{G})_{\overline{\mathbb{Q}}_{\ell}}/\hat{G}]$. If we act on a Schur-irreducible object $A \in \Dlis(\Bun_{G},\overline{\mathbb{Q}}_{\ell})^{\omega}$ then we obtain an endomorphism 
\[ \{ \mathcal{O}_{[Z^{1}(W_{\mathbb{Q}_{p}},\hat{G})_{\overline{\mathbb{Q}}_{\ell}}/\hat{G}]} \star A = A \rightarrow \mathcal{O}_{[Z^{1}(W_{\mathbb{Q}_{p}},\hat{G})_{\overline{\mathbb{Q}}_{\ell}}/\hat{G}]} \star A = A \} \in \End(A) = \overline{\mathbb{Q}}_{\ell} \]
which will be precisely the scalar given by evaluating $\phi_{A}^{\mathrm{FS}}$ on the excursion datum (See \cite[Theorem~5.2]{Zou}). In this way, we see that the action of excursion operators can be obtained through the spectral action. We will leverage this interpretation of excursion operators to prove the following key lemma.
\begin{lemma}
Let $A \in \Dlis(\Bun_{G},\overline{\mathbb{Q}}_{\ell})^{\omega}$ be any Schur-irreducible object with Fargues-Scholze parameter $\phi_{A}^{\mathrm{FS}}$. Set $x$ to be the closed point defined by the parameter $\phi_{A}^{\mathrm{FS}}$ in the coarse moduli space $\mathcal{Z}^{1}(W_{\mathbb{Q}_{p}},\hat{G})_{\overline{\mathbb{Q}}_{\ell}} / / \hat{G}$. Let $\pi^{-1}(x)$ denote the closed subset defined by the preimage in $[\mathcal{Z}^{1}(W_{\mathbb{Q}_{p}},\hat{G})_{\overline{\mathbb{Q}}_{\ell}}/\hat{G}]$.  Suppose we have $C \in \Perf([\mathcal{Z}^{1}(W_{E},\hat{G})_{\overline{\mathbb{Q}}_{\ell}}/\hat{G}])$ with support disjoint from $\pi^{-1}(x)$ then $C$ acts by zero on $A$ via the spectral action.
\end{lemma}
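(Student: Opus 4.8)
The plan is to manufacture a single function $f$ on the stack of $L$-parameters that vanishes at $x$ and yet acts invertibly on $C$, and then to compare the two ways it acts on $C\star A$.

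First I would pass from the stack to its coarse moduli space $\mathcal{Z}^{1}(W_{\mathbb{Q}_{p}},\hat{G})_{\overline{\mathbb{Q}}_{\ell}}//\hat{G}$. Each piece $\mathcal{Z}^{1}(W_{\mathbb{Q}_{p}}/P,\hat{G})_{\overline{\mathbb{Q}}_{\ell}}$ is a finite type affine $\overline{\mathbb{Q}}_{\ell}$-scheme and $\hat{G}$ is reductive, so the quotient map $\pi$ is a good quotient; in particular it exhibits the coarse space as a disjoint union of Noetherian affine schemes and sends closed subsets to closed subsets. Hence $\pi(\mathrm{supp}(C))$ is closed, and the hypothesis $\mathrm{supp}(C)\cap\pi^{-1}(x)=\emptyset$ forces $x\notin\pi(\mathrm{supp}(C))$. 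Since $C$ is perfect, $\mathrm{supp}(C)$ is quasi-compact, so $\pi(\mathrm{supp}(C))\cup\{x\}$ is contained in the union of finitely many of the clopen pieces of the coarse space, a Noetherian affine scheme on which $x$ is a closed point disjoint from the closed set $\pi(\mathrm{supp}(C))$. By comaximality of the corresponding ideals there is a regular function there vanishing at $x$ and equal to the constant $1$ on $\pi(\mathrm{supp}(C))$; extending it by $1$ on the remaining pieces gives $f\in\mathcal{Z}^{\mathrm{spec}}(G,\overline{\mathbb{Q}}_{\ell})=\mathcal{O}(\mathcal{Z}^{1}(W_{\mathbb{Q}_{p}},\hat{G})_{\overline{\mathbb{Q}}_{\ell}})^{\hat{G}}$ with $f(x)=0$ and $f$ a unit on an open neighborhood of $\pi(\mathrm{supp}(C))$.

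Next I would exploit $f$ from two directions. Write $m_{f}$ for multiplication by $f$. Pulled back to the stack, $f$ is invertible on an open neighborhood of $\mathrm{supp}(C)$, so $m_{f}\colon C\to C$ is an isomorphism in $\Perf([\mathcal{Z}^{1}(W_{\mathbb{Q}_{p}},\hat{G})_{\overline{\mathbb{Q}}_{\ell}}/\hat{G}])$; applying the exact functor $(-)\star A$ shows that $m_{f}\star A\colon C\star A\to C\star A$ is an isomorphism. On the other hand, regarding $f$ as the endomorphism $m_{f}$ of the monoidal unit $\mathcal{O}$ and using that, by Theorem 3.7, the spectral action makes $\Dlis(\Bun_{G},\overline{\mathbb{Q}}_{\ell})^{\omega}$ a module category over the $\mathcal{O}([\mathcal{Z}^{1}(W_{\mathbb{Q}_{p}},\hat{G})_{\overline{\mathbb{Q}}_{\ell}}/\hat{G}])$-linear monoidal category $\Perf([\mathcal{Z}^{1}(W_{\mathbb{Q}_{p}},\hat{G})_{\overline{\mathbb{Q}}_{\ell}}/\hat{G}])$, one gets
\[ m_{f}\star A \;=\; \big((\mathcal{O},m_{f})\otimes C\big)\star A \;=\; (\mathcal{O},m_{f})\star(C\star A) \;=\; C\star\big((\mathcal{O},m_{f})\star A\big), \]
where $(\mathcal{O},m_{f})\star(-)$ is, by construction of the map $\mathcal{Z}^{\mathrm{spec}}(G,\overline{\mathbb{Q}}_{\ell})\to\mathcal{Z}^{\mathrm{geom}}(G,\overline{\mathbb{Q}}_{\ell})$ (\cite[Corollary~IX.0.3]{FS}), the action of the Bernstein center element attached to $f$. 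Finally I would invoke Schur-irreducibility of $A$: $\End(A)=\overline{\mathbb{Q}}_{\ell}$, and by the characterization of $\phi_{A}^{\mathrm{FS}}$ (Theorem 3.5, together with the identification of $\mathcal{Z}^{\mathrm{spec}}(G,\overline{\mathbb{Q}}_{\ell})$ with the excursion algebra in \cite[Theorem~VIII.3.6]{FS}) the ring map $\mathcal{Z}^{\mathrm{spec}}(G,\overline{\mathbb{Q}}_{\ell})\to\End(A)$ given by the action is evaluation at $x$. Hence $(\mathcal{O},m_{f})\star A=f(x)\cdot\mathrm{id}_{A}=0$, so $C\star\big((\mathcal{O},m_{f})\star A\big)=0$; comparing with the displayed chain of equalities, the isomorphism $m_{f}\star A$ of $C\star A$ equals the zero map, which forces $C\star A\simeq 0$.

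The main obstacle I expect is the bookkeeping in the middle step: making rigorous, at the level of the $(\infty,1)$-categorical spectral action of \cite[Chapter~X]{FS}, that $m_{f}\star A$ is simultaneously the automorphism produced by functoriality of $(-)\star A$ and the Bernstein center element attached to $f$ acting on $C\star A$, and that the latter commutes with the endofunctor $C\star(-)$. All of this is formal once one knows the spectral action is a module structure over the $\mathcal{O}$-linear category $\Perf([\mathcal{Z}^{1}(W_{\mathbb{Q}_{p}},\hat{G})_{\overline{\mathbb{Q}}_{\ell}}/\hat{G}])$, which is precisely how it is built; the geometric inputs used (a good quotient of a Noetherian affine scheme by a reductive group is Noetherian affine and closed on closed sets) are classical.
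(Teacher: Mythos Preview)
Your proposal is correct and follows essentially the same approach as the paper's proof: both produce an element of $\mathcal{Z}^{\mathrm{spec}}(G,\overline{\mathbb{Q}}_{\ell})$ lying in $\mathfrak{m}_{x}$ yet acting as the identity (or an isomorphism) on $C$, and then compare the two resulting endomorphisms of $C\star A$ to force $C\star A=0$. The paper phrases this as a decomposition $1=1_{C}+1_{A}$ with $1_{C}$ annihilating $C$ and $1_{A}\in\mathfrak{m}_{A}$, which is exactly your $f$ with $1_{C}=1-f$; your write-up is slightly more careful about the underlying commutative algebra and the module-category bookkeeping, but the core idea is identical.
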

\begin{proof}
If we look at the action on $A \in \Dlis(\Bun_{G},\overline{\mathbb{Q}}_{\ell})$ via the map
\[ \mathcal{Z}^{spec}(G,\overline{\mathbb{Q}}_{\ell}) \rightarrow \mathcal{Z}^{geom}(G,\overline{\mathbb{Q}}_{\ell}) \]
given by excursion operators this factors through the maximal ideal $\mathfrak{m}_{A} \subset \mathcal{Z}^{spec}(G,\overline{\mathbb{Q}}_{\ell}) = \mathcal{O}(Z^{1}(W_{\mathbb{Q}_{p}},\hat{G})_{\overline{\mathbb{Q}}_{\ell}})^{\hat{G}}$ defined by the closed point $\phi_{A}^{\mathrm{FS}}$ in the coarse moduli space. By the conditions on the support of $C$, this implies that we can write the identity element as $1 = 1_{C} + 1_{A} \in \mathcal{Z}^{spec}(G,\overline{\mathbb{Q}}_{\ell})$, where $1_{C}$ is a function that annihilates $C$ and $1_{A}$ is in the annihilator of $\mathcal{Z}^{spec}(G,\overline{\mathbb{Q}}_{\ell})/\mathfrak{m}_{A}$. We consider the spectral action of $C$ on $A$
\[ C \star A \in \Dlis(\Bun_{G},\overline{\mathbb{Q}}_{\ell}) \]
and look at the endomorphism induced by multiplication by $1$ on $C$
\[ C \star A \rightarrow C \star A \]
which is just the identity. However, since $1_{C}$ annihilates $C$, this is the same as the action of $1_{A}$ on $C \star A$, but, it follows by the above discussion that acting via multiplication by $1_{A}$ is the same as acting via the map
\[ \mathcal{Z}^{spec}(G,\overline{\mathbb{Q}}_{\ell}) \rightarrow \mathcal{Z}^{geom}(G,\overline{\mathbb{Q}}_{\ell}) \]
given by excursion operators, and the action of $1_{A}$ after applying this map is zero. This would lead to a contradiction unless $C \star A$ is also zero.
\end{proof}
To take advantage of this lemma, we now introduce the following endofunctors of $\Dlis(\Bun_{G},\overline{\mathbb{Q}}_{\ell})^{\omega}$, which are analogues of the averaging operators considered by \cite{AL} in the Fargues-Scholze geometric Langlands correspondence for $\GL_{n}$ and by \cite{FGV, Gait} in the classical geometric Langlands correspondence over function fields.
\begin{definition}
Let $\phi$ be a representation of $W_{\mathbb{Q}_{p}}$ and $V$ a representation of $\phantom{}^{L}G$ with $T_{V}$ the associated Hecke operator. We consider the endofunctor of $\Dlis(\Bun_{G},\overline{\mathbb{Q}}_{\ell})$
\[ A \mapsto R\Gamma(W_{\mathbb{Q}_{p}},T_{V}(A) \otimes \phi^{\vee})\]
where $R\Gamma(W_{\mathbb{Q}_{p}},-): \Dlis(\Bun_{G},\overline{\mathbb{Q}}_{\ell})^{BW_{\mathbb{Q}_{p}}} \rightarrow \Dlis(\Bun_{G},\overline{\mathbb{Q}}_{\ell})$ is the derived functor given by continuous group cohomology with respect to $W_{\mathbb{Q}_{p}}$. We denote this endofunctor by $Av_{V,\phi}: \Dlis(\Bun_{G},\overline{\mathbb{Q}}_{\ell}) \rightarrow \Dlis(\Bun_{G},\overline{\mathbb{Q}}_{\ell})$. 
\end{definition}
We now would like to realize the functor $Av_{V,\phi}$ as the spectral action of an object in $\Perf([\mathcal{Z}^{1}(W_{\mathbb{Q}_{p}},\hat{G})_{\overline{\mathbb{Q}}_{\ell}}/\hat{G})])$ similar to \cite[Section~5.5]{AL}. An obvious guess would be that one should take the vector bundle $C_{V}$ corresponding to the Hecke operator $T_{V}$, as in Remark 3.7 (1), and then twist this by the constant sheaf defined by $\phi^{\vee}$, which we denote by
\[ C_{V} \otimes \phi^{\vee} \in \Perf([\mathcal{Z}^{1}(W_{\mathbb{Q}_{p}},\hat{G})_{\overline{\mathbb{Q}}_{\ell}}/\hat{G}])^{BW_{\mathbb{Q}_{p}}}\]
More precisely, this is the vector bundle with $W_{\mathbb{Q}_{p}}$-action whose evaluation at a $\overline{\mathbb{Q}}_{\ell}$-point corresponding to a $L$-parameter $\tilde{\phi}: W_{\mathbb{Q}_{p}} \rightarrow \phantom{}^{L}G(\overline{\mathbb{Q}}_{\ell})$ is the vector space with $W_{\mathbb{Q}_{p}}$-action given by tensoring the representation
\[ \tilde{\phi}: W_{\mathbb{Q}_{p}} \rightarrow \phantom{}^{L}G(\overline{\mathbb{Q}}_{\ell}) \xrightarrow{r_{V}} GL(V)\]
with $\phi^{\vee}$. To obtain the desired perfect complex, it is natural to apply $R\Gamma(W_{\mathbb{Q}_{p}},-)$ to the vector bundle
$C_{V} \otimes \phi^{\vee}$, which we denote by $\mathcal{A}v_{V,\phi}$. We note that $\mathcal{A}v_{V,\phi}$ is a perfect complex. Indeed, as $p$ is invertible in $\overline{\mathbb{Q}}_{\ell}$, the wild inertia $P \subset W_{\mathbb{Q}_{p}}$ will always act through a finite quotient on $C_{V} \otimes \phi^{\vee}$ and has no higher cohomology which implies that the invariants
\[ (C_{V} \otimes \phi^{\vee})^{P} \]
are a direct summand of the vector bundle $C_{V} \otimes \phi^{\vee}$. If we choose a generator $\tau \in I/P$ in the tame quotient of the inertia subgroup $I \subset W_{\mathbb{Q}_{p}}$ together with a Frobenius lift $\sigma \in W_{\mathbb{Q}_{p}}/P$ then the complex $\mathcal{A}v_{V,\phi}$ is computed as the homotopy limit of the diagram: 
\begin{equation}
\begin{tikzcd}
& (C_{V} \otimes \phi^{\vee})^{P} \arrow[r,"\tau - 1"] \arrow[d,"\sigma - 1"] & (C_{V} \otimes \phi^{\vee})^{P} \arrow[d,"\sigma(1 + \tau + \ldots + \tau^{p - 1}) - 1"] \\
& (C_{V} \otimes \phi^{\vee})^{P} \arrow[r,"\tau - 1"] & (C_{V} \otimes \phi^{\vee})^{P} 
\end{tikzcd} 
\end{equation}
This gives a presentation of $\mathcal{A}v_{\phi,V}$ as a perfect complex, as in \cite[Page~21]{AL}. To see this, note that $\sigma$ and $\tau$ generate a dense discrete subgroup of $W_{\mathbb{Q}_{p}}/P$ subject to the relationship that $\sigma^{-1}\tau\sigma = \tau^{p}$, and it follows that the limit of the diagram is just $((C_{V} \otimes \phi^{\vee})^{P})^{W_{\mathbb{Q}_{p}}/P}$ and that the homotopy limit is in turn the derived functor $R\Gamma(W_{\mathbb{Q}_{p}},C_{V} \otimes \phi^{\vee}) \simeq R\Gamma(W_{\mathbb{Q}_{p}}/P,(C_{V} \otimes \phi^{\vee})^{P})$, where we have used the vanishing of the higher Galois cohomology with respect to $P$ for the last isomorphism.

Then we have the following Lemma which is a verbatim generalization of \cite[Lemma~5.7]{AL}.
\begin{lemma}
There exists a canonical identification 
\[ Av_{V,\phi}(-) \simeq \mathcal{A}v_{V,\phi} \star (-)\]
of endofunctors of $\Dlis(\Bun_{G},\overline{\mathbb{Q}}_{\ell})^{\omega}$
\end{lemma}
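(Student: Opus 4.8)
The plan is to deduce this from the compatibility of the spectral action with Hecke operators recorded in Theorem~3.7, together with the explicit homotopy-limit presentation of $R\Gamma(W_{\mathbb{Q}_{p}},-)$ set up above, following \cite[Lemma~5.7]{AL} essentially verbatim.

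First I would unwind what Theorem~3.7 says for $I = \{\ast\}$ and $V \in \Rep_{\overline{\mathbb{Q}}_{\ell}}(\phantom{}^{L}G)$: the Hecke operator $T_{V}$, viewed as an endofunctor of $\Dlis(\Bun_{G},\overline{\mathbb{Q}}_{\ell})^{\omega}$ landing in objects with a continuous $W_{\mathbb{Q}_{p}}$-action, is canonically identified with the spectral action of the vector bundle with Weil-group action $C_{V} \in \Perf([\mathcal{Z}^{1}(W_{\mathbb{Q}_{p}},\hat{G})_{\overline{\mathbb{Q}}_{\ell}}/\hat{G}])^{BW_{\mathbb{Q}_{p}}}$ of Remark~3.7~(1); that is, there is a natural isomorphism $T_{V}(A) \simeq C_{V} \star A$ in $\Dlis(\Bun_{G},\overline{\mathbb{Q}}_{\ell})^{\omega, BW_{\mathbb{Q}_{p}}}$, functorial in $A$ and compatible with the $W_{\mathbb{Q}_{p}}$-actions. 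Since the spectral action is $\overline{\mathbb{Q}}_{\ell}$-linear and $\phi^{\vee}$ is finite-dimensional, the exact $\overline{\mathbb{Q}}_{\ell}$-linear functor $(-) \otimes \phi^{\vee}$ on the respective $W_{\mathbb{Q}_{p}}$-equivariant categories intertwines the spectral action with the twist $C \mapsto C \otimes \phi^{\vee}$ on $\Perf([\mathcal{Z}^{1}(W_{\mathbb{Q}_{p}},\hat{G})_{\overline{\mathbb{Q}}_{\ell}}/\hat{G}])^{BW_{\mathbb{Q}_{p}}}$ (the $W_{\mathbb{Q}_{p}}$-action on $C_{V}\star A$ comes entirely from the $C_{V}$-factor). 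Hence $T_{V}(A) \otimes \phi^{\vee} \simeq (C_{V} \otimes \phi^{\vee}) \star A$ naturally in $A$.

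Next I would pass to $P$-invariants, where $P \subset W_{\mathbb{Q}_{p}}$ is the open subgroup of wild inertia through which $C_{V} \otimes \phi^{\vee}$ factors by \cite[Theorem~I.9.1]{FS}. As $p$ is invertible in $\overline{\mathbb{Q}}_{\ell}$, the functor $(-)^{P}$ is the projection onto a direct summand cut out by an idempotent of the group algebra of the finite quotient through which $P$ acts, hence is $\overline{\mathbb{Q}}_{\ell}$-linear and commutes with the spectral action; so $(C_{V} \otimes \phi^{\vee})^{P} \star A \simeq ((C_{V} \otimes \phi^{\vee}) \star A)^{P} \simeq (T_{V}(A) \otimes \phi^{\vee})^{P}$. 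Now I invoke the enhanced, $\infty$-categorical nature of the spectral action recorded in Remark~3.7~(2): it preserves finite homotopy limits. By construction $\mathcal{A}v_{V,\phi}$ is the totalization of the explicit square displayed above, built from $(C_{V} \otimes \phi^{\vee})^{P}$ and the three maps $\tau - 1$, $\sigma - 1$, $\sigma(1 + \tau + \cdots + \tau^{p-1}) - 1$, and this totalization computes $R\Gamma(W_{\mathbb{Q}_{p}}/P,(C_{V}\otimes\phi^{\vee})^{P}) \simeq R\Gamma(W_{\mathbb{Q}_{p}},C_{V}\otimes\phi^{\vee})$. Applying $(-) \star A$ to this square and commuting $\star$ past $(-)^{P}$ and past the homotopy limit gives that $\mathcal{A}v_{V,\phi} \star A$ is the totalization of the same square with $(C_{V}\otimes\phi^{\vee})^{P}$ replaced by $(T_{V}(A) \otimes \phi^{\vee})^{P}$; by the same explicit description of the continuous cohomology of $W_{\mathbb{Q}_{p}}$, now applied to the complex $T_{V}(A) \otimes \phi^{\vee}$ with its $W_{\mathbb{Q}_{p}}$-action, this totalization is canonically $R\Gamma(W_{\mathbb{Q}_{p}},T_{V}(A) \otimes \phi^{\vee}) = Av_{V,\phi}(A)$. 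Composing the isomorphisms produced along the way yields the asserted canonical identification of endofunctors of $\Dlis(\Bun_{G},\overline{\mathbb{Q}}_{\ell})^{\omega}$ (both sides genuinely preserve $\omega$ since $T_{V}$ preserves compact objects by Theorem~3.2 and a finite homotopy limit of compacts is compact, and $\mathcal{A}v_{V,\phi}$ is perfect).

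The only genuinely delicate point is the $\infty$-categorical bookkeeping: one must know that the identification $T_{V}(-) \simeq C_{V}\star(-)$ of Theorem~3.7 is natural in $A$ and $W_{\mathbb{Q}_{p}}$-equivariant at the level of stable $\infty$-categories (not merely on homotopy categories), so that it is legitimate to apply the homotopy limit $R\Gamma(W_{\mathbb{Q}_{p}},-)$ to both sides and to commute it past $(-)\star A$. This is exactly what the monoidality and compactly supported enhancements of Remark~3.7~(2),(3) provide, so no input beyond \cite{FS} and the formal manipulations of \cite[Section~5.5]{AL} is needed; the remaining verifications are routine.
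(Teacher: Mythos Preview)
Your proof is correct and follows essentially the same approach as the paper: both apply the spectral action to the explicit square diagram~(1) presenting $\mathcal{A}v_{V,\phi}$ as a homotopy limit, identify the resulting diagram with the corresponding square for $T_{V}(\mathcal{F})\otimes\phi^{\vee}$ via Theorem~3.7, and then invoke the fact (Remark~3.7~(2)) that the spectral action commutes with homotopy limits. You are simply more explicit about the intermediate steps (the twist by $\phi^{\vee}$, taking $P$-invariants, and the $\infty$-categorical naturality) that the paper leaves implicit in its two-sentence argument.
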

\begin{proof}
Equation (1) gives a diagram of perfect complexes on $[\mathcal{Z}^{1}(W_{\mathbb{Q}_{p}},\hat{G})_{\overline{\mathbb{Q}}_{\ell}}/\hat{G}]$. Acting via the spectral action on an object $\mathcal{F} \in \Dlis(\Bun_{G},\overline{\mathbb{Q}}_{\ell})$ then gives a diagram
\[
\begin{tikzcd}
& (T_{V}(\mathcal{F}) \otimes \phi^{\vee})^{P} \arrow[r,"\tau - 1"] \arrow[d,"\sigma - 1"] & (T_{V}(\mathcal{F}) \otimes \phi^{\vee})^{P} \arrow[d,"\sigma(1 + \tau + \ldots + \tau^{p - 1}) - 1"] \\
& (T_{V}(\mathcal{F}) \otimes \phi^{\vee})^{P} \arrow[r,"\tau - 1"] & (T_{V} \otimes \phi^{\vee})^{P} 
\end{tikzcd}\]
However, if we take the homotopy limit of the diagram in $(1)$, the claim follows from the fact that the spectral action commutes with homotopy limits, as noted in Remark 3.7 (3). 
\end{proof}
With this identification in hand, we can apply Lemma 3.8 to prove the following key consequence.
\begin{lemma}
Let $A \in \Dlis(\Bun_{G},\overline{\mathbb{Q}}_{\ell})^{\omega}$ be an ULA Schur-irreducible object with Fargues-Scholze parameter $\phi_{A}^{\mathrm{FS}}$, $V$ a representation of $\phantom{}^{L}G$, and $\phi$ an irreducible semisimple representation of $W_{\mathbb{Q}_{p}}$. If the cohomology sheaves of $T_{V}(A) \in \Dlis(\Bun_{G},\overline{\mathbb{Q}}_{\ell})^{BW_{\mathbb{Q}_{p}}}$ with respect to the standard $t$-structure on $\Dlis(\Bun_{G},\overline{\mathbb{Q}}_{\ell})^{BW_{\mathbb{Q}_{p}}}$  have a non-zero sub-quotient as $W_{\mathbb{Q}_{p}}$-modules with $W_{\mathbb{Q}_{p}}$-action given by $\phi$  then $r_{V} \circ \phi_{A}^{\mathrm{FS}}$ also has such a sub-quotient.
\end{lemma}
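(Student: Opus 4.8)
The plan is to detect the asserted sub-quotient of $r_{V}\circ\phi_{A}^{\mathrm{FS}}$ by feeding the hypothesis into the averaging functor $Av_{V,\phi}$ of Definition~3.4 and its incarnation as a spectral action. By definition $Av_{V,\phi}(A)=R\Gamma(W_{\mathbb{Q}_{p}},T_{V}(A)\otimes\phi^{\vee})$, and by Lemma~3.9 this is canonically $\mathcal{A}v_{V,\phi}\star A$, where $\mathcal{A}v_{V,\phi}=R\Gamma(W_{\mathbb{Q}_{p}},C_{V}\otimes\phi^{\vee})$ is a perfect complex on the stack of parameters $[\mathcal{Z}^{1}(W_{\mathbb{Q}_{p}},\hat{G})_{\overline{\mathbb{Q}}_{\ell}}/\hat{G}]$ (built from vector bundles via the finite complex of Equation~(1), hence with formation commuting with base change). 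The proof then has two steps: first, the hypothesis forces $Av_{V,\phi}(A)\neq 0$; second, non-vanishing of $\mathcal{A}v_{V,\phi}\star A$, combined with Lemma~3.8 and an analysis of the support of $\mathcal{A}v_{V,\phi}$, produces the sub-quotient of $r_{V}\circ\phi_{A}^{\mathrm{FS}}$.

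The second step is formal. Since $A$ is Schur-irreducible, Lemma~3.8 applied contrapositively with $C=\mathcal{A}v_{V,\phi}$ shows that $\mathcal{A}v_{V,\phi}\star A\neq 0$ forces the support of $\mathcal{A}v_{V,\phi}$ to meet $\pi^{-1}(x)$, where $x$ is the closed point of the coarse moduli space given by $\phi_{A}^{\mathrm{FS}}$. Being perfect, $\mathcal{A}v_{V,\phi}$ has a point $[\tilde{\phi}]$ in its support exactly when the derived fibre there is non-zero, and that fibre is $R\Gamma(W_{\mathbb{Q}_{p}},(r_{V}\circ\tilde{\phi})\otimes\phi^{\vee})$ by Remark~3.7(1) and base change. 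Now using that $W_{\mathbb{Q}_{p}}$ has $\ell$-cohomological dimension at most $2$, one reads off from Equation~(1) that for a continuous representation $\psi$ of $W_{\mathbb{Q}_{p}}$ the non-vanishing of $R\Gamma(W_{\mathbb{Q}_{p}},\psi)$ forces $\psi$ to contain $\overline{\mathbb{Q}}_{\ell}$ or $\overline{\mathbb{Q}}_{\ell}(1)$ as a sub-quotient (the trivial representation from the invariants computing $H^{0}$, the Tate twist from the coinvariants computing $H^{2}$, and one of the two from $H^{1}$). Taking $\psi=(r_{V}\circ\tilde{\phi})\otimes\phi^{\vee}$ and using irreducibility of $\phi$, every $[\tilde{\phi}]$ in the support has $r_{V}\circ\tilde{\phi}$ containing $\phi$ or $\phi(1)$ as a sub-quotient. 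Finally, a point $[\tilde{\phi}]\in\pi^{-1}(x)$ has $\tilde{\phi}^{\mathrm{ss}}$ conjugate to $\phi_{A}^{\mathrm{FS}}$, and since $r_{V}$ is an algebraic representation it carries semisimplifications to semisimplifications, so $r_{V}\circ\tilde{\phi}$ and $r_{V}\circ\phi_{A}^{\mathrm{FS}}$ have the same Jordan--H\"older constituents; hence $\phi$ or $\phi(1)$ is a sub-quotient of $r_{V}\circ\phi_{A}^{\mathrm{FS}}$.

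The main obstacle is the first step: deducing $Av_{V,\phi}(A)\neq 0$ from the existence of a $\phi$- or $\phi(1)$-sub-quotient of some cohomology sheaf $\mathcal{H}^{q}(T_{V}(A))$, equivalently of an $\overline{\mathbb{Q}}_{\ell}$- or $\overline{\mathbb{Q}}_{\ell}(1)$-sub-quotient of $\mathcal{H}^{q}(T_{V}(A)\otimes\phi^{\vee})$. The delicate point is that passing from a complex to its $W_{\mathbb{Q}_{p}}$-cohomology could in principle cancel such a sub-quotient. I would control this by d\'evissage along the standard $t$-structure: by Theorem~3.2 the object $T_{V}(A)$ is ULA and compact, so by Remark~3.5 its cohomology sheaves lie in finite-length admissible representations and one reduces to a two-term situation; then, since $W_{\mathbb{Q}_{p}}$ has cohomological dimension $\leq 2$, the hypercohomology spectral sequence $E_{2}^{p,q}=H^{p}(W_{\mathbb{Q}_{p}},\mathcal{H}^{q}(T_{V}(A)\otimes\phi^{\vee}))\Rightarrow \mathcal{H}^{p+q}R\Gamma(W_{\mathbb{Q}_{p}},T_{V}(A)\otimes\phi^{\vee})$ has $p\in\{0,1,2\}$ with only the differential $d_{2}\colon E_{2}^{0,q}\to E_{2}^{2,q-1}$, which is rigid enough that a non-trivial $\overline{\mathbb{Q}}_{\ell}$- or $\overline{\mathbb{Q}}_{\ell}(1)$-contribution in $\bigoplus_{q}H^{\bullet}(W_{\mathbb{Q}_{p}},\mathcal{H}^{q})$ must survive to the abutment. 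A softer variant of the entire argument, less self-contained but avoiding this point, uses Theorem~3.7 directly: $T_{V}(A)=C_{V}\star A$, and $A$ being Schur-irreducible is spectrally supported over the maximal ideal of $\phi_{A}^{\mathrm{FS}}$, so the Weil-group Jordan--H\"older constituents of $\mathcal{H}^{\bullet}(T_{V}(A))$ already lie among those of $C_{V}$ restricted to $\pi^{-1}(x)$, hence among those of $r_{V}\circ\phi_{A}^{\mathrm{FS}}$. In either approach the d\'evissage over $W_{\mathbb{Q}_{p}}$ is the substantive part; the remainder is bookkeeping with Lemmas~3.8 and~3.9.
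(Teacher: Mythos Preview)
Your two-step strategy is exactly the paper's: show $Av_{V,\phi}(A)\neq 0$ via the hypercohomology spectral sequence, then invoke Lemma~3.9 and Lemma~3.8 to force the support of $\mathcal{A}v_{V,\phi}$ to meet $\pi^{-1}(x)$ and read off the sub-quotient from a fibre. Your second step is essentially identical to the paper's.

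The gap is in step one. You write that the spectral sequence is ``rigid enough that a non-trivial $\overline{\mathbb{Q}}_{\ell}$- or $\overline{\mathbb{Q}}_{\ell}(1)$-contribution \dots\ must survive to the abutment,'' but this is precisely where the paper does the work, and it is not automatic: the only $d_{2}$ is $E_{2}^{0,q+1}\to E_{2}^{2,q}$, and a priori it could kill everything in the $p=0$ and $p=2$ columns. The paper's argument is that the local Euler--Poincar\'e characteristic over $W_{\mathbb{Q}_{p}}$ vanishes, so for each fixed $q$ one has $\dim E_{2}^{1,q}=\dim E_{2}^{0,q}+\dim E_{2}^{2,q}$; hence if either $E_{2}^{0,q}$ or $E_{2}^{2,q}$ is non-zero then $E_{2}^{1,q}\neq 0$, and the $p=1$ column is untouched by $d_{2}$ and survives to $E_{\infty}$. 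You should state this explicitly rather than gesture at rigidity. (Your aside about reducing ``to a two-term situation'' is not how the paper proceeds and does not obviously help.) The preliminary reduction to finite-dimensional Galois cohomology via ULA-ness and writing $T_{V}(A)=\mathrm{colim}_{K}\,T_{V}(A)^{K}$ is the same as the paper's.

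Your ``softer variant'' is essentially the argument of Koshikawa referenced in Remark~3.8: rather than passing through $Av_{V,\phi}$, one argues directly that the spectral support of $A$ constrains the possible $W_{\mathbb{Q}_{p}}$-constituents of $T_{V}(A)=C_{V}\star A$. That is a legitimate alternative and avoids the spectral-sequence bookkeeping entirely, at the cost of relying more heavily on the formalism of the spectral action; the paper chooses the averaging-operator route, which is more hands-on.
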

\begin{remark}
During the creation of this manuscript, a similar result was obtained by Koshikawa through a similar but simpler proof \cite[Theorem~1.3]{Kosh}. As we will similarly conclude in Section 3.3, he shows that if the cohomology $R\Gamma_{c}(G,b,\mu)[\rho](\frac{d}{2})$ admits a sub-quotient with $W_{E}$-action given by an irreducible representation $\phi$ then the Fargues-Scholze parameter $\phi_{\rho}^{\mathrm{FS}}$ admits a sub-quotient given by $\phi^{\vee}$. However, in his paper, the shtuka space $\Sht(G,b,\mu)_{\infty}$ parametrizes modifications of the form $\mathcal{E}_{0} \dashrightarrow \mathcal{E}_{b}$ of type $\mu$, whereas for us it parametrizes modifications of type $\mu^{-1}$. This explains why, under our conventions, there is no dual appearing. 
\end{remark}
\begin{proof}
We first show that the assumption on the cohomology sheaves of $T_{V}(A)$ implies that averaging operator 
\[ Av_{\phi,V}(A) = R\Gamma(W_{\mathbb{Q}_{p}},T_{V}(A) \otimes \phi^{\vee})\]
is non-trivial. Since $A$ is compact we know by Theorem 3.2 that $T_{V}(A)$ is also compact and therefore supported on a finite number of HN-strata. By applying excision with respect to the HN-strata of $\Bun_{G}$ \cite[Proposition~VII.7.3]{FS}, we can assume, without loss of generality, that $T_{V}(A)$ is supported on a single stratum, where it is given by a complex of smooth irreducible representations of $J_{b}(\mathbb{Q}_{p})$ for $b \in B(G)$. Moreover, since $A$ is ULA, by Theorem 3.2 the sheaf $T_{V}(A)$ is also ULA and therefore it follows that, for all open compact $K \subset J_{b}(\mathbb{Q}_{p})$, $T_{V}(A)^{K}$ is a perfect complex of $\ol{\mathbb{Q}}_{\ell}$-vector spaces. Writing $T_{V}(A) := \colim_{K \ra \{1\}} T_{V}(A)^{K}$ and using that cohomology commutes with colimits, this allows us to apply results from the Galois cohomology of $W_{\mathbb{Q}_{p}}$ on finite-dimensional $\ol{\mathbb{Q}}_{\ell}$ vector spaces to $T_{V}(A)$. To do this, we consider the spectral sequence
\[ E_{2}^{p,q} = H^{p}(W_{\mathbb{Q}_{p}},H^{q}(T_{V}(A) \otimes \phi^{\vee})) \Longrightarrow H^{p + q}( R\Gamma(W_{\mathbb{Q}_{p}},T_{V}(A) \otimes \phi^{\vee})), \]
where cohomology is being taken with respect to the standard $t$-structure on $\Dlis(\Bun_{G},\overline{\mathbb{Q}}_{\ell})$. Now recall that the cohomological dimension of $W_{\mathbb{Q}_{p}}$ acting on finite dimensional $\overline{\mathbb{Q}}_{\ell}$-vector spaces is $2$. Therefore, this sequence degenerates at the $E_{3}$ page. Moreover, the only non-zero degeneracy maps are given by
\[ E_{2}^{0, q + 1} = H^{0}(W_{\mathbb{Q}_{p}},H^{q + 1}(T_{V}(A) \otimes \phi^{\vee})) \rightarrow  E_{2}^{2,q} = H^{2}(W_{\mathbb{Q}_{p}},H^{q}(T_{V}(A) \otimes \phi^{\vee})) \]
However, using local Tate-duality on the RHS, we can rewrite this differential as a map: 
\[ (H^{q + 1}(T_{V}(A)) \otimes \phi^{\vee})^{W_{\mathbb{Q}_{p}}} \rightarrow H^{0}(W_{\mathbb{Q}_{p}},H^{q}(T_{V}(A) \otimes \phi^{\vee})^{\vee}(1))^{\vee} \simeq ((H^{q}(T_{V}(A))^{\vee} \otimes \phi(1))^{W_{\mathbb{Q}_{p}}})^{\vee}   \]
Now the term on the LHS will only be non-zero if $\phi$ occurs as a sub-quotient of $H^{q + 1 }(T_{V}(A))$). By assumption, this will be true for some value of $q$, but now, since the Euler-Poincar\'e characteristic of $W_{\mathbb{Q}_{p}}$ acting on $\overline{\mathbb{Q}}_{\ell}$-vector spaces is $0$, one of these values being non-zero implies the $H^{1}(W_{\mathbb{Q}_{p}},-)$ of some cohomology sheaf of $T_{V}$ must also be non-zero. This will then give rise to a non-zero contribution to the cohomology of $R\Gamma(W_{\mathbb{Q}_{p}},T_{V}(A) \otimes \phi^{\vee})$ so the averaging operator $Av_{V,\phi}(A)$ applied to $A$ is non-zero. Lemma 3.9 therefore tells us that the spectral action of the perfect complex $\mathcal{A}v_{V,\phi}$ on $A$ is non-trivial. If $x$ denotes the closed $\overline{\mathbb{Q}}_{\ell}$-point in the coarse moduli space of Langlands parameters defined by $\phi_{A}^{\mathrm{FS}}$ with preimage $\pi^{-1}(x)$ in the stack of Langlands parameters, Lemma 3.8 tells us that $\mathcal{A}v_{V,\phi}$ must have non-zero support on $\pi^{-1}(x)$. The $\overline{\mathbb{Q}}_{\ell}$-points of $\pi^{-1}(x)$ correspond to the set of Langlands parameters whose semisimplification is precisely $\phi_{A}^{\mathrm{FS}}$. The previous analysis tells us that the evaluation of the perfect complex $R\Gamma(W_{\mathbb{Q}_{p}},C_{V} \otimes \phi^{\vee})$ at some such point, corresponding to an $L$-parameter $\tilde{\phi}: W_{\mathbb{Q}_{p}} \rightarrow \phantom{}^{L}G(\overline{\mathbb{Q}}_{\ell})$, must be non-zero. This evaluation is precisely the complex 
\[ R\Gamma(W_{\mathbb{Q}_{p}}, r_{V} \circ \tilde{\phi} \otimes \phi^{\vee}). \]
However, by again applying local Tate-duality, this can only be the case if $r_{V} \circ \tilde{\phi}$ has a sub-quotient isomorphic to $\phi$ or $\phi(1)$. Since $\phi$ is irreducible semisimple, this can only happen if $r_{V} \circ \tilde{\phi}^{\mathrm{ss}} = r_{V} \circ \phi_{A}^{\mathrm{FS}}$ has this property. In summary, we have concluded that if $T_{V}(A)$ has  cohomology sheaf with a non-zero subquotient with $W_{\bb{Q}_{p}}$-action given by $\phi$ then $r_{V} \circ \phi_{A}^{\mathrm{FS}}$ also has a subquotient given by $\phi$ or $\phi(1)$.

In order to strengthen the statement, we consider $\nu(n)$, the $n$-dimensional $W_{\bb{Q}_{p}}$-representation given as the iterated non-split extension of $W_{\bb{Q}_{p}}$-representations with graded isomorphic to $(\ol{\bb{Q}}_{\ell}(n - 1),\ldots,\ol{\bb{Q}}_{\ell}(1),\ol{\bb{Q}}_{\ell})$; where each stage of the extension is specified by the non-zero class in $H^{1}(W_{\bb{Q}_{p}},\ol{\bb{Q}}_{\ell}(i)^{\vee} \otimes \ol{\bb{Q}}_{\ell}(i + 1)) 
 \simeq H^{1}(W_{\bb{Q}_{p}},\ol{\bb{Q}}_{\ell}(1)) \simeq \ol{\bb{Q}}_{\ell}$. Here the last isomorphism follows from local Tate-duality and the fact that Euler-Poincar\'e characteristic is $0$. For an $L$-parameter $\tilde{\phi}: W_{\bb{Q}_{p}} \ra \phantom{}^{L}G(\ol{\bb{Q}}_{\ell})$, by looking at the long exact cohomology sequences attached to this iterated extension defining $\nu(n)$ it is easy to see by induction that $R\Gamma(W_{\bb{Q}_{p}},r_{V} \circ \tilde{\phi} \otimes \phi^{\vee} \otimes \nu(n)^{\vee})$ will be non-trivial if and only if $r_{V} \circ \tilde{\phi}$ admits a subquotient with $W_{\bb{Q}_{p}}$-action isomorphic to $\phi$ or $\phi(n)$. Using this and applying the same line of reasoning given above in the case that $n = 1$ to the averaging operator $Av_{\phi \otimes \nu(n),V}(A)$, this will imply that if $T_{V}(A)$ has a cohomology sheaf with non-zero subquotient with $W_{\bb{Q}_{p}}$-action given by $\phi$ then $r_{V} \circ \phi_{A}^{\mathrm{FS}}$ also has a subquotient given by $\phi$ or $\phi(n)$. Now, by taking $n$ sufficiently large with respect to the fixed $\phi$ and $A$, so that the later possibility cannot occur, we conclude that $T_{V}(A)$ has a non-zero subquotient with $W_{\bb{Q}_{p}}$-action given by $\phi$, as desired.
\end{proof}
We conclude this section by reviewing how the spectral action behaves on objects with supercuspidal Fargues-Scholze parameter along the lines of \cite[Section~X.2]{FS}.  These results will be used to deduce a strong form of the Kottwitz conjecture from compatibility. We recall that a supercuspidal parameter, viewed as a continuous $1$-cocycle with respect to the $W_{\mathbb{Q}_{p}}$-action on $\hat{G}(\overline{\mathbb{Q}}_{\ell})$, denoted $\phi: W_{\mathbb{Q}_{p}} \rightarrow \hat{G}(\overline{\mathbb{Q}}_{\ell})$, satisfies the property that it doesn't factor through any $\hat{P}(\overline{\mathbb{Q}}_{\ell})$ for $P$ a parabolic subgroup of $G$. Equivalently, this is the same as insisting that, if $S_{\phi} := Z_{\hat{G}}(\mathrm{Im}(\phi))$ as before, then the quotient $S_{\phi}/Z(\hat{G})^{\Gamma}$ is finite, where $\Gamma := \Gal(\overline{\mathbb{Q}}_{p}/\mathbb{Q}_{p})$. From this, it follows by deformation theory (See the proof of \cite[Theorem~1.6]{DH} for more details) that the unramified twists of the parameter $\phi$ define a connected component
\[ C_{\phi} \hookrightarrow [\mathcal{Z}^{1}(W_{\mathbb{Q}_{p}},\hat{G})_{\overline{\mathbb{Q}}_{\ell}}/\hat{G}]\]
giving rise to a direct summand
\[ \Perf(C_{\phi}) \hookrightarrow \Perf([\mathcal{Z}^{1}(W_{\mathbb{Q}_{p}},\hat{G})_{\overline{\mathbb{Q}}_{\ell}}/\hat{G}]) \]
Therefore, the spectral action gives rise to a corresponding direct summand
\[ \Dlis^{C_{\phi}}(\Bun_{G},\overline{\mathbb{Q}}_{\ell})^{\omega} \subset  \Dlis(\Bun_{G},\overline{\mathbb{Q}}_{\ell})^{\omega} \]
such that the Schur-irreducible objects in this subcategory all have Fargues-Scholze parameter given by an unramified twist of $\phi$. Now, it follows by Proposition 3.14 in the next section and Theorem 3.6 (4), that, since $\phi$ is supercuspidal, the restriction of any object in $\D^{C_{\phi}}_{\mathrm{lis}}(\Bun_{G},\overline{\mathbb{Q}}_{\ell})^{\omega}$ to any non-basic HN-strata of $\Bun_{G}$ must be zero. Therefore, one obtains a decomposition
\[ \Dlis^{C_{\phi}}(\Bun_{G},\overline{\mathbb{Q}}_{\ell})^{\omega} \simeq \bigoplus_{b \in B(G)_{basic}} \D^{C_{\phi}}(J_{b}(\mathbb{Q}_{p}),\overline{\mathbb{Q}}_{\ell})^{\omega}\]
where $J_{b}$ is the $\sigma$-centralizer of $b$ and  $\D^{C_{\phi}}(J_{b}(\mathbb{Q}_{p}),\overline{\mathbb{Q}}_{\ell})^{\omega} \subset \D(J_{b}(\mathbb{Q}_{p}),\overline{\mathbb{Q}}_{\ell})^{\omega}$ is a full subcategory of the derived category of compact objects in smooth representations of $J_{b}(\mathbb{Q}_{p})$. It also follows again by Theorem 3.6 (4) that the Schur-irreducible objects of any $\D^{C_{\phi}}(J_{b}(\mathbb{Q}_{p}),\overline{\mathbb{Q}}_{\ell})^{\omega}$ must lie only in the supercuspidal components of the Bernstein center. Now to further analyze this we fix a character $\chi$ of $Z(G)(\mathbb{Q}_{p})$ and consider the subcategory  
\[ \D_{\mathrm{lis},\chi}^{C_{\phi}}(\Bun_{G},\overline{\mathbb{Q}}_{\ell})^{\omega} \simeq \bigoplus_{b \in B(G)_{basic}} \D_{\chi}^{C_{\phi}}(J_{b}(\mathbb{Q}_{p}),\overline{\mathbb{Q}}_{\ell})^{\omega} \]
where $\D_{\chi}^{C_{\phi}}(J_{b}(\mathbb{Q}_{p}),\overline{\mathbb{Q}}_{\ell})^{\omega}$ is the derived subcategory of $\D^{C_{\phi}}(J_{b}(\mathbb{Q}_{p}),\overline{\mathbb{Q}}_{\ell})^{\omega}$ generated by compact objects with fixed central character $\chi$, via the natural isomorphism $Z(J_{b})(\mathbb{Q}_{p}) \simeq Z(G)(\mathbb{Q}_{p})$ (where we recall that $J_{b}$ is an extended pure inner form of $G$). One can see that the spectral action of $\Perf(C_{\phi})$ preserves this subcategory. Indeed, it follows by \cite[Theorem~I.8.2]{FS} that $\Perf([\mathcal{Z}^{1}(W_{\mathbb{Q}_{p}},\hat{G})_{\overline{\mathbb{Q}}_{\ell}}/\hat{G}])$ and in turn $\Perf(C_{\phi})$ is generated under cones and retracts by the image of $\Rep_{\overline{\mathbb{Q}}_{\ell}}(\phantom{}^{L}G)$ in $\Perf([\mathcal{Z}^{1}(W_{\mathbb{Q}_{p}},\hat{G})_{\overline{\mathbb{Q}}_{\ell}}/\hat{G}])$. This reduces us to checking that Hecke operators preserve this subcategory, which, in turn reduces to the observation that, if one looks at the simultaneous action of $J_{b}(\mathbb{Q}_{p}) \times J_{b'}(\mathbb{Q}_{p})$ on the space parametrizing modifications $\mathcal{E}_{b} \rightarrow \mathcal{E}_{b'}$, for $b$ and $b'$ in $B(G)_{basic}$ that, under the canonical identification $Z(J_{b'})(\mathbb{Q}_{p}) \simeq Z(J_{b})(\mathbb{Q}_{p})$, the diagonally embedded center acts trivially. This follows since an element in the center of $J_{b}(\mathbb{Q}_{p})$ acts on the modification by the inverse of an element in the corresponding center of $J_{b'}(\mathbb{Q}_{p})$, where the inverse appears from the fact that $J_{b}(\mathbb{Q}_{p})$ is acting on the left and $J_{b'}(\mathbb{Q}_{p})$ is acting on the right. 
\\\\
Now we take $\chi$ to be the central character determined by $\phi$ and local class field theory (as in \cite[Section~10.1]{Bor}). Since all Schur-irreducible objects in $\D_{\chi}^{C_{\phi}}(J_{b}(\mathbb{Q}_{p}),\overline{\mathbb{Q}}_{\ell})$ lie in the supercuspidal component of the Bernstein-center and have fixed central character $\chi$ and, by \cite[Section~VI.3.6]{RenReprp}, supercuspidal representations are injective/projective in the category of smooth representations with fixed central character, we can write $\D_{\chi}^{C_{\phi}}(J_{b}(\mathbb{Q}_{p}),\overline{\mathbb{Q}}_{\ell})^{\omega} = \bigoplus_{\pi} \Perf(\overline{\mathbb{Q}}_{\ell}) \otimes \pi$, where $\pi$ runs over all supercuspidal representations of $J_{b}(\mathbb{Q}_{p})$ with central character $\chi$ which, a priori, have Fargues-Scholze parameter given by an unramified twist of $\phi$, but, by Theorem 3.6 (2), must indeed be equal to $\phi$. All in all, we conclude that we have a decomposition  
\[ \D_{\mathrm{lis},\chi}^{C_{\phi}}(\Bun_{G},\overline{\mathbb{Q}}_{\ell})^{\omega} = \bigoplus_{b \in B(G)_{basic}} \bigoplus_{\pi_{b}} \Perf(\overline{\mathbb{Q}}_{\ell}) \otimes \pi_{b} \] 
where the $\pi_{b}$ runs over all supercuspidal representations of $J_{b}(\mathbb{Q}_{p})$ with Fargues-Scholze parameter $\phi_{\pi_{b}}^{\mathrm{FS}} = \phi$. Moreover, the RHS carries an action of $\Rep_{\overline{\mathbb{Q}}_{\ell}}(S_{\phi})$, the category of finite-dimensional $\overline{\mathbb{Q}}_{\ell}$-representations of $S_{\phi}$.
\\\\
Now, $C_{\phi}$ is the quotient of a torus by $S_{\phi}$, and vector bundles on $[\Spec{(\ol{\mathbb{Q}}_{\ell})}/S_{\phi}]$ can be described in terms of representations of $S_{\phi}$. Therefore, given $W \in \Rep_{\overline{\mathbb{Q}}_{\ell}}(S_{\phi})$, we get a vector bundle on $C_{\phi}$ attached to $W$ by pulling back along the map $C_{\phi} \ra [\Spec(\ol{\mathbb{Q}}_{\ell})/S_{\phi}]$, and we can spectrally act to get an object: 
\[ \Act_{W}(\pi_{b}) \in \bigoplus_{b \in B(G)_{basic}} \bigoplus_{\pi_{b}} \Perf(\overline{\mathbb{Q}}_{\ell}) \otimes \pi_{b}. \]
Assume that $W|_{Z(\hat{G})^{\Gamma}}$ is isotypic, given by some character $\eta: Z(\hat{G})^{\Gamma} \rightarrow \overline{\mathbb{Q}}_{\ell}^{*}$. As $Z(\hat{G})^{\Gamma}$ is diagonalizable with characters given by $B(G)_{basic} \xrightarrow{\simeq} \pi_{1}(G)_{\Gamma}$ via the $\kappa$ map, we obtain an element $b_{\eta} \in B(G)_{basic}$. Then $\Act_{W}(\pi_{b})$ is concentrated on the basic HN-strata given by $b' = b + b_{\eta}$. Therefore, we get an isomorphism
\[ \Act_{W}(\pi_{b}) \simeq \bigoplus_{\pi_{b'}} V_{\pi_{b'}} \otimes \pi_{b'}, \]
where $V_{\pi_{b'}} \in \Perf(\overline{\mathbb{Q}}_{\ell})$ and $\pi_{b'}$ runs over all supercuspidals of $J_{b'}$ with Fargues-Scholze parameter  $\phi^{\mathrm{FS}}_{\pi_{b'}} = \phi$. With this in hand, we can elucidate the $W_{\mathbb{Q}_{p}}$-action on the Hecke operator applied to a smooth irreducible object with supercuspidal Fargues-Scholze parameter, similar to what was done in Lemma 3.10 in the general case. Namely, given $V \in \Rep_{\overline{\mathbb{Q}}_{\ell}}(\phantom{}^{L}G)$, we obtain a vector bundle on $[\Spec{(\overline{\mathbb{Q}}_{\ell})}/S_{\phi}]$ with $W_{\mathbb{Q}_{p}}$-action given by $\phi$ via pulling back to the closed substack defined by the parameter $\phi$. In other words, we have a functor:
\[ \Rep_{\overline{\mathbb{Q}}_{\ell}}(\phantom{}^{L}G) \rightarrow \Rep_{\overline{\mathbb{Q}}_{\ell}}(S_{\phi})^{BW_{\mathbb{Q}_{p}}}. \]
Theorem 3.7 and the above discussion imply that the action of the image of $V$ in $\Rep_{\overline{\mathbb{Q}}_{\ell}}(S_{\phi})^{BW_{\mathbb{Q}_{p}}}$ acting via the spectral action on $\D_{\mathrm{lis},\chi}^{C_{\phi}}(\Bun_{G},\overline{\mathbb{Q}}_{\ell})$ described above is precisely the Hecke operator $T_{V}$. In particular, we note that, since every object $\D_{\mathrm{lis},\chi}^{C_{\phi}}(\Bun_{G},\overline{\mathbb{Q}}_{\ell})$ has Fargues-Scholze parameter equal to $\phi$, the spectral action of the Hecke operator on objects in $\D_{\mathrm{lis},\chi}^{C_{\phi}}(\Bun_{G},\overline{\mathbb{Q}}_{\ell})$ factors over the base-change to the localization around the closed point defined by $\phi$.

This tells us that, if we decompose $r_{V} \circ \phi$ viewed as a representation of $S_{\phi}$ as a direct sum $\bigoplus_{i \in I} W_{i} \otimes \sigma_{i}$ where $W_{i} \in \Rep_{\ol{\mathbb{Q}}_{\ell}}(S_{\phi})$ is irreducible and $\sigma_{i}$ is a continuous finite-dimensional representation of $W_{\mathbb{Q}_{p}}$, then we have an isomorphism 
\[ T_{V}(\pi) \simeq \bigoplus_{i \in I} \Act_{W_{i}}(\pi) \otimes \sigma_{i} \]
as $J_{b'}(\mathbb{Q}_{p}) \times W_{\mathbb{Q}_{p}}$-modules.
We now summarize the above discussion as a corollary for future use.
\begin{corollary}
Let $\phi$ be a supercuspidal parameter of $G$, $b \in B(G)_{basic}$ a basic elment, $V \in \Rep_{\overline{\mathbb{Q}}_{\ell}}(\phantom{}^{L}G)$ an irreducible representation of some highest weight $\mu$ with dominant inverse $\mu^{-1}$, and $\pi_{b}$ a representation of $J_{b}(\mathbb{Q}_{p})$ with Fargues-Scholze parameter equal to $\phi$. We set $b_{\mu} \in B(G,\mu^{-1})$ to be the unique basic element and $b'$ the unique basic element such that $\kappa(b') = \kappa(b) + \kappa(b_{\mu})$.  If we decompose $r_{V} \circ \phi$ viewed as representation of $S_{\phi}$ as a direct sum $\bigoplus_{i \in I} W_{i} \otimes \sigma_{i}$, where $W_{i} \in Rep(S_{\phi})$ is irreducible and $\sigma_{i}$ is a continuous finite-dimensional representation of $W_{\mathbb{Q}_{p}}$, then there exists an isomorphism of $W_{\mathbb{Q}_{p}} \times J_{b'}(\mathbb{Q}_{p})$-modules
\[ T_{\mu}(\pi_{b}) \simeq \bigoplus_{i \in I} \Act_{W_{i}}(\pi) \otimes \sigma_{i} \]
where $\Act_{W_{i}}(\pi) \simeq \bigoplus_{\pi_{b'}} V_{\pi_{b'}} \otimes \pi_{b'}$ with $V_{\pi_{b'}} \in \Perf(\overline{\mathbb{Q}}_{\ell})$ and $\pi_{b'}$ ranging over supercuspidal representation of $J_{b'}(\mathbb{Q}_{p})$ with Fargues-Scholze parameter equal to $\phi$.
\end{corollary}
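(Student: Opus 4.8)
The statement is in effect a recapitulation of the structure theory just developed, so the plan is to assemble the ingredients in the order they were introduced. First I would use that a supercuspidal $\phi$ has $S_{\phi}/Z(\hat{G})^{\Gamma}$ finite, so that, by deformation theory (as in the proof of \cite[Theorem~1.6]{DH}), the unramified twists of $\phi$ cut out a connected component $C_{\phi}$ of $[\mathcal{Z}^{1}(W_{\mathbb{Q}_{p}},\hat{G})_{\overline{\mathbb{Q}}_{\ell}}/\hat{G}]$; this yields a direct summand $\Perf(C_{\phi})$ of the category of perfect complexes and, via the spectral action, a direct summand $\Dlis^{C_{\phi}}(\Bun_{G},\overline{\mathbb{Q}}_{\ell})^{\omega}$ all of whose Schur-irreducible objects have Fargues--Scholze parameter an unramified twist of $\phi$. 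Combining Proposition~3.14 with compatibility of the Fargues--Scholze correspondence with parabolic induction (Theorem~3.6~(4)), I would then argue that every object of $\Dlis^{C_{\phi}}(\Bun_{G},\overline{\mathbb{Q}}_{\ell})^{\omega}$ vanishes on all non-basic HN-strata, so that, using Lemma~3.1, it decomposes as $\bigoplus_{b\in B(G)_{basic}}\D^{C_{\phi}}(J_{b}(\mathbb{Q}_{p}),\overline{\mathbb{Q}}_{\ell})^{\omega}$, with every Schur-irreducible object living in a supercuspidal Bernstein component.

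Next I would fix the central character $\chi$ attached to $\phi$ by local class field theory and check that the spectral action preserves the subcategory of objects with central character $\chi$: since $\Perf(C_{\phi})$ is generated under cones and retracts by Hecke operators (\cite[Theorem~I.8.2]{FS}), this reduces to the observation that the diagonally embedded centre of $J_{b}(\mathbb{Q}_{p})\times J_{b'}(\mathbb{Q}_{p})$ acts trivially on spaces of modifications $\mathcal{E}_{b}\to\mathcal{E}_{b'}$. On $\D_{\mathrm{lis},\chi}^{C_{\phi}}(\Bun_{G},\overline{\mathbb{Q}}_{\ell})^{\omega}$ I would invoke projectivity and injectivity of supercuspidals with fixed central character (\cite[Theorem~5.4.1]{Cas}) to write $\D_{\chi}^{C_{\phi}}(J_{b}(\mathbb{Q}_{p}),\overline{\mathbb{Q}}_{\ell})^{\omega}=\bigoplus_{\pi_{b}}\Perf(\overline{\mathbb{Q}}_{\ell})\otimes\pi_{b}$, with $\pi_{b}$ running over supercuspidals whose Fargues--Scholze parameter is an unramified twist of $\phi$, which Theorem~3.6~(2) together with the choice of $\chi$ forces to be exactly $\phi$. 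The closed point $\phi\in C_{\phi}$ produces $[\overline{\mathbb{Q}}_{\ell}/S_{\phi}]\hookrightarrow C_{\phi}$ and hence $\Perf([\overline{\mathbb{Q}}_{\ell}/S_{\phi}])\hookrightarrow\Perf(C_{\phi})$, and by Lemma~3.8 everything not in the image of this subcategory acts by zero, so $\D_{\mathrm{lis},\chi}^{C_{\phi}}$ acquires a $\Rep_{\overline{\mathbb{Q}}_{\ell}}(S_{\phi})$-action and one obtains the objects $\Act_{W}(\pi_{b})$ for $W\in\Rep_{\overline{\mathbb{Q}}_{\ell}}(S_{\phi})$. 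When $W|_{Z(\hat{G})^{\Gamma}}$ is $\eta$-isotypic, the diagonalizability of $Z(\hat{G})^{\Gamma}$ and the identification $B(G)_{basic}\simeq\pi_{1}(G)_{\Gamma}$ via the $\kappa$-map pin down the basic stratum $b'=b+b_{\eta}$ that supports $\Act_{W}(\pi_{b})$, giving $\Act_{W}(\pi_{b})\simeq\bigoplus_{\pi_{b'}}V_{\pi_{b'}}\otimes\pi_{b'}$ with $V_{\pi_{b'}}\in\Perf(\overline{\mathbb{Q}}_{\ell})$.

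Finally, given an irreducible $V\in\Rep_{\overline{\mathbb{Q}}_{\ell}}({}^{L}G)$ of highest weight $\mu$, I would pass to the vector bundle on $[\overline{\mathbb{Q}}_{\ell}/S_{\phi}]$ it determines, namely $r_{V}\circ\phi$ regarded as an $S_{\phi}$-representation with its natural $W_{\mathbb{Q}_{p}}$-action, and use Theorem~3.7 to identify the spectral action of its image with the Hecke operator $T_{V}=T_{\mu}$. Decomposing $r_{V}\circ\phi\simeq\bigoplus_{i\in I}W_{i}\otimes\sigma_{i}$ into isotypic pieces (with $W_{i}$ irreducible $S_{\phi}$-representations and $\sigma_{i}$ continuous $W_{\mathbb{Q}_{p}}$-representations) and using the monoidality of the spectral action then gives $T_{\mu}(\pi_{b})\simeq\bigoplus_{i\in I}\Act_{W_{i}}(\pi_{b})\otimes\sigma_{i}$; combining with the previous step yields the displayed isomorphism of $W_{\mathbb{Q}_{p}}\times J_{b'}(\mathbb{Q}_{p})$-modules, where $b'=b+b_{\mu}$ for $b_{\mu}\in B(G,\mu^{-1})$ the unique basic element, which matches $b_{\eta}$ for $\eta=r_{V}\circ\phi|_{Z(\hat{G})^{\Gamma}}$ (a single character since $V$ is irreducible). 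I expect the point requiring the most care to be the central-character bookkeeping: both the verification that the spectral action preserves the fixed-central-character subcategory and, above all, tracking via the $\kappa$-map exactly which basic Newton stratum carries each $\Act_{W_{i}}(\pi_{b})$, i.e.\ matching the twist $b_{\eta}$ with $b_{\mu}$; everything else is a formal unwinding of the spectral action and of Lemma~3.8.
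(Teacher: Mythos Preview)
Your proposal is correct and follows essentially the same approach as the paper: the corollary is explicitly presented there as a summary of the preceding discussion, and your outline recapitulates that discussion step by step (connected component $C_{\phi}$, vanishing on non-basic strata via Proposition~3.14 and Theorem~3.6~(4), fixing the central character, the decomposition via \cite[Theorem~5.4.1]{Cas}, factoring the spectral action through $\Perf([\overline{\mathbb{Q}}_{\ell}/S_{\phi}])$ using Lemma~3.8, and finally identifying $T_{\mu}$ with the spectral action of the image of $V$ via Theorem~3.7). There is no substantive difference.
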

\begin{remark}
As we will start to see in the next section, the work of Hansen \cite{Han}, Hansen-Kaletha-Weinstein \cite{KW}, and compatibility of the Fargues-Scholze and Gan-Takeda/Gan-Tantono local Langlands correspondence will allow us to use this Corollary to prove Theorem 1.3. This is suggested already by Corollary 3.3, which shows us that $T_{\mu}(\pi_{b})$ can be computed explicitly using the cohomology of local Shimura varieties.
\end{remark}
\subsection{Compatibility with the Local Langlands for $\GSp_{4}$ and $\GU_{2}(D)$}
With the results of the previous section in place, we can now start making progress towards our goal of proving compatibility. So we again let $G = \mathrm{Res}_{L/\mathbb{Q}_{p}}\GSp_{4}$ and $J = \mathrm{Res}_{L/\mathbb{Q}_{p}}\GU_{2}(D)$, for $L/\mathbb{Q}_{p}$ a finite extension. As mentioned in section $1$, the case where a representation $\pi \in \Pi(G)$ (resp. $\rho \in \Pi(J)$), is a sub-quotient of a parabolic induction easily follows from Theorem 3.6 (3), (4), (5), and compatibility of the (semi-simplified) Gan-Takeda (resp. Gan-Tantono) parameter with parabolic induction. We record this as a corollary now.
\begin{corollary}
Let $\pi \in \Pi(G)$ (resp. $\rho \in \Pi(J)$) be representations occurring as a sub-quotient of a parabolic induction. For such $\pi$ (resp. $\rho$), the Fargues-Scholze and Gan-Takeda (resp. Gan-Tantono) local Langlands correspondences are compatible. 
\end{corollary}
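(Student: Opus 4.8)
The plan is to reduce compatibility for such $\pi$ (resp.\ $\rho$) to compatibility on a proper Levi subgroup, where every factor is a general linear group over $L$, an inner form of one, or a torus, and then propagate the equality of parameters through the parabolic induction using the functoriality properties recorded in Theorem 3.6. Concretely, the hypothesis provides a proper parabolic $P \subsetneq G$ (resp.\ $P \subsetneq J$) with Levi factor $M$ and a smooth irreducible representation $\pi_{M}$ of $M(\Q_{p})$ such that $\pi$ (resp.\ $\rho$) is an irreducible sub-quotient of the normalized parabolic induction $\mathrm{ind}_{P}^{G}(\pi_{M})$ (resp.\ $\mathrm{ind}_{P}^{J}(\rho_{M})$); equivalently, $\pi$ (resp.\ $\rho$) is non-supercuspidal. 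The structural input is that every proper Levi subgroup of $G = \Res_{L/\Q_{p}}\GSp_{4}$ is $\Res_{L/\Q_{p}}$ of either the Siegel Levi $\GL_{2} \times \GL_{1}$, the Klingen Levi $\GL_{1} \times \GL_{2}$, or the maximal torus $\mathbb{G}_{m}^{3}$, while every proper Levi of $J = \Res_{L/\Q_{p}}\GU_{2}(D)$ is $\Res_{L/\Q_{p}}$ of $D^{\times} \times \GL_{1}$ or of the maximal torus, with $D^{\times}$ an inner form of $\GL_{2}/L$.

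Next I would verify compatibility on $M$. On each torus factor the claim is Theorem 3.6 (1); on each $\GL_{2}$-factor and on $D^{\times}$ it is Theorem 3.6 (5); by Theorem 3.6 (6) these statements, formulated over $L$, descend to the corresponding Weil restrictions over $\Q_{p}$; and by Theorem 3.6 (3) they combine to give $\phi^{\mathrm{FS}}_{\pi_{M}} = \phi^{\mathrm{ss}}_{\pi_{M}}$, where $\phi_{\pi_{M}}$ is the parameter attached to $\pi_{M}$ by the local Langlands correspondence on $M$ (the product of Harris--Taylor/Henniart and local class field theory on the factors). Then Theorem 3.6 (4) shows that $\phi^{\mathrm{FS}}_{\pi}$ is the image of $\phi^{\mathrm{FS}}_{\pi_{M}}$ under the canonical embedding $\phantom{}^{L}M(\ol{\mathbb{Q}}_{\ell}) \hookrightarrow \phantom{}^{L}G(\ol{\mathbb{Q}}_{\ell})$, and hence equals the image of $\phi^{\mathrm{ss}}_{\pi_{M}}$ under this embedding.

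Finally I would match this with the Gan--Takeda (resp.\ Gan--Tantono) side. By the construction of $\LLC_{G}$ (resp.\ $\LLC_{J}$) from $\LLC_{\GL_{n}}$, Jacquet--Langlands, and theta lifting --- equivalently, by the uniqueness characterization of Theorem 2.1 (resp.\ Theorem 2.4) via $L$-, $\epsilon$- and $\gamma$-factors together with the standard compatibility of the Langlands classification with parabolic induction on each $\GL_{n}$-block --- the Gan--Takeda (resp.\ Gan--Tantono) parameter $\phi_{\pi}$ of any sub-quotient of $\mathrm{ind}_{P}^{G}(\pi_{M})$ is precisely the image of $\phi_{\pi_{M}}$ under the same embedding $\phantom{}^{L}M \hookrightarrow \phantom{}^{L}G$. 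Since semisimplification is precomposition with the fixed map $g \mapsto (g,\mathrm{diag}(|g|^{1/2},|g|^{-1/2}))$, which commutes with the fixed homomorphism $\phantom{}^{L}M \hookrightarrow \phantom{}^{L}G$, one gets $\phi^{\mathrm{ss}}_{\pi} = (\phantom{}^{L}M \hookrightarrow \phantom{}^{L}G) \circ \phi^{\mathrm{ss}}_{\pi_{M}}$, and comparing with the previous paragraph yields $\phi^{\mathrm{FS}}_{\pi} = \phi^{\mathrm{ss}}_{\pi}$; the argument for $\rho$ is verbatim the same. The only point that requires real care --- the (mild) main obstacle --- is pinning down that the Gan--Takeda/Gan--Tantono parameter of a sub-quotient of a reducible induced representation really is the induced parameter; this is implicit in the construction, but if one wants it can be checked case by case (Saito--Kurokawa, Howe--Piatetski--Shapiro, and the genuinely non-discrete parameters), using that compatibility of $\LLC_{\GL_{n}}$ with parabolic induction is known and that the $L$-, $\epsilon$- and $\gamma$-factors appearing in Theorem 2.1 (5) and Theorem 2.4 (5) are multiplicative in the inducing data.
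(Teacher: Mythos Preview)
Your argument is correct and follows exactly the approach indicated in the paper: reduce to the Levi via Theorem 3.6 (3), (4), (5), (6), use the known compatibility for $\GL_{n}$ and its inner forms on each factor, and then invoke the compatibility of the Gan--Takeda/Gan--Tantono correspondence with parabolic induction. One small slip: $J = \GU_{2}(D)$ has a \emph{unique} conjugacy class of proper parabolics, with Levi $D^{\times} \times \GL_{1}$ (see Section~2.2), so there is no ``maximal torus'' case on the $J$ side; this does not affect the proof.
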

To tackle the remaining cases where the $L$-parameter $\phi$ is mixed supercuspidal or supercuspidal, we note that these are the cases where the $L$-parameter is discrete (i.e the $L$-parameter does not factor through a Levi subgroup). This case is disposable to the results of Hansen-Kaletha-Weinstein \cite{KW}. We will now let $\mu$ be the Siegel cocharacter of $G$ so that the $\sigma$-centralizer of the  unique basic element $b \in B(G,\mu)$ is $J$. We now state the main result of \cite{KW} specialized to the case of the Shimura datum $(G,b,\mu)$.
\begin{theorem}{\cite[Theorem~1.0.2]{KW}}
Let $\phi$ be a discrete parameter and $S_{\phi} := Z_{\hat{G}}(\mathrm{Im}(\phi))$ as before. Let $\Pi_{\phi}(G)$ and $\Pi_{\phi}(J)$ denote the $L$-packets over $\phi$. Set $\pi \in \Pi_{\phi}(G)$ (resp. $\rho \in \Pi_{\phi}(J)$) to be smooth irreducible representations of $G(\mathbb{Q}_{p})$ (resp. $J(\mathbb{Q}_{p})$). If $\phi$ is supercuspidal or mixed supercuspidal, we have the following equality in the Grothendieck group $K_{0}(G(\mathbb{Q}_{p}))^{ell}$ of elliptic admissible representations of $G(\mathbb{Q}_{p})$ of finite length
\[ [R\Gamma_{c}^{\flat}(G,b,\mu)[\rho]] = -\sum_{\pi \in \Pi_{\phi}(G)} Hom_{S_{\phi}}(\delta_{\pi,\rho},\mathrm{std}\circ\phi)\pi \]
and the following equality 
\[ [R\Gamma_{c}^{\flat}(G,b,\mu)[\pi]] = -\sum_{\rho \in \Pi_{\phi}(J)} Hom_{S_{\phi}}(\delta_{\pi,\rho}^{\vee},(\mathrm{std}\circ\phi)^{\vee})\rho \]
in the Grothendieck group of elliptic admissible $J(\mathbb{Q}_{p})$-representations of finite length, where $\delta_{\pi,\rho}$ is the algebraic representation of $S_{\phi}$ in Definition 2.3. Moreover, if the Fargues-Scholze parameter of $\pi$ (resp. $\rho$) is supercuspidal this is true in the Grothendieck group $K_{0}(G(\mathbb{Q}_{p}))$ (resp. $K_{0}(J_{b}(\mathbb{Q}_{p}))$) of all admissible representations of finite length. 
\end{theorem}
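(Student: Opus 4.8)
The statement is the specialization of \cite[Theorem~1.0.2]{KW} to the local Shimura datum $(G,b,\mu)$ in which $\mu$ is the Siegel cocharacter and $b\in B(G,\mu)$ is the basic element with $J_{b}=J$; the plan is to invoke that theorem, and I sketch how its proof runs in this case. The geometric input is Corollary 3.3, which identifies $R\Gamma_{c}(G,b,\mu)[\rho]$ with $j_{\mathbf{1}}^{*}T_{\mu}j_{b!}(\mathcal{F}_{\rho})$ up to a shift and Tate twist, and dually $R\Gamma_{c}(G,b,\mu)[\pi]$ with $j_{b}^{*}T_{\mu^{-1}}j_{\mathbf{1}!}(\mathcal{F}_{\pi})$; by Theorem 3.5 these are valued in finite-length admissible representations, so their classes in the relevant Grothendieck groups are defined (the $\flat$-version, as in Remark 3.4, is the one adapted to the machinery of \cite{KW}). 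Since only the equality in the elliptic Grothendieck group $K_{0}(G(\mathbb{Q}_{p}))^{ell}$ is needed, it suffices to compute the elliptic pairing of $[R\Gamma_{c}^{\flat}(G,b,\mu)[\rho]]$ against pseudocoefficients of (essentially) discrete series representations, as these detect classes in $K_{0}^{ell}$.

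I would then compute this pairing by combining the geometry of $\Bun_{G}$ with a trace-formula comparison. The behaviour of the Hecke operator $T_{\mu}$ on the Harder--Narasimhan stratification --- in particular a Harris--Viehmann-type statement that $j_{b'}^{*}T_{\mu}j_{b!}(\mathcal{F}_{\rho})$ is built from parabolic inductions out of proper Levi subgroups when $b'$ is non-basic, hence is trivial in the elliptic Grothendieck group --- reduces the problem to the basic locus. There the elliptic pairing against a discrete series pseudocoefficient $f_{\pi}$ is computed by a Lefschetz--Verdier-type trace formula for the Hecke correspondence, producing a sum of stable orbital integrals attached to $\rho$ and $\mu$. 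Finally, the refined local Langlands parametrization of Section 2 --- concretely, the endoscopic character identities of Chan--Gan \cite{CG} recorded in Section 2.2 together with the canonical transfer factors $\Delta[\mathfrak{m},\mathfrak{c},b_{1}]$ of Kaletha --- turns this sum into the multiplicity $\Hom_{S_{\phi}}(\delta_{\pi,\rho},\mathrm{std}\circ\phi)$, giving the first displayed formula. The second follows symmetrically, with $\mu^{-1}$ in place of $\mu$ and the roles of $G$ and $J$ interchanged, the duals arising because $r_{\mu^{-1}}\simeq r_{\mu}^{\vee}$.

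For the final sentence, suppose $\phi_{\rho}^{\mathrm{FS}}$ is supercuspidal and let $C$ be the associated connected component of the stack of Langlands parameters. By the discussion preceding Corollary 3.12, $\mathcal{F}_{\rho}$ --- and hence, $T_{\mu}$ being part of the spectral action, also $T_{\mu}j_{b!}(\mathcal{F}_{\rho})$ --- lies in the summand $\Dlis^{C}(\Bun_{G},\overline{\mathbb{Q}}_{\ell})^{\omega}$, which decomposes as a direct sum, over basic $b'$ and supercuspidal representations $\pi_{b'}$ of $J_{b'}(\mathbb{Q}_{p})$ with Fargues--Scholze parameter $\phi_{\rho}^{\mathrm{FS}}$, of copies of $\Perf(\overline{\mathbb{Q}}_{\ell})\otimes\pi_{b'}$. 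Consequently $R\Gamma_{c}^{\flat}(G,b,\mu)[\rho]$ has only supercuspidal constituents, and since supercuspidal representations never occur as subquotients of proper parabolic inductions, the natural map from $K_{0}$ of all finite-length admissible representations to $K_{0}^{ell}$ is injective on their span; the equality already valid in $K_{0}^{ell}$ therefore lifts to $K_{0}$. The case of $[R\Gamma_{c}^{\flat}(G,b,\mu)[\pi]]$ with $\phi_{\pi}^{\mathrm{FS}}$ supercuspidal is identical.

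The main obstacle is the middle paragraph: controlling the non-basic Newton strata through the Harris--Viehmann formula and extracting the $S_{\phi}$-equivariant multiplicities from the trace of the Hecke operator --- the technical heart of \cite{KW} --- together with the bookkeeping needed to reconcile the ad hoc signs of the transfer factors appearing in \cite{CG} with Kaletha's canonical normalization (cf.\ the discussion following Definition 2.3).
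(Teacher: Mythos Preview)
The paper does not give its own proof of this statement: it is quoted verbatim as \cite[Theorem~1.0.2]{KW} and used as a black box, with only a remark following it explaining the two-towers swap and the role of the result in the argument. There is thus nothing in the paper to compare your proposal against.

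That said, your sketch is a faithful outline of the strategy of \cite{KW}: reduce to computing elliptic pairings via a Lefschetz--Verdier trace formula on the Hecke correspondence, kill the contribution of non-basic strata using the Harris--Viehmann description (parabolic induction is trivial in $K_{0}^{ell}$), and identify the resulting stable distribution via the endoscopic character identities that underlie the refined local Langlands of Kaletha. Your justification of the ``Moreover'' clause via the decomposition of $\Dlis^{C_{\phi}}(\Bun_{G},\overline{\mathbb{Q}}_{\ell})^{\omega}$ over basic strata is exactly the argument recorded in the paper (Section~3.2, preceding Corollary~3.11), so on that point you and the paper are in complete agreement.
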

\begin{remark}
\begin{enumerate}
\item To deduce the result for the $\pi$-isotypic part, we have implicitly used the two towers isomorphism $\Sht(G,b,\mu)_{\infty} \simeq \Sht(J,\hat{b},\mu^{-1})_{\infty}$, where $\hat{b} \in B(J,\mu^{-1})$ is the unique basic element \cite[Corollary 23.3.2.]{SW2} and $\mu^{-1}$ is the dominant inverse of $\mu$. This explains the dual appearing in the $\pi$-isotypic part. 
\item We see that, via Corollary 3.3, this, in the case that $\phi$ is supercuspidal, should provide us insight into the multiplicity spaces $V_{\pi_{b'}}$ appearing in Corollary 3.11, assuming compatibility of the Fargues-Scholze and Gan-Tantono/Gan-Takeda local Langlands correspondences. Namely, we will see later (Theorem 3.17 and 3.18) that $R\Gamma_{c}(G,b,\mu)[\rho] \simeq R\Gamma_{c}^{\flat}(G,b,\mu)[\rho]$ and is concentrated in middle degree $3$ if $\phi_{\rho}^{\mathrm{FS}}$ is supercuspidal. Assuming compatibility, Corollary 3.11 will therefore tell us that $R\Gamma_{c}(G,b,\mu)[\rho]$ will be a direct sum over representations $\pi \in \Pi_{\phi}(G)$ with $W_{L}$-action given by $\mathrm{std}\circ \phi_{\rho}^{\mathrm{FS}} = \mathrm{std}\circ \phi_{\rho}$ decomposed as a representation of $S_{\phi}$. The summands in the decomposed $S_{\phi}$-representation correspond to the weight spaces appearing in the above description in the Grothendieck group. 
\end{enumerate}
\end{remark}
We now wish to write out the precise formula for the $\rho$ and $\pi$-isotypic parts, using the refined local Langlands discussed in section $2$.
\begin{enumerate}
    \item ($\phi$ stable)
    In this case, the $L$-packet $\Pi_{\phi}(G) = \{\pi\}$ is a singleton so the RHS of the above formula for the $\rho$-isotypic part has one term
    \[ -\pi Hom_{S_{\phi}}(\delta_{\pi,\rho},\mathrm{std}\circ\phi_{\rho}) \]
    In this case, $S_{\phi} = \mathbb{G}_{m}$ and $\delta_{\pi,\rho}$ is simply the identity representation. Thus, this Hom space gets identified with the characters of $\GL_{4}$, so the formula reduces to
    \[ -4\pi \]
    \item ($\phi$ endoscopic) 
    In this case, the $L$-packet has size $2$ and, as seen in section $2.1$, $\Pi_{\phi}(G) = \{ \pi^{+}, \pi^{-} \}$, where $\pi^{+}$ (resp. $\pi^{-}$) corresponds to the trivial (resp. non-trivial) character of the component group. $\rho$ can be either of the two representations corresponding to the irreducible representation of $S_{\phi}$ given by $\tau_{i}$ for $i = 1,2$ the projection to the two coordinates of $S_{\phi}$. However, the RHS remains the same regardless of which one it corresponds to. So, without loss of generality, we assume that $\rho = \rho_{1}$. Then the RHS of the above formula for the $\rho$-isotypic part has two terms
    \[ \pi^{+}Hom_{S_{\phi}}(\tau_{1},\mathrm{std}\circ\phi_{\rho})\]
    and
    \[ \pi^{-}Hom_{S_{\phi}}(\tau_{1} \otimes \tau_{\pi^{-}} \simeq \tau_{2}, \mathrm{std}\circ \phi_{\rho}) \]
    However, writing $\mathrm{std}\circ \phi_{\rho} \simeq \phi_{1} \oplus \phi_{2}$, these get identified with
    \[ -\pi^{+}Hom_{\overline{\mathbb{Q}}_{\ell}^{*}}(\overline{\mathbb{Q}}_{\ell}^{*},\phi_{1})\]
    and
    \[ -\pi^{-}Hom_{\overline{\mathbb{Q}}_{\ell}^{*}}(\overline{\mathbb{Q}}_{\ell}^{*}, \phi_{2}) \]
    which will both be identified with characters of $\GL_{2}$. Thus, the RHS is equal to
    \[ - 2\pi^{+} - 2\pi^{-}\]
\end{enumerate}
Similarly, for the $\pi$-isotypic part, we get that the RHS of the above formula is given by
\[ -4\rho \]
in the stable case and
\[ -2\rho_{1} - 2\rho_{2} \]
in the endoscopic case. 
\\\\
As mentioned in section 1.2, we will now use the previous result to perform a bootstrap to the supercuspidal representations occurring in the $L$-packets $\Pi_{\phi}(G)$ (resp. $\Pi_{\phi}(J)$), for $\phi$ a mixed supercuspidal parameter. For this, we will mention one last result from the Fargues-Scholze local Langlands correspondence. 
\begin{proposition}{\cite[Section~IX.7.1]{FS}}
For $G$ any connected reductive group over $\mathbb{Q}_{p}$, the action of the excursion algebra on $\Dlis(\Bun_{G},\overline{\mathbb{Q}}_{\ell})$ commutes with Hecke operators. Moreover, it is compatible with restriction to the HN-strata $\Bun_{G}^{b}$ for $b \in B(G)$ in the following sense. Given a Schur irreducible object $A \in \Dlis(\Bun_{G},\ol{\mathbb{Q}}_{\ell})$ and $b \in B(G)$, if we let $B$ be a smooth irreducible constituent of $j_{b}^{*}(A)$ then we can view it as a sheaf on the neutral strata of $\Bun_{J_{b}}^{1}$ and let $\phi_{B}^{\mathrm{FS}}$ denote its Fargues-Scholze parameter with respect to the excursion algebra on $\Bun_{J_{b}}$. Then the Fargues-Scholze parameter of $A$ is the composition 
\[ W_{\mathbb{Q}_{p}} \xrightarrow{\phi_{B}^{\mathrm{FS}}} \phantom{}^{L}J_{b}(\ol{\mathbb{Q}}_{\ell}) \ra \phantom{}^{L}G(\ol{\mathbb{Q}}_{\ell}) \]
where $\phantom{}^{L}J_{b}(\ol{\mathbb{Q}}_{\ell}) \ra \phantom{}^{L}G(\ol{\mathbb{Q}}_{\ell})$ is the \emph{twisted} embedding, as defined in \cite[Page~327]{FS}.
\end{proposition}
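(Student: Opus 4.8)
This is a theorem of Fargues--Scholze \cite[Section~IX.7.1]{FS}; below is the shape of the argument one would carry out. The commutation of excursion operators with Hecke operators is cleanest through the spectral action of Theorem~3.7: working on the compact objects $\Dlis(\Bun_{G},\overline{\mathbb{Q}}_{\ell})^{\omega}$ (which generate), one uses that the excursion algebra is identified with the ring $\mathcal{Z}^{\mathrm{spec}}(G,\overline{\mathbb{Q}}_{\ell})$ of global functions on $[\mathcal{Z}^{1}(W_{\mathbb{Q}_{p}},\hat{G})_{\overline{\mathbb{Q}}_{\ell}}/\hat{G}]$ and acts through the spectral action, whereas the Hecke operator $T_{W}$ is the spectral action of the object $C_{W}\in\Perf([\mathcal{Z}^{1}(W_{\mathbb{Q}_{p}},\hat{G})_{\overline{\mathbb{Q}}_{\ell}}/\hat{G}])^{BW_{\mathbb{Q}_{p}}^{I}}$. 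Since $\Perf([\mathcal{Z}^{1}(W_{\mathbb{Q}_{p}},\hat{G})_{\overline{\mathbb{Q}}_{\ell}}/\hat{G}])$ is symmetric monoidal with unit the structure sheaf, the action of its endomorphisms of the unit (the global functions) commutes with the action of all of its objects, and the commutation follows. Intrinsically — the route to take without invoking the full spectral action — the same follows from the fact that an excursion operator attached to a datum $(I,W,\alpha,\beta,(\gamma_{i})_{i\in I})$ is assembled from the Hecke operators $T_{\Delta^{*}W}$, the natural transformations $\alpha,\beta$, and the Weil translations $\gamma_{i}$, together with the commutativity of Hecke operators at disjoint legs via the fusion structure on geometric Satake (of which Remark~3.2 is the basic instance): one adjoins an auxiliary leg $0\notin I$, works with $T_{V\boxtimes W}$ on $X^{\{0\}}\times X^{I}$, applies the commutativity constraint to interchange the two groups of legs (the $\gamma_{i}$ being left alone since they move only the $X^{I}$-factor), and then specializes the auxiliary leg and composes with $\alpha,\beta$.

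For compatibility with restriction to the HN-stratum $\Bun_{G}^{b}$, the geometric input I would invoke is the comparison, carried out in \cite[Chapter~IX]{FS}, between the Hecke correspondence of $\Bun_{G}$ over $\Bun_{G}^{b}$ and the Hecke correspondence of $\Bun_{J_{b}}$ over its neutral stratum $\Bun_{J_{b}}^{\mathbf{1}}\simeq B\underline{J_{b}(\mathbb{Q}_{p})}$. Under this comparison, the Satake sheaf $\mathcal{S}_{W}$ attached to $W\in\Rep_{\overline{\mathbb{Q}}_{\ell}}(\phantom{}^{L}G^{I})$ should match, up to a Tate-type twist governed by the Newton cocharacter $\nu_{b}$, the Satake sheaf on $\Bun_{J_{b}}$ attached to the restriction of $W$ along the twisted embedding $\phantom{}^{L}J_{b}\to\phantom{}^{L}G$ of \cite[p.~327]{FS}; the twist is precisely what that embedding absorbs, as the twisted embedding differs from the naive inclusion $\widehat{J}_{b}\hookrightarrow\widehat{G}$ exactly by the unramified cocycle determined by $\nu_{b}$. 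Granting this, $j_{b}^{*}$ intertwines $T_{W}$ on $\Bun_{G}$ with the Hecke operator on $\Bun_{J_{b}}$ associated to the restriction of $W$ along the twisted embedding, and, transporting $\alpha,\beta,\gamma_{i}$ verbatim, it intertwines the excursion operator for $(I,W,\alpha,\beta,(\gamma_{i}))$ on $\Bun_{G}$ with the excursion operator for the restricted datum on $\Bun_{J_{b}}$.

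To finish, since $B$ is a smooth irreducible constituent of $j_{b}^{*}(A)$, each excursion operator on $\Bun_{G}$ acts on $A$ by the scalar obtained by evaluating its excursion function at $\phi_{A}^{\mathrm{FS}}$, while the corresponding excursion operator on $\Bun_{J_{b}}$ acts on $B$ by evaluating at $\phi_{B}^{\mathrm{FS}}$, and these two scalars coincide for every excursion datum by the previous step. As the restriction along the twisted embedding is a tensor functor $\Rep_{\overline{\mathbb{Q}}_{\ell}}(\phantom{}^{L}G)\to\Rep_{\overline{\mathbb{Q}}_{\ell}}(\phantom{}^{L}J_{b})$ — hence represented by a homomorphism of $L$-groups, namely that embedding — V.~Lafforgue's reconstruction theorem, the same tool underlying Theorem~3.5, forces $\phi_{A}^{\mathrm{FS}}$ to be the composite $W_{\mathbb{Q}_{p}}\xrightarrow{\phi_{B}^{\mathrm{FS}}}\phantom{}^{L}J_{b}(\overline{\mathbb{Q}}_{\ell})\to\phantom{}^{L}G(\overline{\mathbb{Q}}_{\ell})$. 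The main obstacle is the geometric comparison of the second step: building the chart near $\Bun_{G}^{b}$ modelled on $\Bun_{J_{b}}$ near its neutral point, and pinning down the Tate twist so that it is exactly the discrepancy between the naive and the twisted $L$-embedding — this is the substance of \cite[Chapter~IX]{FS}, while the surrounding excursion-data bookkeeping and the reconstruction step are formal.
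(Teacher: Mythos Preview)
Your proposal is correct and aligns with the paper's treatment. The paper does not give a full proof of this proposition but only a brief remark: the commutation of Hecke operators with the excursion algebra follows from the interpretation of excursion operators as multiplication by global functions on the stack of $L$-parameters (the spectral action), exactly as in your first paragraph. For the compatibility with restriction to HN-strata, the paper simply cites \cite[Section~IX.7.1]{FS} without further argument, so your sketch of the geometric comparison between the Hecke correspondence over $\Bun_{G}^{b}$ and that over $\Bun_{J_{b}}^{\mathbf{1}}$, together with the role of the Newton twist in the twisted embedding, goes beyond what the paper itself provides and correctly identifies the substance of the Fargues--Scholze argument.
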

\begin{remark}
The commutation of Hecke operators and the excursion algebra follows from the interpretation of the excursion algebra in terms of endomorphisms coming from multiplication by the ring of global functions of the stack of $L$-parameters, as discussed in section 3.2. 
\end{remark} 
From this, we can deduce the following useful corollary.
\begin{corollary}
For $G$ any connected reductive group with $(G,b,\mu)$ a local Shimura datum and $\pi \in \Pi(G)$ and $\rho \in \Pi(J_{b})$ smooth irreducible representations. All smooth irreducible representations occurring in the cohomology of $R\Gamma_{c}(G,b,\mu)[\pi]$ have Fargues-Scholze parameter equal to $\phi_{\pi}^{\mathrm{FS}}$. Similarly, all smooth irreducible representations occurring in the cohomology of $R\Gamma_{c}(G,b,\mu)[\rho]$ have Fargues-Scholze parameter equal to $\phi_{\rho}^{\mathrm{FS}}$. The same is also true for $R\Gamma_{c}^{\flat}(G,b,\mu)[\rho]$ and $R\Gamma_{c}^{\flat}(G,b,\mu)[\pi]$.
\end{corollary}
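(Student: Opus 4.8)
The strategy is to deduce the statement essentially formally from the Hecke-theoretic descriptions of these complexes (Corollary 3.3) together with the fact that excursion operators commute with Hecke operators and are compatible with restriction to the Harder--Narasimhan strata of $\Bun_{G}$ (Proposition 3.14). The mechanism is that an excursion operator acts on a Schur-irreducible object by the scalar read off from its Fargues--Scholze parameter, this scalar is not changed by passage through a Hecke operator or a restriction functor, and comparing it with the scalar by which the operator acts on a smooth irreducible subquotient pins down the parameter of that subquotient.

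I would first handle $R\Gamma_{c}(G,b,\mu)[\rho]$. Fix an excursion operator $z$, i.e.\ an element of the image of $\mathcal{Z}^{spec}(G,\overline{\mathbb{Q}}_{\ell})$ in $\mathcal{Z}^{geom}(G,\overline{\mathbb{Q}}_{\ell})$, regarded as a natural endomorphism of $\mathrm{id}_{\Dlis(\Bun_{G},\overline{\mathbb{Q}}_{\ell})}$. Since $j_{b}\colon \Bun_{G}^{b}\hookrightarrow\Bun_{G}$ is an open immersion, $j_{b!}$ is fully faithful and $j_{b!}(\mathcal{F}_{\rho})$ is Schur-irreducible, its endomorphism ring being $\End_{J_{b}(\mathbb{Q}_{p})}(\rho)=\overline{\mathbb{Q}}_{\ell}$; hence $z$ acts on it by a scalar, and by the Harder--Narasimhan-compatibility in Proposition 3.14, applied to $A=j_{b!}(\mathcal{F}_{\rho})$ whose restriction to $\Bun_{G}^{b}$ is $\rho$, this scalar is the value at $\iota_{b}\circ\phi_{\rho}^{\mathrm{FS}}$ of the function on the coarse moduli space of parameters attached to $z$, where $\iota_{b}\colon {}^{L}J_{b}\to {}^{L}G$ is the twisted embedding --- an isomorphism, as $b$ is basic and $J_{b}$ is an extended pure inner form of $G$. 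Now $z$ commutes with $T_{\mu}$ (Proposition 3.14) and intertwines the excursion actions under $j_{\mathbf{1}}^{*}$ (by naturality of $z$ and $j_{\mathbf{1}}^{*}j_{\mathbf{1}!}=\mathrm{id}$), so, via the isomorphism $R\Gamma_{c}(G,b,\mu)[\rho][d](\frac{d}{2})\simeq j_{\mathbf{1}}^{*}T_{\mu}j_{b!}(\mathcal{F}_{\rho})$ of Corollary 3.3 (an isomorphism of $G(\mathbb{Q}_{p})$-complexes, hence automatically equivariant for the Bernstein centre), $z$ acts by that same scalar on every cohomology group of $R\Gamma_{c}(G,b,\mu)[\rho]$ and hence on every smooth irreducible subquotient $\sigma$ of these. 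On the other hand $z$ acts on $\sigma$ by the value of that function at $\phi_{\sigma}^{\mathrm{FS}}$ (Theorem 3.5). Letting $z$ range over all excursion operators --- whose attached functions separate the closed points of $\mathcal{Z}^{1}(W_{\mathbb{Q}_{p}},\hat{G})_{\overline{\mathbb{Q}}_{\ell}}/\!/\hat{G}$, i.e.\ semisimple parameters --- forces $\phi_{\sigma}^{\mathrm{FS}}=\iota_{b}\circ\phi_{\rho}^{\mathrm{FS}}$, which is the asserted equality $\phi_{\sigma}^{\mathrm{FS}}=\phi_{\rho}^{\mathrm{FS}}$ under the identification $\iota_{b}$.

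The case of $R\Gamma_{c}(G,b,\mu)[\pi]$ is symmetric, using the other isomorphism of Corollary 3.3, $R\Gamma_{c}(G,b,\mu)[\pi][d](\frac{d}{2})\simeq j_{b}^{*}T_{\mu^{-1}}j_{\mathbf{1}!}(\mathcal{F}_{\pi})$: here $z$ acts on the Schur-irreducible sheaf $j_{\mathbf{1}!}(\mathcal{F}_{\pi})$ by the scalar attached to $\phi_{\pi}^{\mathrm{FS}}$, commutes with $T_{\mu^{-1}}$ and with $j_{b}^{*}$, and --- again by the Harder--Narasimhan-compatibility of Proposition 3.14 --- acts on a smooth irreducible $J_{b}(\mathbb{Q}_{p})$-subquotient $\sigma$ (sitting on $\Bun_{G}^{b}$) by the value at $\iota_{b}\circ\phi_{\sigma}^{\mathrm{FS}}$; comparing scalars and separating points gives $\iota_{b}\circ\phi_{\sigma}^{\mathrm{FS}}=\phi_{\pi}^{\mathrm{FS}}$, i.e.\ $\phi_{\sigma}^{\mathrm{FS}}=\phi_{\pi}^{\mathrm{FS}}$. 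For the flat complexes I would instead use the Hom--tensor duality $R\Gamma_{c}^{\flat}(G,b,\mu)[\rho]\simeq R\mathcal{H}om(R\Gamma_{c}(G,b,\mu)[\rho^{*}],\overline{\mathbb{Q}}_{\ell})[-2d](-d)$ of Remark 3.4: since the cohomology of $R\Gamma_{c}(G,b,\mu)[\rho^{*}]$ is admissible of finite length, the smooth irreducible subquotients of the left side are the smooth contragredients of those of $R\Gamma_{c}(G,b,\mu)[\rho^{*}]$, which have Fargues--Scholze parameter $\phi_{\rho^{*}}^{\mathrm{FS}}$ by the first part, so their contragredients have parameter $(\phi_{\rho^{*}}^{\mathrm{FS}})^{\vee}=\phi_{\rho}^{\mathrm{FS}}$ by compatibility with contragredients (Theorem 3.6(2)); the argument for $R\Gamma_{c}^{\flat}(G,b,\mu)[\pi]$ is identical.

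The step I expect to be the main obstacle is making precise the compatibility of the excursion/Bernstein-centre action with the functors $j_{\mathbf{1}}^{*}$, $j_{b!}$ and $j_{b}^{*}$ --- that the natural endomorphism of $\mathrm{id}_{\Dlis(\Bun_{G},\overline{\mathbb{Q}}_{\ell})}$ attached to $z$ restricts and extends correctly along the HN-strata, and that over $\Bun_{G}^{b}\simeq B\underline{J_{b}(\mathbb{Q}_{p})}$ it matches the excursion action of $J_{b}$ transported along $\iota_{b}$. This is exactly the content of the Harder--Narasimhan-compatibility half of Proposition 3.14, whose proof rests on realising excursion operators as multiplication by global functions on the stack of $L$-parameters (Section 3.2); so the difficulty is careful bookkeeping rather than genuinely new input, and once it is in hand everything reduces to a formal comparison of scalars.
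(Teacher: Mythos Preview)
Your argument for $R\Gamma_{c}(G,b,\mu)[\rho]$ and $R\Gamma_{c}(G,b,\mu)[\pi]$ is correct and is exactly the paper's approach: invoke Corollary~3.3 to write the complex as $j^{*}T j_{!}(\mathcal{F})$, then use Proposition~3.14 (commutation of excursion operators with Hecke operators and compatibility with restriction to HN-strata) to conclude. Your detailed unpacking of how the scalar propagates is just a spelled-out version of what the paper means by ``follows immediately from Proposition~3.14 and Corollary~3.3.''

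For the $\flat$-complexes the paper takes a slightly different route: rather than passing through Hom--tensor duality and contragredients as you do, it writes $R\Gamma_{c}^{\flat}(G,b,\mu)[\rho]\simeq j_{\mathbf{1}}^{*}T_{\mu}Rj_{b*}(\mathcal{F}_{\rho})$ and $R\Gamma_{c}^{\flat}(G,b,\mu)[\pi]\simeq j_{b}^{*}T_{\mu^{-1}}Rj_{\mathbf{1}*}(\mathcal{F}_{\pi})$ (as in \cite[Section~IX.7.1]{FS}) and then applies the identical excursion-operator argument to these $*$-pushforward presentations. Your duality argument is also correct and is a legitimate alternative; it trades the need to cite the $Rj_{*}$-description for the need to invoke compatibility with contragredients (Theorem~3.6(2)). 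The paper's route is marginally cleaner in that it treats all four complexes uniformly via the same Hecke-theoretic template, while yours has the advantage of reducing the $\flat$-case to the already-proven non-$\flat$ case without any further geometric input.
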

\begin{proof}
The first part follows immediately from Proposition 3.14 and Corollary 3.3. It remains to see the same is true for the complexes $R\Gamma_{c}^{\flat}(G,b,\mu)[\rho]$ and $R\Gamma_{c}^{\flat}(G,b,\mu)[\pi]$. This can be done by writing them as $j_{1}^{*}T_{\mu}Rj_{b*}(\rho)$ and $j_{b}^{*}T_{\mu^{-1}}Rj_{1*}(\pi)$, as in \cite[Section~IX.7.1]{FS}, where it again follows from Proposition 3.14 and Corollary 3.3. 
\end{proof}
We now exploit this corollary to deduce compatibility in the mixed supercuspidal case. 
\begin{corollary}
Let $\phi$ be an $L$-parameter of Howe-Piatetski--Schapiro, Saito-Kurokawa, or Soudry type. Then, for any $\pi \in \Pi_{\phi}(G)$ (resp. $\rho \in \Pi_{\phi}(J)$), the Fargues-Scholze and Gan-Takeda (resp. Gan-Tantono) local Langlands correspondences are compatible.
\end{corollary}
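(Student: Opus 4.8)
The plan is to separate $\Pi_{\phi}(G)$ and $\Pi_{\phi}(J)$ into their non-supercuspidal and supercuspidal members and treat the two groups by different means. Following section 2, write $\Pi_{\phi}(G) = \{\pi_{disc},\pi_{sc}\}$ with $\pi_{sc}$ supercuspidal and $\pi_{disc}$ not, and $\Pi_{\phi}(J) = \{\rho_{disc},\rho_{sc}\}$ with $\rho_{sc}$ supercuspidal in the Saito--Kurokawa case, $\Pi_{\phi}(J) = \{\rho^{1}_{disc},\rho^{2}_{disc}\}$ with no supercuspidal members in the Howe--Piatetski--Shapiro case. Since an irreducible admissible representation is non-supercuspidal precisely when it is a sub-quotient of a parabolic induction from a proper Levi, and since every proper Levi subgroup of $G$ or $J$ is a restriction of scalars of a product of general linear groups and their inner forms, Corollary 3.12 applies and gives compatibility for each of $\pi_{disc}$, $\rho_{disc}$, $\rho^{1}_{disc}$ and $\rho^{2}_{disc}$. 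This settles the Howe--Piatetski--Shapiro case completely and reduces the Saito--Kurokawa case to the two supercuspidal members $\pi_{sc}$ and $\rho_{sc}$.

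For $\pi_{sc}$, in either case, I would choose a non-supercuspidal $\rho' \in \Pi_{\phi}(J)$, namely $\rho_{disc}$ or $\rho^{1}_{disc}$, and study $R\Gamma_{c}^{\flat}(G,b,\mu)[\rho']$, where $\mu$ is the Siegel cocharacter and $b \in B(G,\mu)$ is the basic element with $J_{b} = J$; by Theorem 3.5 this is a bounded complex of admissible finite-length $G(\mathbb{Q}_{p})$-representations, and by Corollary 3.15 every irreducible sub-quotient of its cohomology has Fargues--Scholze parameter $\phi_{\rho'}^{\mathrm{FS}}$. Projecting degreewise onto the direct summand supported on supercuspidal Bernstein components yields $R\Gamma_{c}^{\flat}(G,b,\mu)[\rho']_{sc}$, and Theorem 3.13 computes its class in $K_{0}(G(\mathbb{Q}_{p}))^{ell}$ as the supercuspidal part of $-\sum_{\pi \in \Pi_{\phi}(G)} \mathrm{Hom}_{S_{\phi}}(\delta_{\pi,\rho'},\mathrm{std}\circ\phi)\,\pi$; as $\pi_{disc}$ is not supercuspidal, only the $\pi_{sc}$-term survives, and a short computation with $S_{\phi}$ as in section 2.1 — using $(\mathrm{std}\circ\phi)|_{S_{\phi}} \simeq \tau_{1}^{\oplus 2} \oplus \tau_{2}^{\oplus 2}$ and $\delta_{\pi_{sc},\rho'} = \tau_{\pi_{sc}}^{\vee} \otimes \tau_{\rho'}$ — shows this multiplicity equals $2$, so the class is $-2\pi_{sc} \neq 0$. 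Hence $\pi_{sc}$ genuinely occurs as a sub-quotient of some cohomology group of $R\Gamma_{c}^{\flat}(G,b,\mu)[\rho']$, and Corollary 3.15 gives $\phi_{\pi_{sc}}^{\mathrm{FS}} = \phi_{\rho'}^{\mathrm{FS}}$. Combining this with compatibility for $\rho'$ from the first paragraph, and with $\phi_{\rho'}^{\mathrm{ss}} = \phi^{\mathrm{ss}} = \phi_{\pi_{sc}}^{\mathrm{ss}}$ (both $\rho'$ and $\pi_{sc}$ have Gan--Takeda/Gan--Tantono parameter $\phi$), gives the chain $\phi_{\pi_{sc}}^{\mathrm{FS}} = \phi_{\rho'}^{\mathrm{FS}} = \phi_{\rho'}^{\mathrm{ss}} = \phi^{\mathrm{ss}} = \phi_{\pi_{sc}}^{\mathrm{ss}}$, which is compatibility for $\pi_{sc}$.

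For $\rho_{sc}$ in the Saito--Kurokawa case the argument is the mirror image: using the two towers isomorphism $\Sht(G,b,\mu)_{\infty} \simeq \Sht(J,\hat{b},\mu^{-1})_{\infty}$ and the $\pi$-isotypic form of Theorem 3.13, take $\pi' = \pi_{disc}$, project $R\Gamma_{c}^{\flat}(G,b,\mu)[\pi']$ onto its supercuspidal Bernstein components, and compute the class of the resulting complex in $K_{0}(J(\mathbb{Q}_{p}))^{ell}$ as the supercuspidal part of $-\sum_{\rho \in \Pi_{\phi}(J)} \mathrm{Hom}_{S_{\phi}}(\delta_{\pi',\rho}^{\vee},(\mathrm{std}\circ\phi)^{\vee})\,\rho$; the same computation gives coefficient $-2$ at $\rho_{sc}$, so $\rho_{sc}$ occurs in the cohomology, Corollary 3.15 yields $\phi_{\rho_{sc}}^{\mathrm{FS}} = \phi_{\pi'}^{\mathrm{FS}}$, and compatibility for $\pi'$ together with the $L$-packet relations gives $\phi_{\rho_{sc}}^{\mathrm{FS}} = \phi^{\mathrm{ss}} = \phi_{\rho_{sc}}^{\mathrm{ss}}$.

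I expect the only delicate point to be the passage from "nonzero class in the elliptic Grothendieck group" to "actually occurs as a sub-quotient of the cohomology"; this is exactly why one projects onto the supercuspidal Bernstein components before invoking Theorem 3.13, since the resulting complex $R\Gamma_{c}^{\flat}(G,b,\mu)[\rho']_{sc}$ has only supercuspidal sub-quotients and its class $-2\pi_{sc}$ in $K_{0}(G(\mathbb{Q}_{p}))^{ell}$ cannot be produced by cancellation with parabolically induced constituents, forcing $\pi_{sc}$ to appear. Everything else is bookkeeping: that $\pi_{disc}$ and the $\rho^{i}_{disc}$ are sub-quotients of parabolic inductions is recorded in section 2, and the component-group $\mathrm{Hom}$-space computations are routine.
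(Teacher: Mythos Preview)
Your argument is essentially the paper's own proof: handle the non-supercuspidal members via Corollary 3.12, then bootstrap to $\pi_{sc}$ (and $\rho_{sc}$ in the Saito--Kurokawa case) by reading off its occurrence in the supercuspidal summand of $R\Gamma_{c}^{\flat}(G,b,\mu)[\rho']$ (resp.\ $[\pi_{disc}]$) via Theorem 3.13 and invoking Corollary 3.15. One slip: your sentence ``This settles the Howe--Piatetski--Shapiro case completely'' is not quite right, since $\Pi_{\phi}(G)$ still contains the supercuspidal $\pi_{sc}$ in that case too --- but your second paragraph (``For $\pi_{sc}$, in either case\ldots'') correctly treats it, so the argument is complete once you fix that summary sentence.
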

\begin{proof}
We first give the proof for the Gan-Takeda local Langlands correspondence. If $\phi$ is of Saito-Kurokawa type then, as seen in section $2.2$, we can write $\Pi_{\phi}(G) = \{\pi_{sc},\pi_{disc}\}$ and $\Pi_{\phi}(J) = \{\rho_{sc},\rho_{disc}\}$, where $\pi_{sc}$ (resp. $\rho_{sc}$) is a supercuspidal representation of $G(\mathbb{Q}_{p})$ (resp. $J(\mathbb{Q}_{p})$) and $\pi_{disc}$ (resp. $\rho_{disc}$) is a non-supercuspidal representaiton. We note that the Gan-Takeda (resp. Gan-Tantono) correspondences are compatible with the Fargues-Scholze correspondence for $\pi_{disc}$ (resp. $\rho_{disc}$), by Corollary 3.12.
\\\\
If we let $\mu$ be the Siegel cocharacter and $b \in B(G,\mu)$ be the unique basic element. Then the $\sigma$-centralizer $J_{b}$ is isomorphic to  $J$ and we can consider the complex
\[ R\Gamma_{c}^{\flat}(G,b,\mu)[\rho_{disc}] \]
of $J(\mathbb{Q}_{p}) \times W_{L}$-representations. We then let $R\Gamma_{c}^{\flat}(G,b,\mu)[\rho_{disc}]_{sc}$ denote the direct summand of $R\Gamma_{c}^{\flat}(G,b,\mu)[\rho_{disc}]$, given by the supercuspidal Bernstein components of $G(\mathbb{Q}_{p})$. Theorem 3.13 tells us that we can describe this complex in the Grothendieck group of admissible $G(\mathbb{Q}_{p})$-representations of finite length as
\[ [R\Gamma_{c}^{\flat}(G,b,\mu)[\rho_{disc}]_{sc}] = -2\pi_{sc}, \]
which tells us that $\pi_{sc}$, occurs as a non-zero sub-quotient of the complex $R\Gamma_{c}^{\flat}(G,b,\mu)[\rho_{disc}]$. By Corollary 3.15, we know that we have an equality
\[ \phi_{\rho_{disc}}^{\mathrm{FS}} = \phi_{\pi_{sc}}^{\mathrm{FS}} \]
of conjugacy classes of parameters. However, Corollary 3.12 tells us that $\phi_{\rho_{disc}}^{\mathrm{FS}} = \phi_{\rho_{disc}}^{\mathrm{ss}}$, which is equal to $\phi_{\pi_{sc}}^{\mathrm{ss}}$, so we get the desired equality. The analysis in the Howe-Piatetski--Schapiro case is the same, where one can look at the $\rho$-isotypic part for any of the two non-supercuspidals in $\Pi_{\phi}(J)$, and in the Soudry type case the claim for the Gan-Takeda local Langlands just follows from Corollary 3.12. Similarly, for the Gan-Tantono local Langlands correspondence one looks at the $\pi$-isotypic part for $\pi_{disc} \in \Pi_{\phi}(G)$ a discrete series non-supercuspidal representation. In particular, in the Soudry type case, one needs to show compatibility for the unique supercuspidal representation in $\Pi_{\phi}(J)$, this can be done by looking at the $\pi_{disc}$ isotypic part for $\pi_{disc}$ the unique non-supercuspidal discrete series representation in the L-packet $\Pi_{\phi}(G)$.
\end{proof}
In the remaining part of this section, we address proving compatibility in the case where the parameter $\phi$ is supercuspidal. Before tackling the question of compatibility, we address some geometric properties of the sheaves $\mathcal{F}_{\rho}$, for $\rho$ with supercuspidal Fargues-Scholze parameter. This will be leveraged in proving the strong form of the Kottwitz Conjecture for the $\rho$ and $\pi$-isotypic parts in section $8$, as mentioned in Remark 3.10 (2). 
\\\\
Now, considering again $G$ a general connected reductive group, $b \in B(G)_{basic}$ a basic element, and a smooth irreducible representation $\rho$ of the $\sigma$-centralizer $J_{b}(\mathbb{Q}_{p})$. We will now address some further consequences of the Fargues-Scholze parameter $\phi^{\mathrm{FS}}_{\rho}$ being supercuspidal. It turns out that the sheaves defined by representations with these parameters have interesting geometric properties, which were leveraged in \cite{Han} to prove various general results on the cohomology groups. 
In particular, Hansen shows the following:
\begin{theorem}{\cite[Theorem~1.1.]{Han}}
Let $(G,b,\mu)$ be a basic local Shimura datum with $E$ the reflex field of $\mu$ as before, and let $\rho$ be a smooth irreducible representation of $J_{b}(\mathbb{Q}_{p})$. Suppose the following conditions hold: 
\begin{enumerate}
    \item The spaces $(\Sht(G,b,\mu)_{K})_{K \subset G(\mathbb{Q}_{p})}$ occur in the basic uniformization at $p$ of a global Shimura variety in the sense of Definition 4.1.
    \item The Fargues-Scholze parameter $\phi_{\rho}^{\mathrm{FS}}: W_{\mathbb{Q}_{p}} \rightarrow \phantom{}^{L}G(\overline{\mathbb{Q}}_{\ell})$ is supercuspidal. 
\end{enumerate}
Then the complex $R\Gamma_{c}(G,b,\mu)[\rho]$ defined above is concentrated in the middle degree $d = \dim(\Sht(G,b,\mu)_{\infty}) = \langle 2\rho_{G},\mu \rangle$.
\end{theorem}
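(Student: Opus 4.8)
The plan is to translate the assertion into a perverse $t$-exactness statement on $\Bun_{G}$. By Corollary 3.3 we have $R\Gamma_{c}(G,b,\mu)[\rho][d](\tfrac{d}{2}) \simeq j_{\mathbf{1}}^{*}T_{\mu}j_{b!}(\mathcal{F}_{\rho})$ as $G(\mathbb{Q}_{p}) \times W_{E}$-modules, and since $\Bun_{G}^{\mathbf{1}} \simeq B\underline{G(\mathbb{Q}_{p})}$ carries the standard $t$-structure, concentration of $R\Gamma_{c}(G,b,\mu)[\rho]$ in middle degree $d$ is equivalent to concentration of $j_{\mathbf{1}}^{*}T_{\mu}j_{b!}(\mathcal{F}_{\rho})$ in perverse degree $0$. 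We already know from Theorem 3.5 that this complex lives in cohomological degrees $[-d,d]$, and from Remark 3.4 that its $\overline{\mathbb{Q}}_{\ell}$-linear dual is, up to the shift $[-2d](-d)$, the $R\Gamma_{c}^{\flat}$-complex attached to $\rho^{\vee}$; this will let us promote a one-sided estimate to genuine concentration.

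The heart of the matter is that hypothesis (2) makes $j_{b!}(\mathcal{F}_{\rho})$ \emph{clean}. Put $\phi := \phi_{\rho}^{\mathrm{FS}}$. By Theorem 3.6(4) the representation $\rho$ cannot be a subquotient of a parabolic induction, so $\rho$ is supercuspidal and defines an object of $\D^{C_{\phi}}(J_{b}(E),\overline{\mathbb{Q}}_{\ell})^{\omega}$. Under the decomposition $\Dlis^{C_{\phi}}(\Bun_{G},\overline{\mathbb{Q}}_{\ell})^{\omega} \simeq \bigoplus_{b' \in B(G)_{basic}} \D^{C_{\phi}}(J_{b'}(E),\overline{\mathbb{Q}}_{\ell})^{\omega}$ of Section 3.2 — which holds precisely because every object of $\Dlis^{C_{\phi}}$ restricts to zero on the non-basic strata — the representation $\rho$ corresponds to the object of $\Dlis^{C_{\phi}}(\Bun_{G})$ supported entirely on $\Bun_{G}^{b}$; concretely $j_{b!}(\mathcal{F}_{\rho}) \simeq j_{b*}(\mathcal{F}_{\rho}) = j_{b!*}(\mathcal{F}_{\rho})$, the intermediate extension of the simple perverse sheaf $\mathcal{F}_{\rho} = \rho$ on the zero-dimensional stratum $\Bun_{G}^{b}$, and in particular this is a perverse sheaf on $\Bun_{G}$. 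By Theorem 3.6(2), $\phi_{\rho^{\vee}}^{\mathrm{FS}} = (\phi_{\rho}^{\mathrm{FS}})^{\vee}$ is also supercuspidal, so the same holds with $\rho$ replaced by $\rho^{\vee}$. Cleanness also yields a canonical identification $R\Gamma_{c}(G,b,\mu)[\rho] \simeq R\Gamma_{c}^{\flat}(G,b,\mu)[\rho]$, since the latter is computed by $j_{\mathbf{1}}^{*}T_{\mu}Rj_{b*}(\mathcal{F}_{\rho})$.

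Now I would apply the Hecke operator. For minuscule $\mu$ the Satake sheaf is $\overline{\mathbb{Q}}_{\ell}[d](\tfrac{d}{2})$ on the cohomologically smooth $\Hck_{\mu^{-1}}$, so the normalized operator $T_{\mu}$ is perverse $t$-exact (or at least perverse right $t$-exact); since $j_{\mathbf{1}}^{*}$ is $t$-exact, $j_{\mathbf{1}}^{*}T_{\mu}j_{b!}(\mathcal{F}_{\rho})$ lies in perverse degrees $\le 0$, i.e. $R\Gamma_{c}(G,b,\mu)[\rho]$ is concentrated in degrees $\le d$. Running the same estimate for $\rho^{\vee}$ gives $R\Gamma_{c}^{\flat}(G,b,\mu)[\rho^{\vee}] \simeq R\Gamma_{c}(G,b,\mu)[\rho^{\vee}]$ (cleanness) in degrees $\le d$; but by Remark 3.4 this complex is the $\overline{\mathbb{Q}}_{\ell}$-linear dual of $R\Gamma_{c}(G,b,\mu)[\rho]$ up to the shift $[-2d](-d)$, which forces $R\Gamma_{c}(G,b,\mu)[\rho]$ into degrees $\ge d$; combined with the upper bound it is concentrated in degree $d$. (If full perverse $t$-exactness of $T_{\mu}$ is available, $j_{\mathbf{1}}^{*}T_{\mu}j_{b!}(\mathcal{F}_{\rho})$ is perverse outright and no duality step is needed.) Unwinding Corollary 3.3 identifies the degree $d$ with $\langle 2\rho_{G},\mu\rangle = \dim\Sht(G,b,\mu)_{\infty}$. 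Hypothesis (1) is what makes the route actually followed in \cite{Han} go through: there the last steps are replaced by basic uniformization (Shen), which glues the supercuspidal part of $R\Gamma_{c}(G,b,\mu)$ — twisted by a space of algebraic automorphic forms — into the cohomology of a \emph{proper} global Shimura variety attached to a $\mathbb{Q}$-inner form of $\Res_{F/\mathbb{Q}}\GSp_{4}$, together with the analogue of Boyer's trick (Section 4) that kills the non-basic Newton strata of the Hodge--Tate flag variety after passage to the supercuspidal part; Poincaré duality on that proper variety then supplies the matching bound.

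The main obstacle is the perverse $t$-exactness of the minuscule Hecke operator on the Artin $v$-stack $\Bun_{G}$ together with the cleanness lemma, which rest on the compatibility of the spectral action with the gluing functors along the Harder--Narasimhan stratification and on the structure of the non-basic strata $\Bun_{G}^{b}$; in the global incarnation the crux is instead the Boyer's-trick analogue and the verification that the basic uniformization isomorphism is compatible with Poincaré duality, which is exactly why hypothesis (1) — so that the relevant Shimura varieties have good reduction at $p$ and Shen's theorem applies — is imposed. Everything else (the reduction via Corollary 3.3, the bookkeeping of shifts and Tate twists, and the admissibility and finiteness supplied by Theorem 3.5) is routine.
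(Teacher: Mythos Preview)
Your identification of the local ingredients is correct and matches the sketch the paper gives immediately after Theorem~3.17: supercuspidality of $\phi_{\rho}^{\mathrm{FS}}$ forces the cleanness $j_{b!}(\mathcal{F}_{\rho}) \xrightarrow{\sim} Rj_{b*}(\mathcal{F}_{\rho})$ (via Proposition~3.14 and Theorem~3.6(4)), and this yields the Verdier-duality statement of Theorem~3.18, reducing concentration in middle degree to a one-sided bound. That much is exactly the paper's (and Hansen's) setup.

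Where your proposal diverges is in how the one-sided bound is obtained. Your primary line of argument invokes perverse $t$-exactness of $T_{\mu}$ on $\Dlis(\Bun_{G})$; but note that if this step were available your proof would never use hypothesis~(1), so you would be proving a strictly stronger theorem than the one stated. That is a signal that this is not the argument in \cite{Han}: Hansen imposes hypothesis~(1) precisely because the remaining bound is obtained \emph{globally}. You correctly sketch that global route in your last paragraph (basic uniformization plus the Boyer's-trick analogue of Section~4, feeding into the cohomology of a compact Shimura variety), but you present it as an afterthought rather than as the actual proof, and your local perverse argument is left resting on an unproved assertion that you yourself flag as ``the main obstacle.'' The honest summary is: your proposal contains the correct reduction (cleanness $\Rightarrow$ duality $\Rightarrow$ one-sided vanishing suffices), but then offers two endings---one of which (perverse $t$-exactness) is an unestablished alternative that would supersede the theorem's hypotheses, and the other of which (the global argument via hypothesis~(1)) is the one actually carried out in \cite{Han} and should be promoted to the main text.
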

One of the key ideas in the argument is to exploit the behavior of the sheaf $j_{b!}(\mathcal{F}_{\rho})$ under Verdier duality, where $j_{b}: \Bun_{G}^{b} \hookrightarrow \Bun_{G}$ is the inclusion of the open HN-strata corresponding to $b \in B(G)_{basic}$. In particular, by Proposition 3.14 and Theorem 3.6 (4), one can see that the natural map $j_{b!}(\mathcal{F}_{\rho}) \rightarrow Rj_{b*}(\mathcal{F}_{\rho})$ is an isomorphism. Namely, Proposition 3.14 implies that a non-zero restriction of $Rj_{b*}(\mathcal{F}_{\rho})$ to any non-basic HN-strata must be valued in representations having Fargues-Scholze parameter $\phi_{\rho}^{\mathrm{FS}}$ under the relevant twisted embedding, which is impossible since the $\sigma$-centralizers of non-basic elements are extended pure inner forms of proper Levi subgroups of $G$ and, by assumption, the parameter $\phi_{\rho}^{\mathrm{FS}}$ is supercuspidal. This implies that, if we apply Verdier duality to both sides of 
the isomorphism 
\[ j_{\mathbf{1}}^{*}T_{\mu}j_{b!}(\mathcal{F}_{\rho}) \simeq R\Gamma_{c}(G,b,\mu)[\rho][d](\frac{d}{2}) \]
supplied by Corollary 3.3, we see that the LHS is isomorphic to
\[ j_{\mathbf{1}}^{*}T_{\mu}j_{b!}(\mathcal{F}_{\rho^{*}}) \simeq R\Gamma_{c}(G,b,\mu)[\rho^{*}][d](\frac{d}{2})\]
On the other hand, on the RHS we act through Verdier duality on the tower $(\Sht(G,b,\mu)_{K})_{K \subset G(\mathbb{Q}_{p})}$, which are smooth rigid spaces of dimension $d$. So, in particular, the dualizing object is isomorphic to $\overline{\mathbb{Q}}_{\ell}[2d](d)$. This allows one to deduce the following consequence for the cohomology groups $R\Gamma_{c}(G,b,\mu)[\rho]$. 
\begin{theorem}{\cite[Theorem~1.3,Theorem~2.23]{Han}}
Fix a basic local Shimura datum $(G,b,\mu)$ and let $\rho$ be representation of $J_{b}(\mathbb{Q}_p)$ with supercuspidal Fargues-Scholze parameter. Then there is a natural isomorphism
\[ R\mathcal{H}om(R\Gamma_{c}(G,b,\mu)[\rho],\overline{\mathbb{Q}}_{\ell}) \simeq R\Gamma_{c}(G,b,\mu)[\rho^{*}][2d](d) \]
as $W_{E}$-equivariant objects of $\D(J_{b}(\mathbb{Q}_{p}),\overline{\mathbb{Q}}_{\ell})$, where $\rho^{*}$ is the contragredient of $\rho$. In particular, we have a natural $W_{E}$-equivariant isomorphism of admissible $G(\mathbb{Q}_{p})$-representations for all $0 \leq i \leq 2d$
\[ H^{i}(R\Gamma_{c}(G,b,\mu)[\rho])^{*} \simeq H^{2d - i}(R\Gamma_{c}(G,b,\mu)[\rho^{*}])(d) \]
\end{theorem}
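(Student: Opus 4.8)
The statement is \cite[Theorem~1.3, Theorem~2.23]{Han}, and the plan is to flesh out the argument sketched just above. The starting point is the identification $j_{\mathbf{1}}^{*}T_{\mu}j_{b!}(\mathcal{F}_{\rho}) \simeq R\Gamma_{c}(G,b,\mu)[\rho][d](\tfrac{d}{2})$ from Corollary 3.3; the goal is to compute the image of the left-hand side under the Verdier duality functor $\mathbb{D}_{\Bun_{G}}$ on $\Dlis(\Bun_{G},\overline{\mathbb{Q}}_{\ell})$ and recognize it as the analogous complex built from $\rho^{*}$. First I would record the three routine compatibilities that let $\mathbb{D}$ pass through the functor $j_{\mathbf{1}}^{*}T_{\mu}j_{b!}(-)$: \emph{(a)} since $j_{\mathbf{1}}$ is an open immersion one has $j_{\mathbf{1}}^{!}\simeq j_{\mathbf{1}}^{*}$, hence $\mathbb{D}\circ j_{\mathbf{1}}^{*}\simeq j_{\mathbf{1}}^{*}\circ\mathbb{D}$; \emph{(b)} as $\mu$ is minuscule the Satake kernel on $\Hck_{\mu^{-1}}$ is the shifted twisted constant sheaf $\overline{\mathbb{Q}}_{\ell}[d](\tfrac{d}{2})$ and both legs of $\Hck_{\mu^{-1}}$ are cohomologically smooth and proper of relative dimension $d$, so chasing the shifts through $\mathbb{D}h^{\leftarrow *}\simeq h^{\leftarrow !}\mathbb{D}\simeq h^{\leftarrow *}\mathbb{D}[2d](d)$ and $\mathbb{D}R(h^{\rightarrow})_{\natural}\simeq R(h^{\rightarrow})_{\natural}\mathbb{D}$ yields $\mathbb{D}\circ T_{\mu}\simeq T_{\mu}\circ\mathbb{D}$ with no net shift; \emph{(c)} on $\Bun_{G}^{b}\simeq B\underline{J_{b}(\mathbb{Q}_{p})}$ Verdier duality corresponds to smooth contragredient by Lemma 3.1, so $\mathbb{D}(\mathcal{F}_{\rho})\simeq\mathcal{F}_{\rho^{*}}$ with no shift and $\mathbb{D}\circ j_{b!}\simeq Rj_{b*}\circ\mathbb{D}$.

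The real content — and the only place the hypothesis enters — is the \emph{cleanness} of $j_{b!}(\mathcal{F}_{\rho})$, namely that the canonical map $j_{b!}(\mathcal{F}_{\rho})\to Rj_{b*}(\mathcal{F}_{\rho})$ is an isomorphism. As indicated before the statement, this follows from Proposition 3.14 together with Theorem 3.6(4): if $Rj_{b*}(\mathcal{F}_{\rho})$ had a nonzero stalk on a non-basic Harder--Narasimhan stratum $\Bun_{G}^{b'}$, that stalk would be valued in $J_{b'}(\mathbb{Q}_{p})$-representations whose Fargues--Scholze parameter, composed with the twisted embedding $^{L}J_{b'}\to{}^{L}G$, is $\phi_{\rho}^{\mathrm{FS}}$; but for non-basic $b'$ the group $J_{b'}$ is an extended pure inner form of a proper Levi subgroup of $G$, forcing any such parameter to factor through a proper parabolic, contradicting the supercuspidality of $\phi_{\rho}^{\mathrm{FS}}$. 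The same reasoning applies verbatim to $\rho^{*}$, since $\phi_{\rho^{*}}^{\mathrm{FS}}\simeq(\phi_{\rho}^{\mathrm{FS}})^{\vee}$ by Theorem 3.6(2) is again supercuspidal; hence $j_{b!}(\mathcal{F}_{\rho^{*}})\simeq Rj_{b*}(\mathcal{F}_{\rho^{*}})$ as well. I expect the genuinely delicate point to be making this excision/stalk argument precise within the solid-sheaf formalism (quasi-compactness of supports, well-definedness of $j_{b!}$, and the ULA input needed to identify stalks with complexes of smooth representations), rather than anything further down.

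Granting cleanness, the assembly is formal. Applying $\mathbb{D}_{\Bun_{G}}$ and using (a), (b), (c) together with cleanness for $\rho^{*}$:
\[
\mathbb{D}\big(j_{\mathbf{1}}^{*}T_{\mu}j_{b!}(\mathcal{F}_{\rho})\big)\ \simeq\ j_{\mathbf{1}}^{*}T_{\mu}\,\mathbb{D}\big(j_{b!}(\mathcal{F}_{\rho})\big)\ \simeq\ j_{\mathbf{1}}^{*}T_{\mu}\,Rj_{b*}(\mathcal{F}_{\rho^{*}})\ \simeq\ j_{\mathbf{1}}^{*}T_{\mu}j_{b!}(\mathcal{F}_{\rho^{*}}).
\]
By Corollary 3.3 the left side is $\mathbb{D}(R\Gamma_{c}(G,b,\mu)[\rho])[-d](-\tfrac{d}{2})$ and the right side is $R\Gamma_{c}(G,b,\mu)[\rho^{*}][d](\tfrac{d}{2})$; rearranging gives
\[
R\mathcal{H}om\big(R\Gamma_{c}(G,b,\mu)[\rho],\overline{\mathbb{Q}}_{\ell}\big)\ \simeq\ R\Gamma_{c}(G,b,\mu)[\rho^{*}][2d](d)
\]
as $W_{E}$-equivariant objects of $\D(J_{b}(\mathbb{Q}_{p}),\overline{\mathbb{Q}}_{\ell})$, the $[2d](d)$ being precisely the two copies of $[d](\tfrac{d}{2})$ from Corollary 3.3 transported across the duality. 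Passing to cohomology and using that $H^{i}$ of the smooth-duality functor computes the contragredient of $H^{-i}$ yields $H^{i}(R\Gamma_{c}(G,b,\mu)[\rho])^{*}\simeq H^{2d-i}(R\Gamma_{c}(G,b,\mu)[\rho^{*}])(d)$ for $0\le i\le 2d$. An equivalent and perhaps more transparent route, the one actually taken in \cite{Han}, is to run Poincar\'e--Verdier duality directly on the smooth $d$-dimensional rigid spaces $\Sht(G,b,\mu)_{K}$ — where the dualizing complex is manifestly $\overline{\mathbb{Q}}_{\ell}[2d](d)$ — with cleanness invoked to identify ordinary cohomology with compactly supported cohomology on the $\rho^{*}$-isotypic part; I would present whichever version fits the surrounding exposition best.
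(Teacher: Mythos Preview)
Your proposal is correct and follows essentially the same approach as the paper's sketch preceding the theorem: start from Corollary~3.3, use cleanness of $j_{b!}(\mathcal{F}_{\rho})$ (via Proposition~3.14 and Theorem~3.6(4)) to pass Verdier duality through the functor $j_{\mathbf{1}}^{*}T_{\mu}j_{b!}(-)$, and then read off the identification. The one cosmetic difference is how the shift $[2d](d)$ is accounted for: you obtain it by applying Corollary~3.3 twice (once for $\rho$, once for $\rho^{*}$) and transporting the two copies of $[d](\tfrac{d}{2})$ across the duality, whereas the paper computes the Verdier dual of the right-hand side directly on the smooth rigid tower $(\Sht(G,b,\mu)_{K})_{K}$, where the dualizing complex is visibly $\overline{\mathbb{Q}}_{\ell}[2d](d)$ --- exactly the alternative route you mention at the end.
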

\begin{remark}
As noted in Remark 3.4, the LHS of the above formula is isomorphic to $R\Gamma_{c}^{\flat}(G,b,\mu)[\rho^{*}][2d](d)$, so it follows by cancelling the shifts and Tate twists and relaxing contragradients that one has an isomorphism
\[ R\Gamma_{c}^{\flat}(G,b,\mu)[\rho] \simeq R\Gamma_{c}(G,b,\mu)[\rho] \]
as $J_{b}(\mathbb{Q}_{p}) \times W_{E}$-representations for all such $\rho$. 
\end{remark}
Now we turn our attention to the question of showing compatibility for supercuspidal parameters assuming Proposition 1.4. So again let $L/\mathbb{Q}_{p}$ be a finite extension and let $G := \mathrm{Res}_{L/\mathbb{Q}_{p}}(\GSp_{4})$ be the restriction of scalars of $\GSp_{4}$ and $J := \mathrm{Res}_{L/\mathbb{Q}_{p}}\GU_{2}(D)$ the unique non-split inner form as before. As we will see in section $8$, it essentially follows from Theorem 3.13 and Corollary 3.15 that showing compatibility for $\rho \in \Pi(J)$ with supercuspidal Gan-Tantono parameter implies the corresponding statement for $\pi \in \Pi(G)$ with supercuspidal Gan-Takeda parameter. So we fix such a $\rho$ and assume that the Gan-Tantono parameter $\phi_{\rho}$ is endoscopic supercuspidal with the stable case being strictly easier. We will write $\mathrm{std}\circ \phi_{\rho} \simeq \phi_{1} \oplus \phi_{2}$ for $\phi_{i}$ distinct irreducible $2$-dimensional representations of $W_{L}$ and let $\mu$ be the Siegel cocharacter. The shtuka space $\Sht(G,b,\mu)_{\infty}$ in this case will have dimension $d := \langle 2\rho_{G},\mu \rangle = 3$. We will assume for the rest of this section that Proposition 1.4 is true.
\begin{proposition}
For $L/\bb{Q}_{p}$ unramified and $p > 2$, we let $\phi$ be a supercuspidal parameter with associated $L$-packet $\Pi_{\phi}(J)$. Then the direct summand of
\[ \bigoplus_{\rho' \in \Pi_{\phi}(J)} R\Gamma_{c}(G,b,\mu)[\rho'] \]
given by the supercuspidal Bernstein components of $G(\mathbb{Q}_{p})$, denoted
\[ \bigoplus_{\rho' \in \Pi_{\phi}(J)} R\Gamma_{c}(G,b,\mu)[\rho']_{sc}, \]
is concentrated in middle degree $3$ and admits a non-zero $W_{L}$-stable sub-quotient with $W_{L}$-action given by $\mathrm{std}\circ \phi \otimes |\cdot|^{-3/2}$.
\end{proposition}
First, we combine this with the following lemma.
\begin{lemma}
Let $\phi$ be a supercuspidal parameter then all representations in the $L$-packet $\Pi_{\phi}(J)$ have the same Fargues-Scholze parameter.
\end{lemma}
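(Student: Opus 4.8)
The plan is to read off the equality of Fargues--Scholze parameters from the description of the cohomology of the relevant local Shimura variety supplied by Hansen--Kaletha--Weinstein (Theorem~3.13), together with the fact (Corollary~3.15) that every irreducible $G(\mathbb{Q}_{p})$-constituent occurring in the cohomology of $R\Gamma_{c}^{\flat}(G,b,\mu)[\rho]$ carries the Fargues--Scholze parameter $\phi_{\rho}^{\mathrm{FS}}$. Here $\mu$ is the Siegel cocharacter and $b \in B(G,\mu)$ is the basic element with $J_{b} \simeq J$. If $\Pi_{\phi}(J)$ is a singleton there is nothing to prove, so by the Gan--Tantono classification (cf.\ Section~2.2) we may assume $\phi$ is endoscopic supercuspidal, so that $\Pi_{\phi}(J) = \{\rho_{1},\rho_{2}\}$ and $\Pi_{\phi}(G) = \{\pi^{+},\pi^{-}\}$, all four of these representations being supercuspidal.

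First I would apply Theorem~3.13 to each $\rho_{i}$. By the computation of the $\rho$-isotypic part in the endoscopic case given immediately after Theorem~3.13, this yields
\[ [R\Gamma_{c}^{\flat}(G,b,\mu)[\rho_{i}]] = -2\pi^{+} - 2\pi^{-} \]
in $K_{0}(G(\mathbb{Q}_{p}))^{ell}$, for $i = 1,2$. Since $R\Gamma_{c}^{\flat}(G,b,\mu)[\rho_{i}]$ is a bounded complex of admissible $G(\mathbb{Q}_{p})$-representations of finite length, its class in $K_{0}(G(\mathbb{Q}_{p}))$ is the alternating sum $\sum_{j}(-1)^{j}[H^{j}]$ of its cohomology; and because $\pi^{+}$ is supercuspidal, its multiplicity in any admissible representation is faithfully recorded by the projection to $K_{0}(G(\mathbb{Q}_{p}))^{ell}$ (supercuspidals occupy singleton blocks of the Bernstein decomposition, hence are untouched by the induced classes being quotiented out). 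Therefore $\sum_{j}(-1)^{j}(\text{multiplicity of }\pi^{+}\text{ in }H^{j}) = -2 \neq 0$, so $\pi^{+}$ occurs as a subquotient of the cohomology of $R\Gamma_{c}^{\flat}(G,b,\mu)[\rho_{i}]$ for both $i = 1$ and $i = 2$.

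Then I would invoke Corollary~3.15: the occurrence of $\pi^{+}$ in the cohomology of $R\Gamma_{c}^{\flat}(G,b,\mu)[\rho_{i}]$ forces $\phi_{\pi^{+}}^{\mathrm{FS}} = \phi_{\rho_{i}}^{\mathrm{FS}}$ for $i = 1,2$, whence $\phi_{\rho_{1}}^{\mathrm{FS}} = \phi_{\pi^{+}}^{\mathrm{FS}} = \phi_{\rho_{2}}^{\mathrm{FS}}$, which is the claim. One could equally well run this with $\pi^{-}$, or with the $\pi$-isotypic part in place of the $\rho$-isotypic part; the essential point is only that a single supercuspidal representation of $G(\mathbb{Q}_{p})$ is seen by every member of $\Pi_{\phi}(J)$.

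The step requiring care is the passage from the identity in $K_{0}(G(\mathbb{Q}_{p}))^{ell}$ to an honest subquotient, i.e.\ ruling out that the supercuspidal $\pi^{+}$ is cancelled against induced contributions inside the alternating sum. As indicated above this is a standard consequence of the Bernstein decomposition, but it deserves an explicit sentence in the write-up; I expect it to be the only non-formal input. Everything else is a direct combination of Theorem~3.13, Corollary~3.15, and the $L$-packet bookkeeping of Section~2, so the argument should be short.
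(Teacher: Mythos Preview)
Your argument is correct and uses the same essential ingredients as the paper (Theorem~3.13 and Corollary~3.15), just run in the mirror direction: the paper fixes a single $\pi \in \Pi_{\phi}(G)$, applies Theorem~3.13 to see that \emph{every} $\rho \in \Pi_{\phi}(J)$ occurs in $R\Gamma_{c}^{\flat}(G,b,\mu)[\pi]$, and then Corollary~3.15 forces all of their Fargues--Scholze parameters to equal $\phi_{\pi}^{\mathrm{FS}}$. This avoids the need to argue separately for each $\rho_{i}$ and sidesteps the (valid but unnecessary) discussion about lifting from $K_{0}(G(\mathbb{Q}_{p}))^{ell}$, since the same supercuspidal-block reasoning applies once on the $J$-side rather than twice on the $G$-side; but the content is identical, as you yourself note in your final paragraph.
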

\begin{proof}
We choose a $\pi \in \Pi_{\phi}(G)$ and then apply Corollary 3.15 to deduce that all representations occurring in the cohomology of $R\Gamma_{c}^{\flat}(G,b,\mu)[\pi]$ have Fargues-Scholze parameter equal to $\phi_{\pi}^{\mathrm{FS}}$. However, by Theorem 3.13, we have that all representations in $\rho \in \Pi_{\phi}(J)$ occur in the cohomology of $R\Gamma_{c}^{\flat}(G,b,\mu)[\pi]$, so their Fargues-Scholze parameters are the same as desired. 
\end{proof}
With this in hand, we are ready to prove the key consequence of Proposition 1.4 using the results on the spectral action obtained in section 3.2. 
\begin{corollary}
Assume that $L/\mathbb{Q}_{p}$ is an unramified extension and that $p > 2$ and that Proposition 1.4 is true. Then, for $\rho$ a smooth irreducible representation of $J(\mathbb{Q}_{p})$ with supercuspidal Gan-Tantono parameter $\phi$, the Gan-Tantono and Fargues-Scholze correspondences coincide.
\end{corollary}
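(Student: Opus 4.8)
The plan is to establish the equality $\phi_{\rho}^{\mathrm{FS}} = \phi$ of $\GSp_{4}(\overline{\mathbb{Q}}_{\ell})$-conjugacy classes of semisimple parameters by reducing it, as in the sketch of Section 1.2, to an identification of the composition with the standard embedding. A $\GSp_{4}$-valued semisimple parameter $\psi$ is determined by the pair $(\mathrm{std}\circ\psi,\ sim\circ\psi)$: this is immediate in the stable case, and in the endoscopic case one uses that the two $2$-dimensional summands of $\mathrm{std}\circ\phi$ are non-isomorphic, so that a $\GL_{4}$-conjugacy carrying $\mathrm{std}\circ\psi$ to $\mathrm{std}\circ\phi$ automatically lies in $\GSp_{4}$. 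Compatibility of the Fargues--Scholze correspondence with central characters (Theorem 3.6 (2)) together with property (3) of the Gan--Tantono correspondence gives $sim\circ\phi_{\rho}^{\mathrm{FS}} = \omega_{\rho} = sim\circ\phi$, so the whole problem reduces to proving $\mathrm{std}\circ\phi_{\rho}^{\mathrm{FS}} = \mathrm{std}\circ\phi$. Two facts will be used repeatedly: since $\phi$ is supercuspidal the entire $L$-packet $\Pi_{\phi}(J)$ consists of supercuspidal representations, so for each $\rho' \in \Pi_{\phi}(J)$ the sheaf $j_{b!}(\mathcal{F}_{\rho'})$ is a compact, ULA, Schur-irreducible object of $\Dlis(\Bun_{G},\overline{\mathbb{Q}}_{\ell})^{\omega}$; and by Lemma 3.21 all members of $\Pi_{\phi}(J)$ have a common Fargues--Scholze parameter, which I abbreviate $\psi := \phi_{\rho}^{\mathrm{FS}}$.

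The next step is to feed Proposition 1.4 into Lemma 3.10. By Corollary 3.3, $j_{\mathbf{1}}^{*}T_{\mu}j_{b!}(\mathcal{F}_{\rho'}) \simeq R\Gamma_{c}(G,b,\mu)[\rho'][d](\frac{d}{2})$ with $d = \langle 2\rho_{G},\mu\rangle = 3$, and the representation governing the Hecke operator $T_{\mu}$ is the $4$-dimensional standard representation $\mathrm{std}$ of $\hat{G} = \GSp_{4}(\overline{\mathbb{Q}}_{\ell})$. Since $\overline{\mathbb{Q}}_{\ell}(1) = |\cdot|$ in the geometric normalization, the half-Tate twist $(\frac{d}{2}) = (\frac{3}{2})$ cancels the factor $|\cdot|^{-3/2}$ appearing in Proposition 1.4; consequently each irreducible $W_{L}$-constituent of $\mathrm{std}\circ\phi$ (namely $\phi_{1}$ and $\phi_{2}$ in the endoscopic case, or $\mathrm{std}\circ\phi$ itself in the stable case) occurs as a subquotient of some cohomology sheaf of $T_{\mu}\big(\bigoplus_{\rho' \in \Pi_{\phi}(J)} j_{b!}(\mathcal{F}_{\rho'})\big)$ over the neutral Harder--Narasimhan stratum, hence of $T_{\mu}j_{b!}(\mathcal{F}_{\rho'})$ for some $\rho' \in \Pi_{\phi}(J)$. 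Applying Lemma 3.10 with $V = \mathrm{std}$ and $A = j_{b!}(\mathcal{F}_{\rho'})$ — whose Fargues--Scholze parameter is governed by $\phi_{\rho'}^{\mathrm{FS}} = \psi$ through Proposition 3.14 — then shows that $\mathrm{std}\circ\psi$ contains each of these constituents, a priori only up to a twist by $\overline{\mathbb{Q}}_{\ell}(1)$.

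It remains to remove the twist. Since $\psi$ is semisimple, $\mathrm{std}\circ\psi$ is a $4$-dimensional semisimple $W_{L}$-representation; in the stable case it therefore equals $\mathrm{std}\circ\phi$ or $(\mathrm{std}\circ\phi)(1)$, and in the endoscopic case it decomposes as $(\phi_{1}\text{ or }\phi_{1}(1)) \oplus (\phi_{2}\text{ or }\phi_{2}(1))$, using $\phi_{1}\not\simeq\phi_{2}$, $\phi_{i}\not\simeq\phi_{i}(1)$ and $\phi_{1}\not\simeq\phi_{2}(1)$, all of which follow from $\det\phi_{1} = \det\phi_{2}$. But $\det(\mathrm{std}\circ\psi) = (sim\circ\psi)^{2} = (sim\circ\phi)^{2} = \det(\mathrm{std}\circ\phi)$, using $\det\circ\,\mathrm{std} = sim^{2}$ on $\GSp_{4}$ together with the similitude comparison from the first paragraph, and each twisted alternative would alter this determinant by $|\cdot|^{2}$ or $|\cdot|^{4}$; hence there is no twist, $\mathrm{std}\circ\psi = \mathrm{std}\circ\phi$, and combined with $sim\circ\psi = sim\circ\phi$ this yields $\phi_{\rho}^{\mathrm{FS}} = \psi = \phi$.

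The genuinely hard work sits in Proposition 1.4, which is the standing hypothesis here; inside the present deduction the only points needing care are the normalization bookkeeping — matching the half-Tate twist of $T_{\mu}$ to the $|\cdot|^{-3/2}$ of Proposition 1.4 and the central twist in the twisted embedding of Proposition 3.14, so that Lemma 3.10 can be applied and the residual twist absorbed — and the routine verification that $j_{b!}(\mathcal{F}_{\rho'})$ is compact, ULA and Schur-irreducible, which is immediate from supercuspidality of $\phi$. Together with Corollaries 3.12 and 3.16 this disposes of the last remaining case, and hence of Theorem 1.1 for $G$ and $J$.
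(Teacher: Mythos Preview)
Your argument is correct and follows essentially the same route as the paper's proof: invoke Corollary~3.3 to identify $R\Gamma_c(G,b,\mu)[\rho'][3](\tfrac{3}{2})$ with $j_{\mathbf{1}}^*T_\mu j_{b!}(\mathcal{F}_{\rho'})$, use Lemma~3.20 (your ``Lemma 3.21'') to know all members of $\Pi_\phi(J)$ share a common Fargues--Scholze parameter $\psi$, feed Proposition~1.4 into Lemma~3.10 to force each irreducible constituent of $\mathrm{std}\circ\phi$ to appear in $\mathrm{std}\circ\psi$ up to a Tate twist, and then eliminate the twist via the similitude/central character compatibility before invoking that a $\GSp_4$-parameter is determined by $(\mathrm{std}\circ\psi,\ sim\circ\psi)$.

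The only organizational difference is in how the Tate-twist ambiguity is killed. The paper first uses the $\GSp_4$-valued constraint (equal determinants of the two $2$-dimensional summands) to reduce the four possibilities in the endoscopic case to two, and then compares similitude characters directly to pick the untwisted one. You instead compute $\det(\mathrm{std}\circ\psi) = (sim\circ\psi)^2 = (sim\circ\phi)^2$ and observe that any nontrivial Tate twist shifts this determinant by $|\cdot|^2$ or $|\cdot|^4$, handling all three bad cases at once. Both arguments are equivalent repackagings of the same information; yours is marginally more uniform across the stable and endoscopic cases, while the paper's makes the role of the $\GSp_4$-structure slightly more visible. One small slip: the non-isomorphism $\phi_1 \not\simeq \phi_2$ is part of the \emph{definition} of an endoscopic parameter, not a consequence of $\det\phi_1 = \det\phi_2$ as you wrote, though the other non-isomorphisms you list do follow from the determinant condition.
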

\begin{proof}
As mentioned in the introduction, the key will be the isomorphism
\[ \bigoplus_{\rho' \in \Pi_{\phi}(J)} j_{\mathbf{1}}^{*}T_{\mu}j_{b!}(\mathcal{F}_{\rho'}) \simeq \bigoplus_{\rho' \in \Pi_{\phi}(J)} R\Gamma_{c}(G,b,\mu)[\rho'][3](\frac{3}{2}) \]
supplied by Corollary 3.3. Now Proposition 1.4 tells us that one of the summands on the RHS admits a sub-quotient with $W_{L}$-action given by $\phi_{1}$ and one of them admits a sub-quotient with $W_{L}$-action given by $\phi_{2}$. Applying Lemma 3.20 and 3.10 therefore tells us that $\mathrm{std}\circ \phi_{\rho}^{\mathrm{FS}}$ admits a sub-quotient isomorphic to $\phi_{1}$ and a sub-quotient isomorphic to $\phi_{2}$.  Therefore, we conclude that $\mathrm{std}\circ \phi = \mathrm{std}\circ \phi_{\rho}^{\mathrm{FS}}$. Moreover, we also know that the central character of $\mathrm{std}\circ \phi_{\rho}^{\mathrm{FS}}$ must coincide with the central character of $\rho$ by Theorem 3.6 (2), which agrees with the similitude character of $\mathrm{std}\circ \phi_{\rho} = \phi_{1} \oplus \phi_{2}$. By combining these observations with \cite[Lemma~6.1]{GT1}, this is enough to conclude that $\phi_{\rho} = \phi_{\rho}^{\mathrm{FS}}$, as conjugacy classes of $\GSp_{4}$-valued parameters.
\end{proof}
\section{Basic Uniformization}
In this section, we will briefly review what basic uniformization of the generic fiber of a global Shimura variety means, following \cite[Section~3.1]{Han}. Then we will apply it to our particular case and derive an analogue of Boyer's trick, providing useful consequences for the proof of Proposition 1.4. 
\subsection{A Review of Basic Uniformization}
We now recall briefly what basic uniformization means. Let $\mathbf{G}/\mathbb{Q}$ be a connected reductive group over $\mathbb{Q}$ and let $(\mathbf{G},X)$ be a Shimura datum, with associate inverse conjugacy class of Hodge cocharacters $\mu: \mathbb{G}_{m,\mathbb{C}} \rightarrow \mathbf{G}_{\mathbb{C}}$. Fix a prime $p$, and set $G := \mathbf{G}_{\mathbb{Q}_{p}}$. Using our fixed isomorphism $\mathbb{C} \simeq \overline{\mathbb{Q}}_{p}$, we can and do regard $\mu$ as a conjugacy class of cocharacters $\mu: \mathbb{G}_{m,\overline{\mathbb{Q}}_{p}} \rightarrow G_{\overline{\mathbb{Q}}_{p}}$. This allows us to consider the $\mu$-admissible locus $B(G,\mu)$ in the Kottwitz set of $G$. Let $\mathbb{A}$ (resp. $\mathbb{A}_{f}$) denote the adeles (resp. finite adeles) of $\mathbb{Q}$ and $\mathbb{A}_{f}^{p}$ denote the finite adeles away from $p$. For any compact open subgroup $K \subset \mathbf{G}(\mathbb{A}_{f})$, let $\mathcal{S}(\mathbf{G},X)_{K}$ be the associated rigid analytic Shimura variety over $\mathbb{C}_{p}$ of level $K$, regarded as a diamond over $\Spd(\bb{C}_{p})$. We let $K = K^{p}K_{p}$, where $K^{p} \subset \mathbf{G}(\mathbb{A}_{f}^{p})$ and $K_{p} \subset G(\mathbb{Q}_{p})$ are open compact subgroups. We set
\[ \mathcal{S}(\mathbf{G},X)_{K^{p}} := \lim_{K_{p} \rightarrow \{1\}} \mathcal{S}(\mathbf{G},X)_{K^{p}K_{p}}. \]
If $(\mathbf{G},X)$ is of pre-abelian type, this is representable by a perfectoid space and in general it can described as a diamond. By the results of \cite{Han1}, there exists a canonical $G(\mathbb{Q}_{p})$-equivariant Hodge-Tate period map
\[ \pi_{HT}: \mathcal{S}(\mathbf{G},X)_{K^{p}} \rightarrow \mathcal{F}\ell_{G,\mu^{-1}}, \]
where $\mathcal{F}\ell_{G,\mu^{-1}} := (G_{\mathbb{C}_{p}}/P_{\mu^{-1}})^{ad}$ is the adic space associated to the flag variety defined by the parabolic $P_{\mu^{-1}} \subset G_{\mathbb{C}_{p}}$ given by the dominant inverse of $\mu$ via the dynamical method again regarded as a diamond. We identify $\mathcal{F}\ell_{G,\mu^{-1}}$ with the space of modifications of type $\mathcal{E} \dashrightarrow \mathcal{E}_{0}$ of type $\mu$ via the Bialynicka-Birula isomorphism. We note that the map $\pi_{\HT}$ is an fdcs map of diamonds in the sense of \cite[Definition~5.4]{Mann2022NuclearSheaves}\footnote{The maps  $\mathcal{S}(\mathbf{G},X)_{K^{p}} \ra \Spa(\bb{C}_{p})$ and $\mathcal{F}\ell_{G,\mu^{-1}} \ra \Spa(\bb{C}_{p})$ are easily checked to be fdcs, and therefore this follows by \cite[Lemma~5.5 (iv)]{Mann2022NuclearSheaves}.} in particular all six operations constructed in \cite{Ecod} exist for this morphism.

By the $G(\mathbb{Q}_{p})$-equivariance, $\pi_{HT}$ descends to a map:
\[ \pi_{HT,K_{p}}: \mathcal{S}(\mathbf{G},X)_{K^{p}K_{p}} \rightarrow [\mathcal{F}\ell_{G,\mu^{-1}}/\underline{K_{p}}]. \]
We let $b \in B(G,\mu)$ be the unique basic element, and let $\mathcal{F}\ell_{G,\mu^{-1}}^{b}$ be the basic Newton stratum. This parametrizes, for $S$ a perfectoid space in characterstic $p$, modifications $\mathcal{E} \dashrightarrow \mathcal{E}_{0}$ of type $\mu$ between the trivial $G$-bundle $\mathcal{E}_{0}$ on the relative Fargues-Fontaine curve $X_{S}$ and $\mathcal{E}$ a bundle isomorphic to the $G$-bundle $\mathcal{E}_{b}$ corresponding to $b \in B(G)$ after pulling back to a geometric point of $S$. The triple $(G,b,\mu)$ defines a local Shimura datum, as in section $3.1$, so we may consider the infinite level Shimura variety/shtuka space $\Sht(G,b,\mu)_{\infty}$ and its base change $\Sht(G,b,\mu)_{\infty,\mathbb{C}_{p}}$. By pulling back along $\pi_{HT}$, we get an open subspace $\mathcal{S}(\mathbf{G},X)^{b}_{K^{p}} \subset \mathcal{S}(\mathbf{G},X)_{K^{p}}$, which descends to an open subspace $\mathcal{S}(\mathbf{G},X)^{b}_{K}$, for $K \subset \mathbf{G}(\mathbb{A}_{f})$ an open compact. We now have the key definition.
\begin{definition}
We say a global Shimura datum $(\mathbf{G},X)$ satisfies basic uniformization at $p$ if there exists (in fact this is always true see \cite[Proposition~3.1]{Han}) a  $\mathbb{Q}$-inner form $\mathbf{G}'$ of $\mathbf{G}$ satisfying
\begin{itemize}
    \item $\mathbf{G}'_{\mathbb{A}^{p}_{f}} \simeq \mathbf{G}_{\mathbb{A}^{p}_{f}}$ as algebraic groups over $\mathbb{A}^{p}_{f}$,
    \item $\mathbf{G}'_{\mathbb{Q}_{p}} \simeq J_b$, where $J_{b}$ is the inner form of $G$ given by the $\sigma$-centralizer of the basic element $b \in B(G,\mu)$,
    \item $\mathbf{G}'(\mathbb{R})$ is compact modulo center,
\end{itemize}
and a $\mathbf{G}(\mathbb{A}_{f})$-equivariant isomorphism of diamonds over $\mathbb{C}_{p}$
\begin{equation} 
\lim_{K^{p} \rightarrow \{1\}} \mathcal{S}(\mathbf{G},X)^{b}_{K^{p}} \simeq (\underline{\mathbf{G}'(\mathbb{Q})}\backslash\underline{\mathbf{G}'(\mathbb{A}_{f})} \times_{\Spd(\mathbb{C}_{p})} \Sht(G,b,\mu)_{\infty,\mathbb{C}_{p}})/\underline{J_{b}(\mathbb{Q}_{p})}, 
\end{equation}
where $J_{b}(\mathbb{Q}_{p})$ acts diagonally, such that, under the identification $\mathcal{F}\ell_{G,\mu^{-1}}^{b} \simeq \Sht(G,b,\mu)_{\infty,\mathbb{C}_{p}}/\underline{J_{b}(\mathbb{Q}_{p})}$, the morphism
\[ \pi_{HT}: \lim_{K^{p} \rightarrow \{1\}} \mathcal{S}(\mathbf{G},X)^{b}_{K^{p}} \rightarrow \mathcal{F}\ell_{G,\mu}^{b} \]
identifies with the projection
\[ (\underline{\mathbf{G}'(\mathbb{Q})}\backslash\underline{\mathbf{G}'(\mathbb{A}_{f})} \times_{\Spd(\mathbb{C}_{p})} \Sht(G,b,\mu)_{\infty,\mathbb{C}_{p}})/\underline{J_{b}(\mathbb{Q}_{p})} \rightarrow \Sht(G,b,\mu)_{\infty,\mathbb{C}_{p}}/\underline{J_{b}(\mathbb{Q}_{p})}, \]
where $\mathbf{G}(\mathbb{A}_{f}) \simeq \mathbf{G}'(\mathbb{A}_{f}^{p}) \times G(\mathbb{Q}_{p})$ acts on the RHS via the natural action of $\mathbf{G}'(\mathbb{A}_{f}^{p})$ on $\mathbf{G}'(\mathbb{Q})\backslash\mathbf{G}'(\mathbb{A}_{f})$ and $G(\mathbb{Q}_{p})$ acts on $\Sht(G,b,\mu)_{\infty,\mathbb{C}_{p}}$. Moreover, if the reflex field of the cocharacter $\mu: \mathbb{G}_{m,\overline{\mathbb{Q}}_{p}} \rightarrow G_{\overline{\mathbb{Q}}_{p}}$ is $E/\mathbb{Q}_{p}$ then this isomorphism descends to an isomorphism of diamonds over $\breve{E} := E\breve{\mathbb{Q}}_{p}$.  
\end{definition}
We now mention some consequences of uniformization which will be key to us in what follows. Let $\mathcal{H}(J_{b}) := C^{\infty}_{c}(J_b(\mathbb{Q}_{p}),\overline{\mathbb{Q}}_{\ell})$ be the usual smooth Hecke algebra. We fix an algebraic representation of  $\mathbf{G}/\mathbb{Q}$, denoted $\mathcal{V}_{\xi}$, of some regular highest weight $\xi$. The isomorphism $i: \overline{\mathbb{Q}}_{\ell} \xrightarrow{\simeq} \mathbb{C}$ then determines a $\overline{\mathbb{Q}}_{\ell}$-local system   $\mathcal{L}_{\xi}$ on the Shimura variety $\mathcal{S}(G,X)_{K^{p}}$. We now consider the space of algebraic automorphic forms valued in $\mathcal{V}_{\xi}$: \[\mathcal{A}(\mathbf{G}'(\mathbb{Q})\backslash \mathbf{G}'(\mathbb{A}_{f})/K^{p},\mathcal{L}_{\xi}) := \colim_{K_{p} \rightarrow \{1\}} \mathcal{A}(\mathbf{G}'(\mathbb{Q})\backslash \mathbf{G}'(\mathbb{A}_{f})/K^{p}K_{p},\mathcal{L}_{\xi})\] 
in the sense of Gross \cite{Gross}. Namely, it is the space of all continuous functions $\phi: \mathbf{G}'(\mathbb{A}_{f}) \ra \mathcal{V}_{\xi}(\ol{\mathbb{Q}}_{\ell})$ with respect to the pro-finite topology on the source and the discrete topology on the target such that, for all $k \in K^{p}$, $\gamma \in G(\mathbb{Q})$, and $g \in \mathbf{G}'(\mathbb{A}_{f})$, we have that $\phi(gk) = \phi(g)$ and $\phi(\gamma g) = \gamma\phi(g)$, where $\gamma$ acts via $\mathcal{V}_{\xi}$. The isomorphism (2) will then allow us to deduce an isomorphism
\[ R\Gamma_{c}(G,b,\mu) \otimes^{\mathbb{L}}_ {\mathcal{H}(J_{b})} \mathcal{A}(\mathbf{G}'(\mathbb{Q})\backslash \mathbf{G}'(\mathbb{A}_{f})/K^{p},\mathcal{L}_{\xi}) \xrightarrow{\simeq} R\Gamma_{c}(\mathcal{S}(\mathbf{G},X)^{b}_{K^{p}},\mathcal{L}_{\xi})  \]
of $G(\mathbb{Q}_{p}) \times W_{E}$-modules, which when composed with the morphism 
\[ R\Gamma_{c}(\mathcal{S}(\mathbf{G},X)^{b}_{K^{p}},\mathcal{L}_{\xi}) \rightarrow R\Gamma_{c}(\mathcal{S}(\mathbf{G},X)_{K^{p}},\mathcal{L}_{\xi}) \]
coming from excision with $\mathbb{Z}_{\ell}$-coefficients with respect to the open strata $\mathcal{S}(\mathbf{G},X)^{b}_{K^{p}} \hookrightarrow \mathcal{S}(\mathbf{G},X)_{K^{p}}$, gives rise to the uniformization map mentioned in the introduction. We show this now.
\begin{proposition}
Assume that $(\mathbf{G},X)$ satisfies basic uniformization at $p$, then there exists a $G(\mathbb{Q}_{p}) \times W_{E}$-equivariant map
\[ \Theta: R\Gamma_{c}(G,b,\mu) \otimes^{\mathbb{L}}_ {\mathcal{H}(J_{b})} \mathcal{A}(\mathbf{G}'(\mathbb{Q})\backslash \mathbf{G}'(\mathbb{A}_{f})/K^{p},\mathcal{L}_{\xi}) \rightarrow R\Gamma_{c}(\mathcal{S}(\mathbf{G},X)_{K^{p}},\mathcal{L}_{\xi})  \]
functorial in the level $K^{p}$.  
\end{proposition}
\begin{proof}
We will want to use excision in this argument with respect to the Newton stratification. However, with $\ol{\mathbb{Q}}_{\ell}$-coefficients there are some subtleties if one just does this naively, as noted at the end of the introduction of \cite{Han} (See also \cite[Proposition~2.6, Example~2.7]{Huber} for examples in more classical language). To remedy this, as in \cite{Han} we first construct a map $\hat{\Theta}$ with $\mathbb{Z}_{\ell}$-coefficients, where we use the formalism of adic sheaves defined in \cite[Section~26]{Ecod}, which defines a full six functor formalism with the usual excision triangles. We then deduce the statement with $\ol{\mathbb{Q}}_{\ell}$-coefficients from this. We write $\hat{\mathcal{L}}_{\xi}$ for the $\mathbb{Z}_{\ell}$-adic local system on $\mathcal{S}(\mathbf{G},X)_{K^{p}}$ defined by fixing a $K^{p}$-stable $\mathbb{Z}_{\ell}$-lattice in the $\mathbb{Q}_{\ell}$-realization of the algebraic representation $\mathcal{V}_{\xi}$ considered above, as in \cite[Proposition~2.9]{Han}

We fix a level $K_{p} \subset \mathbf{G}(\mathbb{Q}_{p})$ and write $\pi_{HT,K_{p}}: \mathcal{S}(\mathbf{G},X)_{K^{p}K_{p}} \ra [\mathcal{F}\ell_{G,\mu^{-1}}/\underline{K_{p}}]$ for the induced Hodge-Tate period map. Then we have an isomorphism: 
\[ R\Gamma_{c}(\mathcal{S}(\mathbf{G},X)_{K^{p}},\hat{\mathcal{L}}_{\xi}) \simeq \colim_{K_{p} \ra \{1\}}  R\Gamma_{c}([\mathcal{F}\ell_{G,\mu^{-1}}/\underline{K_{p}}], R\pi_{HT,K_{p}!}(\hat{\mathcal{L}}_{\xi})), \]
where the LHS is defined as the colimit of the cohomology of the Shimura varieties at finite level (or alternatively, since we are working with compactly supported cohomology which is, in particular the colimit over the cohomology of the qcqs opens by definition, the cohomology of the Shimura variety at infinite level).
Write $j_{K_{p}}: [\mathcal{F}\ell_{G,\mu^{-1}}^{b}/\underline{K_{p}}] \hookrightarrow [\mathcal{F}\ell_{G,\mu^{-1}}/\underline{K_{p}}]$ for the open inclusion of the basic locus quotiented out by $K_{p}$.  Now, we claim that, under this identification, the desired map is given by the map
\[ R\Gamma_{c}([\mathcal{F}\ell_{G,\mu^{-1}}^{b}/\underline{K_{p}}], j_{K_{p}}^{*}R\pi_{HT,K_{p}!}(\hat{\mathcal{L}}_{\xi})) \ra   R\Gamma_{c}([\mathcal{F}\ell_{G,\mu^{-1}}/\underline{K_{p}}], R\pi_{HT,K_{p}!}(\hat{\mathcal{L}}_{\xi}))  \]
coming from applying excision with respect to $j_{K_{p}}$, for varying $K_{p}$. There is a natural map 
\[ q_{K_{p}}^{b}: [\mathcal{F}\ell_{G,\mu^{-1}}^{b}/\underline{K_{p}}] \ra [\Spd(\mathbb{C}_{p})/\underline{J_{b}(\mathbb{Q}_{p})}], \]
which is cohomologically smooth as in \cite[Proposition~2.16]{Han}. Since we are working with the category of $\mathbb{Z}_{\ell}$-adic sheaves, the category of sheaves on the target is not identified with the unbounded derived category of smooth representations of $J_{b}(\mathbb{Q}_{p})$, but rather the left-completed derived category of smooth $\ell$-complete representations by  \cite[Proposition~2.6 (1)]{Han} (cf. Remark 3.1). However, since $\hat{\mathcal{L}}_{\xi}$ was defined by fixing a $K^{p}$-stable lattice inside the $\mathbb{Q}_{\ell}$-realization of $\mathcal{V}_{\xi}$, the representation $\Pi := \mathcal{A}(\mathbf{G}'(\mathbb{Q})\backslash \mathbf{G}'(\mathbb{A}_{f})/K^{p},\hat{\mathcal{L}}_{\xi})$ defined by the $\ell$-adic completion of the space of $\mathbb{Z}_{\ell}$-algebraic automorphic forms defined above has the structure of such a $J_{b}(\mathbb{Q}_{p})$-representation, and we write $\mathcal{F}_{\Pi}$ for this sheaf. By \cite[Corollary~3.7]{Han}, the fact that we know basic uniformization at $p$ implies we have a natural in $K^{p}$ isomorphism:
\[ (q_{K_{p}}^{b*})(\mathcal{F}_{\Pi}) \simeq j_{K_{p}}^{*}R\pi_{HT,K_{p}!}(\hat{\mathcal{L}}_{\xi}). \]
Applying $R\Gamma_{c}([\mathcal{F}\ell_{G,\mu^{-1}}^{b}/K_{p}],-)$, we obtain 
\[  R\Gamma_{c}([\mathcal{F}\ell_{G,\mu^{-1}}^{b}/K_{p}],(q_{K_{p}}^{b*})(\mathcal{F}_{\Pi})) \simeq  R\Gamma_{c}([\mathcal{F}\ell_{G,\mu^{-1}}^{b}/K_{p}],j_{K_{p}}^{*}R\pi_{HT,K_{p}!}(\hat{\mathcal{L}}_{\xi})), \]
but now, by \cite[Proposition~2.17]{Han}, we have an isomorphism
\[ R\Gamma_{c}(\Sht(G,b,\mu)_{K_{p}},\mathbb{Z}_{\ell}) \otimes^{\mathbb{L}}_{\mathcal{H}(J_{b})} \mathcal{A}(\mathbf{G}'(\mathbb{Q})\backslash \mathbf{G}'(\mathbb{A}_{f})/K^{p},\hat{\mathcal{L}}_{\xi}) \simeq R\Gamma_{c}([\mathcal{F}\ell_{G,\mu^{-1}}^{b}/K_{p}],(q_{K_{p}}^{b*})(\mathcal{F}_{\Pi})) \]
natural in $K_{p}$. However, by proper base-change, we have an identification
\[ R\Gamma_{c}(\mathcal{S}(\mathbf{G},X)^{b}_{K^{p}K_{p}},\hat{\mathcal{L}}_{\xi}) \simeq R\Gamma_{c}([\mathcal{F}\ell_{G,\mu^{-1}}^{b}/K_{p}],j_{K_{p}}^{*}R\pi_{HT,K_{p}!}(\hat{\mathcal{L}}_{\xi})), \]
and by composing the previous isomorphism with the natural map
\[ R\Gamma_{c}(\mathcal{S}(\mathbf{G},X)^{b}_{K^{p}K_{p}},\hat{\mathcal{L}}_{\xi}) \ra R\Gamma_{c}(\mathcal{S}(\mathbf{G},X)_{K^{p}K_{p}},\hat{\mathcal{L}}_{\xi}) \]
coming from excision applied to the adic sheaf $\hat{\mathcal{L}}_{\xi}$, we obtain a map of the desired form. Since all these constructions are natural in $K_{p}$, if we take the colimit over $K_{p}$ we obtain a uniformization map:
\[ \widehat{\Theta}: R\Gamma_{c}(G,b,\mu) \otimes^{\mathbb{L}}_{\mathcal{H}(J_{b})} \mathcal{A}(\mathbf{G}'(\mathbb{Q})\backslash \mathbf{G}'(\mathbb{A}_{f})/K^{p},\hat{\mathcal{L}}_{\xi}) \rightarrow R\Gamma_{c}(\mathcal{S}(\mathbf{G},X)_{K^{p}},\hat{\mathcal{L}}_{\xi}).  \]
However, using that both sides of $\widehat{\Theta}$ are admissible $G(\mathbb{Q}_{p})$-representations (i.e its invariants under varying $K_{p}$ give rise to a perfect complex), we can obtain the desired map with $\ol{\mathbb{Q}}_{\ell}$-coefficients by taking smooth vectors with respect to $G(\mathbb{Q}_{p})$ and tensoring by $\ol{\mathbb{Q}}_{\ell}$ (cf. the equivalence \cite[Proposition~2.3]{Han} and Definition 3.2). Here the admissibility of the target of $\hat{\Theta}$ follows from standard finiteness results for Shimura varieties, and the admissibility of the source follows from the fact that $\mathbf{G}'(\mathbb{Q})\backslash\mathbf{G}'(\mathbb{A}_{f})/K^{p}K_{p}$ is a finite set for any compact open $K_{p} \subset \mathbf{G}'(\mathbb{Q}_{p})$ and the fact that the isotypic component of $R\Gamma_{c}(G,b,\mu)$ with respect to any complex of admissible representations is again admissible. This follows for example by the fact that Hecke operators preserve ULA sheaves (\cite[Theorem~I.7.2]{FS}), the relationship between Hecke operators and isotypic parts as in Corollary \ref{cor: isotypcpartcalculation}, and the relationship between ULAness and admissibility (cf. Remark \ref{remark: ULAsheaves} and see \cite[Theorem~I.5.1 (v)]{FS})
\end{proof}
\subsection{Boyer's Trick}
We will now be interested in applying uniformization to the situation we are interested in, proving an analogue of Boyer's trick \cite{Boy1} and deducing some relevant consequences. The first relevant result is due to Shen. \begin{theorem}{\cite[Corollary~6.14]{She}}
If $(\mathbf{G},X)$ is a Shimura datum of abelian type and $p > 2$ is a prime where $G$ is unramified then $(\mathbf{G},X)$ satisfies basic uniformization at $p$.  
\end{theorem}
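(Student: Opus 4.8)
The plan is to reduce to the case of Shimura data of Hodge type and there to run a version of the classical Rapoport--Zink $p$-adic uniformization argument, upgraded to infinite level and to the setting of diamonds. First, via Deligne's construction of abelian type Shimura varieties, one fixes an auxiliary Hodge type datum $(\mathbf{G}_{1},X_{1})$ together with a central isogeny $\mathbf{G}_{1}^{\mathrm{der}}\to\mathbf{G}^{\mathrm{der}}$ so that $\mathcal{S}(\mathbf{G},X)$ is obtained from $\mathcal{S}(\mathbf{G}_{1},X_{1})$ by passing to connected components, taking a quotient by a finite group, and twisting by a torsor; since $G$ is unramified one may take $\mathbf{G}_{1,\mathbb{Q}_{p}}$ unramified as well. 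The Newton stratification, the basic locus, the Hodge--Tate period map $\pi_{HT}$, and the infinite-level Shtuka space are all compatible with each of these operations, so basic uniformization for $(\mathbf{G}_{1},X_{1})$ will give it for $(\mathbf{G},X)$, with the inner form $\mathbf{G}'$ obtained from $\mathbf{G}_{1}'$ by reversing Deligne's construction; the only delicate point here is to check that the central and connected-components bookkeeping on the Shimura variety side matches that on the side of $\bigl(\underline{\mathbf{G}'(\mathbb{Q})}\backslash\underline{\mathbf{G}'(\mathbb{A}_{f})}\times\Sht(G,b,\mu)_{\infty}\bigr)/\underline{J_{b}(\mathbb{Q}_{p})}$.

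For the Hodge type case, fix a symplectic embedding $(\mathbf{G}_{1},X_{1})\hookrightarrow(\GSp_{2g},\mathcal{H}_{g}^{\pm})$. Because $p>2$ and $\mathbf{G}_{1,\mathbb{Q}_{p}}$ is unramified, Kisin's theory of integral canonical models supplies a smooth integral model of $\mathcal{S}(\mathbf{G}_{1},X_{1})$ at hyperspecial level with a finite morphism to the Siegel moduli scheme, and the Hodge tensors cut out inside the Siegel Rapoport--Zink space a Rapoport--Zink formal scheme of Hodge type. The $p$-adic uniformization of the basic (supersingular) locus of the special fibre is inherited from the Siegel case, where it is classical (Rapoport--Zink, via Grothendieck--Messing, Serre--Tate, and the adelic parametrization of supersingular abelian varieties with polarization and prime-to-$p$ level), restricted to the closed locus where the Hodge tensors are respected. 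Passing to the rigid generic fibre and then to infinite level at $p$, the Scholze--Weinstein identification of Rapoport--Zink spaces with moduli of $p$-adic shtukas turns this into an isomorphism between the ``tube'' of the basic locus at infinite level and $\bigl(\underline{\mathbf{G}_{1}'(\mathbb{Q})}\backslash\underline{\mathbf{G}_{1}'(\mathbb{A}_{f})}\times_{\Spd(\mathbb{C}_{p})}\Sht(G_{1},b,\mu)_{\infty,\mathbb{C}_{p}}\bigr)/\underline{J_{b}(\mathbb{Q}_{p})}$.

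It then remains to identify this tube of the basic locus with $\pi_{HT}^{-1}(\mathcal{F}\ell^{b}_{G_{1},\mu^{-1}})=\mathcal{S}(\mathbf{G}_{1},X_{1})^{b}_{K^{p}}$, that is, to match the Newton stratification defined by the $p$-divisible groups on the special fibre with the one on the Hodge--Tate flag variety pulled back along $\pi_{HT}$. Here one invokes the compatibility of the Hodge--Tate and crystalline period maps, together with the Caraiani--Scholze analysis of these stratifications and the relevant integral $p$-adic comparison theorems, and then checks $G_{1}(\mathbb{Q}_{p})$-equivariance, compatibility with $\pi_{HT}$ as demanded in Definition 4.1, and descent of the isomorphism from $\mathbb{C}_{p}$ to $\breve{E}$ by propagating the non-effective Frobenius descent datum through all of the identifications above.

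I expect the main obstacle to be precisely this last matching of the two Newton stratifications at integral level for Hodge type data: it is what forces the hypotheses $p>2$ and $G$ unramified, through the need for Kisin's smooth integral models and for Rapoport--Zink spaces of Hodge type, and it is the reason the method breaks down in the ramified or bad-reduction situation discussed in Remark 1.2. By contrast, the reduction to Hodge type via Deligne's construction and the passage to the diamond statement via the Scholze--Weinstein shtuka dictionary, although technically involved, should be comparatively formal.
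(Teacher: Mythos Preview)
The paper does not give its own proof of this statement: it is quoted as a theorem of Shen \cite{She}, and the subsequent remark simply points to \cite[Theorem~D]{LiHue} for a complete argument at hyperspecial level. So there is nothing in the paper to compare against beyond the citation.

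That said, your outline is a faithful sketch of the strategy in the cited literature: reduce from abelian type to Hodge type via Deligne's formalism, use Kisin's integral canonical models (this is where $p>2$ and unramifiedness enter), inherit Rapoport--Zink uniformization of the basic locus from the Siegel case cut out by Hodge tensors, pass to the generic fibre and infinite level via the Scholze--Weinstein shtuka interpretation, and then match the two Newton stratifications through the Hodge--Tate period map. Your identification of the last step as the crux is correct, and your explanation of why the hypotheses are needed matches the paper's own Remark~1.1. One small caution: the ``comparatively formal'' reduction from abelian to Hodge type is in practice where much of the technical work in \cite{She} and \cite{LiHue} lies, since one must track the uniformization isomorphism, the inner form $\mathbf{G}'$, and the compatibility with $\pi_{HT}$ through the connected-components and quotient steps; you flag this but may be underestimating it.
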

\begin{remark}
To see a full proof of this statement for the full integral model at hyperspecial level, one can also look at the proof of \cite[Theorem~D]{LiHue}.
\end{remark}
Let $F/\bb{Q}$ be a totally real field and $q$ some odd inert prime. For the rest of the section, we take $\mathbf{G}$ be a $\mathbb{Q}$-inner form of $\mathbf{G}^{*} := \mathrm{Res}_{F/\mathbb{Q}}\GSp_{4}$, with $F/\mathbb{Q}$ a totally real field satisfying the condition that $\mathbf{G}(\mathbb{R}) \simeq \GSp_{4}(\mathbb{R}) \times \GU_{2}(\mathbb{H})^{[F:\mathbb{Q}] - 1}$, and that it is split at all finite places if $[F:\mathbb{Q}]$ is odd and non-split only at $q$ if $[F:\mathbb{Q}]$ is even. Here $\mathbb{H}$ denotes the Hamilton quaternions. We assume that $p \neq q$ is totally inert and $F_{p} \simeq L$, a fixed unramified extension of $\mathbb{Q}_{p}$ so that $\mathbf{G}_{\mathbb{Q}_{p}} \simeq \mathrm{Res}_{L/\mathbb{Q}_{p}}\GSp_{4} = G$. We fix a level $K = K_{p}K^{p} \subset \mathbf{G}(\mathbb{A}_{f})$ as before, and assume from now on that $(\mathbf{G},X)$ is such that the corresponding cocharacter $\mu$ is the Siegel cocharacter. Therefore, the unique basic $b \in B(G,\mu)$ will have $\sigma$-centralizer given by $\mathrm{Res}_{L/\mathbb{Q}_{p}}\GU_{2}(D)$, with $D/L$ the quaternionic division algebra. Since $L/\mathbb{Q}_{p}$ is unramified and $p > 2$, we can apply Theorem $4.2$ to deduce basic uniformization at $p$. Let $\mathbf{G}'$ be the $\mathbb{Q}$-inner form of $\mathbf{G}$ defined above. Now we prove the following result, which plays a similar role to Boyer's trick \cite{Boy1} in the study of the cohomology of the Lubin-Tate/Drinfeld towers.
\begin{lemma}
For $b \in B(G,\mu)$ non-basic there exists a proper parabolic $P = LU \subset G$, a dominant cocharacter $\mu_{L}$ of the Levi $L$, and an element $b_{L} \in B(L,\mu_{L})$, such that the adic Newton strata $\mathcal{F}\ell_{G,\mu^{-1}}^{b}$ is parabolically induced as a space with $G(\mathbb{Q}_{p})$-action from the flag variety $\mathcal{F}\ell_{L,\mu_{L}^{-1}}^{b_{L}}$, where $P(\mathbb{Q}_{p})$ acts via the surjection $P(\mathbb{Q}_{p}) \ra L(\mathbb{Q}_{p})$.
\end{lemma}
\begin{proof}
We recall \cite[Definition~4.28]{RV} that we say $b \in B(G,\mu)$ is Hodge-Newton reducible if there exists a proper Levi subgroup $L$ together with a basic element $b_{L} \in B(L,\mu_{L})$ mapping to $b \in B(G,\mu)$ under the natural map $B(L) \ra B(G)$, where $\mu_{L}$ is a choice of representative for $\mu$ as a geometric dominant cocharacter of $L$. Given such a $b$, we let $P$ be the standard parabolic of $G$ with respect to a choice of Borel such that its Levi factor is $L$, and fix $\mu_{L}$ to be the conjugacy class of dominant cocharacters of $L$ that is dominant respect to $B$. We will also assume that $b_{L}$ can always be chosen so that $J_{b_{L}} \xrightarrow{\simeq} J_{b}$ via the map induced by $B(L) \ra B(G)$. 

We recall that, if $\mathcal{J}_{b}$ denotes the group diamond parametrizing automorphisms of the bundle corresponding to $b \in B(G)$, this has a semi-direct product decomposition 
\[ \mathcal{J}_{b} \simeq \ul{J_{b}(\mathbb{Q}_{p})} \ltimes \mathcal{J}_{b}^{U} \]
given by the splitting of the HN-filtration of $\mathcal{E}_{b}$ \cite[Proposition~III.5.1]{FS}. The flag variety $\mathcal{F}\ell_{G,\mu^{-1}}^{b}$ identifies with the moduli space $\mathcal{E} \dashrightarrow \mathcal{E}_{0}$ parametrizing modifications on $X_{S}$ of meromorphy $\mu$, where $\mathcal{E}_{0}$ denotes the trivial $G$-bundle and $\mathcal{E}$ is a bundle which is isomorphic to $\mathcal{E}_{b}$ after pulling back to a geometric point of $S$ under our conventions. We then have the local Hodge-Tate period map 
\[ \pi_{\mathrm{HT}}^{b}: \Sht(G,b,\mu)_{\infty,\mathbb{C}_{p}} \ra \mathcal{F}\ell_{G,\mu^{-1}}^{b}, \]
which is the $\mathcal{J}_{b}$-torsor defined by rigidifying $\mathcal{E} \simeq \mathcal{E}_{b}$. The space $\Sht(G,b,\mu)_{\infty,\mathbb{C}_{p}}$ has an action of $G(\mathbb{Q}_{p})$ and $\mathcal{J}_{b}$ coming from the automorphisms of the $G$-bundles $\mathcal{E}_{0}$ and $\mathcal{E}_{b}$, respectively. It now follows by \cite[Proposition~4.13]{IG} that $P(\mathbb{Q}_{p}) \subset G(\mathbb{Q}_{p})$ stabilizes a subspace $\mathcal{C}^{\mu_{L}}_{b_{L}} \subset \Sht(G,b,\mu)_{\infty}$, and that we have a $L(\mathbb{Q}_{p})$-equivariant isomorphism of diamonds:
\[ \mathcal{C}_{b_{L}}^{\mu_{L}} \times^{P(\mathbb{Q}_{p})} G(\mathbb{Q}_{p}) \simeq \Sht(G,b,\mu)_{\infty}. \] 
By \cite[Proposition~4.24]{IG} and our assumption that $J_{b_{L}} \xrightarrow{\simeq} J_{b}$, the space $\mathcal{C}_{b_{L}}^{\mu_{L}}$ is isomorphic to 
\[ \Sht(L,b_{L},\mu_{L})_{\infty} \times \mathcal{J}_{b}^{U}, \]
where one can see by looking at the definition of the isomorphism in the proof of \cite[Proposition~4.24]{IG} that the resulting action of $P(\mathbb{Q}_{p})$ on the $\Sht(L,b_{L},\mu_{L})_{\infty}$ factor is via the surjection $P(\mathbb{Q}_{p}) \ra L(\mathbb{Q}_{p})$ and the natural action of $L(\mathbb{Q}_{p})$ on the trivial $L$-bundle $\mathcal{E}_{L}^{0}$ by automorphisms. Now, by quotienting out by the $\mathcal{J}_{b} \simeq J_{b}(\mathbb{Q}_{p}) \ltimes \mathcal{J}_{b}^{U}$-action on $\Sht(G,b,\mu)_{\infty}$, we deduce that 
\[ [\Sht(G,b,\mu)_{\infty}/\mathcal{J}_{b}] \simeq \mathcal{F}\ell_{G,\mu^{-1}}^{b} \] is parabolically induced from 
\[ [\Sht(L,b_{L},\mu_{L})_{\infty} \times \mathcal{J}_{b}^{U}/J_{b}(\mathbb{Q}_{p}) \ltimes \mathcal{J}_{b}^{U}] \simeq [\Sht(L,b_{L},\mu_{L})_{\infty}/J_{b_{L}}(\mathbb{Q}_{p})] \simeq \mathcal{F}\ell_{L,\mu^{-1}_{L}}^{b_{L}} \]
where $P(\mathbb{Q}_{p})$ acts on $\mathcal{F}\ell_{L,\mu^{-1}_{L}}^{b_{L}}$ via the natural surjection $P(\mathbb{Q}_{p}) \ra L(\mathbb{Q}_{p})$ and the last isomorphism is given by the local Hodge-Tate period map for the local Shimura datum $(L,b_{L},\mu_{L})$.

Therefore, the claim will follow from checking that the non-basic elements in $B(G,\mu)$ are Hodge-Newton reducible in such a way that the basic element $b_{L}$ can be chosen in such a way that $J_{b_{L}} \simeq J_{b}$. This follows from the classification of this condition in \cite[Theorem~3.5]{GHN}. For clarity, we recall how this works in our case. There are two non-basic elements $b \in B(G,\mu)$. The $\mu$-ordinary element, defined by the maximal element $b^{max} \in B(G,\mu)$ with respect to the partial ordering on $B(G,\mu)$, and the intermediate strata corresponding to the element lying between the basic element and $b^{max}$ with respect to the partial ordering on $B(G)$. The element $b^{\max}$ admits a reduction to the unique element $b_{T} \in B(T,\mu_{T})$, where $T$ is the maximal torus inside $\GSp_{4}/L$. If we take the image of $b_{T}$ inside the Kottwitz set defined by the Levi of the Siegel parabolic then we can arrange that $J_{b_{L}} \simeq J_{b}$, as well. Similarly, the element $b \in B(G,\mu)$ corresponding to the intermediate strata admits a reduction to the unique basic element $b_{L} \in B(L,\mu_{L})$ such that $J_{b_{L}} \xrightarrow{\simeq} J_{b}$, where $L$ is the Levi factor of the Klingen parabolic of $\GSp_{4}$.
\end{proof}
We now consider some irreducible algebraic representation of $\mathbf{G}/\bb{Q}$ with highest weight $\xi$ as in the previous section, and look at the uniformization map
\[ \Theta: R\Gamma_{c}(G,b,\mu) \otimes^{\mathbb{L}}_ {\mathcal{H}(J_{b})} \mathcal{A}(\mathbf{G}'(\mathbb{Q})\backslash \mathbf{G}'(\mathbb{A}_{f})/K^{p},\mathcal{L}_{\xi}) \rightarrow R\Gamma_{c}(\mathcal{S}(\mathbf{G},X)_{K^{p}},\mathcal{L}_{\xi})  \]
furnished by Proposition 4.1 and Theorem 4.2. We let $R\Gamma_{c}(G,b,\mu)_{sc}$ and $R\Gamma_{c}(\mathcal{S}(\mathbf{G},X)_{K^{p}},\mathcal{L}_{\xi})_{sc}$ be the direct summands given by the supercuspidal Bernstein components of $G(\bb{Q}_{p})$. Then we have the following key consequence of the previous lemma, which justifies why we are referring to this as Boyer's trick. 
\begin{proposition}
The uniformization map $\Theta$ induces an isomorphism
\[ \Theta_{sc}: R\Gamma_{c}(G,b,\mu)_{sc} \otimes^{\mathbb{L}}_ {\mathcal{H}(J_{b})} \mathcal{A}(\mathbf{G}'(\mathbb{Q})\backslash \mathbf{G}'(\mathbb{A}_{f})/K^{p},\mathcal{L}_{\xi}) \xrightarrow{\simeq} R\Gamma_{c}(\mathcal{S}(\mathbf{G},X)_{K^{p}},\mathcal{L}_{\xi})_{sc}  \]
on the summand given by the supercuspidal Bernstein components of $G(\mathbb{Q}_{p})$. 
\end{proposition}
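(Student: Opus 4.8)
The plan is to deduce the isomorphism from the excision triangle associated to the basic Newton stratum, combined with the parabolic-induction description of the non-basic strata from Lemma 4.3. Concretely, for a fixed level $K_p$ we have the Hodge-Tate period map $\pi_{HT,K_p}: \mathcal{S}(\mathbf{G},X)_{K^pK_p} \to [\mathcal{F}\ell_{G,\mu^{-1}}/\underline{K_p}]$, and the open basic stratum $\mathcal{F}\ell_{G,\mu^{-1}}^b$ inside $\mathcal{F}\ell_{G,\mu^{-1}}$. Stratifying $\mathcal{F}\ell_{G,\mu^{-1}}$ by Newton strata and applying the associated excision spectral sequence (in the usual category of \'etale $\overline{\mathbb{Q}}_\ell$-sheaves, since we are at finite level), one gets that the cofiber of the map
\[ R\Gamma_c([\mathcal{F}\ell_{G,\mu^{-1}}^b/\underline{K_p}], j_{K_p}^* R\pi_{HT,K_p!}(\mathcal{L}_\xi)) \to R\Gamma_c([\mathcal{F}\ell_{G,\mu^{-1}}/\underline{K_p}], R\pi_{HT,K_p!}(\mathcal{L}_\xi)) \]
is built out of the complexes $R\Gamma_c$ of the \emph{non-basic} strata with coefficients in (the restriction of) $R\pi_{HT,K_p!}(\mathcal{L}_\xi)$. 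By the proof of Corollary 4.1, the first term is $R\Gamma_c(G,b,\mu)_{K_p}\otimes^{\mathbb{L}}_{\mathcal{H}(J_b)}\mathcal{A}(\cdots)$ and the second is $R\Gamma_c(\mathcal{S}(\mathbf{G},X)_{K^pK_p},\mathcal{L}_\xi)$, so it suffices to show that the contribution of the non-basic strata vanishes after passing to the summand where $G(\mathbb{Q}_p)$ acts via a supercuspidal representation, and then take the colimit over $K_p \to \{1\}$.

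Next I would analyze a single non-basic stratum. By Lemma 4.3, for $b'\in B(G,\mu)$ non-basic, $\mathcal{F}\ell_{G,\mu^{-1}}^{b'}$ is parabolically induced from a proper parabolic $P = LU$: namely $\mathcal{F}\ell_{G,\mu^{-1}}^{b'} \simeq \mathcal{S}_P \times^{P(\mathbb{Q}_p)} G(\mathbb{Q}_p)$ with $U(\mathbb{Q}_p)$ acting trivially on $R\Gamma_c(\mathcal{S}_P,\overline{\mathbb{Q}}_\ell)$. The local system $\mathcal{L}_\xi$ pulled back via $\pi_{HT}$, together with the induced structure, means that $R\Gamma_c$ of this stratum with the relevant coefficients, at infinite level in $K^p$ and after applying $R\pi_{HT!}$, is (a shift/twist of) a complex of the form $\mathrm{ind}_{P(\mathbb{Q}_p)}^{G(\mathbb{Q}_p)}(-)$ where the inducing datum is a smooth $P(\mathbb{Q}_p)$-representation on which $U(\mathbb{Q}_p)$ acts trivially — hence it is normalized parabolic induction (up to a modulus twist) from an $L(\mathbb{Q}_p)$-representation. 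The key representation-theoretic input is then that a parabolically induced representation $\mathrm{ind}_{P}^{G}(\sigma)$ has no supercuspidal subquotients, so the summand of the non-basic contribution where $G(\mathbb{Q}_p)$ acts supercuspidally is zero. One has to be slightly careful: the claim about triviality of the $U(\mathbb{Q}_p)$-action is about $R\Gamma_c(\mathcal{S}_P,\overline{\mathbb{Q}}_\ell)$ and I would need to propagate this through $\pi_{HT}$ and the coefficient system $\mathcal{L}_\xi$ — but $\mathcal{L}_\xi$ is pulled back from the flag variety side via $\pi_{HT}$, so its fibers over the stratum carry the relevant $P(\mathbb{Q}_p)$-action compatibly, and the argument of \cite[Lemma~4.28]{IG} applies verbatim to the twisted coefficients. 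Combining these: the cofiber of $\Theta$ (at finite level $K_p$, and then in the colimit), localized at the supercuspidal summand of the $G(\mathbb{Q}_p)$-action, vanishes, giving the claimed isomorphism $\Theta_{sc}$.

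Finally I would assemble the pieces. The excision argument produces a distinguished triangle relating $\Theta_{K_p}$, its source, and a complex $C_{K_p}$ assembled from the non-basic strata; $C_{K_p}$ decomposes according to the two non-basic $b'$, and each summand is parabolically induced as above; taking the direct summand $(-)_{sc}$ kills $(C_{K_p})_{sc}$; and taking the filtered colimit over $K_p$ is exact, so $\Theta_{sc}$ is an isomorphism. Throughout I would use that $R\pi_{HT,K_p!}$ commutes with the relevant excision triangles and with the colimit over $K_p$, that the summand functor $(-)_{sc}$ (projection onto the supercuspidal Bernstein blocks of $G(\mathbb{Q}_p)$) is exact and commutes with colimits, and that all the complexes in sight are complexes of smooth admissible $G(\mathbb{Q}_p)$-representations of finite length at each finite level (Theorem 3.4-type finiteness), so that ``supercuspidal subquotient'' arguments are legitimate.

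I expect the main obstacle to be bookkeeping around the coefficients: verifying that the parabolic-induction structure of the non-basic strata, which Lemma 4.3 states for the constant sheaf $\overline{\mathbb{Q}}_\ell$, survives when one feeds in the non-constant local system $\mathcal{L}_\xi$ pulled back along $\pi_{HT}$ and pushes forward along $\pi_{HT,K_p}$ — in particular confirming that $U(\mathbb{Q}_p)$ still acts trivially on the relevant cohomology so that the induction is genuinely from the Levi and the standard ``no supercuspidal subquotients of a parabolic induction'' fact applies. The rest is formal manipulation of excision triangles and colimits.
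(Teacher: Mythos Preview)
Your approach is correct and essentially identical to the paper's: identify the cone of $\Theta$ with the compactly supported cohomology of the non-basic locus, stratify by non-basic Newton strata, and use Lemma~4.3 to see that each contribution is parabolically induced from a proper parabolic and hence has no supercuspidal subquotients; the paper packages this via the Cartan--Leray spectral sequence for $\pi_{HT,K_p}$ and the auxiliary Lemma~4.5, but the content is the same.

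One minor inaccuracy worth correcting: $\mathcal{L}_\xi$ is \emph{not} pulled back from the flag variety along $\pi_{HT}$ --- it is the automorphic local system attached to the algebraic representation $\xi$. The relevant $G(\mathbb{Q}_p)$-equivariant object on the flag side is $R\pi_{HT,K_p!}(\mathcal{L}_\xi)$, and it is its equivariance (not a pullback structure) that yields, on a stratum of the form $\mathcal{S}_P \times^{P(\mathbb{Q}_p)} G(\mathbb{Q}_p)$, an induction from its restriction to $\mathcal{S}_P$. The $U(\mathbb{Q}_p)$-triviality you flag as the main obstacle then comes from the product decomposition $\mathcal{S}_P \simeq \mathcal{F}\ell_{L,\mu_L^{-1}}^{b_L} \times \mathcal{J}_b^U$ and the cohomological triviality of $\mathcal{J}_b^U$, as in \cite[Lemma~4.28]{IG}; the paper is equally terse on this point.
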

\begin{proof}
As we will want to apply excision, we will show this by first proving a claim in the setting of adic $\mathbb{Z}_{\ell}$-complete sheaves, as in the proof of Proposition 4.1. We will use the notation introduced there. We let $\mathcal{F}\ell_{G,\mu^{-1}}^{nbas}$ denote the closed complement of $\mathcal{F}\ell_{G,\mu^{-1}}^{b}$ for $b \in B(G,\mu)$ the unique basic element. The space $\mathcal{F}\ell_{G,\mu^{-1}}^{nbas}$ is stratified by $\mathcal{F}\ell_{G,\mu^{-1}}^{b}$ for $b \in B(G,\mu)$ non-basic. For varying $K_{p} \subset G(\mathbb{Q}_{p})$, we set $[\mathcal{F}\ell_{G,\mu^{-1}}^{nbas}/\ul{K_{p}}]$ to be the $v$-stack quotient. We let $\mathcal{S}(\mathbf{G},X)_{K^{p}K_{p}}^{nbas}$ denote the preimage of $[\mathcal{F}\ell_{G,\mu^{-1}}^{nbas}/\ul{K_{p}}]$ under the map $\pi_{HT,K_{p}}: \mathcal{S}(\mathbf{G},X)_{K^{p}K_{p}} \ra [\mathcal{F}\ell_{G,\mu^{-1}}/\ul{K_{p}}]$. It follows from the proof of Proposition 4.1 that the cone of the $\mathbb{Z}_{\ell}$-complete uniformization map $\widehat{\Theta}$ is identified with the $G(\mathbb{Q}_{p})$-representation
\[ R\Gamma_{c}(\mathcal{S}(\mathbf{G},X)_{K^{p}}^{nbas},\hat{\mathcal{L}}_{\xi}) := \colim_{K_{p} \ra \{1\}} R\Gamma_{c}(\mathcal{S}(\mathbf{G},X)_{K^{p}K_{p}}^{nbas},\hat{\mathcal{L}}_{\xi}). \]
We now consider the Hodge-Tate period morphism quotiented out by $G(\mathbb{Q}_{p})$ restricted to this locus
\[ [\mathcal{S}(\mathbf{G},X)^{nbas}_{K^{p}}/\ul{G(\mathbb{Q}_{p})}] \ra [\mathcal{F}\ell_{G,\mu^{-1}}^{nbas}/\ul{G(\mathbb{Q}_{p})}], \]
which we will denote as $\pi_{HT}^{nbas}$.

We write $h_{G}^{\ra}: [\mathcal{F}\ell_{G,\mu^{-1}}^{nbas}/\ul{G(\mathbb{Q}_{p})}] \ra [\ast/\ul{G(\mathbb{Q}_{p})}]$ for the natural map remembering the isomorphism class of the trivial $G$-bundle. We have an identification: 
\[ h_{G!}^{\ra}R\pi^{nbas}_{HT!}(\hat{\mathcal{L}}_{\xi}) \simeq R\Gamma_{c}(\mathcal{S}(\mathbf{G},X)_{K_{p}}^{nbas},\hat{\mathcal{L}}_{\xi}) \]
of objects in $\hat{D}(G(\mathbb{Q}_{p}),\mathbb{Z}_{\ell})$, the derived category of $\mathbb{Z}_{\ell}$-complete smooth representations, as in \cite[Definition~2.5]{Han} (See also \cite[Proposition~2.6 (1)]{Han} for the identification of this category with $\bb{Z}_{\ell}$-adic sheaves on the classifying stack). Here we again recall that compactly supported cohomology respects taking limits of spaces since it is defined in terms of the the colimit over the qcqs opens (however, this fails for regular cohomology unless the spaces in the limit are qcqs). Now we can apply excision to the sheaf $R\pi^{nbas}_{HT!}(\hat{\mathcal{L}}_{\xi})$, and the stratification of $[\mathcal{F}\ell^{nbas}_{G,\mu^{-1}}/\ul{G(\mathbb{Q}_{p})}]$ by $[\mathcal{F}\ell^{b}_{G,\mu^{-1}}/\ul{G(\mathbb{Q}_{p})}]$ for $b \in B(G,\mu)$ non-basic, which is well-defined since the adic Newton stratification is $G(\mathbb{Q}_{p})$-equivariant. We set $A_{b} := j_{b}^{*}R\pi^{nbas}_{HT!}(\hat{\mathcal{L}}_{\xi})$. It follows by Lemma 4.3 and the definition of being parabolically induced that we have an isomorphism of $v$ -stacks \[ [\mathcal{F}\ell_{G,\mu^{-1}}^{b}/\ul{G(\mathbb{Q}_{p})}] \simeq [\mathcal{F}\ell_{L,\mu^{-1}_{L}}^{b_{L}}/\ul{P(\mathbb{Q}_{p})}],  \] and, therefore, $A_{b}$ gives rise to a sheaf $A_{b}^{P}$ on $\mathcal{F}\ell_{L,\mu^{-1}_{L}}^{b_{L}}$ with $P(\mathbb{Q}_{p})$-equivariant structure. We consider the commutative diagram 
\[ 
\begin{tikzcd}
\left[\mathcal{F}\ell^{b}_{G,\mu^{-1}}/\underline{G(\mathbb{Q}_{p})}\right] \arrow[r,"h^{\ra,b}_{G}"]& \left[\Spd(\mathbb{C}_{p})/\underline{G(\mathbb{Q}_{p})}\right] & \\
\left[\mathcal{F}\ell^{b_{L}}_{L,\mu_{L}^{-1}}/\underline{P(\mathbb{Q}_{p})}\right] \arrow[u,"\simeq"]  \arrow[r,"h_{P}^{\ra,b}"]  & \left[\Spd(\mathbb{C}_{p})/\ul{P(\mathbb{Q}_{p})}\right]  \arrow[u,"p"]   & 
\end{tikzcd}
\]
Here $h^{\ra,b}_{G} := h_{G}^{\ra} \circ j_{b}$, where $j_{b}: [\mathcal{F}\ell_{G,\mu^{-1}}^{b}/\ul{G(\mathbb{Q}_{p})}] \hookrightarrow [\mathcal{F}\ell_{G,\mu^{-1}}/\ul{G(\mathbb{Q}_{p})}]$ is the natural inclusion of the locally closed adic Newton stratum. By proper base-change, we have an  identification
\[ h^{\ra}_{G!}j_{b!}j_{b}^{*}R\pi^{nbas}_{HT!}(\hat{\mathcal{L}}_{\xi}) = h^{\ra}_{G!}j_{b!}(A_{b}) \simeq h^{\ra,b}_{G!}(A_{b}), \]
as $\mathbb{Z}_{\ell}$-complete $G(\mathbb{Q}_{p})$-representations. Using the above commutative diagram, we deduce that this is isomorphic to
\[ p_{!}h_{P!}^{\ra,b}(A^{P}_{b}). \]
We now invoke the following.
\begin{theorem}
Consider the composition $h^{\la,b}_{G}: [\mathcal{F}\ell_{G,\mu^{-1}}^{b}/\underline{G(\bb{Q}_{p})}] \ra [\Spa(\bb{C}_{p})/\mathcal{J}_{b}] \ra [\Spa(\bb{C}_{p})/\underline{J_{b}(\bb{Q}_{p})}]$. Here the first morphism corresponds to the $\mathcal{J}_{b}$-torsor given by the local Hodge-Tate period map $\pi_{\mathrm{HT}}^{b}$, and the second morphism corresponds to the natural projection $[\Spa(\bb{C}_{p})/\mathcal{J}_{b}] \ra [\Spa(\bb{C}_{p})/\underline{J_{b}(\bb{Q}_{p})}]$. The sheaf $j_{b}^{*}R\pi_{HT!}(\hat{\mathcal{L}}_{\xi})$ is the pullback along $h^{\la,b}_{G}$ of a complex of $\bb{Z}_{\ell}$-complete smooth $J_{b}(\bb{Q}_{p})$-representations.
\end{theorem}
\begin{proof}
First, we note that if $F = \bb{Q}$ then $(\mathbf{G},X)$ is the Shimura datum for the Siegel threefold. In particular, it is of PEL type $C$ and the claim then follows from \cite[Corollary~3.14]{HamLeeTorsVan} and proper base-change together with the equivalence  $\Detale(\Bun_{G}^{b},\bb{Z}_{\ell}) \simeq \hat{\D}(J_{b}(\bb{Q}_{p}),\bb{Z}_{\ell})$ (\cite[Proposition~V.2.2]{FS}).

We can therefore assume that $F \neq \bb{Q}$. In this case, the group $\mathbf{G}$ is anisotropic and therefore the Shimura variety is compact by \cite[Theorem~1]{BailyBorel}. It follows by the compactness (so that the good reduction locus gives the entire Shimura variety) and the main result of \cite{MingjiaPolII} (See also \cite{MingjiaPolI} for the Hodge-type case, which the abelian type case can be reduced to), that there exists a $v$-stack $\Igs_{K^{p}}$ mapping to $\Bun_{G}$ via a map $\pi_{\Igs}$, and a Cartesian diagram
\[
\begin{tikzcd}
\mathcal{S}(\mathbf{G},X)_{K^{p}} \arrow[r,"\pi_{\HT}"] \arrow[d] & \left[\mathcal{F}\ell_{G,\mu^{-1}}/\underline{G(\bb{Q}_{p})}\right] \arrow[d,"h^{\la}_{G}"] \\
\mathrm{Igs}_{K^{p}} \arrow[r,"\pi_{\Igs}"] & \Bun_{G,\bb{C}_{p}},
\end{tikzcd}
\]
where $h^{\la}_{G}$ is the map sending a modification $\mathcal{E} \dashrightarrow \mathcal{E}_{0}$ to $\mathcal{E}$. In particular, note that $h_{G}^{\la}$ pulls back along the locally closed immersion $\Bun_{G,\bb{C}_{p}}^{b} \hookrightarrow \Bun_{G,\bb{C}_{p}}$ to the morphism $[\mathcal{F}\ell^{b}_{G,\mu^{-1}}/\underline{G(\bb{Q}_{p})}] \ra [\Spa(\bb{C}_{p})/\mathcal{J}_{b}] \simeq \Bun_{G,\bb{C}_{p}}^{b}$ corresponding to the torsor given by $\pi_{\HT}^{b}$. The claim now follows just as in the case where $F = \bb{Q}$.
\end{proof}
We write $h^{\la,b}_{P}: [\mathcal{F}\ell_{L,\mu_{L}^{-1}}^{b_{L}}/\underline{P(\bb{Q}_{p})}] \simeq [\mathcal{F}\ell_{G,\mu^{-1}}^{b}/\underline{G(\bb{Q}_{p})}] \xrightarrow{h^{\la,b}_{G}} [\Spa(\bb{C}_{p})/\underline{J_{b}(\bb{Q}_{p})}]$. It follows by the proof of Lemma 4.3  that the map $h^{\la,b}_{P}$ identifies with the composition  $[\mathcal{F}\ell_{L,\mu_{L}^{-1}}^{b_{L}}/\underline{P(\bb{Q}_{p})}] \ra [\mathcal{F}\ell_{L,\mu_{L}^{-1}}^{b_{L}}/\underline{L(\bb{Q}_{p})}] \ra \Bun_{L}^{b_{L}} = [\Spa(\bb{C}_{p})/\underline{J_{b_{L}}(\bb{Q}_{p})}] \simeq [\Spa(\bb{C}_{p})/\underline{J_{b}(\bb{Q}_{p})}]$, where the last morphism is the map corresponding to the local Hodge-Tate period morphism $\pi_{\HT}^{b_{L}}$ attached to $\Sht(L,b_{L},\mu_{L})_{\infty} \ra \mathcal{F}\ell_{L,\mu_{L}^{-1}}^{b_{L}}$, and the first morphism is the natural map induced by $P(\bb{Q}_{p}) \ra L(\bb{Q}_{p})$. In particular, we deduce that the complex $A_{b}^{P}$ is pulled back from a sheaf $A_{b}^{L}$ along the map $[\mathcal{F}\ell_{L,\mu_{L}^{-1}}^{b_{L}}/\underline{P(\bb{Q}_{p})}] \ra [\mathcal{F}\ell_{L,\mu_{L}^{-1}}^{b_{L}}/\underline{L(\bb{Q}_{p})}]$. We have a Cartesian diagram
\[ \begin{tikzcd}
\left[\mathcal{F}\ell^{b_{L}}_{L,\mu_{L}^{-1}}/\underline{P(\mathbb{Q}_{p})}\right]  \arrow[r,"h_{P}^{\ra,b}"] \arrow[d] & \left[\Spa(\mathbb{C}_{p})/\ul{P(\mathbb{Q}_{p})}\right]  \arrow[d,"q"]  & \\
\left[\mathcal{F}\ell_{L,\mu_{L}^{-1}}^{b_{L}}/\ul{L(\mathbb{Q}_{p})}\right] \arrow[r,"h_{L}^{\ra,b}"]  & \left[\Spa(\bb{C}_{p})/\ul{L(\bb{Q}_{p})}\right], 
\end{tikzcd} \] 
by virtue of the fact that $U(\bb{Q}_{p})$ acts trivially on $\mathcal{F}\ell_{L,\mu_{L}^{-1}}^{b_{L}}$ as explained in the proof of Lemma 4.3, where $U$ denotes the unipotent radical of $P$. By applying proper-base-change to this Cartesian square, we obtain an isomorphism 
\[ h_{P!}^{\la,b}A_{b}^{P} \simeq q^{*}h_{L!}^{\ra,b}A_{b}^{L} \]
and combining with the above this implies that 
\[ h^{\ra}_{G!}j_{b!}j_{b}^{*}R\pi^{nbas}_{HT!}(\hat{\mathcal{L}}_{\xi}) \simeq p_{!}q^{*}h_{L!}^{\ra,b}A_{b}^{L}.  \]
However, the functor $p_{!}q^{*}: \hat{D}(L(\bb{Q}_{p}),\bb{Z}_{\ell}) \ra \hat{D}(G(\bb{Q}_{p}),\bb{Z}_{\ell})$ is easily identified with the unnormalized parabolic induction functor. Therefore, we have constructed a filtration on the cone of $\widehat{\Theta}$ as a $\mathbb{Z}_{\ell}$-complete $G(\mathbb{Q}_{p})$-representation whose graded pieces are parabolically induced. If we now take smooth vectors and tensor by $\ol{\mathbb{Q}}_{\ell}$ as in the proof of Proposition 4.1 then this implies the desired claim.
\end{proof}
Proposition 4.4 will be a key tool in allowing us to describe the $W_{L}$-action on the local Shimura variety by global methods. We start this global analysis by constructing strong transfers between $\GSp_{4}$ and its inner forms over a number field $F$ and proving a strong multiplicity one result. 
\section{Existence of Strong Transfers and a Strong Multiplicity One Result}
In this section, we will show the existence of strong transfers of certain automorphic representations of an inner form of $\GSp_{4}$ over a number field $F$, using the analysis of the trace formula similar to that of \cite[Section~6]{KS}. We will then combine this with analysis of the simple twisted trace formula of Kottwitz-Shelstad \cite{KoShe}, to deduce a kind of strong multiplicity one result for inner forms of $\GSp_{4}$. 
\subsection{The Simple Trace Formula and Existence of Strong Transfers}
In order to describe the Galois action on the global Shimura variety, we will need to construct strong transfers for inner forms of $\GSp_{4}/F$ over a totally real field $F$. This will allow us to compute the traces of Frobenius on the global Shimura variety in terms of the Langlands parameters of the strong transfer. The construction of strong transfers will be accomplished by applying the elliptic part of the stable trace formula with respect to the Lefschetz functions constructed by Kret-Shin \cite{KS} at the Steinberg/Infinite places and pseudo-coefficients at some finite number places where the representation has supercuspidal $L$-parameter, applying the character identities of Chan-Gan \cite{CG} to conclude equality of the orbital integrals at these latter places. First, we recall the key results of Kret-Shin on the trace formula with fixed central character. For now, we will work generally. Let $\mathbf{G}$ denote a connected reductive group over a number field $F$ with center $Z$, write $A_{Z}$ for the maximal $\mathbb{Q}$-split torus of $\mathrm{Res}_{F/\mathbb{Q}}Z$, and set $A_{Z,\infty} = A_{Z}(\mathbb{R})^{0}$ to be the connected component of the identity. Let $\mathbb{A}_{F}$ be the adeles of $F$ and write $\mathbf{G}(\mathbb{A}_{F})^{1}$ for a choice of subgroup so that $\mathbf{G}(\mathbb{A}_{F}) = \mathbf{G}(\mathbb{A}_{F})^{1} \times A_{Z,\infty}$, as in \cite[Page~11]{Ar2}. We consider a closed subgroup $\mathfrak{X} \subset Z(\mathbb{A}_{F})$ which contains $A_{Z,\infty}$ such that $Z(F)\mathfrak{X}$ is closed inside $Z(\mathbb{A}_{F})$ and a continuous character $\chi: (\mathfrak{X} \cap Z(F))\backslash\mathfrak{X} \rightarrow \mathbb{C}^{*}$. 
\\\\
We write $\Pl_{F}$ for the set of places of $F$, and, for $v \in \Pl_{F}$, let $\mathfrak{X}_{v} \subset Z(F_{v})$ denote a closed subgroup. We let $\chi_{v}: \mathfrak{X}_{v} \rightarrow \mathbb{C}^{*}$ be a smooth character. Write $\mathcal{H}(\mathbf{G}(F_{v}),\chi_{v}^{-1})$ for the space of smooth compactly supported functions modulo center on $\mathbf{G}(F_{v})$ which transform under $\mathfrak{X}_{v}$ via $\chi_{v}^{-1}$. We also require the functions to be $K_{v}$-finite for some maximal compact subgroup $K_{v}$ of $\mathbf{G}(F_{v})$ if $v$ is archimedean. We now assume that $\mf{X}$ as above can be written as restricted product $\prod'_{v \in \mathrm{Pl}_{F}} \mathfrak{X}_{v}$ in $Z(\mathbb{A}_{F})$, for $\mathfrak{X}_{v} \subset Z(F_{v})$ a closed subgroup. Given a semisimple element $\gamma_{v} \in \mathbf{G}(F_{v})$ and an admissible representation $\pi_{v}$ of $\mathbf{G}(F_{v})$ with central character $\chi_{v}$ on $\mathfrak{X}_{v}$, we define the orbital integral and trace character for $f_{v} \in \mathcal{H}(\mathbf{G}(F_{v}),\chi_{v}^{-1})$ as follows. Let $I_{\gamma_{v}}$ denote the connected centralizer of $\gamma_{v}$ in $\mathbf{G}$. We have
\[ O_{\gamma_{v}}(f_{v}) := \int_{I_{\gamma_{v}}(F_{v})\backslash \mathbf{G}(F_{v})} f_{v}(x^{-1}\gamma_{v}x)dx \]
and
\[ \tr(f_{v}|\pi_{v}) := \tr(\int_{\mathbf{G}(F_{v})/Z(F_{v})} f_{v}(g)\pi_{v}(g)dg) \]
where we have fixed compatible choices of Haar measure on $\mathbf{G}$ and $Z$ throughout. We note that this operator is well defined since the above operator is of finite rank if $v$ is finite and is of trace class if $v$ is infinite, by the $K_{v}$-finiteness assumption.
\\\\
We define the adelic Hecke algebra $\mathcal{H}(\mathbf{G}(\mathbb{A}_{F}),\chi^{-1})$, as well as the global orbital integrals by taking a restricted tensor product over the local Hecke algebras defined above and products of the local integrals. Write $\Gamma_{ell}(\mathbf{G})$ to be the set of $F$-elliptic conjugacy classes in $\mathbf{G}(F)$. Let $\mathbb{A}_{f,F}$ denote the finite adeles of $F$. For our purposes, it will suffice to consider a central character datum $(\mathfrak{X},\chi)$, where $\mathfrak{X} = Z(\mathbb{A}_{F})$, and we write $\chi = \bigotimes_{v \in \Pl_{F}} \chi_{v}$ for smooth characters $\chi_{v}: Z(F_{v}) \ra \mathbb{C}^{*}$ and all $v \in \Pl_{F}$. We let $L^{2}_{disc,\chi}(\mathbf{G}(F)\backslash \mathbf{G}(\mathbb{A}_{F}))$ denote the space of functions on $\mathbf{G}(F) \backslash \mathbf{G}(\mathbb{A}_{F})$ transforming under $\mathfrak{X}$ by $\chi$ and square-integrable on $\mathbf{G}(F)\backslash \mathbf{G}(\mathbb{A}_{F})^{1}/\mathfrak{X}(\mathbb{A}_{F}) \cap G(\mathbb{A}_{F})^{1}$. Write $\mathcal{A}_{cusp,\chi}(\mathbf{G})$ for the set of isomorphism classes of cuspidal automorphic representations of $\mathbf{G}(\mathbb{A}_{F})$ whose central characters restricted to $\mathfrak{X}$ are $\chi$. For $f \in \mathcal{H}(\mathbf{G}(\mathbb{A}_{F}),\chi^{-1})$, define the invariant distributions $T_{ell,\chi}^{\mathbf{G}}$ and $T_{disc,\chi}^{\mathbf{G}}$ by
\[ T_{ell,\chi}^{\mathbf{G}}(f) := \sum_{\gamma \in \Gamma_{ell}(\mathbf{G})} i(\gamma)^{-1}vol(I_{\gamma}(F)\backslash I_{\gamma}(\mathbb{A}_{F})/\mathfrak{X}(\mathbb{A}_{F}))O_{\gamma}(f) \]
\[ T^{\mathbf{G}}_{disc,\chi}(f) := \tr(f|L^{2}_{disc,\chi}(\mathbf{G}(F)\backslash \mathbf{G}(\mathbb{A}_{F}))), \]
where $i(\gamma)$ is the number of connected components in the centralizer of $\gamma$.
Analogously, we define $T^{\mathbf{G}}_{cusp,\chi}(f)$ by taking the trace on the space of square-integrable cusp forms whose central character restricted to $\mathfrak{X}$ is $\chi$. Let $\mathbf{G}^{*}$ denote the quasi-split inner form of $\mathbf{G}$ over $F$, with a fixed inner twist $\mathbf{G}^{*} \simeq \mathbf{G}$ over $\overline{F}$.  Since $Z$ is canonically identified with the center of $\mathbf{G}^{*}$, we may view the character $\chi$ as a central character datum for $\mathbf{G}^{*}$. We then let $f^{*}$ denote a Langlands-Shelstad transfer of $f$ to $\mathbf{G}^{*}$. One can construct such a transfer by lifting $f$ along the surjection $\mathcal{H}(\mathbf{G}(\mathbb{A}_{F})) \rightarrow  \mathcal{H}(\mathbf{G}(\mathbb{A}_{F}),\chi^{-1})$ applying the transfer with trivial central character due to Waldspurger and then taking the image along the analogous surjection for $\mathbf{G}^{*}(\mathbb{A}_{F})$. We let $\Sigma_{ell,\chi}(\mathbf{G}^{*})$ denote the set of $Z(\mathbb{A}_{F})$ orbits of stable $F$-elliptic conjugacy classes in $\mathbf{G}^{*}(F)$. We define the stable elliptic distribution
\[ ST^{\mathbf{G}^{*}}_{ell,\chi}(f^{*}) := \tau(\mathbf{G}^{*}) \sum_{\gamma \in \Sigma_{ell,\chi}(\mathbf{G}^{*})} \tilde{i}(\gamma)^{-1}SO_{\gamma,\chi}^{\mathbf{G}^{*}}(f) \]
where $SO^{\mathbf{G}^{*}}_{\gamma,\chi}(f^{*})$ denotes the stable orbital integral of $f^{*}$ at $\gamma$, $\tau(\mathbf{G}^{*})$ is the Tamagawa number of $\mathbf{G}^{*}$, and $\tilde{i}(\gamma)$ is the number of Galois fixed connected components of the centralizer of $\gamma$ in $\mathbf{G}^{*}$. Let $\xi$ be an irreducible representation of $\mathbf{G}_{F_{\infty}}$. Denote by $\chi_{\xi}: Z(F_{\infty}) \rightarrow \mathbb{C}^{*}$ the restriction of $\xi$ to $Z(F_{\infty})$. Write $f_{\xi}^{\mathbf{G}} \in \mathcal{H}(\mathbf{G}(F_{\infty}),\chi_{\xi}^{-1})$ for a Lefschetz function associated with $\xi$. In other words, a function such that $\tr(f^{\mathbf{G}}_{\xi}|\pi_{\infty})$ computes the Euler-Poincar\'e characteristic for the relative Lie algebra cohomology of $\pi_{\infty} \otimes \xi$ for every irreducible admissible representation $\pi_{\infty}$ of $\mathbf{G}(F_{\infty})$ with central character $\chi_{\xi}$. It follows by the Vogan-Zuckerman classification \cite{VZ} that, if $\xi$ is regular, this will be non-zero if and only if $\pi_{\infty}$ is an (essentially) discrete series representation cohomological of regular weight $\xi$. For a finite place $v_{st}$ of $F$, we let $f_{v_{st}} := f_{Lef,v_{st}}^{\mathbf{G}} \in \mathcal{H}(\mathbf{G}(F_{v_{st}}))$ denote a Lefschetz function at $v_{st}$. Morally, this should be characterized by the property that $\tr(f_{v_{st}}|\pi_{v_{st}})$ computes the Euler-Poincar\'e characteristic of the continuous group cohomology of $\mathbf{G}(F_{v_{st}})$ valued in $\pi_{v_{st}}$. In the case that the center is anisotropic, it follows from the computations in \cite[Theorem~XI.3.9]{BW} that this means that the trace of $f_{Lef,v_{st}}^{\mathbf{G}}$ will only be non-zero if $\pi_{v}$ is $1$-dimensional or an unramified twist of the Steinberg representation, for all irreducible admissible unitary $\pi_{v}$. For finite places $v = v_{st}$, they were originally constructed by Kottwitz; however, these results do not apply for the desired application, since the center of $\GSp_{4}$ is not compact. For the construction of these functions in this case and the proof of the property that their traces detect when a representation is $1$-dimensional or an unramified twist of Steinberg, see \cite[Appendix~A]{KS}. 

We now assume for simplicity that the center $Z$ of $\mathbf{G}$ is split, which will be the case in all our applications. We consider a character $\eta: \mathbf{G}(F) \ra \mathbb{C}^{*}$ such that $\eta|_{Z(F_{v_{st}})} = \chi_{v_{st}}$. We can define a function $f_{Lef,\eta}^{\mathbf{G}} \in \mathcal{H}(\mathbf{G}(F),\chi_{v_{st}}^{-1})$ as $f_{Lef,\eta}^{\mathbf{G}}(g) := \eta^{-1}(g)f_{Lef}^{G/Z}(\ol{g})$, where $\ol{g} \in G(F_{v_{st}})/Z(F_{v_{st}})$ denotes the image of $g$ under the quotient map. This (up to scaling by a non-zero constant) forms a pseudo-coefficient for the Steinberg representation twisted by $\eta$, as constructed in \cite{Kaz,SSt}. In particular, by \cite[Corollary~A.8 (2)]{KS}, we have, for $\pi_{v_{st}}$ an irreducible (essentially) unitary representation of $\mathbf{G}(F_{v_{st}})$ with central character $\chi_{v_{st}}$, that $\tr(f_{Lef,\eta}^{\mathbf{G}}|\pi_{v_{st}}) \neq 0$ if and only if $\pi_{v_{st}}$ is isomorphic to $\eta$ or Steinberg twisted by $\eta$.

Now we have the key lemma, which tells us that with respect to these choices of test functions we get the following "simple trace formula".
\begin{lemma}{\cite[Lemma~6.1,6.2]{KS}}
Fix a central character datum $(Z(\mathbb{A}_{F}),\chi)$ with $\chi_{\xi} = \chi|_{Z(F_{\infty})}$ as above for $\chi_{\xi}$ attached to some regular weight $\xi$. For an element $f \in \mathcal{H}(\mathbf{G}(\mathbb{A}_{F}),\chi^{-1})$ in the global Hecke algebra as above, assume that $f_{\infty} = f_{\xi}^{\mathbf{G}} \in \mathcal{H}(\mathbf{G}(F_{\infty}),\chi^{-1})$ is a Lefschetz function at $\infty$ and assume that $f_{v_{st}} = f^{\mathbf{G}}_{Lef,\eta}$ is the Lefschetz function described above at $v_{st}$ for a character $\eta$ of $\mathbf{G}(F_{v_{st}})$ such that $\eta|_{Z(F_{v_{st}})} = \chi_{v_{st}}$. Then we have an equality:
\[ ST_{ell,\chi}^{\mathbf{G}^{*}}(f^{*}) = T^{\mathbf{G}}_{ell,\chi}(f) = T^{\mathbf{G}}_{disc,\chi}(f) = T^{\mathbf{G}}_{cusp,\chi}(f). \]
\end{lemma}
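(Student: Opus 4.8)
The plan is to deduce this "simple trace formula" from Arthur's invariant trace formula for $\mathbf{G}$ and its stabilization, by checking that the chosen test functions kill all terms except the elliptic/cuspidal ones. First I would recall the invariant trace formula in the form $T^{\mathbf{G}}_{\mathrm{geom},\chi}(f) = T^{\mathbf{G}}_{\mathrm{spec},\chi}(f)$ with fixed central character datum $(Z(\mathbb{A}_F),\chi)$, as developed by Arthur and adapted to the central-character setting in \cite{Ar2} and \cite[Section~6]{KS}. On the spectral side, the presence of the Lefschetz function $f_\xi^{\mathbf{G}}$ at $\infty$ for a \emph{regular} weight $\xi$ forces, via the Vogan--Zuckerman classification \cite{VZ}, that any representation contributing must be cohomological of weight $\xi$ at $\infty$, hence (essentially) tempered at $\infty$; and the Lefschetz/pseudo-coefficient $f^{\mathbf{G}}_{Lef,\eta}$ at $v_{st}$ forces the local component at $v_{st}$ to be $\eta$ or an unramified (Steinberg) twist of $\eta$, by \cite[Corollary~A.8(2)]{KS}. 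Standard arguments (as in Arthur's work on $L^2$-Lefschetz numbers, cf.\ \cite[Section~6]{KS}) then show the continuous/Eisenstein part of the spectral side vanishes, leaving only $T^{\mathbf{G}}_{\mathrm{disc},\chi}$; and since any discrete non-cuspidal (residual) contribution would be non-tempered at $v_{st}$ — it would be a sub-quotient of a genuine parabolic induction, hence cannot be Steinberg or $\eta$ at $v_{st}$ — one gets $T^{\mathbf{G}}_{\mathrm{disc},\chi}(f) = T^{\mathbf{G}}_{\mathrm{cusp},\chi}(f)$. This gives the last two equalities.

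On the geometric side, I would argue that the Lefschetz function $f^{\mathbf{G}}_{Lef,\eta}$ at $v_{st}$ has vanishing orbital integrals at all non-elliptic semisimple classes (this is part of the defining property of such pseudo-coefficients/Lefschetz functions, cf.\ \cite[Appendix~A]{KS}, \cite{Kaz,SSt}): more precisely its orbital integrals are supported on elliptic elements, which kills all the weighted orbital integral terms $T^{\mathbf{G}}_{M}$ for proper Levi $M$ in Arthur's geometric expansion. Hence $T^{\mathbf{G}}_{\mathrm{geom},\chi}(f)$ collapses to the elliptic term $T^{\mathbf{G}}_{\mathrm{ell},\chi}(f)$. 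This yields the middle equality $T^{\mathbf{G}}_{\mathrm{ell},\chi}(f) = T^{\mathbf{G}}_{\mathrm{disc},\chi}(f)$.

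For the first equality $ST^{\mathbf{G}^{*}}_{\mathrm{ell},\chi}(f^{*}) = T^{\mathbf{G}}_{\mathrm{ell},\chi}(f)$, I would invoke the stabilization of the elliptic part of the trace formula: since $f$ is supported (at $v_{st}$) on elliptic elements, only the elliptic terms of the stabilization survive, and the elliptic part of Arthur's stable trace formula together with the inner-form comparison (using the Langlands--Shelstad transfer $f \mapsto f^{*}$, constructed as indicated by lifting along $\mathcal{H}(\mathbf{G}(\mathbb{A}_F)) \twoheadrightarrow \mathcal{H}(\mathbf{G}(\mathbb{A}_F),\chi^{-1})$, applying Waldspurger's transfer with trivial central character, and descending) identifies $T^{\mathbf{G}}_{\mathrm{ell},\chi}(f)$ with $ST^{\mathbf{G}^{*}}_{\mathrm{ell},\chi}(f^{*})$ — the endoscopic error terms from proper elliptic endoscopic groups of $\mathbf{G}$ also vanish because transfers of $f^{\mathbf{G}}_{Lef,\eta}$ to such groups, being obtained by transferring an elliptic-supported function, contribute nothing once one pairs against the regularity at $\infty$ and Steinberg at $v_{st}$ (this is exactly the mechanism in \cite[Lemma~6.1]{KS}). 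The main obstacle I expect is precisely the careful bookkeeping in this last step: tracking the central character datum through Waldspurger's transfer and through the stabilization, and verifying that all terms attached to proper Levi subgroups and proper elliptic endoscopic data genuinely drop out. Fortunately this is all carried out in \cite[Section~6, Appendix~A]{KS} in a form directly applicable to $\GSp_4$ and its inner forms (with split center), so the proof reduces to citing \cite[Lemma~6.1,6.2]{KS} after matching notation, and I would present it that way rather than reproving the stabilization from scratch.
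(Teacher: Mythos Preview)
Your proposal is correct and converges to the paper's approach: the paper's proof is literally a two-line citation to \cite[Lemmas~6.1,~6.2]{KS}, so your detailed sketch of the mechanism (Lefschetz functions killing non-elliptic/non-cuspidal terms on both sides, stabilization of the elliptic part) is an accurate unpacking of what lies behind that citation, and your concluding sentence ``the proof reduces to citing \cite[Lemma~6.1,6.2]{KS} after matching notation'' is exactly what the paper does.

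There is one small point the paper adds which you do not make explicit. The cited lemmas in \cite{KS} are proved for the central character datum $(Z(F_\infty),\chi_\xi)$, not the full adelic datum $(Z(\mathbb{A}_F),\chi)$ used here. The paper notes this discrepancy and says the extension follows easily from \cite[Corollary~A.8(3)]{KS} (with a pointer to the proof of \cite[Corollary~8.5]{KS}). Your phrase ``matching notation'' glosses over this; in a clean write-up you should flag the central-character mismatch explicitly and cite the specific result (Corollary~A.8(3)) that lets you pass from the archimedean datum to the full adelic one.
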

\begin{proof}
Strictly speaking, the cited Lemmas only prove this in the case where the central character datum is $(Z(F_{\infty}),\chi_{\xi})$; however, the result in this case easily follows using \cite[Corollary~A.8 (3)]{KS} (cf. the proof of \cite[Corollary~8.5]{KS}). 
\end{proof}
Let $S_{st}$ and $S_{sc}$ be disjoint finite sets of finite places of $F$, and let $S_{0}$ be a finite set of places contained in  $S_{st} \cup S_{sc}$. Let $S_{\infty}$ denote the infinite places of $F$. Set $S$ to be a finite set of places containing $S_{st} \cup S_{sc} \cup S_{\infty}$. We assume that the inner twist $\mathbf{G}^{*}$ of $\mathbf{G}$ is trivialized away from $S_{0}$ and $S_{\infty}$ (i.e for all $v \notin S_{0} \cup S_{\infty}$, the inner twisting gives rise to an isomorphism $\mathbf{G}_{F_{v}} \simeq \mathbf{G}_{F_{v}}^{*}$). In particular, $\mathbf{G}$ is unramified outside $S$ and we can fix a reductive model for $\mathbf{G}$ and $\mathbf{G}^{*}$ over $\mathcal{O}_{F}[1/S]$. By abuse of notation, we write $\mathbf{G}$ and $\mathbf{G}^{*}$ for the integral models of these groups. The inner twist gives an isomorphism $\mathbf{G}^{*}_{F_{v}} \simeq \mathbf{G}_{F_{v}}$ and isomorphisms $\mathbf{G}^{*}_{\mathcal{O}_{F_{v}}} 
\simeq \mathbf{G}_{\mathcal{O}_{F_{v}}}$ of the hyperspecial subgroups determined by this model for the finite places $v \notin S$. The notion of unramified local representation on either side will be defined with respect to this fixed choice of hyperspecial level.
\\\\
For the rest of the section, we will assume that $\mathbf{G}^{*} = \GSp_{4}$. We assume throughout that $\pi$ is a global cuspidal automorphic representation of the group $\mathbf{G}(\mathbb{A}_{F})$ satisfying the following properties:
\begin{enumerate}
    \item $\pi$ is cohomological of some regular weight $\xi$ at infinity.
    \item $\pi_{v}$ is unramified at all finite places $v \notin S$.
    \item $\pi_{v}$ has supercuspidal $L$-parameter for $v \in S_{sc}$.  
    \item $\pi_{v}$ is an unramified twist of Steinberg at all finite places $v \in S_{st}$.
\end{enumerate}
 Note, as in Definition 2.1, we can further partition $S_{sc}$ into $S_{ssc}$, where $\mathrm{std}\circ \phi_{\pi_{v}}$ is irreducible (ssc = stable supercuspidal) and $S_{esc}$, where $\mathrm{std}\circ \phi_{\pi_{v}}$ is reducible (esc = endoscopic supercuspidal).
\\\\
The main result of this section shows the existence of strong transfers from $\mathbf{G}$ to $\mathbf{G}^{*}$ at the places in $S_{sc} \cup S_{st} \cup S_{\infty}$ for a certain class of automorphic representations of $\mathbf{G}$. It is essentially a more refined version of \cite[Proposition~6.3]{KS} in the particular case that $\mathbf{G}^{*} = \GSp_{4}$. 
\begin{theorem}
Suppose that $S_{st}$ is non-empty. Given a $\pi$ as above, there exists a cuspidal automorphic representation $\tau$ of $\mathbf{G}^{*}(\mathbb{A}_{F})$ satisfying the following:
\begin{itemize}
\item $\tau^{S} \simeq \pi^{S}$.
\item At all $v \in S_{sc} \cup S_{st} \cup S_{\infty}$, $\tau_{v}$ has the same Langlands parameter as $\pi_{v}$.
\end{itemize}
Moreover, we can choose $\tau$ to be globally generic. For the first part of the claim, the same is true with the roles of $\tau$ and $\pi$ reversed. 
\end{theorem}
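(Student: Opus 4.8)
The plan is to run a simple stable trace formula comparison between $\mathbf{G}$ and $\mathbf{G}^{*}$ in the style of \cite[\S 6]{KS} with test functions that are Lefschetz/Steinberg functions at $S_{\infty}\cup S_{st}$ and suitably chosen elsewhere, and to feed in the local character identities of Chan--Gan \cite{CG} to control the spectral support of the transferred function at $S_{sc}$. Concretely, fix the central character datum $(Z(\mathbb{A}_{F}),\chi)$ with $\chi$ the central character of $\pi$, and build $f=\bigotimes_{v}f_{v}\in\mathcal{H}(\mathbf{G}(\mathbb{A}_{F}),\chi^{-1})$: at the archimedean places $f_{v}=f_{\xi}^{\mathbf{G}}$ is the Lefschetz function of the regular weight $\xi$; at each $v\in S_{st}$, $f_{v}=f^{\mathbf{G}}_{Lef,\eta_{v}}$ is the Steinberg pseudo-coefficient for the unramified twist $\eta_{v}=\chi_{v}\circ\lambda$ with $\pi_{v}\simeq \mathrm{St}_{\mathbf{G}(F_{v})}\otimes\eta_{v}$; at each $v\in S_{sc}$, $f_{v}$ is a matrix coefficient of the supercuspidal $\pi_{v}$; at each $v\notin S$, $f_{v}$ is spherical and nonzero on the Satake parameter of $\pi_{v}$; and at the remaining finitely many $v\in S$, $f_{v}=\phi_{v}^{*}*\phi_{v}$ with $\pi_{v}(\phi_{v})\neq 0$, so that $\tr(f_{v}|\sigma_{v})\geq 0$ on all unitary $\sigma_{v}$ and $>0$ on $\pi_{v}$. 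Let $f^{*}=\bigotimes_{v}f_{v}^{*}$ be a Langlands--Shelstad transfer of $f$ to $\mathbf{G}^{*}$. By the fundamental lemma $f_{v}^{*}=f_{v}$ for $v\notin S$; by the transfer of Lefschetz functions (\cite[Appendix~A]{KS} at $S_{st}$, and the classical computation at $\infty$) $f_{v}^{*}$ is again the Lefschetz function of $\xi$, resp.\ the Steinberg pseudo-coefficient of $\eta_{v}$, at $v\in S_{\infty}\cup S_{st}$; and, crucially, by the endoscopic and inner-form character identities of Chan--Gan, at each $v\in S_{sc}$ the function $f_{v}^{*}$ is spectrally supported on $\Pi_{\phi_{\pi_{v}}}(\mathbf{G}^{*}(F_{v}))=\Pi_{\phi_{\pi_{v}}}(\GSp_{4}(F_{v}))$, i.e.\ $\tr(f_{v}^{*}|\sigma_{v})=0$ for every irreducible $\sigma_{v}$ whose Langlands parameter is not $\phi_{\pi_{v}}$.

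Next I would apply Lemma 5.1 to $\mathbf{G}$ and to $\mathbf{G}^{*}$ (the latter with $f^{*}$, which has the required Lefschetz shape): since $f^{*}$ is a transfer of $f$ the two stable elliptic terms coincide, whence $T^{\mathbf{G}}_{cusp,\chi}(f)=ST^{\mathbf{G}^{*}}_{ell,\chi}(f^{*})=T^{\mathbf{G}^{*}}_{cusp,\chi}(f^{*})$, which spectrally reads $\sum_{\sigma}m_{disc}(\sigma)\tr(f|\sigma)=\sum_{\sigma'}m_{disc}(\sigma')\tr(f^{*}|\sigma')$ over cuspidal representations with central character $\chi$. By the choices above, a cuspidal $\sigma$ contributes to the left side only if $\sigma^{S}$ has the Satake parameter of $\pi^{S}$, $\sigma_{\infty}$ is discrete series of weight $\xi$, and $\sigma_{v}$ is an unramified twist of Steinberg at each $v\in S_{st}$ — the one-dimensional alternative $\eta_{v}$ being excluded because, by strong approximation for the simply connected group $\Sp_{4}$, a cuspidal representation with a one-dimensional local component factors through the similitude, hence is one-dimensional and not cohomological of the regular weight $\xi$. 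For such $\sigma$ the local traces at $S_{sc}$, at the remaining places of $S$ and at the unramified places are nonnegative, while those at $S_{\infty}\cup S_{st}$ all carry the same sign; comparing the coefficient of the Satake eigensystem of $\pi^{S}$ on both sides (both being finite linear combinations of Satake characters for fixed $f$), this coefficient on the left is that sign times a sum of nonnegative terms containing the strictly positive $\pi$-contribution, hence nonzero. Therefore some cuspidal $\tau$ of $\mathbf{G}^{*}(\mathbb{A}_{F})$ with $\tau^{S}\simeq\pi^{S}$ and $\prod_{v\in S}\tr(f_{v}^{*}|\tau_{v})\neq 0$ occurs on the right, and reading off the local conditions gives $\tau_{v}$ of parameter $\phi_{\pi_{v}}$ for all $v\in S_{sc}\cup S_{st}\cup S_{\infty}$ (at $S_{sc}$ by the support statement; at $S_{st}$ since Steinberg is the only remaining option and has the parameter of $\pi_{v}$; at $\infty$ since the discrete series $L$-packet of weight $\xi$ is determined by its infinitesimal character).

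To make $\tau$ globally generic, note that $\tau_{v}$ is tempered at $v\in S_{st}$, so $\tau$ is not CAP and hence of general type in Arthur's classification for $\GSp_{4}$ \cite{Ar,GeTa}; the globally generic member of its global $L$-packet is then cuspidal, of multiplicity one, and satisfies all the same local conditions, since the local packets at the relevant places are pinned down by the same parameters, so we may replace $\tau$ by it. The reversed statement follows by the symmetric comparison, again routed through $\mathbf{G}^{*}$: here one uses that the relevant local $L$-packets on the inner form $\mathbf{G}$ are nonempty (supercuspidal and Steinberg parameters are relevant, and $\mathbf{G}(F_{\infty})$ has discrete series of weight $\xi$) together with the surjectivity of the transfer onto the stable distributions spanned by those packets supplied by \cite{CG}, so that a test function on $\mathbf{G}$ whose transfer detects $\tau$ exists and the roles of $\pi$ and $\tau$ may be interchanged.

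I expect the main obstacle to be the spectral support control at $S_{sc}$: one must verify that a matrix coefficient of $\pi_{v}$ on the inner form transfers, with respect to the canonical transfer factors, to a function on $\GSp_{4}(F_{v})$ whose nonzero spectral traces occur only inside $\Pi_{\phi_{\pi_{v}}}(\GSp_{4}(F_{v}))$, and — for the reversed direction — that this transfer is moreover surjective onto the corresponding stable distributions; this is exactly the role of \cite{CG}, and it requires carefully tracking the two inner forms (split $\GSp_{4}$ and $\GU_{2}(D)$) and the normalization of transfer factors. A secondary point is the no-cancellation argument on the spectral side, which rests on the strong approximation step eliminating one-dimensional constituents and on the uniform sign of the Lefschetz contributions at $S_{\infty}\cup S_{st}$.
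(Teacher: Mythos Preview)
Your overall strategy coincides with the paper's: compare cuspidal spectra on $\mathbf{G}$ and $\mathbf{G}^{*}$ via Lemma~5.1 with Lefschetz functions at $S_{\infty}\cup S_{st}$, input from Chan--Gan at $S_{sc}$, and a sign argument on the $\mathbf{G}$-side; the globally generic and converse statements are also handled in the same spirit.

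There is, however, a gap at $v\in S_{sc}$, exactly at the point you flag as the main obstacle. You take $f_{v}$ to be a single matrix coefficient of $\pi_{v}$ and assert that its transfer $f_{v}^{*}$ is spectrally supported on $\Pi_{\phi_{\pi_{v}}}(\GSp_{4}(F_{v}))$. But the Langlands--Shelstad transfer is only determined up to functions with vanishing stable orbital integrals, so only the packet-sum $\sum_{\sigma'\in\Pi_{\phi'}(\mathbf{G}^{*})}\tr(f_{v}^{*}\mid\sigma')$ is pinned down for each tempered $\phi'$. Chan--Gan's stable identity does force this sum to vanish for $\phi'\neq\phi_{\pi_{v}}$, yet in an endoscopic packet $\{\tau_{1}',\tau_{2}'\}$ nothing prevents $\tr(f_{v}^{*}\mid\tau_{1}')=-\tr(f_{v}^{*}\mid\tau_{2}')\neq 0$; the $\tau$ you then extract on the $\mathbf{G}^{*}$-side could have $\tau_{v}\notin\Pi_{\phi_{\pi_{v}}}$, and your conclusion about the parameter at $v$ is not justified. (For $v\in S_{ssc}$, where the packet is a singleton, your choice is fine; the problem is genuinely the endoscopic case.) The paper avoids this by constructing $f^{*}$ \emph{explicitly} rather than taking an abstract transfer: at $v\in S_{sc}$ it sets $f_{v}=\sum_{\pi'\in\Pi_{\phi_{\pi_{v}}}(\mathbf{G})}f_{\pi'}$ and $f_{v}^{*}=\sum_{\tau'\in\Pi_{\phi_{\pi_{v}}}(\mathbf{G}^{*})}f_{\tau'}$, the sums of pseudo-coefficients over the full $L$-packets. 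Each of these already has stable orbital integrals equal to the stable character of its packet, and Chan--Gan's stable identity \cite[Proposition~11.1~(1)]{CG} says precisely that these stable characters agree under orbit matching, so $f$ and $f^{*}$ are matching functions. With this choice $\tr(f_{v}^{*}\mid\sigma_{v})\in\{0,1\}$ for every tempered $\sigma_{v}$, nonzero exactly on the packet, and reading off the parameter of $\tau_{v}$ from a nonzero term on the $\mathbf{G}^{*}$-side is immediate.
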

\begin{proof}
First off note that, since $\pi$ is an unramified twist of Steinberg at some finite place, $\tau$, if it exists, is automatically (essentially) tempered at all places (cf. Remark 5.1). It follows, by \cite[Remark~7.4.7]{GeTa}, that the global $L$-packet of $\tau$ therefore contains a globally generic representation. So, if we can show the existence of some $\tau$ with the desired properties, that means we can find $\tau$ globally generic with the same properties. For the former, we now apply the trace formula.
\\\\
Let $\mathfrak{X} = Z(\mathbb{A}_{F})$ and $\chi$ be the central character of $\pi$. We set $f = \bigotimes_{v \in \mathrm{Pl}_{F}} f_{v}$ to be a test function on $\mathbf{G}(\mathbb{A}_{F})$ satisfying the following:
\begin{enumerate}
    \item $f_{\infty} = f_{\xi}^{\mathbf{G}}$ is a Lefschetz/Euler-Poincar\'e function of weight $\xi$ of $\mathbf{G}(F_{\infty})$.
    \item At $v \in S_{st}$, $f_{v} = f_{Lef,\eta_{v}}^{\mathbf{G}}$ is a Lefschetz function for $\mathbf{G}(F_{v})$, where $\eta_{v}$ is the unique character such that $\pi_{v}$ is the Steinberg twisted by $\eta_{v}$.
    \item At $v \in S_{ssc}$, $f_{v} = f_{\pi_{v}}$ is the pseudo-coefficient of $\pi_{v}$, as constructed in \cite{Kaz,SSt}. 
    \item At $v \in S_{esc}$, $f_{v} = f_{\pi_{v}^{+}} + f_{\pi_{v}^{-}}$, where $\{\pi_{v}^{+},\pi_{v}^{-}\}$ is the $L$-packet over $\phi_{\pi_{v}}$ and $f_{\pi_{v}^{\pm}}$ is the pseudo-coefficient of $\pi_{v}^{\pm}$. 
    \item At the finite places $v \notin S$, $f_{v}$ is an arbitrary element of the unramified Hecke algebra.
    \item For $v \in S \setminus S_{st} \cup S_{\infty} \cup S_{sc}$, choose $f_{v}$ to be an arbitrary function such that $\tr(f_{v}|\pi_{v}) > 0$.
\end{enumerate}
Given such a $f$, we choose a test function $f^{*} = \bigotimes_{v \in \mathrm{Pl}_{F}} f_{v}^{*}$ on $\mathbf{G}^{*}(\mathbb{A}_{F})$ satisfying the following:
\begin{enumerate}
    \item $f_{\infty}^{*} = f_{\xi}^{\mathbf{G}^{*}}$ is a Lefschetz/Euler-Poincar\'e for the representation $\xi$ of $\mathbf{G}^{*}(F_{\infty})$.
    \item At $v \in S_{st}$, $f_{v}^{*} = f^{\mathbf{G}^{*}}_{Lef,\eta_{v}}$ is the Lefschetz function for $\mathbf{G}^{*}(F_{v})$.
    \item At $v \in S_{ssc}$, $f_{v}^{*} = f_{\tau_{v}}$ is a pseudo-coefficient of $\tau_{v}$, where $\tau_{v}$ is the unique supercuspidal representation of $\mathbf{G}^{*}(F_{v})$ with Langlands parameter $\phi_{\pi_{v}}$. 
    \item At $v \in S_{esc}$, $f_{v}^{*} = f_{\tau_{v}^{+}} + f_{\tau_{v}^{-}}$, where $\{\tau_{v}^{+},\tau_{v}^{-}\}$ is the $L$-packet over $\phi_{\pi_{v}}$ of $\mathbf{G}^{*}(F_{v})$ and $f_{\tau_{v}^{\pm}}$ is the pseudo-coefficient of $\tau_{v}^{\pm}$.
    \item At the finite places $v \notin S$, $f_{v}^{*} = f_{v}$ is the same element of the unramified Hecke algebra.
    \item For $v \in S \setminus S_{st} \cup S_{\infty} \cup S_{sc}$, choose $f_{v}^{*} = f_{v}$.
\end{enumerate}
Now we wish to check that $f$ and $f^{*}$ are matching up to a non-zero constant $c$, in the sense of \cite[Section~5.5]{KS}. We can check this place by place. For the finite places $v \notin S_{st} \cup S_{sc} \cup S_{\infty}$, this is tautological. For all $v \in S_{\infty} \cup S_{st}$, this follows from \cite[Lemma~A.4,A.11]{KS}. For $v \in S_{sc}$, this follows from the character identities of Chan-Gan \cite[Proposition~11.1]{CG}. Namely, recall for a regular semisimple elliptic element $\gamma$ the orbital integrals of the pseudo-coefficients of a discrete series representation $\pi$ is given by the Harish-Chandra character of $\pi$ evaluated at $\gamma$, and it vanishes if $\gamma$ is non-elliptic \cite[Theorem~K]{Kaz} \cite[Proposition~3.2]{KST}. It follows that the orbital integrals for a sum of pseudo-coefficients over an $L$-packet of $\GSp_{4}$ or its inner form is already a stable distribution, by \cite[Main Theorem]{CG} combined with \cite[Proposition~11.1 (1)]{CG} for the inner form. Therefore, the desired equality of stable orbital integrals reduces to showing an equality of the sum of Harish-Chandra characters over the $L$-packets of supercuspidal parameters, and this is precisely \cite[Proposition~11.1 (1)]{CG}. 

Since the test functions are matching and $S_{st} \neq \emptyset$, we can apply Lemma 5.1 to conclude:
\[ T^{\mathbf{G}^{*}}_{cusp,\chi}(f^{*}) = ST^{\mathbf{G}^{*}}_{ell,\chi}(f^{*}) =  cT^{\mathbf{G}}_{cusp,\chi}(f). \]
Then, by linear independence of characters, we have a relationship
\[ \sum_{\substack{\Pi' \in \mathcal{A}_{cusp,\chi}(\mathbf{G}^{*}) \\\\ \Pi'^{S} \simeq \pi^{S}}} m(\Pi')\tr(f_{S}^{*}|\Pi'_{S})  = c \cdot \sum_{\substack{\Pi \in \mathcal{A}_{cusp,\chi}(\mathbf{G}) \\\\ \Pi^{S} \simeq \pi^{S}}} m(\Pi)\tr(f_{S}|\Pi_{S}), \]
where $m(\Pi)$ (resp. $m(\Pi')$) denotes the multiplicity of $\Pi$ (resp. $\Pi'$) in the cuspidal spectrum of $\mathbf{G}$ (resp. $\mathbf{G}^*$). Now at the infinite places, as soon as $\tr(f_{v}|\Pi_{v}) \neq 0$ at $v \mid \infty$ the regularity condition on $\xi$ implies that $\Pi_{v}$ is an (essentially) discrete series representation cohomological of regular weight $\xi$ and that $\tr(f_{v}|\Pi_{v}) = (-1)^{q(\mathbf{G}_{v})}$ by the Vogan-Zuckerman classification of unitary cohomological representations, where $q(\mathbf{G})$ is the $F$-rank of the derived group of $\mathbf{G}$. At $v_{st} \in S_{st}$ it follows by \cite[Corollary~A.8]{KS} that $\Pi_{v_{st}}$ is either the $\eta_{v_{st}}$-twist of the Steinberg or trivial representation. If $\Pi_{v_{st}}$ were one-dimensional then the global representation would also be one-dimensional by a strong-approximation argument \cite[Lemma~6.2]{KST}, implying that $\Pi_{\infty}$ cannot be tempered, which would contradict the fact $\Pi_{\infty}$ is an (essentially) discrete series representation. Therefore, $\Pi_{v_{st}}$ is always the $\eta_{v_{st}}$ twist of the Steinberg representation. At the remaining $v \in S_{ssc}$ (resp. $v \in S_{esc}$), it follows from the definition of pseudo-coefficients that, if $\tr(f_{v}|\Pi_{v}) > 0$, we have $\Pi_{v} \simeq \pi_{v}$ (resp. $\Pi_{v} \in \{ \pi_{v}^{+},\pi_{v}^{-} \}$). Similar considerations apply for $\Pi' \in \mathcal{A}_{cusp,\chi}(\mathbf{G}^{*})$ occurring non-trivially in the LHS. In summary, by the above analysis, we can deduce that the RHS of the previous equation is non-zero for the term corresponding to $\pi$ and that all the non-trivial terms on the RHS have the same sign. Therefore, the LHS is also non-zero, and we see, by choosing any non-zero term, that we obtain the desired $\tau$. The converse direction works similarly, where the role of $\mathbf{G}$ and $\mathbf{G}^{*}$, are swapped. 
\end{proof}
\begin{remark}
We note that we crucially used at the places $v \in S_{sc}$ that $\phi_{\pi_{v}}$ was supercuspidal. Otherwise, $\tr(f_{v}|\Pi_{v}) \neq 0 $ wouldn't necessarily imply that $\Pi_{v}$ lies in the $L$-packet over $\phi_{\pi_{v}}$ without assuming that $\Pi_{v}$ is tempered. However, by \cite[Lemma~2.7]{KS} any representation of $\GSp_{4}$ that is Steinberg at some non-empty finite set of places is tempered at all places. Therefore, we can relax this assumption at least for the forward direction of Theorem 5.2 to just assuming that $\tau_{v}$ is a discrete series representation at all $v \in S_{sc}$. 
\end{remark}
\subsection{The Stable and $\sigma$-twisted Simple Trace Formula}
For the proof of strong multiplicity one, we will need some more refined analysis of trace formulae. Namely, we will be interested in the discrete part of the stable trace formula in the particular case of $\mathbf{G}^{*} = \GSp_{4}/F$, as discussed in \cite[Section~7.1]{CG}. To this end, fix a central character datum $(\mathfrak{X},\chi) = (Z(\mathbb{A}_{F}),\chi)$ as before. We recall that the unique elliptic proper endoscopic group of $\GSp_{4}/F$ is $C = \GSO_{2,2} \simeq (\GL_{2} \times \GL_{2})/\{(t,t^{-1})| t \in \GL_{1}\}$. Then, for a test function $f^{*}$ on $\mathbf{G}^{*}(\mathbb{A}_{F})$ as above, the discrete part of the stable trace formula is an equality:
\[ I^{\mathbf{G}^{*}}_{disc,\chi}(f^{\mathbf{*}}) = ST^{\mathbf{G}^{*}}_{disc,\chi}(f^{*}) + \frac{1}{4}ST^{C}_{disc,\chi}(f^{C}) \]
for $f^{C}$ a matching test function on $C$. Here
\[ I_{disc,\chi}^{\mathbf{G}^{*}}(f^{*}) = \sum_{M} |W(G,M)|^{-1}\cdot \sum_{s \in W(M,G)_{reg}}|\det(s - 1)_{\mathfrak{a}_{M}/\mathfrak{a}_{G}}|^{-1} \cdot \tr(M_{P}(s,0)\cdot I^{P}_{disc,\chi}(0,f^{*})) \]
is a sum indexed over classes of standard Levi subgroups of $\mathbf{G}^{*}$. The precise definition of the terms will not be important for our purposes, but the interested reader can look at \cite[Section~3]{Ar1}. We simply note that the term corresponding to $M = \mathbf{G}^{*}$ is precisely equal to
$T_{disc,\chi}^{\mathbf{G}^{*}}(f^{*})$, as defined in section 5.1. The distribution $ST^{\mathbf{G}^{*}}_{disc,\chi}$ is a stable distribution on $\mathbf{G}^{*}$, similar to $ST_{ell,\chi}^{\mathbf{G}^{*}}$, and $ST^{C}_{disc,\chi}$ is the analogous stable distribution on $C(\mathbb{A}_{F})$. However, since $C$ has no proper elliptic endoscopic group, we have
\[ ST^{C}_{disc,\chi}(f^{C}) = I_{disc,\chi}^{C}(f^{C}) = T_{disc,\chi}^{C}(f^{C}) + (\text{other terms}) \]
with the other terms indexed by proper standard Levi subgroups of $C$ as above.

We will be interested in combining this with the elliptic part of the twisted trace formula as described by Kottwitz-Shelstad \cite{KoShe} for the particular group $\tilde{\mathbf{G}} := \GL_{4} \times \GL_{1}/F$ with respect to involution
 \[ \sigma: (g,e) \mapsto (J^{t}g^{-1}J^{-1},e\det(g))\]
where 
\[
   J :=
  \left[ {\begin{array}{cccc}
    &  &  & 1 \\
    &  & 1 &  \\
    & -1 &  &   \\ 
  -1 &  &  &   \\
  \end{array} } \right]
\]
We can enumerate the elliptic $\sigma$-twisted endoscopic groups as follows. 
\begin{enumerate}
    \item $\mathbf{G}^{*} = \GSp_{4}$
    \item $C_{E} = \mathrm{Res}_{E/F}\GL_{2}' := \{(g_{1},g_{2}) \in \mathrm{Res}_{E/F}\GL_{2} | \det(g_{1}) = \det(g_{2})\}$
    \item $C_{+}^{E} = (\GL_{2} \times \mathrm{Res}_{E/F}\GL_{1})/\GL_{1}$
\end{enumerate}
where $E$ is an \'etale quadratic $F$-algebra and $E$ is not split in case (3). The simple stable twisted trace formula says that if $\tilde{f}$ is a test function on $\tilde{\mathbf{G}}$ whose twisted orbital integral is supported on the regular elliptic set at at least $3$ finite places, then we have an identity
\[ I^{\tilde{\mathbf{G}},\sigma}_{disc,\chi}(\tilde{f}) = \frac{1}{2}ST_{disc,\chi}^{\mathbf{G}^{*}}(f^{*}) + \frac{1}{4}\sum_{E}ST^{C_{E}}_{disc,\chi}(f^{C_{E}}) + \frac{1}{8}\sum_{E \neq F^{\oplus 2}} ST^{C_{+}^{E}}_{disc,\chi}(f^{C_{+}^{E}}) \]
where
\begin{itemize}
    \item $\sum_{E}$ is a sum over \'etale quadratic $F$-algebras $E$,
    \item $\tilde{f}$, $f^{*}$, $f^{C_{E}}$, and $f^{C_{+}^{E}}$ are matching test functions,
    \item $ST^{\mathbf{G}^{*}}_{disc,\chi}$, $ST^{C_{E}}_{disc,\chi}$, and $ST^{C_{+}^{E}}_{disc,\chi}$ are the stable distributions appearing in the discrete part of the stable trace formula, as described above,
    \item $I^{\tilde{\mathbf{G}},\sigma}_{disc,\chi}$ is the invariant distribution which is the twisted analogue of $I^{\mathbf{G}^{*}}_{disc,\chi}$. It is given by \cite[Theorem~14.3.1 and Proposition~14.3.2]{LW} and has the form
    \[ I_{disc,\chi}^{\tilde{\mathbf{G}},\sigma}(\tilde{f}) = \sum_{M} |W(G,M)|^{-1}\cdot \sum_{s \in W(M,G)_{reg}}|\det(s - 1)_{\mathfrak{a}_{M}/\mathfrak{a}_{G}}^{s\cdot\sigma}|^{-1} \cdot \tr(M_{P}(s,0)\cdot I^{P}_{disc,\chi}(0,\tilde{f})I_{P,disc}(\sigma)), \]
    where the sum runs over standard Levi subgroups $M$ of $G$. 
\end{itemize}
Now we want to apply these trace formulae with respect to appropriately chosen test functions. We will assume that $S_{st}$ is a finite set of places such that $|S_{st}| \geq 3$. Then, for all $v \in S_{st}$, we let $f^{*}_{v}$ be a pseudo-coefficient for the unramified twist of Steinberg by $\eta_{v}$, where $\eta_{v}$ is a character of $\mathbf{G}(F_{v})$ such that the restriction to $Z(F_{v})$ is $\chi_{v}$. In particular, we will take $f^{*}_{v}$ to be the Lefschetz function $f_{Lef,\eta_{v}}^{\mathbf{G}^{*}} \in \mathcal{H}(\mathbf{G}^{*}(F_{v}),\chi_{v}^{-1})$ considered in the previous section which is a pseudo-coefficient for Steinberg up to scaling. It follows, by \cite[Corollary~10.8]{CG}, that we can choose the local constituent of the matching function $\tilde{f}$ at $v$ to be the $\sigma$-twisted pseudo-coefficient of the Steinberg twisted by $\eta_{v}$, as defined in \cite{MW}. The orbital integrals of these functions are supported on the regular elliptic set (See the construction in \cite[Section~7.2]{MW}), and therefore we can apply the simple twisted trace formula. Moreover, the twisted orbital integral of $\tilde{f}_{v}$ is a stable function, and hence the $\kappa$-orbital integral of $\tilde{f}_{v}$ is zero for all $\kappa \neq 1$. Therefore, it follows that the transfers of $f_{v}$ to all elliptic twisted endoscopic groups $(\tilde{\mathbf{G}}_{F_{v}},\sigma)$ vanish, except possibly for $\mathbf{G}^{*}_{F_{v}}$. Thus, the simple twisted trace formula simplifies giving an equality: 
\[ I^{\tilde{\mathbf{G}},\sigma}_{disc,\chi}(\tilde{f}) = \frac{1}{2}ST_{disc,\chi}^{\mathbf{G}^{*}}(f^{*}) \]
Now we apply the discrete part of the stable trace formula for $\mathbf{G}^{*}$ to the RHS this gives us an equality: 
\[ I^{\mathbf{G}^{*}}_{disc,\chi}(f^{\mathbf{*}}) - \frac{1}{4}ST^{C}_{disc,\chi}(f^{C}) = ST^{\mathbf{G}^{*}}_{disc,\chi}(f^{*}).  \]
However, for any $v \in S_{st}$, the orbital integral of $f_{v}^{*}$ is again stable, so we see that its transfer to the endoscopic group $C$ is zero. Hence, the second term vanishes on the LHS vanishes. All in all, we obtain the following lemma.
\begin{lemma}
For $S_{st}$ a finite set of finite places with $|S_{st}| \geq 3$, $f^{*}$ and $\tilde{f}$ matching test functions on $\mathbf{G}^{*}$ and $\tilde{\mathbf{G}}$, respectively, such that $f^{*}_{v}$ is a pseudo-coefficient for the (essentially) discrete series Steinberg representation twisted by an unramified character $\eta_{v}$ such that $\eta_{v}|_{Z(F_{v})} = \chi_{v}$ and $\tilde{f}_{v}$ is the $\sigma$-twisted pseudo-coefficient for the Steinberg representation of $\tilde{\mathbf{G}}_{F_{v}}$ twisted by $\eta_{v}$, we have an equality:
\[ \frac{1}{2}I^{\mathbf{G}^{*}}_{disc,\chi}(f^{\mathbf{*}}) =  I_{disc,\chi}^{\tilde{\mathbf{G}},\sigma}(\tilde{f}), \]
relating spectral information on $\mathbf{G}^{*}$ to $\tilde{\mathbf{G}}$.
\end{lemma}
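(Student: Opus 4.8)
The plan is to deduce this purely from the two trace-formula identities already recorded above, exploiting that the test functions chosen at the places of $S_{st}$ are stable. First I would invoke the simple stable twisted trace formula for $(\tilde{\mathbf{G}},\sigma)$. Since $|S_{st}|\ge 3$ and, for each $v\in S_{st}$, the component $\tilde f_{v}$ is the $\sigma$-twisted pseudo-coefficient of the $\eta_{v}$-twist of Steinberg — whose twisted orbital integral is supported on the regular elliptic set by the construction of Moeglin–Waldspurger \cite{MW} — the support hypothesis of the formula is met, so
\[ I^{\tilde{\mathbf{G}},\sigma}_{disc,\chi}(\tilde f)=\tfrac12\,ST^{\mathbf{G}^{*}}_{disc,\chi}(f^{*})+\tfrac14\sum_{E}ST^{C_{E}}_{disc,\chi}(f^{C_{E}})+\tfrac18\sum_{E\neq F^{\oplus2}}ST^{C_{+}^{E}}_{disc,\chi}(f^{C_{+}^{E}}). \]
Next I would argue that only the $\mathbf{G}^{*}$-term survives: for $v\in S_{st}$ the twisted orbital integral of $\tilde f_{v}$ is a stable distribution (again \cite{MW}; this is the same input that, via \cite[Corollary~10.8]{CG}, lets one match $\tilde f_{v}$ with $f^{*}_{v}$), so its $\kappa$-twisted orbital integrals vanish for every $\kappa\neq1$. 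Hence the matching functions on the proper elliptic $\sigma$-twisted endoscopic groups $C_{E}$ and $C_{+}^{E}$ may be taken to vanish locally at $v$, forcing $ST^{C_{E}}_{disc,\chi}(f^{C_{E}})=ST^{C_{+}^{E}}_{disc,\chi}(f^{C_{+}^{E}})=0$, and we are left with $I^{\tilde{\mathbf{G}},\sigma}_{disc,\chi}(\tilde f)=\tfrac12\,ST^{\mathbf{G}^{*}}_{disc,\chi}(f^{*})$.

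Then I would feed this into the discrete part of the stable trace formula for $\mathbf{G}^{*}=\GSp_{4}$, whose unique proper elliptic endoscopic group is $C=\GSO_{2,2}$:
\[ I^{\mathbf{G}^{*}}_{disc,\chi}(f^{*})=ST^{\mathbf{G}^{*}}_{disc,\chi}(f^{*})+\tfrac14\,ST^{C}_{disc,\chi}(f^{C}). \]
At $v\in S_{st}$ the function $f^{*}_{v}=f^{\mathbf{G}^{*}}_{Lef,\eta_{v}}$ is, up to a non-zero scalar, a pseudo-coefficient of the $\eta_{v}$-twist of Steinberg, and since this singleton $L$-packet has stable character, its orbital integral is already stable (orbital integrals of discrete-series pseudo-coefficients are given by Harish-Chandra characters and are supported on the elliptic set, by \cite[Theorem~K]{Kaz} and \cite[Proposition~3.2]{KST}). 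Therefore the endoscopic transfer $f^{C}$ may be taken to vanish at $v$, so $ST^{C}_{disc,\chi}(f^{C})=0$ and $I^{\mathbf{G}^{*}}_{disc,\chi}(f^{*})=ST^{\mathbf{G}^{*}}_{disc,\chi}(f^{*})$. Combining the two displays yields $I^{\tilde{\mathbf{G}},\sigma}_{disc,\chi}(\tilde f)=\tfrac12\,I^{\mathbf{G}^{*}}_{disc,\chi}(f^{*})$, which is the assertion.

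The main obstacle is not any deep new ingredient but the careful bookkeeping of the stability claims and transfer-factor normalizations: one must check that "$\tilde f_{v}$ has stable twisted orbital integral'' genuinely kills every non-principal $\sigma$-twisted endoscopic contribution in the simple twisted trace formula, and likewise that "$f^{*}_{v}$ stable'' kills the $\GSO_{2,2}$-contribution in the ordinary stable trace formula for $\GSp_{4}$ — that is, ensuring that the matching of $\tilde f$, $f^{*}$, $f^{C_{E}}$, $f^{C_{+}^{E}}$, $f^{C}$ is compatible with these local vanishings. This is exactly the place where one leans on \cite{MW} and \cite[Section~10]{CG}, together with the shape of $I^{\tilde{\mathbf{G}},\sigma}_{disc,\chi}$ from \cite{KoShe} and of $I^{\mathbf{G}^{*}}_{disc,\chi}$ from \cite{Ar1}.
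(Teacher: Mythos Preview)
Your proposal is correct and follows essentially the same argument as the paper: simplify the simple twisted trace formula using stability of the twisted orbital integrals of $\tilde f_v$ at $v\in S_{st}$ to kill the $C_E$ and $C_+^E$ terms, then simplify the stable trace formula for $\mathbf{G}^*$ using stability of the orbital integrals of $f^*_v$ to kill the $\GSO_{2,2}$ term, and combine. The references you invoke (\cite{MW}, \cite[Corollary~10.8]{CG}, \cite{Kaz}, \cite{KST}) are exactly those used in the paper's derivation preceding the lemma.
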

\subsection{Strong Multiplicity One}
We now would like to combine the analysis of sections 5.1 and 5.2 to deduce a strong multiplicity one result for $\mathbf{G}^{*} = \GSp_{4}/F$ and certain inner forms. Our analysis is very similar to \cite[Sections~10.5 and 10.6]{CG} and benefited from reading the proofs of \cite[Proposition~10.1 and Theorem~11.4]{RW} in a paper of Rosner and Weissauer, where they prove a similar multiplicity one result using Weselmann's topological twisted trace formula \cite{We} instead of the simple twisted trace formula of Kottwitz-Shelstad. Let $S_{st}$ and $S_{sc}$ be disjoint finite sets of finite places. Let $S_{\infty}$ denote the set of infinite places. Set $S_{0} \subset S_{st} \cup S_{sc}$ and $S_{sc} \cup S_{st} \cup S_{\infty} \subset S$ to be finite sets of places as before. We let $\mathbf{G}$ be an inner form over $F$, as in Theorem 5.2, trivialized outside of $S_{0} \cup S_{\infty}$. We have the following.
\begin{proposition}
Assume that $|S_{st}| \geq 3$. Let $\pi$ be a cuspidal automorphic representation of $\mathbf{G}^{*} = \GSp_{4}/F$ or the above inner form $\mathbf{G}$ satisfying the following:
\begin{enumerate}
    \item $\pi$ is cohomological of regular weight $\xi$ at infinity,
    \item $\pi$ is unramified outside of $S$,
    \item $\pi$ is an unramified twist of Steinberg at all places in $S_{st}$,
    \item $\pi$ has supercuspidal $L$-parameter at all places in $S_{sc}$.
\end{enumerate}
If $\pi'$ is a cuspidal automorphic representation of $\mathbf{G}^{*}$ satisfying (1), (3), and $\pi'^{S} \simeq \pi^{S}$ then its Langlands parameter at all places in $S$ agrees with $\pi$. If $\pi'$ is a cuspidal automorphic representation of $\mathbf{G}$ satisfying conditions (i), (iii), and $\pi'^{S} \simeq \pi^{S}$ then its Langlands parameter at all places in $S_{st} \cup S_{sc} \cup S_{\infty}$ agrees with $\pi$.
\end{proposition}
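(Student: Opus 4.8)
The strategy is to reduce the two assertions to the trace-formula comparisons of Lemma 5.1, Theorem 5.2, and Lemma 5.3, isolating the automorphic spectrum via the linear independence of characters and the ``sign-definiteness'' phenomenon already exploited in the proof of Theorem 5.2. Throughout I fix the central character datum $(\mathfrak{X},\chi) = (Z(\mathbb{A}_{F}),\chi)$ determined by $\pi$, and I take $S$ large enough to contain all ramification of $\pi$, $\pi'$, $S_{st}$, $S_{sc}$, $S_{\infty}$.

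\textbf{Step 1: the $\mathbf{G}^{*}$ case via the simple twisted trace formula.} Suppose first $\pi,\pi'$ are both cuspidal automorphic representations of $\mathbf{G}^{*}=\GSp_{4}/F$. The point is that, after transferring to $\tilde{\mathbf{G}}=\GL_{4}\times\GL_{1}$, the Hecke eigensystem $\pi^{S}=\pi'^{S}$ pins down the $\sigma$-stable cuspidal automorphic representation of $\GL_{4}$ (up to the similitude line recorded on $\GL_{1}$) that both $\pi$ and $\pi'$ transfer to, by strong multiplicity one for $\GL_{4}$. Concretely: choose $\tilde f=\bigotimes_v\tilde f_v$ matching $f^{*}=\bigotimes_v f^{*}_v$ as in Lemma 5.3, where at $v\in S_{st}$ one takes $f^{*}_v$ a pseudo-coefficient of the $\eta_v$-twist of Steinberg and $\tilde f_v$ the corresponding $\sigma$-twisted pseudo-coefficient (which has elliptic-supported twisted orbital integrals, so the simple twisted trace formula applies once $|S_{st}|\geq 3$); at $v\in S_{\infty}$ take Lefschetz functions of weight $\xi$; at $v\in S_{sc}$ take (sums of) pseudo-coefficients of the members of the $L$-packet $\Pi_{\phi_{\pi_v}}$, which are matching by the Chan-Gan character identities \cite[Proposition~11.1]{CG}; and at the remaining places of $S$ take arbitrary matching pairs with $\tr(f^{*}_v|\pi_v)>0$. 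Feeding these into Lemma 5.3 gives $\tfrac12 I^{\mathbf{G}^{*}}_{disc,\chi}(f^{*}) = I^{\tilde{\mathbf{G}},\sigma}_{disc,\chi}(\tilde f)$. On the geometric/spectral side of $\tilde{\mathbf{G}}$, using that $\tilde f_v$ at $v\in S_{st}$ kills all non-$\mathbf{G}^{*}$ twisted endoscopic contributions and that Steinberg forces temperedness (so the proper Levi terms $M\neq \tilde{\mathbf{G}}$ in $I^{\tilde{\mathbf{G}},\sigma}_{disc,\chi}$ drop out by the same strong-approximation/temperedness argument as in Remark 5.1 and the proof of Theorem 5.2), one is left with a genuine sum over $\sigma$-stable discrete automorphic representations of $\GL_{4}\times\GL_{1}$. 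Linear independence of twisted characters, applied at the places in $S$, then shows that the discrete spectrum of $\GL_{4}$ with the fixed unramified eigensystem $\pi^{S}$ and the specified behaviour (Steinberg at $S_{st}$, cohomological at $\infty$) is a \emph{single} $\sigma$-stable cuspidal representation $\Pi_4$; both $\pi$ and $\pi'$ are strong transfers of $\Pi_4$ (after matching the similitude factor, which is itself determined by the central character $\chi$). Finally, the local transfer being injective on $L$-parameters at each place of $S$ --- for $\GL_4\leftrightarrow\GSp_4$ this is the content of the local character identities of Gan-Takeda/Chan-Gan together with the local Langlands correspondence for $\GL_4$ --- forces $\phi_{\pi'_v}=\phi_{\pi_v}$ for all $v\in S$.

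\textbf{Step 2: the inner-form case.} Now suppose $\pi$ is a representation of the inner form $\mathbf{G}$ (trivialized outside $S_{0}\cup S_{\infty}$) satisfying (1),(3),(4), and $\pi'$ is a cuspidal automorphic representation of $\mathbf{G}$ with (1),(3) and $\pi'^{S}\simeq\pi^{S}$. First apply Theorem 5.2 to $\pi$ (using $S_{st}\neq\emptyset$) to obtain a globally generic cuspidal $\tau$ on $\mathbf{G}^{*}$ with $\tau^{S}\simeq\pi^{S}$ and $\phi_{\tau_v}=\phi_{\pi_v}$ at all $v\in S_{sc}\cup S_{st}\cup S_{\infty}$; note $\tau$ satisfies the hypotheses of Step 1 (it is Steinberg at $S_{st}$, cohomological of weight $\xi$, supercuspidal $L$-parameter at $S_{sc}$). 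Next, run the simple stable trace formula $ST^{\mathbf{G}^{*}}_{ell,\chi}(g^{*})=T^{\mathbf{G}}_{ell,\chi}(g)=T^{\mathbf{G}}_{cusp,\chi}(g)$ of Lemma 5.1 with matching test functions $g,g^{*}$: at $v\in S_{st}$ take Lefschetz functions, at $v\in S_{\infty}$ Lefschetz functions of weight $\xi$, at $v\in S_{0}\setminus(S_{st}\cup S_{\infty})$ arbitrary matching functions positive on $\pi_v$, and at $v\notin S$ equal elements of the unramified Hecke algebra realizing the eigensystem $\pi^{S}$. Linear independence of characters again isolates, on the $\mathbf{G}$-side, those cuspidal $\Pi$ with $\Pi^{S}\simeq\pi^{S}$, Steinberg at $S_{st}$, cohomological at $\infty$ --- and $\pi'$ is among them. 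The stable side only sees the stable orbital integrals, which (via the Langlands-Shelstad transfer and the endoscopic classification for $\GSp_4$ of \cite{Ar,GeTa}) depend only on the $\mathbf{G}^{*}$-packet; combined with Step 1 applied to $\tau$, this forces every such $\Pi$ to have $\Pi_v$ in the local $L$-packet $\Pi_{\phi_{\pi_v}}(\mathbf{G}_v)$ for all $v\in S_{st}\cup S_{sc}$ (using that the $\mathbf{G}$-packet maps injectively to the $\mathbf{G}^{*}$-packet on parameters), and $\Pi_\infty$ in the discrete series $L$-packet of weight $\xi$ for $v\in S_{\infty}$. Hence $\phi_{\pi'_v}=\phi_{\pi_v}$ at all $v\in S_{st}\cup S_{sc}\cup S_{\infty}$, which is the claim. (If instead $\pi$ itself lives on $\mathbf{G}^{*}$, apply Step 1 directly.)

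\textbf{Main obstacle.} The delicate point is Step 1: making the comparison on $\tilde{\mathbf{G}}=\GL_4\times\GL_1$ genuinely rigid, i.e.\ showing that after pinning the unramified eigensystem and the Steinberg condition at $|S_{st}|\geq 3$ places, the only surviving term in $I^{\tilde{\mathbf{G}},\sigma}_{disc,\chi}(\tilde f)$ is a single $\sigma$-stable \emph{cuspidal} representation of $\GL_4$ and that the proper-Levi (Eisenstein) contributions in the discrete part of the twisted trace formula all vanish. This requires (a) the temperedness forced by Steinberg at several places, together with the vanishing of the relevant intertwining/normalizing factors $M_P(s,0)$ for the proper Levi terms, and (b) a careful bookkeeping of the similitude factor on the $\GL_1$-component so that ``the same $\GL_4$-representation'' really does correspond to ``the same $\GSp_4$-parameter'' --- here one uses that the central/similitude character is fixed by the datum $\chi$ and is already matched by hypothesis via $\pi'^{S}\simeq\pi^{S}$. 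Once the $\GL_4$-side is rigid, strong multiplicity one for $\GL_4$ does the rest, and the descent of this rigidity to $\GSp_4$ and then to its inner form is exactly the role played by \cite{Ar,GeTa,CG} and the compatibility of local/global transfers recorded in Theorem 5.2.
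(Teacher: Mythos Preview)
Your outline follows the paper's route---Lemma 5.3 plus strong multiplicity one on $\GL_4\times\GL_1$, then descent to the inner form via the comparison of Theorem 5.2---but the mechanism you propose for killing the proper-Levi contributions in Step 1 is not the right one, and this is where the argument needs to be repaired. You write that ``Steinberg forces temperedness (so the proper Levi terms $M\neq\tilde{\mathbf{G}}$ in $I^{\tilde{\mathbf{G}},\sigma}_{disc,\chi}$ drop out\ldots)''. Temperedness alone does nothing here: the Levi terms involve induced representations at unitary parameters, which are already tempered, and the $\sigma$-twisted pseudo-coefficient of Steinberg can have nonzero trace on nontempered constituents as well. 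The paper proceeds differently: it first builds the Gan--Takeda global transfer $\tilde{\tau}$ of a globally generic $\tau$ to $\GL_4$, then isolates the $\tilde{\tau}$-isotypic part of Lemma 5.3 using strong multiplicity one for $\tilde{\mathbf{G}}$. On the $\mathbf{G}^{*}$-side, the proper-Levi contributions are parabolically induced, hence (by \cite[Section~5.8]{CG}) lift to parabolically induced representations of $\tilde{\mathbf{G}}$; since $\tilde{\tau}$ is cuspidal these cannot share its Hecke eigensystem, so they vanish after isolation. The remaining twisted trace $\tr_\sigma(\tilde{\tau}_S|\tilde{f}_S)$ is then rewritten as the product $\prod_{v\in S}\sum_{\pi'_v\in\Pi_{\phi_{\tau_v}}}\tr(f_v^{*}|\pi'_v)$ via the local twisted character identity \cite[Proposition~9.1]{CG}---this is the precise input you are gesturing at with ``local transfer being injective on $L$-parameters''. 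Your ``Main obstacle'' paragraph correctly flags this step, but the proposed fix (vanishing of intertwining factors $M_P(s,0)$ via temperedness) is not what makes it work.

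Two smaller points. First, the paper treats a separate ``endoscopic'' case where $\tilde{\tau}=\sigma_1\boxplus\sigma_2$ is an isobaric sum, using the stable trace formula for $\mathbf{G}^{*}$ and \cite[Corollary~8.6]{CG} in place of Lemma 5.3. Under the hypotheses here this case is in fact vacuous: $\tau$ is a Steinberg twist at $v\in S_{st}$, so $\tilde{\tau}_v$ is the Steinberg of $\GL_4$, whose indecomposable $L$-parameter cannot arise from an isobaric sum; your omission of this case is therefore harmless. Second, in Step 2 you invoke ``the endoscopic classification for $\GSp_4$ of \cite{Ar,GeTa}'', but the paper needs only the Chan--Gan local character identities \cite[Proposition~11.1]{CG} to pass from the $\mathbf{G}^{*}$-packet sum to the $\mathbf{G}$-packet sum; no global multiplicity formula is required.
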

\begin{proof}
Set $\chi$ to be the central character of $\pi$. We apply the above trace formulae with central character datum $(Z(\mathbb{A}_{F}),\chi)$. If $\pi$ is a representation of $\mathbf{G}^{*} = \GSp_{4}/F$, we take $\tau$ to be a globally generic member of the global $L$-packet of $\pi$, as in the proof of Theorem 5.2. If $\pi$ is a representation of the inner form $\mathbf{G}$ then, using Theorem 5.2, we take $\tau$ to be a globally generic strong transfer $\tau$ of $\pi$ to a cuspidal automorphic representation of $\mathbf{G}^{*}$, with Langlands parameter equal to $\phi_{\pi_{v}}$ at all places in $v \in S_{sc} \cup S_{st} \cup S_{\infty}$. Now we apply \cite[Section~13]{GT1} to $\tau$ to deduce the existence of a strong transfer to a globally generic automorphic representation of $\GL_{4}(\mathbb{A}_{F})$, denoted $\tilde{\tau}$. It satisfies the following:
\begin{enumerate}
    \item $\tilde{\tau}$ is a global theta lift of $\tau$.
    \item $\tilde{\tau}^{\vee} \otimes \chi \simeq \tilde{\tau}$.
    \item For all places $v$, we have that $\phi_{\tilde{\tau}_{v}} = \mathrm{std}\circ \phi_{\tau_{v}} $ as conjugacy classes of parameters.
    \item Its form falls into the two cases:
    \begin{enumerate}
        \item $\tilde{\tau}$ is cuspidal 
        \item $\tilde{\tau} = \sigma \boxplus \sigma'$ for $\sigma \neq \sigma'$ a cuspidal automorphic representation of $\GL_{2}$.
    \end{enumerate}
    In the latter case, $\tau$ is the theta lift of a cusp form $\sigma \otimes \sigma'$ on $C= \GSO_{2,2}$. 
\end{enumerate}
To distinguish these two cases, we say that $\tilde{\tau}$ is a stable or endoscopic lift. We note that actually, since $v \in S_{st}$, we have that $\phi_{\tilde{\tau}_{v}} = \mathrm{std} \circ \phi_{\tau_{v}}$, and $\phi_{\tau_{v}}$ is an unramified twist of the Steinberg parameter by assumption. Therefore, $\tilde{\tau}$ is necessarily a stable lift. We choose matching test functions $\tilde{f}$ and $f^{*}$ on $\tilde{\mathbf{G}}$ and $\mathbf{G}^{*}$, respectively, such that, for $v \in S_{st}$, they are pseudo-coefficients for Steinberg twisted by $\eta_{v}$, as in Lemma 5.3, where $\eta_{v}$ the unramified character that $\pi_{v_{st}}$ is a twist of Steinberg of, where we can, up to scaling, take this to be the Lefschetz function $f_{Lef,\eta_{v_{st}}}^{\mathbf{G}^{*}}$ considered in section 5.1. We let $f^{*}_{\infty}$ be a Lefschetz function for the discrete series $L$-packet given by $\xi$ as before. Lemma 5.3 then gives us an equality:
\[ \frac{1}{2}I^{\mathbf{G}^{*}}_{disc,\chi}(f^{\mathbf{*}}) =  I_{disc,\chi}^{\tilde{\mathbf{G}},\sigma}(\tilde{f}) \]
We first treat the case where $\tilde{\tau}$ is a stable lift, and consider the part of the RHS corresponding to the cuspidal representation $\tilde{\tau}$ constructed above. By using linear independence of the unramified characters and the strong multiplicity one property for $\tilde{\mathbf{G}}$, the above identity implies an equality
\begin{equation}
 c_{1}\cdot\sum_{\Pi' \simeq \pi^{S}} m(\Pi')\tr(f^{*}_{S}|\Pi'_{S}) =  \tr_{\sigma}(\tilde{\tau}_{S}|\tilde{f}_{S}) 
\end{equation}
where $c_{1}$ is a non-zero constant. Here $\tr_{\sigma}(\tau_{S}|\tilde{f}_{S})$ is the $\sigma$-twisted trace, as defined in \cite[Section~5.16]{CG}. The LHS runs over automorphic representations satisfying the following:
\begin{enumerate}
    \item  $\Pi'$ has non-zero contribution to the discrete part of the trace formula $I_{disc,\chi}^{\mathbf{G}^{*}}$. 
    \item The coefficient $m(\Pi')$ is the coefficient associated with the trace of $\Pi'$ in $I_{disc,\chi}^{\mathbf{G}^{*}}$.
\end{enumerate}
We can further simplify the LHS of (3) by noting that non-discrete spectrum representations which intervene in $I_{disc,\chi}^{\mathbf{G}^{*}}$ are parabolically induced from the discrete spectrum of proper Levi subgroups of $\mathbf{G}^{*}$. By \cite[Section~5.8]{CG}, we know that parabolically induced representations of $\mathbf{G}^{*}$ lift to parabolically induced representations of $\tilde{\mathbf{G}}$. Therefore, since $\tilde{\tau}$ is a stable lift and therefore cuspidal, all terms occurring in in the LHS must all come from the discrete spectrum $T_{disc,\chi}^{\mathbf{G}^{*}}(f^{*})$, by strong multiplicity one for $\tilde{\mathbf{G}}$. Moreover, the coefficients $m(\Pi')$ must then be the multiplicities of $\Pi'$ in the discrete spectrum. However, as in Lemma 5.1, we have an equality:
\[ T_{disc,\chi}^{\mathbf{G}^{*}}(f^{*}) = T_{cusp,\chi}^{\mathbf{G}^{*}}(f^{*}). \]
In other words, we may assume that the sum on the LHS of (3) ranges over  $\Pi' \in \mathcal{A}_{cusp,\chi}(\mathbf{G}^{*})$, and that $m(\Pi')$ denotes the multiplicity in the cuspidal automorphic spectrum. In other words, we can rewrite the LHS as
\[ \sum_{\substack{\Pi' \in \mathcal{A}_{cusp,\chi}(\mathbf{G}^{*}) \\\\
\Pi'^{S} \simeq \pi^{S}}} m(\Pi')\tr(f_{S}^{*}|\Pi'_{S}). \]
Now, for the RHS, we apply the local character identities of Chan-Gan \cite[Proposition~9.1]{CG}, this tells us that we have an equality:
\[ \tr_{\sigma}(\tilde{\tau}_{S}|\tilde{f}_{S}) = c_{2}\prod_{v \in S} \sum_{\pi'_{v} \in \Pi_{\phi_{\tau_{v}}}(\mathbf{G}^{*}_{F_{v}})} \tr(f_{v}^{*}|\pi'_{v}), \]
for some non-zero constant $c_{2}$, where we have used property (3) of the representation $\tilde{\tau}$. In summary, we have concluded 
\[ \sum_{\substack{\Pi' \in \mathcal{A}_{cusp,\chi}(\mathbf{G}^{*}) \\\\
\Pi'^{S} \simeq \pi^{S}}} m(\Pi')\tr(f_{S}^{*}|\Pi'_{S}) = c \cdot \prod_{v \in S} \sum_{\pi'_{v} \in \Pi_{\phi_{\tau_{v}}}(\mathbf{G}^{*}_{F_{v}})} \tr(f_{v}^{*}|\pi'_{v})  \]
for some non-zero constant $c$. If $\pi$ was a representation of $\mathbf{G}^{*}$, we know by our choice of $\tau$ that $\phi_{\tau_{v}} = \phi_{\pi_{v}}$ for all $v \in S$, so, by linear independence of characters at the places $v \in S \setminus S_{\infty} \cup S_{st}$, this tells us that the local constituents of some $\Pi'$ occurring in the LHS with non-zero trace at $S_{\infty} \cup S_{st}$ are described by members of the $L$-packet over $\phi_{\pi_{v}}$ occurring with some multiplicity. Since the representation $\pi'$ is by assumption cohomological of regular weight $\xi$ and an unramified twist of Steinberg at all places in $S_{st}$, by arguing as in proof of Theorem 5.2, we have that $\tr(f_{S_{st} \cup S_{\infty}}|\pi'_{S_{st} \cup S_{\infty}}) \neq 0$, and this gives us the desired claim for $\mathbf{G}^{*} = \GSp_{4}$. Now, if $\pi'$ is a representation of the inner form $\mathbf{G}$ of $\mathbf{G}^{*}$, we apply the character identities of Chan-Gan \cite[Proposition~11.1]{CG}. This tells us that the RHS of the previous equation is equal to
\[ c_{3}\prod_{v \in S} \sum_{\rho_{v} \in \Pi_{\phi_{\tau_{v}}}(\mathbf{G}_{F_{v}})} \tr(f_{v}|\rho_{v}) \]
for some non-zero constant $c_{3}$. Now, to rewrite the LHS, we apply the trace formula as in the proof of Theorem 5.2. By linear independence of characters, we obtain a relationship
\begin{equation*}
\sum_{\substack{\Pi' \in \mathcal{A}_{cusp,\chi}(\mathbf{G}^{*}) \\\\
\Pi'^{S} \simeq \pi^{S}}} m(\Pi')\tr(f_{S}^{*}|\Pi'_{S})  = c_{4} \cdot \sum_{\substack{\Pi \in \mathcal{A}_{cusp,\chi}(\mathbf{G}) \\\\ \Pi^{S} \simeq \pi^{S}}} m(\Pi)\tr(f_{S}|\Pi_{S})
\end{equation*}
for some non-zero constant $c_{4}$. All in all, we obtain that 
\[ \sum_{\substack{\Pi \in \mathcal{A}_{cusp,\chi}(\mathbf{G}) \\\\ \Pi^{S} \simeq \pi^{S}}} m(\Pi)\tr(f_{S}|\Pi_{S}) = c'\prod_{v \in S} \sum_{\rho_{v} \in \Pi_{\phi_{\tau_{v}}}(\mathbf{G}_{F_{v}})} \tr(f_{v}|\rho_{v}) \]
for some non-zero constant $c'$. We know by our choice of $\tau$ that $\phi_{\tau_{v}} = \phi_{\pi_{v}}$ for all $v \in S_{sc} \cup S_{st} \cup S_{\infty}$. From here, the claim follows. 

In the case that $\tilde{\tau}$ is an endoscopic lift. We apply the stable trace formula for $\mathbf{G}^{*}$ and a test function $f^{*}$ of $\mathbf{G}^{*}(\mathbb{A}_{F})$. We recall that this is an identity:
\[ ST^{\mathbf{G}^{*}}_{disc,\chi}(f^{*}) = I^{\mathbf{G}^{*}}_{disc,\chi}(f^{\mathbf{*}}) - \frac{1}{4}ST^{C}_{disc,\chi}(f^{C})  \]
for $f^{C}$ a matching test function on $C(\mathbb{A}_{F})$. Moreover, since $C$ has no proper elliptic endoscopic groups, we have
\[ ST^{C}_{disc,\chi}(f^{C}) = I_{disc,\chi}^{C}(f^{C}). \]  
We look at the $\sigma_{1} \otimes \sigma_{2}$-isotypic part, where $\sigma_{1} \otimes \sigma_{2}$ is the representation of $C$ whose theta lift is $\tau$. We then use linear independence of characters at the unramified places to obtain a semi-local identity 
\[ ST_{\sigma_{1} \otimes \sigma_{2}}^{\mathbf{G}^{*}}(f^{*}_{S}) = \sum_{\Pi'^{S} \simeq \pi^{S}} m(\Pi')\tr(f_{S}^{*}|\Pi'_{S}) - \frac{1}{4}\sum_{(\sigma'_{1} \otimes \sigma_{2}')^{S} \simeq (\sigma_{1} \otimes \sigma_{2})^{S}} m(\sigma \otimes \sigma')\tr(f^{C}_{S}|(\sigma_{1}' \otimes \sigma_{2}')_{S}), \]
where the LHS is a stable distribution on $\mathbf{G}^{*}(\mathbb{A}_{S})$, as in \cite[Equation~8.5]{CG}. The first term in the RHS is a sum over discrete automorphic representations $\Pi'$ of $\mathbf{G}^{*}(\mathbb{A}_{F})$ with the coefficient $m(\Pi')$ being the multiplicity in the discrete spectrum, and the second term is a sum over all automorphic representations $\sigma_{1}' \otimes \sigma_{2}'$ of $C(\mathbb{A}_{F})$. It follows by \cite[Corollary~8.6]{CG} that the LHS is equal to 
\[ \frac{1}{2} \prod_{v \in S} \sum_{\pi'_{v} \in \Pi_{\phi_{\tau_{v}}}(\mathbf{G}^{*}_{F_{v}})} \tr(f^{*}_{v}|\pi'_{v})   \]
On the other hand, if, for $v \in S_{st}$, we take $f_{v}^{*} = f_{Lef,\eta_{v}}^{\mathbf{G}^{*}}$ to be the Lefschetz function as above, we see that
\[ -\frac{1}{4}\sum_{(\sigma_{1} \otimes \sigma_{2}')^{S} \simeq (\sigma_{1} \otimes \sigma_{2})^{S}} m(\sigma \otimes \sigma')\tr(f^{C}_{S}|(\sigma_{1}' \otimes \sigma_{2}')_{S}) \]
vanishes, since the orbital integral of $f_{v}^{*}$ is stable and $S_{st} \neq \emptyset$. In summary, we have concluded an identity 
\[ c''\prod_{v \in S} \sum_{\pi'_{v} \in \Pi_{\phi_{\tau_{v}}}(\mathbf{G}^{*}_{F_{v}})} \tr(f^{*}_{S}|\pi'_{v}) =  \sum_{\Pi'^{S} \simeq \pi^{S}} m(\Pi')\tr(f_{S}^{*}|\Pi'_{S})   \]
for some non-zero constant $c''$. Now taking $f_{\infty}^{*}$ to be our Lefschetz function at $\infty$, we can argue just as in the stable case.
\end{proof}
\begin{remark}
Strong multiplicity one for globally generic automorphic representations of $\GSp_{4}$ has been proven by Jiang-Soudry \cite{JS}. So, in the particular case that $\pi$ and $\pi'$ are representations of $\GSp_{4}$, we could have just assumed that $S_{st}$ is non-empty and then applied their results to a globally generic member in the global $L$-packets of $\pi$ and $\pi'$ to deduce the desired claim. 
\end{remark}
\section{Galois Representations in the Cohomology of Shimura varieties}
We now would like to combine the results of the previous section with results of Sorensen \cite{So} on the Galois representations associated to automorphic representations of $\mathbf{G}^{*} = \GSp_{4}/F$ to say something about the Galois action of the global Shimura varieties occurring in basic uniformization. Let $F/\mathbb{Q}$ be a totally real field and $\mathbb{A}_{f,F}$ the finite adeles of $F$. Throughout, we will assume that $\tau$ is a cuspidal automorphic representation of $\mathbf{G}^{*}$ satisfying the same properties as in the previous section.
\begin{enumerate}
    \item $\tau_{\infty}$ is cohomological of some regular weight $\xi$ of $\mathbf{G}^{*}(F_{\infty})$.
    \item $\tau_{v}$ is unramified at all finite places outside of $S$.
    \item $\tau_{v}$ is an unramified twist of Steinberg at some finite set of finite places $S_{st}$.
\end{enumerate}
We have the following key result of Sorensen. 
\begin{theorem}{\cite[Theorem~A]{So}}
Fix a globally generic $\tau$ as above such that $S_{st}$ is non-empty. Then there exists, a unique (after fixing the isomorphism $i: \overline{\mathbb{Q}}_{\ell} \xrightarrow{\simeq} \mathbb{C}$) irreducible continuous representation $\rho_{\tau}: \Gal(\overline{F}/F) \rightarrow \GSp_{4}(\overline{\mathbb{Q}}_{\ell})$ characterized by the property that, for each finite place $v \nmid \ell$ of $F$, we have
\[ i\WD(\rho_{\tau}|_{W_{F_{v}}})^{F-s.s} \simeq \phi_{\tau_{v}} \otimes |\cdot|^{-3/2}\] 
where $(-)^{F-s.s}$ denotes the Frobenius semisimplification and $\phi_{\tau_{v}}$ is the Gan-Takeda parameter of $\tau_{v}$.
\end{theorem}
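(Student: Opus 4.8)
\emph{Proof sketch.} This is \cite[Theorem~A]{So}; we recall the strategy, which proceeds by transfer to $\GL_{4}$ and the known construction of Galois representations attached to regular algebraic essentially self-dual cuspidal automorphic representations of $\GL_{4}$ over a totally real field. First, as in \cite[Section~13]{GT1} and \cite{So}, one uses the global theta correspondence to transfer $\tau$ to a globally generic automorphic representation $\tilde{\tau}$ of $\GL_{4}(\mathbb{A}_{F})$ satisfying $\tilde{\tau}^{\vee} \otimes \chi \simeq \tilde{\tau}$, where $\chi$ is the central character of $\tau$, and $\phi_{\tilde{\tau}_{v}} \simeq \mathrm{std}\circ\phi_{\tau_{v}}$ at every place $v$. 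The hypothesis that $\tau_{v}$ is an unramified twist of Steinberg at the non-empty set $S_{st}$ forces $\tilde{\tau}_{v}$ to be an unramified twist of $\mathrm{St}_{4}$ there; in particular $\tilde{\tau}$ cannot be an isobaric sum $\sigma \boxplus \sigma'$, so $\tilde{\tau}$ is cuspidal, and it is regular algebraic because $\tau_{\infty}$ is cohomological of regular weight $\xi$.

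Second, one applies the construction of Galois representations for regular algebraic, essentially self-dual, cuspidal $\tilde{\tau}$ on $\GL_{4}/F$ (the work of Shin, Chenevier--Harris, Barnet-Lamb--Gee--Geraghty--Taylor, with full local-global compatibility at the finite places due to Caraiani and others): there is a continuous semisimple representation $r_{\tilde{\tau}}: \Gal(\overline{F}/F) \rightarrow \GL_{4}(\ol{\mathbb{Q}}_{\ell})$ with $i\WD(r_{\tilde{\tau}}|_{W_{F_{v}}})^{F-s.s} \simeq \phi_{\tilde{\tau}_{v}} \otimes |\cdot|^{-3/2}$ for all $v \nmid \ell$. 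Since $\tilde{\tau}$ is cuspidal and Steinberg at a place, the local Weil--Deligne representation at $v \in S_{st}$ is indecomposable of dimension $4$, so $r_{\tilde{\tau}}$ is irreducible; by Chebotarev it is the unique representation with these Frobenius traces at the unramified places.

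Third, one upgrades the target group. The self-duality $\tilde{\tau}^{\vee} \simeq \tilde{\tau} \otimes \chi^{-1}$ (suitably normalized) gives $r_{\tilde{\tau}}^{\vee} \simeq r_{\tilde{\tau}} \otimes \chi_{\ell}^{-1}$ for the $\ell$-adic avatar $\chi_{\ell}$ of $\chi$, so $r_{\tilde{\tau}}$ preserves, up to the similitude character $\chi_{\ell}$, a non-degenerate bilinear form, and its image lies in $\GSp_{4}(\ol{\mathbb{Q}}_{\ell})$ or $\mathrm{GO}_{4}(\ol{\mathbb{Q}}_{\ell})$. Because $\tilde{\tau}$ is a theta lift from $\GSp_{4} = \GSpin_{5}$ --- equivalently, because $\wedge^{2}L(s,\tilde{\tau})$ has a pole while $\mathrm{Sym}^{2}L(s,\tilde{\tau}\otimes\chi^{-1})$ does not, which can also be checked locally at $v \in S_{st}$ where $\mathrm{std}$ of the (twisted) Steinberg parameter is visibly symplectic --- the form is symplectic. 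Conjugating, one obtains $\rho_{\tau}: \Gal(\overline{F}/F) \rightarrow \GSp_{4}(\ol{\mathbb{Q}}_{\ell})$, which is irreducible and unique because $r_{\tilde{\tau}}$ is. Finally, the local-global compatibility $i\WD(\rho_{\tau}|_{W_{F_{v}}})^{F-s.s} \simeq \phi_{\tau_{v}} \otimes |\cdot|^{-3/2}$ follows from that for $r_{\tilde{\tau}}$ together with the fact that the similitude character of $\rho_{\tau}$ is the $\ell$-adic character attached to $\chi$, hence to $\mathrm{sim}\circ\phi_{\tau_{v}}$: a $\GSp_{4}(\ol{\mathbb{Q}}_{\ell})$-valued parameter is pinned down by its composition with $\mathrm{std}$ and by its similitude character, by the analogue of \cite[Lemma~6.1]{GT1}.

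The main obstacle is showing that the image is symplectic rather than orthogonal (the parity/sign problem) and establishing irreducibility together with local-global compatibility at the ramified and Steinberg places; these are precisely the technical cores of \cite{So}, relying on the pole of the exterior square $L$-function, the Steinberg hypothesis, and Caraiani-type local-global compatibility for $\GL_{4}$. $\qed$
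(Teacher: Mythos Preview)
The paper does not prove this statement at all: it is quoted as \cite[Theorem~A]{So} and used as a black box, with no proof or sketch given. There is therefore nothing in the paper to compare your argument against.

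That said, your sketch is a faithful outline of Sorensen's strategy and its subsequent refinements: transfer $\tau$ to a regular algebraic essentially self-dual cuspidal $\tilde{\tau}$ on $\GL_{4}$ via the global theta correspondence, invoke the $\GL_{n}$ Galois representation machinery to produce $r_{\tilde{\tau}}$, use the Steinberg hypothesis to force cuspidality and irreducibility, and resolve the sign problem (symplectic vs.\ orthogonal) via the exterior square $L$-function. One point to be careful about: Sorensen's original Theorem~A only established local--global compatibility at the unramified and Steinberg places; the statement in the present paper, asserting compatibility at \emph{every} finite $v\nmid\ell$, requires the later work on full local--global compatibility for $\GL_{4}$ (Caraiani et al.), which you correctly invoke. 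So your sketch is really a proof of a slightly stronger statement than Sorensen's original theorem, combining \cite{So} with these later inputs --- which is exactly what the paper needs and implicitly assumes.
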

Now let us fix $\tau$ with associated $\rho_{\tau}$ as above and assume that $\tau$ is a strong transfer of some cuspidal automorphic representation $\pi$ of $\mathbf{G}$, as in Theorem 5.2. We assume that $S_{st}$ contains $q$ an odd inert prime in the number field $F$ and choose the inner form $\mathbf{G}$ to be of the following form, as in Kret-Shin \cite[Section~8]{KS} and section 4.2, 
\begin{itemize}
    \item $\mathbf{G}(\mathbb{R}) \simeq \GSp_{4}(\mathbb{R}) \times \GU_{2}(\mathbb{H})^{[F:\mathbb{Q}] - 1}$,
    \item $\mathbf{G}_{F_{v}} \simeq \GSp_{4}/F_{v}$ at all finite places $v$ if $[F:\mathbb{Q}]$ is odd,  
    \item $\mathbf{G}_{F_{v}} \simeq \GSp_{4}/F_{v}$ at all but the finite place $q$ if $[F:\mathbb{Q}]$ is even,
\end{itemize}
where $\mathbb{H}$ is the Hamilton quaternions.
Let $A(\pi)$ be the set of isomorphism classes of cuspidal automorphic representations $\Pi$ of $\mathbf{G}$ such that, for all $v \in S_{st}$, $\Pi_{v}$ is an unramified twist of Steinberg, $\Pi_{\infty}$ is $\xi$ cohomological, and, for all $v \notin S_{\infty} \cup S_{st}$, $\Pi_{v} \simeq \pi_{v}$. Our main task now is to show that $\rho_{\tau}$ is realized in the $\pi^{\infty}$ isotypic component of the Shimura variety associated to a Shimura datum $(\mathbf{G},X)$, where we take $X$ to be the inverse of the cocharacter in the discussion before \cite[Lemma~7.1]{KS}. Let $\Sh(\mathbf{G},X)_{K,\overline{F}}$ be the associated Shimura variety over $\overline{F}$ which we recall is $3$-dimensional. We set $\mathcal{L}_{\xi}$ to be the $\overline{\mathbb{Q}}_{\ell}$ local system associated to a irreducible representation of $\mathbf{G}$ over $F$ of highest weight $\xi$ on it as before, and let $H^{i}_{c}(\Sh(\mathbf{G},X)_{\ol{F}},\mathcal{L}_{\xi})_{ss}$ denote the semi-simplification of the colimit of the $i$th-cohomologies over all compact opens $K \subset \mathbf{G}(\mathbb{A}_{f,F})$ as a $\mathbf{G}(\mathbb{A}_{f,F}) \times \Gamma_{F}$-module. We note this is non-zero only if $0 \leq i \leq 6$.

Let $S_{bad}$ denote the set of prime numbers $p$ for which either $p = 2$, the group $\mathbf{G}$ is ramified, or $K_{p} = \prod_{v|p} K_{v}$ is not hyperspecial. Then we define the virtual Galois representation
\begin{equation}
 \rho^{\pi}_{shim} := (-1)^{3}\sum_{\Pi \in A(\pi)} \sum_{i = 0}^{6} (-1)^{i}[Hom_{\mathbf{G}(\mathbb{A}_{f,F})}(\Pi^{\infty},H^{i}_{c}(\Sh(\mathbf{G},X)_{\overline{F}},\mathcal{L}_{\xi})_{ss})] \in K_{0}(\overline{\mathbb{Q}}_{\ell}(\Gamma_{F})) 
 \end{equation}
where $K_{0}(\overline{\mathbb{Q}}_{\ell}(\Gamma_{F}))$ denotes the Grothendieck group of continuous $\Gamma_{F} := \Gal(\overline{F}/F)$-representations with coefficients in $\overline{\mathbb{Q}}_{\ell}$. We now define the rational number
\[ a(\pi) := (-1)^{3}N_{\infty}^{-1}\sum_{\Pi \in A(\pi)} m(\Pi)\cdot ep(\Pi_{\infty} \otimes \xi) \]
where,
\begin{enumerate}
\item $m(\Pi)$ is the multiplicity of $\Pi$ in the automorphic spectrum of $\mathbf{G}$,
\item $N_{\infty} = |\Pi_{\xi}^{G(F_{\infty})}|\cdot|\pi_{0}(\mathbf{G}(F_{\infty})/Z(F_{\infty}))| = 4$, where $\Pi_{\xi}^{G(F_{\infty})}$ denotes the discrete series $L$-packet of representations of $G(F_{\infty})$ cohomological of weight $\xi$,
\item $ep(\Pi_{\infty} \otimes \xi) := \sum_{i = 0}^{\infty} (-1)^{i}\dim(H^{i}(Lie(\mathbf{G}(F_{\infty})),K_{\infty};\Pi_{\infty} \otimes \xi))$.
\end{enumerate}
Then we have the following proposition of Kret-Shin.
\begin{proposition}{\cite[Proposition~8.2]{KS}}\footnote{Note that we have taken $X$ to be the inverse of the cocharacter in \cite[Proposition~8.2]{KS}.} 
With notation as above, for almost all finite $F$-places $v$ not dividing a prime number in $S_{bad}$ and all sufficiently large integers $j$, we have:
\[ \mathrm{tr}(\rho^{\pi}_{shim}(\Frob_{v}^{j})) = a(\pi)q_{v}^{j\frac{3}{2}}\mathrm{tr}(\mathrm{std}\circ \phi_{\pi_{v}})(\Frob^{j}_{v}). \]
Moreover, the virtual representation $\rho^{\pi}_{shim}$ is a true representation. In particular, the only non-zero term appearing in the above alternating sum occurs in middle degree ($= 3$).
\end{proposition}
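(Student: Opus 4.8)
The plan is to follow the strategy of Kret--Shin \cite{KS} essentially verbatim, since the group $\mathbf{G}$ here is of exactly the type they treat (an inner form of $\mathrm{Res}_{F/\mathbb{Q}}\GSp_{4}$ with the prescribed signature at $\infty$ and the prescribed local forms), and the automorphic representation $\pi$ satisfies their running hypotheses: cohomological of regular weight at infinity, unramified outside $S$, and an unramified twist of Steinberg at the places in $S_{st}$ (which we arrange to contain an odd inert prime $q$). First I would recall that the \'etale cohomology $H^{*}_{c}(\Sh(\mathbf{G},X)_{K,\overline{F}},\mathcal{L}_{\xi})$ carries commuting actions of $\mathbf{G}(\mathbb{A}_{f,F})$ and $\Gamma_{F}$, and that after semisimplifying as a Hecke module one can isolate the $\Pi^{\infty}$-isotypic part for each $\Pi \in A(\pi)$; the virtual representation $\rho^{\pi}_{shim}$ is the alternating sum of these over all $\Pi \in A(\pi)$, normalized by the sign $(-1)^{3}$. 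The first main step is to compute the trace of $\rho^{\pi}_{shim}(\Frob_{v}^{j})$ at good places $v$ and large $j$ via the Langlands--Kottwitz method: this expresses the trace of a Hecke operator times a power of Frobenius as a sum of (twisted) orbital integrals, which the stabilization of the trace formula — combined with the endoscopic classification and the character identities of Chan--Gan \cite{CG} and the transfer results of Section 5 — rewrites in terms of the stable trace formula for $\mathbf{G}^{*}=\GSp_{4}$ and its elliptic endoscopic group $C=\GSO_{2,2}$. Since $\pi$ is an unramified twist of Steinberg at $S_{st}$, the Steinberg Lefschetz functions kill the endoscopic contributions (their orbital integrals are stable), so only the $\mathbf{G}^{*}$-term survives, and one matches the answer with $\mathrm{tr}(\mathrm{std}\circ\phi_{\pi_{v}})(\Frob_{v}^{j})$ up to the normalizing constant $a(\pi)q_{v}^{j\cdot 3/2}$, using Theorem 6.1 of Sorensen \cite{So} to identify the Galois side with $\rho_{\tau}$ for a strong transfer $\tau$ of $\pi$ (which exists by Theorem 5.2, since $S_{st}\neq\emptyset$).

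The second main step — which I expect to be the genuine obstacle — is the concentration-in-middle-degree statement, i.e.\ showing that $\rho^{\pi}_{shim}$ is a true representation rather than merely a virtual one, equivalently that $\mathrm{Hom}_{\mathbf{G}(\mathbb{A}_{f,F})}(\Pi^{\infty},H^{i}_{c})$ vanishes for $i\neq 3$ for all $\Pi\in A(\pi)$. The argument here is: any such $\Pi$ is cuspidal (being an unramified twist of Steinberg at a place forces it into the discrete, indeed cuspidal, spectrum by a strong approximation argument ruling out the trivial/1-dimensional constituent, as $\xi$ is regular), and $\Pi_{\infty}$ must then be an (essentially) discrete series representation cohomological of regular weight $\xi$; by Matsushima/Borel--Wallach and the Vogan--Zuckerman classification, a discrete series $\Pi_{\infty}$ of this kind contributes to relative Lie algebra cohomology $H^{i}(\mathrm{Lie}\,\mathbf{G}(F_{\infty}),K_{\infty};\Pi_{\infty}\otimes\xi)$ only in the middle degree $i=\tfrac{1}{2}\dim \Sh = 3$. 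Combining this with the Borel--Casselman-type comparison between $L^{2}$-cohomology and compactly-supported \'etale cohomology — valid here because $\Pi$ is tempered at all finite places (Steinberg at $S_{st}$ plus \cite[Lemma~2.7]{KS}), so there is no contribution from Eisenstein/non-tempered boundary phenomena — one concludes the compactly supported cohomology is also concentrated in degree $3$ on the $\Pi^{\infty}$-part. The subtlety is that $\Sh(\mathbf{G},X)_{K}$ may be non-proper (when $\mathbf{G}$ is split over $F$ at all finite places), so one must either invoke the vanishing of the boundary contribution to the $\pi^{\infty}$-isotypic part — which again follows from temperedness, since the boundary cohomology is built from parabolic inductions of cohomological representations of Levi subgroups and these cannot match the cuspidal Hecke eigensystem of $\pi$ away from $S$ — or else work with intersection cohomology of the Baily--Borel compactification and quote Morel's or Zucker's results; Kret--Shin handle exactly this point and I would follow their treatment.

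Once these two steps are in place, the proposition follows formally: the trace identity of \cite[Proposition~8.2]{KS} gives the stated formula $\mathrm{tr}(\rho^{\pi}_{shim}(\Frob_{v}^{j})) = a(\pi)q_{v}^{j\cdot 3/2}\mathrm{tr}(\mathrm{std}\circ\phi_{\pi_{v}})(\Frob_{v}^{j})$ for almost all $v$ and large $j$, and the middle-degree concentration shows that the only non-zero term in the alternating sum defining $\rho^{\pi}_{shim}$ is the $i=3$ term, so $\rho^{\pi}_{shim}$ is an honest (not merely virtual) representation. The only point requiring care beyond citing \cite{KS} directly is checking that their hypotheses are met in our normalization — in particular that the weight $\xi$ is regular, that $S_{st}$ is chosen to contain an odd inert prime so that the Steinberg Lefschetz functions at $S_{st}$ are available and have the stability property used to kill endoscopy, and that the inner form $\mathbf{G}$ has been set up with the signature $\GSp_{4}(\mathbb{R})\times\GU_{2}(\mathbb{H})^{[F:\mathbb{Q}]-1}$ at infinity so that the associated Shimura datum is of abelian type and Shen's basic uniformization (Theorem 4.2) applies at $p$. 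All of these are arranged in the statement, so the proof is a matter of verifying the cited inputs transfer and then quoting Proposition 8.2 of Kret--Shin.
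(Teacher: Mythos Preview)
Your proposal correctly identifies that this proposition is taken as a black box from Kret--Shin: the paper gives no proof and simply cites \cite[Proposition~8.2]{KS}, adding only a remark that the middle-degree concentration is established in the discussion after \cite[equation~(8.13)]{KS}. Your sketch of the Kret--Shin argument (Langlands--Kottwitz method, stabilization, Vogan--Zuckerman for concentration in middle degree) is broadly accurate and matches what is being cited.

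That said, your proposal conflates ingredients that belong to \emph{other} parts of the paper, not to the proof of this proposition. Sorensen's Theorem 6.1 is not used here; it is invoked only afterwards, in Corollary 6.3, to match $\rho^{\pi}_{shim}$ with $\mathrm{std}\circ\rho_{\tau}$ via Brauer--Nesbitt and Chebotarev once the trace identity is already in hand. More seriously, Shen's basic uniformization (Theorem 4.2) is entirely irrelevant to this proposition: it concerns the description of the basic locus of the global Shimura variety in terms of the local Shtuka space, and is used only in Sections 4 and 7 to relate local and global cohomology. The proposition at hand is a statement purely about the \'etale cohomology of the global Shimura variety over $\overline{F}$ and is proved by Kret--Shin internally via the Langlands--Kottwitz method and the analysis of tempered cohomological representations; no input from $p$-adic uniformization is needed or used. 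You should strip these extraneous references from your account.
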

\begin{remark}
The claim about it occurring in middle degree is part of the proof of the Proposition not the statement. (See the discussion after equation (8.13) in \cite{KS})
\end{remark}
For $\pi$ as above, we define the $\pi^{\infty}$-isotypic part of $R\Gamma_{c}(\Sh(\mathbf{G},X)_{\overline{F}},\mathcal{L}_{\xi})$ to be the complex of $\ol{\mathbb{Q}}_{\ell}$-vector spaces with $\Gamma_{F}$-action:
\[ \bigoplus_{i \in \mathbb{Z}} \mathrm{Hom}_{\mathbf{G}(\mathbb{A}_{f,F})} (\pi^{\infty},H^{i}_{c}(\Sh(\mathbf{G},X)_{\overline{F}},\mathcal{L}_{\xi})_{\mathrm{ss}})[-i], \]
as in the beginning of \cite[Section~8]{KS}. If $K \subset \mathbf{G}(\mathbb{A}_{f,F})$ is a sufficiently small compact open subgroup such that $\pi^{\infty}$ has a non-zero $K$-invariant vector then we note that this is identified with 
\[ \bigoplus_{i \in \mathbb{Z}} \mathrm{Hom}_{\mathcal{H}(\mathbf{G}(\mathbb{A}_{f,F}))//K} ((\pi^{\infty})^{K},H^{i}_{c}(\Sh(\mathbf{G},X)_{K,\overline{F}},\mathcal{L}_{\xi})_{\mathrm{ss}})[-i], \]
where $\mathcal{H}(\mathbf{G}(\mathbb{A}_{f,F})//K)$ is the Hecke algebra of $K$ bi-invariant compactly supported functions, and
$H^{i}_{c}(\Sh(\mathbf{G},X)_{K,\overline{F}},\mathcal{L}_{\xi})_{\mathrm{ss}}$ is the semi-simplification of the $i$th cohomology of the Shimura variety at level $K$ as a $\mathcal{H}(\mathbf{G}(\mathbb{A}_{f,F})//K) \times \Gamma_{F}$-module.

We now use the previous discussion to deduce the following Corollary.
\begin{corollary}
The $\pi^{\infty}$-isotypic component of $R\Gamma_{c}(\Sh(\mathbf{G},X)_{\overline{F}},\mathcal{L}_{\xi})$ is concentrated in degree $3$ and has $\Gamma_{F}$-action given (up to multiplicity) by $\mathrm{std}\circ \rho_{\tau}$.
\end{corollary}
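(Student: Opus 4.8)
The plan is to combine Proposition 6.2 with the Chebotarev density theorem, together with a Brauer--Nesbitt style argument, to promote the trace identity into an honest isomorphism of Galois representations up to semisimplification. First I would invoke Proposition 6.2 to know that $\rho^{\pi}_{shim}$ is a true representation concentrated in degree $3$; this already gives the concentration claim in the corollary, since by definition $\rho^{\pi}_{shim}$ extracts exactly the $\pi^{\infty}$-isotypic part of $\bigoplus_i (-1)^i H^i_c(\Sh(\mathbf{G},X)_{K,\overline{F}},\mathcal{L}_{\xi})_{ss}$, and the vanishing outside the middle degree is asserted in the Proposition (and in the remark following it). It remains to identify the $\Gamma_F$-action on this degree-$3$ piece.

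Next I would identify the scalar $a(\pi)$. The identity of Proposition 6.2 reads $\mathrm{tr}(\rho^{\pi}_{shim}(\Frob_v^j)) = a(\pi) q_v^{j\cdot 3/2}\,\mathrm{tr}(\mathrm{std}\circ\phi_{\pi_v})(\Frob_v^j)$ for almost all $v$ and all large $j$. On the other hand, Theorem 6.1 (Sorensen) attaches to $\tau$ (the globally generic strong transfer of $\pi$, produced by Theorem 5.2) a representation $\rho_\tau:\Gamma_F\to\GSp_4(\overline{\mathbb{Q}}_\ell)$ with $i\,\WD(\rho_\tau|_{W_{F_v}})^{F\text{-}s.s.}\simeq\phi_{\tau_v}\otimes|\cdot|^{-3/2}$. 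Since $\tau$ is a strong transfer, $\phi_{\tau_v}=\phi_{\pi_v}$ at the places in $S_{sc}\cup S_{st}\cup S_\infty$ and $\tau^S\simeq\pi^S$, so at almost all $v$ we have $\mathrm{tr}(\mathrm{std}\circ\rho_\tau(\Frob_v^j)) = q_v^{-j\cdot 3/2}\cdot q_v^{\,j\cdot 3}\,\mathrm{tr}(\ldots)$ — more carefully, the normalization $|\cdot|^{-3/2}$ on $W_{F_v}$ evaluated on $\Frob_v$ is $q_v^{3/2}$ under the geometric normalization fixed in the conventions, so $\mathrm{tr}((\mathrm{std}\circ\rho_\tau)(\Frob_v^j)) = q_v^{j\cdot 3/2}\,\mathrm{tr}((\mathrm{std}\circ\phi_{\tau_v})(\Frob_v^j))$. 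Comparing with Proposition 6.2 and using that $\rho^{\pi}_{shim}$ is a true semisimple representation, I get $\mathrm{tr}(\rho^{\pi}_{shim}(\Frob_v^j)) = a(\pi)\,\mathrm{tr}((\mathrm{std}\circ\rho_\tau)(\Frob_v^j))$ for almost all $v$ and all large $j$; letting $j$ vary and using that the powers of $\Frob_v$ are dense in the relevant way, this forces $a(\pi)$ to be a nonnegative integer (it is a multiplicity-weighted Euler characteristic) and forces $\mathrm{tr}(\rho^{\pi}_{shim}(\sigma)) = a(\pi)\,\mathrm{tr}((\mathrm{std}\circ\rho_\tau)(\sigma))$ for a density-one set of $\sigma$, hence for all $\sigma\in\Gamma_F$ by Chebotarev and continuity. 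By Brauer--Nesbitt, two semisimple $\ell$-adic representations with equal characters are isomorphic, so $\rho^{\pi}_{shim}\simeq (\mathrm{std}\circ\rho_\tau)^{ss}$ with multiplicity $a(\pi)$; this is exactly the assertion that the $\pi^\infty$-isotypic component of $R\Gamma_c(\Sh(\mathbf{G},X)_{K^p,\overline F},\mathcal{L}_\xi)$ is concentrated in degree $3$ with $\Gamma_F$-action given up to multiplicity by $(\mathrm{std}\circ\rho_\tau)^{ss}$.

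The main obstacle will be bookkeeping the normalization factors and the passage from "$K$" to "$K^p$": Proposition 6.2 is stated at a fixed sufficiently small level $K$ for which $\pi^\infty$ has a $K$-fixed vector, whereas the corollary speaks of $R\Gamma_c(\Sh(\mathbf{G},X)_{K^p},\mathcal{L}_\xi)$ with the level at $p$ taken to the limit. I would handle this by noting that the $\pi^\infty$-isotypic functor $\mathrm{Hom}_{\mathbf{G}(\mathbb{A}_{f,F})}(\pi^\infty,-)$ is exact and commutes with the colimit over shrinking level at $p$, and that for any $K$ small enough the isotypic component is independent of $K$ (it only sees $\pi^\infty$), so the statement at level $K$ and at level $K^p$ carry the same information once one restricts to the $\pi^\infty$-part. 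A secondary subtlety is that Proposition 6.2 controls only $\mathrm{std}\circ\phi_{\pi_v}$ and only the traces, so one genuinely only recovers $\mathrm{std}\circ\rho_\tau$ up to semisimplification and up to the overall multiplicity $a(\pi)$ — which is why the corollary is phrased "up to multiplicity" and with "semi-simplification"; I would make this explicit rather than claim more. Finally, I would remark that this corollary, fed back into the uniformization isomorphism $\Theta_{sc}$ of Proposition 3.14 (Boyer's trick) and into the strong multiplicity one result Proposition 5.4, is precisely what is needed to conclude Proposition 1.4 in the following section.
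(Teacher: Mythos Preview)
Your proposal is correct and follows essentially the same route as the paper: the concentration in degree $3$ comes directly from Proposition 6.2, and the identification of the Galois action is obtained by matching the Frobenius traces of $\rho^{\pi}_{shim}$ with those of $\mathrm{std}\circ\rho_\tau$ via the characterizing property in Theorem 6.1, then invoking Chebotarev density and Brauer--Nesbitt. The paper's own proof is a two-line sketch invoking exactly these ingredients; your additional discussion of the normalization factor $|\cdot|^{-3/2}$, the integrality of $a(\pi)$, and the passage from level $K$ to level $K^p$ is sound bookkeeping that the paper leaves implicit (note, however, that your closing cross-reference to ``Proposition 3.14 (Boyer's trick)'' is a misnumbering --- Boyer's trick is Proposition 4.4 in the paper).
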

\begin{proof}
The first part follows immediately from the previous Proposition, and the second part follows from the identification of the traces. In particular, by the Brauer-Nesbitt Theorem, Cheboratev density theorem, and the condition characterizing $\rho_{\tau}$, we can identify (up to multiplicity) the Galois representation $\rho^{\pi}_{shim}$ with the irreducible Galois representation $\mathrm{std} \circ \rho_{\tau}$.
\end{proof}
\section{Proof of the Key Proposition}
We will now combine the results of the previous three sections to deduce some key consequences that will be used to derive Proposition 1.4. For this, using Krasner's lemma, we now fix a totally real number field $F$ with two odd distinct totally inert primes $p$ and $q$ such that $F_{p} \simeq L$ the fixed unramified extension of $\mathbb{Q}_{p}$. We fix the $\mathbb{Q}$-inner form $\mathbf{G}$ of $\mathbf{G}^{*} = \mathrm{Res}_{F/\mathbb{Q}}\GSp_{4}$ of the form considered in section 4.2 and 6, and let $\mathbf{G}'$ be an inner form of $\mathbf{G}$ of the form described in Definition 4.1. We take $(\mathbf{G},X)$ to be the Shimura datum considered in section 6. We note that this forces the associated geometric dominant cocharacter $\mu$ of $G := \mathbf{G}_{\mathbb{Q}_{p}}$ to be the Siegel cocharacter\footnote{Taking note that we are working with the inverse of the $X$ defined in discussion before \cite[Lemma~7.1]{KS}.}; in particular, we can apply the results of section 4.2. Set $\xi$ to be a regular weight of an algebraic representation of $\mathbf{G}$ over $\mathbb{Q}$. Let $K^{p} \subset \mathbf{G}(\mathbb{A}^{p\infty})$ be an open compact subgroup. We set $S_{sc} = \{p\}$, and set $S_{st}$ to be a disjoint finite set of finite places of $\mathbb{Q}$ containing $q$. We consider the uniformization map
\begin{equation}
 \Theta: R\Gamma_{c}(G,b,\mu) \otimes^{\mathbb{L}}_ {\mathcal{H}(J_{b})} \mathcal{A}(\mathbf{G}'(\mathbb{Q})\backslash \mathbf{G}'(\mathbb{A}_{f})/K^{p},\mathcal{L}_{\xi}) \rightarrow R\Gamma_{c}(\mathcal{S}(\mathbf{G},X)_{K^{p}},\mathcal{L}_{\xi})
 \end{equation}
supplied by Proposition 4.1 and Theorem 4.2. Now fix a smooth irreducible supercuspidal representation $\rho$ of $J(\mathbb{Q}_{p}) = \mathrm{Res}_{L/\mathbb{Q}_{p}}(\GU_{2}(D))(\mathbb{Q}_{p}) = \GU_{2}(D)(L)$. We have the following lemma.
\begin{lemma}
Suppose $\rho$ is a supercuspidal representation of $J(\mathbb{Q}_{p})$, then, for sufficiently regular $\xi$ and sufficiently small $K^{p}$, we can find a lift $\Pi'$ to a cuspidal automorphic representation of $\mathbf{G}'$, such that $\Pi'^{\infty}$ occurs as a $J(\mathbb{Q}_{p})$-stable direct summand of $\mathcal{A}(\mathbf{G}'(\mathbb{Q})\backslash \mathbf{G}'(\mathbb{A}_{f}))/K^{p},\mathcal{L}_{\xi})$. Moreover, for all places in $S_{st}$, we can assume that the local constituents at $v \in S_{st}$ are unramified twists of the Steinberg representation. 
\end{lemma}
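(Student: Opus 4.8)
The plan is to realize $\rho$ as the local component at $p$ of a globally-defined cuspidal automorphic representation of $\mathbf{G}'$ by running the simple trace formula for $\mathbf{G}'$, just as in the proof of Theorem 5.2 but with $\mathbf{G}'$ in the role of the inner form. First I would recall that $\mathbf{G}'$ is the $\mathbb{Q}$-inner form of $\mathbf{G}$ furnished by basic uniformization (Definition 4.1), so $\mathbf{G}'_{\mathbb{Q}_p} \simeq J_b \simeq J$, $\mathbf{G}'(\mathbb{R})$ is compact modulo center, and $\mathbf{G}'_{\mathbb{A}_f^p} \simeq \mathbf{G}_{\mathbb{A}_f^p} \simeq \mathbf{G}^{*}_{\mathbb{A}_f^p}$; in particular $\mathbf{G}'$ is a form of $\mathrm{Res}_{F/\mathbb{Q}}\GSp_4$ that is anisotropic modulo center at $\infty$, ramified at $p$ (and at $q$ when $[F:\mathbb{Q}]$ is even), and unramified outside a finite set $S \supset S_{st} \cup S_{sc} \cup S_\infty$. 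I would then set $S_{sc} = \{p\}$ and apply Theorem 5.2 (the ``converse direction'' with the roles reversed, or rather the existence statement applied to the group $\mathbf{G}'$) together with the simple stable trace formula of Lemma 5.1: one feeds in a test function $f = \bigotimes_v f_v$ on $\mathbf{G}'(\mathbb{A}_F)$ whose component at $\infty$ is a Lefschetz/Euler–Poincaré function $f_\xi^{\mathbf{G}'}$ for the regular weight $\xi$, whose components at $v \in S_{st}$ are Lefschetz functions $f_{Lef,\eta_v}^{\mathbf{G}'}$ for appropriate unramified twists of Steinberg, whose component at $p$ is the pseudo-coefficient $f_\rho$ of $\rho$ (or $f_{\rho^{+}}+f_{\rho^{-}}$ if the parameter is endoscopic supercuspidal), and which is chosen so that $\mathrm{tr}(f_v \mid \rho_v) > 0$ at the remaining places of $S$. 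Since $\rho$ is supercuspidal its parameter is supercuspidal, so the pseudo-coefficient detects exactly the $L$-packet at $p$; since $S_{st} \neq \emptyset$ one can apply the simple trace formula, and the central-character–twisted argument of Kret–Shin applies because $Z(\GSp_4)$ is split.

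The key steps, in order, would be: (1) fix a central character datum $(Z(\mathbb{A}_F),\chi)$ with $\chi$ the central character of $\rho$ at $p$, compatible with a regular $\xi$ at $\infty$ and with the twisting characters $\eta_v$ at $S_{st}$ — here I use that central characters of supercuspidals are prescribed by local class field theory and can be interpolated to a global character by weak approximation on the split torus $Z$; (2) build the matching pair of test functions $f$ on $\mathbf{G}'$ and $f^{*}$ on $\mathbf{G}^{*}$ exactly as in the proof of Theorem 5.2, using \cite[Lemma~A.4,A.11]{KS} at the archimedean and Steinberg places and the character identities of Chan–Gan \cite[Proposition~11.1]{CG} at $p$ to check that orbital integrals match up to a nonzero constant; (3) invoke Lemma 5.1 to get $ST^{\mathbf{G}^{*}}_{ell,\chi}(f^{*}) = T^{\mathbf{G}'}_{cusp,\chi}(f)$ and expand both sides spectrally; (4) observe that the summand of the spectral expansion on the $\mathbf{G}'$-side corresponding to a representation with the prescribed local behaviour at $p$, $S_{st}$, and $\infty$ has nonzero trace (this is where regularity of $\xi$ forces $\Pi'_\infty$ to be a cohomological discrete series — so the Euler–Poincaré trace is $\pm 1$, all terms carrying the same sign — and where the Steinberg place together with a strong-approximation argument à la \cite[Lemma~6.2]{KST} rules out the one-dimensional alternative, so $\Pi'_{v}$ is genuinely an unramified twist of Steinberg at $v \in S_{st}$); conclude that the cuspidal spectrum of $\mathbf{G}'$ contains some $\Pi'$ with $\Pi'_p$ in the $L$-packet $\Pi_{\phi_\rho}(J)$, and after possibly adjusting the choice of pseudo-coefficient (using that the supercuspidal packet has at most two members and that both pseudo-coefficients appear) we can arrange $\Pi'_p \simeq \rho$ itself; (5) finally translate ``$\Pi'$ is cuspidal with $\Pi'_\infty$ cohomological of weight $\xi$ and $\Pi'^{\infty}$ has a $K^p$-fixed vector for $K^p$ small enough'' into the statement that $\Pi'^{\infty}$ occurs as a $J(\mathbb{Q}_p)$-stable direct summand of the space of algebraic automorphic forms $\mathcal{A}(\mathbf{G}'(\mathbb{Q})\backslash\mathbf{G}'(\mathbb{A}_f)/K^p,\mathcal{L}_\xi)$ — this last step uses that $\mathbf{G}'(\mathbb{R})$ is compact mod center, so cohomological automorphic forms of weight $\xi$ for $\mathbf{G}'$ are literally algebraic automorphic forms in the sense of Gross, and cuspidal (hence discrete) representations are direct summands of the relevant $\mathcal{H}(J_b)$-module. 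The ``sufficiently regular $\xi$'' and ``sufficiently small $K^p$'' clauses enter precisely at steps (4) (regularity) and (5) (shrinking the level to capture the conductor of $\Pi'^{\infty}$ away from $p$).

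The main obstacle I anticipate is not the trace formula bookkeeping, which is essentially a transcription of \cite[Section~6]{KS} and the proof of Theorem 5.2, but rather controlling the ramification of $\Pi'$ at the auxiliary places $v \in S \setminus (S_{st}\cup S_{sc}\cup S_\infty)$: the simple trace formula only pins down $\Pi'$ at $p$, at $S_{st}$, at $\infty$, and at the unramified places, so a priori $\Pi'$ could be wildly ramified at finitely many other finite places, which is why one must allow $K^p$ to depend on $\rho$ (and on $\xi$, via the weight of the Lefschetz function) and cannot hope for a uniform level. A secondary subtlety is the precise identification of \emph{which} member of the $L$-packet $\Pi_{\phi_\rho}(J)$ one lands on in the endoscopic-supercuspidal case: the clean way around this is to note, as the paper does in the discussion of $\rho_1,\rho_2$ following Definition 2.3, that both pseudo-coefficients $f_{\rho^{+}}, f_{\rho^{-}}$ occur in the endoscopic test function, so by varying the choice one obtains $\Pi'$ with $\Pi'_p \simeq \rho$ for the given $\rho$; alternatively, for the application to Proposition 1.4 one only needs \emph{some} globalization of \emph{some} member of the packet, since the subsequent argument works with $\bigoplus_{\rho' \in \Pi_\phi(J)} R\Gamma_c(G,b,\mu)[\rho']$ and the Hecke-eigensystem $\mathfrak{m}$ is the same for all members of the packet. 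I would phrase the lemma and its proof so as to extract exactly the amount of globalization needed downstream, deferring the finer matching of packet members to the strong multiplicity one input of Proposition 5.4.
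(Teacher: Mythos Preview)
Your outline has the right flavour (simple trace formula) but there is a genuine gap at the non-vanishing step, and the detour through $\mathbf{G}^{*}$ is the wrong route. In steps (3)--(4) you compare $T^{\mathbf{G}'}_{cusp,\chi}(f)$ with $ST^{\mathbf{G}^{*}}_{ell,\chi}(f^{*})$ via Lemma~5.1 and then want to ``observe that the summand of the spectral expansion on the $\mathbf{G}'$-side \ldots\ has nonzero trace''; but this is circular, since the existence of any such summand is exactly what must be proved. Theorem~5.2 is not the right template here: there one begins with a \emph{given} global $\pi$ on one side, which furnishes a visibly non-vanishing spectral term, and the comparison forces non-vanishing on the other side. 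You start only with the local $\rho$, so there is no spectral term on either $\mathbf{G}'$ or $\mathbf{G}^{*}$ to point to, and neither the matching of orbital integrals nor the Chan--Gan character identities can by themselves manufacture one.

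The paper's own proof is a one-line citation (to \cite{Shin} and \cite[Proposition~2.9]{Han}) together with the remark that cuspidality is automatic because $\mathbf{G}'(\mathbb{R})$ is compact modulo center. The ingredient your sketch is missing is precisely what makes those references work: one exploits the anisotropy of $\mathbf{G}'$ at infinity to get the simple trace formula for $\mathbf{G}'$ \emph{directly} --- no Steinberg hypothesis and no stabilization or comparison with $\mathbf{G}^{*}$ are needed for this step --- and then shows non-vanishing on the \emph{geometric} side, e.g.\ by shrinking the support of the auxiliary $f_v$'s (or taking $\xi$ sufficiently regular) so that only central conjugacy classes survive, the central contribution being nonzero since the pseudo-coefficient of $\rho$ does not vanish at the identity. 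Once $T^{\mathbf{G}'}_{cusp,\chi}(f)\neq 0$ is established geometrically, your sign analysis in step~(4), the strong-approximation argument ruling out one-dimensional constituents at $S_{st}$, and step~(5) are correct as written.
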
 
\begin{proof}
This follows from an argument using the simple trace formula. See for example \cite[Proposition~2.9]{Han} or \cite{Shin}. We note in particular that cuspidality is vacuous, since $\mathbf{G}'(\mathbb{R})$ is compact modulo center by construction. 
\end{proof}
So let $\Pi'$ be a globalization of a fixed supercuspidal $\rho$ to a cuspidal automorphic representation of $\mathbf{G}'$ for some sufficiently regular $\xi$ and sufficiently small $K^{p}$. We can and do regard $\Pi'^{p\infty}$ as a representation of $\mathbf{G}(\mathbb{A}^{p}_{f}) \simeq \mathbf{G}'(\mathbb{A}^{p}_{f})$. We set $K^{p} = K^{p}_{S}K^{S}$, where $K^{S} \subset \mathbf{G}(\mathbb{A}_{f}^{S})$ is an open compact in the finite adeles away from $S$, for $S \subset \Pl_{F}$ some finite set of places of $\mathbb{Q}$ containing $S_{st} \cup \{p\} \cup \{\infty\}$, as in section 5. We assume that $S$ is sufficiently large such that outside of $S$ the automorphic representation $\Pi'$ is unramified, so, in particular, the subgroup $K^{S} \subset \mathbf{G}(\mathbb{A}_{f}^{S})$ is a product of hyperspecial subgroups away from $S$. We consider the abstract commutative Hecke algebra
\[ \mathbb{T}^{S} := \mathcal{H}(\mathbf{G}(\mathbb{A}_{f}^{S})//K^{S})  \]
of bi-invariant compactly supported smooth functions on $\mathbf{G}(\mathbb{A}_{f}^{S})$. We regard both sides of (5) as $\mathbb{T}^{S}$-modules and consider the maximal ideal $\mathfrak{m}$ defined by the Hecke eigenvalues of $\Pi'^{S}$. We then localize both sides of $(5)$ at $\mathfrak{m}$ to obtain a map:
\[
 \Theta_{\mathfrak{m}}: (R\Gamma_{c}(G,b,\mu) \otimes^{\mathbb{L}}_ {\mathcal{H}(J_{b})} \mathcal{A}(\mathbf{G}'(\mathbb{Q})\backslash \mathbf{G}'(\mathbb{A}_{f})/K^{p},\mathcal{L}_{\xi}))_{\mathfrak{m}} \rightarrow R\Gamma_{c}(\mathcal{S}(\mathbf{G},X)_{K^{p}},\mathcal{L}_{\xi})_{\mathfrak{m}}
 \]
We would like to apply Propositions 4.4 and 5.4 to the representations occurring on both sides of this map. However, to apply these results we need to make some more modifications. In particular, the automorphic representations of $\mathbf{G}'$ occurring in the LHS (resp. RHS) of $\Theta_{\mathfrak{m}}$ are not necessarily unramified twists of Steinberg at all places in $S_{st}$. 

To remedy this, we set $K^{p} = K_{\{p\} \cup S_{st}}K^{\{p\} \cup S_{st}}$, where $K^{\{p\} \cup S_{st}} \subset \mathbf{G}(\mathbb{A}_{f}^{\{p\} \cup S_{st}}) \simeq \mathbf{G}'(\mathbb{A}_{f}^{\{p\} \cup S_{st}})$ is an open compact subgroup. Then we consider the colimits
\[ R\Gamma_{c}(\mathcal{S}(\mathbf{G},X)_{K^{\{p\} \cup S_{st}}},\mathcal{L}_{\xi}) := \colim_{K_{\{p\} \cup S_{st}} \rightarrow \{1\}} R\Gamma_{c}(\mathcal{S}(\mathbf{G},X)_{K_{\{p\} \cup S_{st}}K^{\{p\} \cup S_{st}}},\mathcal{L}_{\xi}) \]
and 
\[ \mathcal{A}(\mathbf{G}'(\mathbb{Q})\backslash \mathbf{G}'(\mathbb{A}_{f})/K^{\{p\} \cup S_{st}},\mathcal{L}_{\xi}) := \colim_{K_{\{p\} \cup S_{st}} \rightarrow \{1\}} \mathcal{A}(\mathbf{G}'(\mathbb{Q})\backslash \mathbf{G}'(\mathbb{A}_{f})/K_{\{p\} \cup S_{st}}K^{\{p\} \cup S_{st}},\mathcal{L}_{\xi}). \]
Since $S_{st} \subset S$, the map $\Theta_{\mathfrak{m}}$ gives rise to a map 
\[ (R\Gamma_{c}(G,b,\mu) \otimes^{\mathbb{L}}_ {\mathcal{H}(J_{b})} \mathcal{A}(\mathbf{G}'(\mathbb{Q})\backslash \mathbf{G}'(\mathbb{A}_{f})/K^{S_{st} \cup \{p\}},\mathcal{L}_{\xi}))_{\mathfrak{m}} \rightarrow R\Gamma_{c}(\mathcal{S}(\mathbf{G},X)_{K^{S_{st} \cup \{p\}}},\mathcal{L}_{\xi})_{\mathfrak{m}}.  \]
By Proposition 4.4, we obtain an isomorphism
\[ (R\Gamma_{c}(G,b,\mu)_{sc} \otimes^{\mathbb{L}}_ {\mathcal{H}(J_{b})} \mathcal{A}(\mathbf{G}'(\mathbb{Q})\backslash \mathbf{G}'(\mathbb{A}_{f})/K^{S_{st} \cup \{p\}},\mathcal{L}_{\xi}))_{\mathfrak{m}} \xrightarrow{\simeq} R\Gamma_{c}(\mathcal{S}(\mathbf{G},X)_{K^{S_{st} \cup \{p\}}},\mathcal{L}_{\xi})_{\mathfrak{m},sc}. \] 
Now, for all $v \in S_{st}$, we can project to the summand of the LHS where $\mathbf{G}(F_{v}) \simeq \mathbf{G}'(F_{v})$ acts via an unramified twist of Steinberg, noting that the LHS and hence the RHS is semisimple as $\mathbf{G}(F_{v})$ representation. 

This gives an isomorphism: 
\[ \Theta_{\mathfrak{m},sc}^{st}: (R\Gamma_{c}(G,b,\mu)_{sc} \otimes^{\mathbb{L}}_ {\mathcal{H}(J_{b})} \mathcal{A}(\mathbf{G}'(\mathbb{Q})\backslash \mathbf{G}'(\mathbb{A}_{f})/K^{S_{st} \cup \{p\}},\mathcal{L}_{\xi}))^{st}_{\mathfrak{m}} \xrightarrow{\simeq} R\Gamma_{c}(\mathcal{S}(\mathbf{G},X)_{K^{S_{st} \cup \{p\}}},\mathcal{L}_{\xi})^{st}_{\mathfrak{m},sc}  \]
We now apply Proposition 5.4 to obtain the following.
\begin{proposition}
Let $\rho \in \Pi(J)$ be a representation with supercuspidal Gan-Tantono parameter $\phi$. Assume that $|S_{st}| \geq 3$. Then, for $\Pi'$ a choice of globalization of $\rho$ as in Lemma 7.1, unramified outside $S$ with associated maximal ideal $\mathfrak{m} \subset \mathbb{T}^{S}$ in the Hecke algebra away from $S$ defined by the Hecke eigenvalues of $\Pi'^{S}$, the LHS of the map
\[ \Theta_{\mathfrak{m},sc}^{st}: (R\Gamma_{c}(G,b,\mu)_{sc} \otimes^{\mathbb{L}}_ {\mathcal{H}(J_{b})} \mathcal{A}(\mathbf{G}'(\mathbb{Q})\backslash \mathbf{G}'(\mathbb{A}_{f})/K^{S_{st} \cup \{p\}},\mathcal{L}_{\xi}))^{st}_{\mathfrak{m}} \xrightarrow{\simeq} R\Gamma_{c}(\mathcal{S}(\mathbf{G},X)_{K^{S_{st} \cup \{p\}}},\mathcal{L}_{\xi})^{st}_{\mathfrak{m},sc}  \]
breaks up as a direct sum of $R\Gamma_{c}(G,b,\mu)_{sc} \otimes_{\mathcal{H}(J_{b})}^{\mathbb{L}} \ol{\Pi}'^{\{\infty\},K^{S_{st} \cup \{p\}}}$ of $G(\mathbb{Q}_{p}) \times W_{L}$-representations, for $\ol{\Pi}'$ a cuspidal automorphic representation of $\mathbf{G}'(\mathbb{A})$ satisfying the following:
\begin{enumerate}
    \item $\ol{\Pi}'^{S} \simeq \Pi'^{S}$,
    \item $\ol{\Pi}'$ is cohomological of regular weight $\xi$ at $\infty$,
    \item $\ol{\Pi}'$ is unramified twist of Steinberg at all $v \in S_{st}$,
    \item $\ol{\Pi}'$ has local constituent at $p$ with associated $L$-parameter $\phi$. 
\end{enumerate} 
\end{proposition}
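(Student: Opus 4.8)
The plan is to feed the conclusion of Proposition 5.4 (Strong Multiplicity One) into the uniformization picture. The starting point is the isomorphism $\Theta^{st}_{\mathfrak{m},sc}$ established just above the statement, together with the observation that the left-hand side of $\Theta^{st}_{\mathfrak{m}}$ is built, via the K\"unneth-type decomposition coming from $-\otimes^{\mathbb{L}}_{\mathcal{H}(J_b)} \mathcal{A}(\mathbf{G}'(\mathbb{Q})\backslash\mathbf{G}'(\mathbb{A}_f)/K^{S_{st}\cup\{p\}},\mathcal{L}_\xi)$, out of the $J(\mathbb{Q}_p)$-representations $\Pi''_p$ appearing in $\mathcal{A}(\mathbf{G}'(\mathbb{Q})\backslash\mathbf{G}'(\mathbb{A}_f)/K^{S_{st}\cup\{p\}},\mathcal{L}_\xi)$. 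First I would decompose the space of algebraic automorphic forms $\mathcal{A}(\mathbf{G}'(\mathbb{Q})\backslash\mathbf{G}'(\mathbb{A}_f)/K^{S_{st}\cup\{p\}},\mathcal{L}_\xi)$, localized at $\mathfrak{m}$ and projected to the Steinberg Bernstein component at the places of $S_{st}$, as a direct sum over cuspidal automorphic representations $\ol{\Pi}'$ of $\mathbf{G}'$ — cuspidality being automatic since $\mathbf{G}'(\mathbb{R})$ is compact modulo center — of the form $\bigoplus_{\ol\Pi'} \ol\Pi'^{\,\infty} \otimes (\text{multiplicity space})$. Because we have localized at $\mathfrak m$, every such $\ol\Pi'$ satisfies $\ol\Pi'^{S}\simeq \Pi'^{S}$; because we projected to the Steinberg Bernstein component at $v\in S_{st}$, every such $\ol\Pi'$ has $\ol\Pi'_v$ an unramified twist of Steinberg at those places (here one uses that the $\mathfrak m$-localized, $S_{st}$-Steinberg-component part is spanned by genuine automorphic representations of this shape, exactly as in the trace-formula bookkeeping of section 5); and cohomologicality of regular weight $\xi$ at $\infty$ is forced by the local system $\mathcal{L}_\xi$ together with the regularity of $\xi$ via the Vogan--Zuckerman classification, just as in the proof of Theorem 5.2.

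It then remains to pin down the local constituent at $p$. Here is where Proposition 5.4 enters. The cuspidal automorphic representations $\ol\Pi'$ of the inner form $\mathbf{G}'=\mathbf{G}'_{\mathbb{Q}_p}\simeq J$-at-$p$ that occur satisfy hypotheses (1),(3) of Proposition 5.4 (cohomological of regular weight $\xi$, unramified twist of Steinberg at the places of $S_{st}$, with $|S_{st}|\geq 3$) and agree with $\Pi'$ away from $S$, i.e. $\ol\Pi'^{S}\simeq\Pi'^{S}$. Since $\Pi'$ itself has $\Pi'_p\simeq\rho$ with $\rho$ having supercuspidal Gan--Tantono parameter $\phi$, $\Pi'$ satisfies hypotheses (1)--(4) of Proposition 5.4 with $S_{sc}=\{p\}$. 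Proposition 5.4 (applied in the inner-form case, which compares $\mathbf{G}'$ to $\mathbf{G}^*=\mathrm{Res}_{F/\mathbb{Q}}\GSp_4$ through a common strong transfer $\tau$ with $\phi_{\tau_v}=\phi_{\Pi'_v}$ at all $v\in S_{sc}\cup S_{st}\cup S_\infty$) then forces the Langlands parameter of $\ol\Pi'_v$ to agree with that of $\Pi'_v$ for all $v\in S_{st}\cup S_{sc}\cup S_\infty$; in particular at $v=p$ the parameter of $\ol\Pi'_p$ is $\phi$. Combining this with the isomorphism $\Theta^{st}_{\mathfrak m}$ restricted to the image of the LHS — which is exactly what identifies the $\mathbf{G}'(\mathbb{A}_f)$-representations occurring on the LHS with those $\ol\Pi'^{\,\infty}$ — yields all four bullet points of the conclusion.

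I expect the main obstacle to be purely bookkeeping rather than conceptual: one must make sure the $\mathbb{T}^S$-localization and the Steinberg-Bernstein-component projection interact cleanly with the derived tensor product $-\otimes^{\mathbb{L}}_{\mathcal{H}(J_b)}-$ and with the colimit over $K^p_{S_{st}}\to\{1\}$, so that the $\mathbf{G}'(\mathbb{A}_f)$-representations genuinely occurring on the LHS are precisely the $\ol\Pi'^{\,\infty}$ with $\ol\Pi'$ satisfying hypotheses (1),(3) of Proposition 5.4 and $\ol\Pi'^{S}\simeq\Pi'^{S}$ — and not, say, some subquotient that fails to be of this automorphic shape. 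This is handled by the exactness of localization, the fact that the Steinberg component is a direct summand (a block) of the smooth category, and admissibility/finiteness of the cohomology complexes from Theorem 3.3, so that everything reduces to a statement about honest admissible representations of finite length; once that is in place, the appeal to Proposition 5.4 is immediate. A secondary, minor point is to confirm that we may always arrange $|S_{st}|\geq 3$ while keeping $\Pi'$ as in Lemma 7.1 — this is built into the hypotheses and the freedom in choosing $S_{st}$ and $K^p$.
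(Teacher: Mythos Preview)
Your approach is essentially identical to the paper's: decompose the space of algebraic automorphic forms, use localization at $\mathfrak{m}$ and the Bernstein projection at $S_{st}$ to constrain the $\ol{\Pi}'$ that appear, then invoke Proposition 5.4 with $S_{sc}=\{p\}$ to pin down the parameter at $p$.

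The one step you gloss over is that projecting to the Steinberg Bernstein component at $v\in S_{st}$ only tells you $\ol{\Pi}'_v$ is an unramified twist of the Steinberg \emph{or} of the trivial representation, since these share the same cuspidal support; the paper explicitly rules out the latter via strong approximation (as in \cite[Lemma~6.2]{KST}) together with the regularity of $\xi$, because a one-dimensional local component would force $\ol{\Pi}'$ to be globally one-dimensional, contradicting $\xi$-cohomologicality at infinity. Your parenthetical ``exactly as in the trace-formula bookkeeping of section 5'' is pointing in the right direction but does not name this specific obstruction, which is the only nontrivial point beyond straightforward bookkeeping.
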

\begin{proof}
The fact that the cohomology breaks up as a direct sum of $R\Gamma_{c}(G,b,\mu)_{sc} \otimes_{\mathcal{H}(J_{b})}^{\mathbb{L}} \ol{\Pi}'^{\infty,K^{S_{st} \cup \{p\}}}$ for $\ol{\Pi}'$ an automorphic representation of $\mathbf{G}'(\mathbb{A})$ follows from the semi-simplicity of the space of algebraic automorphic forms. We localized at $\mathfrak{m}$ corresponding to $\Pi'^{S}$ and are considering algebraic automorphic representations of $\mathbf{G}'$ valued in the algebraic representation defined by $\xi$. Therefore, it is clear that any representation $\ol{\Pi}'$ giving rise to such a summand is a cuspidal automorphic form of $\mathbf{G}'$ satisfying (i) and (ii). Here cuspidality is automatic since $\mathbf{G}'(\mathbb{R})$ is compact modulo center. Moreover, by construction, it follows that $\mathbf{G}'(\mathbb{Q}_{v})$ acts on the LHS via representations which are an unramified twist of Steinberg for all $v \in S_{st}$. This allows us to apply proposition 5.4, since $|S_{st}| \geq 3$ by assumption. Proposition 5.4 applied to the inner form $\mathbf{G}'$ of $\mathbf{G}^{*}$ and the cuspidal automorphic representation $\Pi'$ of $\mathbf{G}'$ tells us that $\ol{\Pi}'$ must have Langlands parameter at $\{p\} = S_{sc}$ given by $\phi$, which was the desired claim.
\end{proof}
We now combine this with Corollary 6.3 to deduce the following.
\begin{corollary}
With notation as above, the map
\[ \Theta_{\mathfrak{m},sc}^{st}: (R\Gamma_{c}(G,b,\mu)_{sc} \otimes^{\mathbb{L}}_ {\mathcal{H}(J_{b})} \mathcal{A}(\mathbf{G}'(\mathbb{Q})\backslash \mathbf{G}'(\mathbb{A}_{f})/K^{S_{st} \cup \{p\}},\mathcal{L}_{\xi}))^{st}_{\mathfrak{m}} \xrightarrow{\simeq} R\Gamma_{c}(\mathcal{S}(\mathbf{G},X)_{K^{S_{st} \cup \{p\}}},\mathcal{L}_{\xi})^{st}_{\mathfrak{m},sc}  \]
is an isomorphism of complexes of $G(\mathbb{Q}_{p}) \times W_{L}$-modules concentrated in degree $3$ with $W_{L}$-action given, up to multiplicity and semi-simplification as a $W_{L}$-module, by $\mathrm{std}\circ \phi \otimes |\cdot|^{-3/2}$. 
\end{corollary}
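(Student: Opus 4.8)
The strategy is to assemble the isomorphism from the pieces already in place and then read off the cohomological concentration and the $W_L$-action from the global input. First, I would invoke Proposition 4.4 (Boyer's trick), which gives the isomorphism $\Theta_{sc}$ on the part where $G(\mathbb{Q}_p)$ acts by a supercuspidal representation; localizing at $\mathfrak{m}$, passing to the colimit over $K^p_{S_{st}} \to \{1\}$, and projecting to the Steinberg Bernstein component at the places $v \in S_{st}$ preserves this isomorphism, since all of these are exact operations on the relevant module categories (localization at a maximal ideal of $\mathbb{T}^S$, filtered colimits, and projection to a direct factor Bernstein component all commute with the formation of cones and with the $W_L$-action). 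This yields $\Theta_{\mathfrak{m},sc}^{st}$ as an isomorphism of complexes of $G(\mathbb{Q}_p) \times W_L$-modules, which is the content of the displayed map; what remains is to identify the right-hand side.

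\textbf{Identifying the target.} For this I would use Corollary 6.3 together with Proposition 7.2. Proposition 7.2 says that every automorphic representation $\overline{\Pi}'$ of $\mathbf{G}'$ contributing to the left-hand side of $\Theta_{\mathfrak{m}}^{st}$ satisfies $\overline{\Pi}'^S \simeq \Pi'^S$, is cohomological of regular weight $\xi$, is an unramified twist of Steinberg at all $v \in S_{st}$, and has local component at $p$ with $L$-parameter $\phi$. Because $\Theta_{\mathfrak{m},sc}^{st}$ is an isomorphism, the same representations (now of $\mathbf{G}$ rather than $\mathbf{G}'$ away from $p$, via $\mathbf{G}(\mathbb{A}_f^p) \simeq \mathbf{G}'(\mathbb{A}_f^p)$) govern the right-hand side $R\Gamma_c(\mathcal{S}(\mathbf{G},X)_{K^{S_{st}\cup\{p\}}},\mathcal{L}_\xi)^{st}_{\mathfrak{m},sc}$. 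Now I would match this with the Shimura variety cohomology analyzed in Section 6: choosing $\tau$ to be a globally generic strong transfer of $\Pi'$ to $\mathbf{G}^*$ as furnished by Theorem 5.2 (legitimate since $S_{st} \neq \emptyset$ and we have arranged $|S_{st}| \ge 3$), Corollary 6.3 shows that the $\pi^\infty$-isotypic part of $R\Gamma_c(\mathrm{Sh}(\mathbf{G},X)_{K^p},\mathcal{L}_\xi)$ is concentrated in degree $3$ with $\Gamma_F$-action given up to multiplicity and semisimplification by $\mathrm{std}\circ\rho_\tau$, and hence, after restriction to $W_L = W_{F_p}$ and applying the defining property $i\,\mathrm{WD}(\rho_\tau|_{W_{F_v}})^{F\text{-}ss} \simeq \phi_{\tau_v} \otimes |\cdot|^{-3/2}$ from Theorem 6.1, the $W_L$-action is $\mathrm{std}\circ\phi_{\tau_p} \otimes |\cdot|^{-3/2}$. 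Since $\tau$ is a \emph{strong} transfer of $\Pi'$ at $p$, we have $\phi_{\tau_p} = \phi$, so the $W_L$-action is exactly $\mathrm{std}\circ\phi \otimes |\cdot|^{-3/2}$ up to multiplicity.

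\textbf{Assembling the conclusion.} Passing from the étale cohomology of the Shimura variety over $\overline{F}$ (which is what Section 6 controls) to the rigid-analytic $R\Gamma_c(\mathcal{S}(\mathbf{G},X)_{K^p},\mathcal{L}_\xi)$ used in basic uniformization is routine: by the comparison between the étale cohomology of a variety over a $p$-adic field and that of its analytification, together with the $W_L$-equivariance built into the Hodge–Tate period map, the $W_L$-action on $R\Gamma_c(\mathcal{S}(\mathbf{G},X)_{K^{S_{st}\cup\{p\}}},\mathcal{L}_\xi)^{st}_{\mathfrak{m},sc}$ is the restriction to $W_L$ of the $\Gamma_F$-action identified above. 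The degree-$3$ concentration then transports along $\Theta_{\mathfrak{m},sc}^{st}$ to the left-hand side. This gives the statement: $\Theta_{\mathfrak{m},sc}^{st}$ is an isomorphism of complexes of $G(\mathbb{Q}_p) \times W_L$-modules concentrated in degree $3$ with $W_L$-action given, up to multiplicity and semisimplification, by $\mathrm{std}\circ\phi \otimes |\cdot|^{-3/2}$.

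\textbf{Main obstacle.} The delicate point is ensuring that the strong transfer $\tau$ of $\Pi'$ really does have $\phi_{\tau_p} = \phi$ — i.e.\ that the transfer is strong \emph{at $p$} and not merely at the unramified places — and that $|S_{st}| \ge 3$ can be arranged while keeping $\Pi'$ a genuine globalization of the given local supercuspidal $\rho$. The first is exactly what Theorem 5.2 provides (using that $\phi_{\Pi'_p} = \phi$ is supercuspidal, so $S_{sc} = \{p\}$ is handled by the pseudo-coefficient/character-identity argument of Chan–Gan), and the second is guaranteed by Lemma 7.1 (choosing $\xi$ sufficiently regular and $K^p$ sufficiently small, and enlarging $S_{st}$ as needed), but one must check these two requirements are compatible with the choice of globalization fixed before Proposition 7.2, which is where care is needed. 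A secondary subtlety is that Proposition 5.4 requires $|S_{st}| \ge 3$, so this assumption must be carried through consistently; since $S_{st}$ only enters as a technical device and can be chosen freely (subject to containing $q$), this is harmless.
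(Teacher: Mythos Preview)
Your approach is essentially the paper's, with one step omitted. You correctly assemble the isomorphism from Proposition 4.4 plus localization and Bernstein projections, invoke Proposition 7.2 to pin down the automorphic $\overline{\Pi}'$ on $\mathbf{G}'$ contributing to the left-hand side, and construct the strong transfer $\tau$ of $\Pi'$ to $\mathbf{G}^*$ via Theorem 5.2. But when you then write ``Corollary 6.3 shows that the $\pi^\infty$-isotypic part\ldots'', you have not said what $\pi$ is. Corollary 6.3 is a statement about the $\pi^\infty$-isotypic component for a cuspidal automorphic representation $\pi$ of $\mathbf{G}$ admitting $\tau$ as a strong transfer; so far you only have $\tau$ on $\mathbf{G}^*$ and $\overline{\Pi}'$ on $\mathbf{G}'$, and the identification $\mathbf{G}(\mathbb{A}_f^p)\simeq\mathbf{G}'(\mathbb{A}_f^p)$ does not produce an automorphic representation of $\mathbf{G}$.

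The paper closes this by applying Theorem 5.2 a second time, transferring $\tau$ back to a cuspidal automorphic $\Pi$ of $\mathbf{G}$, and then using $\overline{\Pi}'^{\{p,\infty\}\cup S_{st}}\simeq \Pi^{\{p,\infty\}\cup S_{st}}$ (both sides agree with $\tau$ away from $S$, and the remaining constraints are forced) to apply Corollary 6.3 to $\Pi$. Since $\tau$ is a strong transfer of $\Pi'$ at $p$ with $\phi_{\tau_p}=\phi$, Theorem 6.1 gives the $W_L$-action as claimed. Once you insert this second transfer step, your argument coincides with the paper's; your identification of the ``main obstacle'' (strong transfer at $p$, compatibility of $|S_{st}|\geq 3$ with the globalization) is exactly right and is handled by Theorem 5.2 and Lemma 7.1 as you say.
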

\begin{proof}
Proposition 7.2 tells us that the LHS of $\Theta_{\mathfrak{m},sc}^{st}$ breaks up as a direct sum of $G(\mathbb{Q}_{p}) \times W_{L}$-modules of the form
\[ R\Gamma_{c}(G,b,\mu)_{sc} \otimes_{\mathcal{H}(J_{b})}^{\mathbb{L}} \ol{\Pi}'^{\infty,K^{S_{st} \cup \{p\}}} \]
for $\ol{\Pi}'$ a cuspidal automorphic representation of $\mathbf{G}'$ that has $L$-parameter $\phi$ at $p$, and is also cohomological of regular weight $\xi$ at infinity and an unramified twist of Steinberg at all places in $S_{st}$. It suffices to prove the claim for each one of these summands. This summand will map to the $\ol{\Pi}'^{\infty,K^{S_{st} \cup \{p\}}}$-isotypic part of the RHS by construction, where this is defined analogously to Section 6. Here the semi-simplification of the RHS as a Hecke module is not required since this is semi-simple with respect to the $\mathbf{G}'(\mathbb{A}_{f}^{S_{st} \cup \{p,\infty\}}) \simeq \mathbf{G}(\mathbb{A}_{f}^{S_{st} \cup \{p,\infty\}})$-action. Let $\tau$ denote a strong transfer of $\Pi'$ to a cuspidal automorphic representation of $\mathbf{G}^{*}$ given by Theorem 5.2, with associated Galois representation $\rho_{\tau}$ given by Theorem 6.1. Applying Theorem 5.2 again, we consider a strong transfer of $\tau$ to $\mathbf{G}$ given by $\Pi$. We note, by Corollary 6.3, that the $\ol{\Pi}'^{\{p,\infty\} \cup S_{st}} \simeq \Pi^{\{p,\infty\} \cup S_{st}}$-isotypic part will be concentrated in degree $3$ and have $W_{L}$-action given (up to multiplicity and semi-simplification) by $\mathrm{std} \circ \phi \otimes |\cdot|^{-3/2}$, by the property characterizing $\rho_{\tau}$ and the fact that $\tau$ was a strong transfer. 
\end{proof}
With this in hand, we are finally ready conclude our key Proposition. 
\begin{proposition}
Let $\phi$ be a supercuspidal parameter with associated $L$-packet $\Pi_{\phi}(J)$. Then the direct summand of
\[ \bigoplus_{\rho' \in \Pi_{\phi}(J)} R\Gamma_{c}(G,b,\mu)[\rho'] \]
given by the supercuspidal Bernstein components of $G(\mathbb{Q}_{p})$, denoted
\[ \bigoplus_{\rho' \in \Pi_{\phi}(J)} R\Gamma_{c}(G,b,\mu)[\rho']_{sc}, \]
is concentrated in middle degree $3$ and admits a non-zero $W_{L}$-stable sub-quotient with $W_{L}$-action given by $\mathrm{std}\circ \phi \otimes |\cdot|^{-3/2}$.
\end{proposition}
\begin{proof}
This is an immediate consequence of Proposition 7.2 and Corollary 7.3.
\end{proof}
In particular, using Corollary 3.21, we can deduce the following.
\begin{corollary}
If $p > 2$ and $L/\mathbb{Q}_{p}$ is an unramified extension, then, for all $\rho \in \Pi(J)$ with supercuspidal Gan-Tantono parameter $\phi_{\rho}$, the Fargues-Scholze and Gan-Tantono correspondences are compatible.
\end{corollary}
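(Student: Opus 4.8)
The plan is to assemble the pieces already in place. The statement in question is Corollary 7.5, which is really a repackaging of Corollary 3.21 once Proposition 1.4 (= Proposition 7.4) has been established. So first I would recall the logical flow: Corollary 3.21 says precisely that, \emph{assuming Proposition 1.4}, the Fargues--Scholze and Gan--Tantono correspondences agree on every $\rho \in \Pi(J)$ with supercuspidal Gan--Tantono parameter, provided $p>2$ and $L/\mathbb{Q}_p$ is unramified. Thus the only remaining task is to discharge the hypothesis, i.e.\ to prove Proposition 7.4 unconditionally in the unramified, $p>2$ setting. But Proposition 7.4 is stated to be ``an immediate consequence of Proposition 7.2 and Corollary 7.3,'' both of which are proved in section 7 using only the global apparatus of sections 4--6 (basic uniformization of Shen, Boyer's trick, strong transfers and strong multiplicity one, and the Galois representations of Sorensen and Kret--Shin). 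Hence the proof is: invoke Proposition 7.4, then invoke Corollary 3.21.

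Concretely, the steps I would carry out in order are the following. \emph{Step 1:} Fix $\rho \in \Pi(J)$ with supercuspidal Gan--Tantono parameter $\phi := \phi_\rho$; if $\phi$ is stable the argument is strictly easier, so assume $\phi$ is endoscopic supercuspidal and write $\mathrm{std}\circ\phi \simeq \phi_1 \oplus \phi_2$ with $\phi_i$ distinct irreducible $2$-dimensional representations of $W_L$ satisfying $\det(\phi_1) = \det(\phi_2)$. \emph{Step 2:} Apply Proposition 7.4 (valid since $L/\mathbb{Q}_p$ is unramified and $p>2$, so that Shen's basic uniformization, Theorem 4.2, applies) to conclude that $\bigoplus_{\rho' \in \Pi_\phi(J)} R\Gamma_c(G,b,\mu)[\rho']_{sc}$ is concentrated in degree $3$ and admits a nonzero $W_L$-stable subquotient with $W_L$-action $\mathrm{std}\circ\phi \otimes |\cdot|^{-3/2}$; in particular one of the summands $R\Gamma_c(G,b,\mu)[\rho']_{sc}$ admits a subquotient isomorphic (as $W_L$-module) to $\phi_1 \otimes |\cdot|^{-3/2}$ and one admits a subquotient isomorphic to $\phi_2 \otimes |\cdot|^{-3/2}$. \emph{Step 3:} Use Corollary 3.3 to rewrite each $R\Gamma_c(G,b,\mu)[\rho'][3](\tfrac{3}{2})$ as $j_{\mathbf{1}}^* T_\mu j_{b!}(\mathcal{F}_{\rho'})$, and invoke Lemma 3.20 (all members of $\Pi_\phi(J)$ share the same Fargues--Scholze parameter) together with Lemma 3.10 applied to the ULA Schur-irreducible object $\mathcal{F}_\rho$ and the representation $V$ of highest weight $\mu$: this forces $\mathrm{std}\circ\phi_\rho^{\mathrm{FS}}$ to admit a subquotient isomorphic to $\phi_1$ or $\phi_1(1)$ and one isomorphic to $\phi_2$ or $\phi_2(1)$. \emph{Step 4:} Among the four a priori possibilities, only $\phi_1\oplus\phi_2$ and $(\phi_1\oplus\phi_2)(1)$ are $\GSp_4$-valued; the Tate twist is ruled out because the similitude (central) character of $\phi_\rho^{\mathrm{FS}}$ must match $\omega_\rho$ by Theorem 3.6 (2), which is the similitude character of $\mathrm{std}\circ\phi_\rho$. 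Hence $\mathrm{std}\circ\phi_\rho^{\mathrm{FS}} = \mathrm{std}\circ\phi_\rho$, and by \cite[Lemma~6.1]{GT1} together with the equality of similitude characters this gives $\phi_\rho^{\mathrm{FS}} = \phi_\rho$ as conjugacy classes of $\GSp_4$-valued parameters. Steps 3 and 4 are exactly Corollary 3.21.

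The genuinely substantive input is Proposition 7.4 / Proposition 1.4, and within its proof the main obstacle is the chain of global arguments culminating in Corollary 7.3: one must (i) globalize $\rho$ to a cuspidal automorphic representation $\Pi'$ of $\mathbf{G}'$ that is an unramified twist of Steinberg at a large enough set of places $S_{st}$ (Lemma 7.1, via the simple trace formula), (ii) run Boyer's trick (Proposition 4.4) to see that passing to the supercuspidal part of $G(\mathbb{Q}_p)$ turns the uniformization map $\Theta$ into an isomorphism, (iii) use the strong-transfer and strong-multiplicity-one results of section 5 (which in turn rest on the simple twisted trace formula of Kottwitz--Shelstad and the character identities of Chan--Gan) to pin down the local constituent at $p$ of every automorphic representation contributing to the localized cohomology, and (iv) feed this into Kret--Shin's and Sorensen's description of the Galois action on the global Shimura variety (Corollary 6.3) to read off that the $W_L$-action is $\mathrm{std}\circ\phi \otimes |\cdot|^{-3/2}$ in degree $3$. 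Since all of these are established in the excerpt, the proof of the final corollary itself is short: it is the two-line deduction ``Proposition 7.4 $\Rightarrow$ (via Corollary 3.21) compatibility.'' I would also remark that the passage from $\rho$-compatibility to $\pi$-compatibility for $\pi \in \Pi(G)$ with supercuspidal Gan--Takeda parameter (needed for the full Theorem 1.1) follows formally by the same Hecke-operator/excursion-algebra commutation argument used in case (2), as indicated in section 1.2, but that is beyond the statement at hand.
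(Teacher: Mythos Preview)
Your proposal is correct and follows exactly the paper's approach: the paper deduces Corollary 7.5 directly from Proposition 7.4 via Corollary 3.21, and your Steps 1--4 reproduce the proof of Corollary 3.21 essentially verbatim. The only cosmetic point is that Lemma 3.10 is applied to the Schur-irreducible object $j_{b!}(\mathcal{F}_{\rho'})$ on $\Bun_G$ rather than to $\mathcal{F}_\rho$ itself, but this does not affect the argument.
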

\section{Applications}
We will now apply Corollary 7.5 to deduce some applications to the strong form of the Kottwitz conjecture and conclude the proof of Theorem 1.1. We begin with the latter. 
\begin{theorem}
The following is true.
\begin{enumerate}
    \item For any $\pi \in \Pi(G)$ (resp. $\rho \in \Pi(J)$) such that the Gan-Takeda (resp. Gan-Tantono) parameter is not supercuspidal, we have that the Gan-Takeda (resp. Gan-Tantono) correspondence is compatible with the Fargues-Scholze correspondence. 
    \item If $L/\mathbb{Q}_{p}$ is unramified and $p > 2$, we have, for all $\pi \in \Pi(G)$ (resp. $\rho \in \Pi(J)$) such that the Gan-Takeda (resp. Gan-Tantono) parameter is supercuspidal, that the Gan-Takeda (resp. Gan-Tantono) correspondence is compatible with the Fargues-Scholze correspondence. 
\end{enumerate}
\end{theorem}
\begin{proof}
Part (1) follows by Corollary 3.12 and Corollary 3.16. Part (2) for the Gan-Tantono local Langlands is precisely Corollary 7.5. It remains to show that for $L/\mathbb{Q}_{p}$ unramified and $p > 2$, $\pi$ a smooth irreducible representation of $\GSp_{4}/L$ with supercuspidal Gan-Takeda $\phi_{\pi}$ parameter that the two correspondences are compatible. To show this, we consider the complex
\[ R\Gamma_{c}^{\flat}(G,b,\mu)[\pi] \]
of $J(\mathbb{Q}_{p}) \times W_{L}$-representations. We know, by Theorem 3.13, that this admits sub-quotients as a $J(\mathbb{Q}_{p})$-module given by $\rho$, for all $\rho$ whose Gan-Tantono parameter $\phi_{\rho}$ is equal to the Gan-Takeda parameter $\phi_{\pi}$ of $\pi$. However, by Corollary 3.15, we know that these representations must have Fargues-Scholze parameter equal to $\phi_{\pi}^{\mathrm{FS}}$. Therefore, we get a chain of equalities
\[ \phi_{\pi}^{\mathrm{FS}} = \phi_{\rho}^{\mathrm{FS}} = \phi_{\rho} = \phi_{\pi}, \]
of conjugacy classes of parameters, where we have used compatibility of the Gan-Tantono and the Fargues-Scholze correspondence for the middle equality. 
\end{proof}
Now, with this out of the way, we turn our attention to proving some strong forms of the Kottwitz conjecture for these representations, verifying Theorem 1.3. 
\begin{theorem}
Let $L/\mathbb{Q}_{p}$ be an unramified extension with $p > 2$. Let $\pi$ (resp. $\rho$) be members of the $L$-packet over a supercuspidal parameter $\phi: W_{L} \rightarrow \GSp_{4}(\overline{\mathbb{Q}}_{\ell})$. Then the complexes
\[ R\Gamma_{c}(G,b,\mu)[\pi] \]
and 
\[ R\Gamma_{c}(G,b,\mu)[\rho] \]
are concentrated in middle degree $3$. 
\begin{enumerate}
\item If $\phi$ is stable supercuspidal, with singleton $L$-packets $\{\pi\} = \Pi_{\phi}(G)$ and $\{\rho\} = \Pi_{\phi}(J)$, then the cohomology of $R\Gamma_{c}(G,b,\mu)[\pi]$ in middle degree is isomorphic to 
\[ \rho \boxtimes (\mathrm{std}\circ \phi)^{\vee} \otimes |\cdot|^{-3/2}   \]
as a $J(\mathbb{Q}_{p}) \times W_{L}$-module, and the cohomology of $R\Gamma_{c}(G,b,\mu)[\rho]$ in middle degree is isomorphic to
\[ \pi \boxtimes \mathrm{std}\circ \phi \otimes |\cdot|^{-3/2} \]
as a $G(\mathbb{Q}_{p}) \times W_{L}$-module.

\item If $\phi$ is an endoscopic parameter, with $L$-packets $\Pi_{\phi}(G) = \{\pi^{+},\pi^{-}\}$ and $\Pi_{\phi}(J) = \{\rho_{1},\rho_{2}\}$, the cohomology of $R\Gamma_{c}(G,b,\mu)[\pi]$ in middle degree is isomorphic to
\[ \rho_{1} \boxtimes \phi_{1}^{\vee} \otimes |\cdot|^{-3/2} \oplus \rho_{2} \boxtimes \phi_{2}^{\vee} \otimes |\cdot|^{-3/2} \]
or
\[ \rho_{1} \boxtimes \phi_{2}^{\vee} \otimes |\cdot|^{-3/2} \oplus \rho_{2} \boxtimes \phi_{1}^{\vee} \otimes |\cdot|^{-3/2} \]
as a $J(\mathbb{Q}_{p}) \times W_{L}$-module. Similarly, the cohomology of $R\Gamma_{c}(G,b,\mu)[\rho]$ in middle degree is isomorphic to
\[  \pi^{+} \boxtimes \phi_{1} \otimes |\cdot|^{-3/2} \oplus \pi^{-} \boxtimes \phi_{2} \otimes |\cdot|^{-3/2} \]
or 
\[  \pi^{+} \boxtimes \phi_{2} \otimes |\cdot|^{-3/2} \oplus \pi^{-} \boxtimes \phi_{1} \otimes |\cdot|^{-3/2} \]
as a $G(\mathbb{Q}_{p}) \times W_{L}$-module. Here we write $\mathrm{std}\circ \phi_{\rho} \simeq \phi_{1} \oplus \phi_{2}$, with $\phi_{i}$ distinct irreducible $2$-dimensional representations of $W_{L}$ and $\det(\phi_{1}) = \det(\phi_{2})$. 
\\\\
Moreover, both possibilities for the cohomology of $R\Gamma_{c}(G,b,\mu)[\rho]$ (resp. $R\Gamma_{c}(G,b,\mu)[\pi]$) in the endoscopic case occur for some choice of representation $\rho \in \Pi_{\phi}(J)$ (resp. $\pi \in \Pi_{\phi}(G)$). In particular, knowing the precise form of either $R\Gamma_{c}(G,b,\mu)[\rho]$ or $R\Gamma_{c}(G,b,\mu)[\pi]$ for some $\rho \in \Pi_{\phi}(J)$ or $\pi \in \Pi_{\phi}(G)$ determines the precise form of the cohomology in all other cases. 
\end{enumerate}
\end{theorem}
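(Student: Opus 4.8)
The plan is to combine three inputs that are now all available: compatibility of the Gan-Takeda/Gan-Tantono and Fargues-Scholze correspondences (Theorem 8.1), the concentration-in-middle-degree result of Hansen (Theorem 3.17, applicable since $\phi_\rho^{\mathrm{FS}} = \phi_\rho = \phi$ is supercuspidal and the relevant Shtuka spaces uniformize a global Shimura variety by Theorem 4.2), and the explicit description of the Hecke action on objects with supercuspidal Fargues-Scholze parameter (Corollary 3.11). The first step is to record that, once compatibility is known, all of $\Pi_\phi(J)$ and $\Pi_\phi(G)$ have Fargues-Scholze parameter exactly $\phi$, so by Theorem 3.17 each of $R\Gamma_c(G,b,\mu)[\rho]$ and $R\Gamma_c(G,b,\mu)[\pi]$ is concentrated in degree $d = \langle 2\rho_G,\mu\rangle = 3$; moreover by Remark 3.4 and Theorem 3.18 we may freely pass between $R\Gamma_c$ and $R\Gamma_c^\flat$ and apply the Hansen-Kaletha-Weinstein formula of Theorem 3.13 \emph{on the nose} (not just in a Grothendieck group), since the Fargues-Scholze parameter is supercuspidal.

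Next I would feed this into Corollary 3.11. Taking $V = V_{\mu^{-1}}$ the highest-weight representation attached to the Siegel cocharacter, one has $r_V \circ \phi \simeq \mathrm{std}\circ\phi$ up to the twist $|\cdot|^{-3/2}$ absorbed by the perverse normalization $[d](\tfrac{d}{2})$. Decomposing $\mathrm{std}\circ\phi$ as a representation of $S_\phi$: in the stable case $S_\phi = \mathbb{G}_m$ acts by the identity character, so $\mathrm{std}\circ\phi = W \otimes \sigma$ with $W = \mathrm{std}_{4}|_{S_\phi}$ irreducible of dimension $4$ and $\sigma = \mathrm{std}\circ\phi$ as a $W_L$-module; Corollary 3.11 then gives $T_\mu(\rho) \simeq \mathrm{Act}_W(\rho)\otimes(\mathrm{std}\circ\phi)$, and $\mathrm{Act}_W(\rho) = \bigoplus_{\pi_{b'}} V_{\pi_{b'}}\otimes \pi_{b'}$ is supported on the basic stratum $b' = b + b_\mu$ with $\pi_{b'}$ running over supercuspidals of $G(\mathbb{Q}_p)$ with FS-parameter $\phi$, i.e. over $\Pi_\phi(G) = \{\pi\}$. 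So $R\Gamma_c(G,b,\mu)[\rho]$ in degree $3$ is $V_\pi \otimes \pi \boxtimes \mathrm{std}\circ\phi \otimes |\cdot|^{-3/2}$ for some $V_\pi \in \Perf(\overline{\mathbb{Q}}_\ell)$; comparing with the Grothendieck-group identity $[R\Gamma_c^\flat(G,b,\mu)[\rho]] = -4\pi$ from Section 3.3 and using concentration in a single degree forces $V_\pi$ to be $1$-dimensional (placed so the sign works out), giving the clean answer. The $\pi$-isotypic statement follows by the two towers isomorphism $\Sht(G,b,\mu)_\infty \simeq \Sht(J,\hat b,\mu^{-1})_\infty$ (Remark 3.10(1)), which also explains the dual $(\mathrm{std}\circ\phi)^\vee$. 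In the endoscopic case one runs the same argument but now $\mathrm{std}\circ\phi|_{S_\phi}$ decomposes as $W_1 \otimes \phi_1 \oplus W_2 \otimes \phi_2$ with $W_i$ the two distinct characters $\tau_i$ of $S_\phi \simeq \{(a,b): a^2=b^2\}$; the Grothendieck-group computation $-2\pi^+ - 2\pi^-$ and $-2\rho_1-2\rho_2$ together with concentration in degree $3$ pins down each multiplicity space to be $1$-dimensional and matches each $\phi_i$ with exactly one member of each $L$-packet, up to the ambiguity in how $\rho_1,\rho_2$ (resp. $\pi^+,\pi^-$) are labelled against $\phi_1,\phi_2$ — which is precisely the source of the "two possibilities".

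The last step is the consistency/rigidity claim: that the two possibilities in the endoscopic case are correlated. Here I would use that $\bigoplus_{\rho'\in\Pi_\phi(J)} R\Gamma_c(G,b,\mu)[\rho']_{sc}$ and $\bigoplus_{\pi'\in\Pi_\phi(G)} R\Gamma_c(G,b,\mu)[\pi']_{sc}$ compute, respectively, $j_{\mathbf 1}^* T_\mu (\bigoplus j_{b!}\mathcal F_{\rho'})$ and $j_b^* T_{\mu^{-1}}(\bigoplus j_{\mathbf 1 !}\mathcal F_{\pi'})$, and that these are interchanged by Verdier duality composed with the two-towers switch (Theorem 3.18); since $\mathrm{Act}_{W_i}$ for the two characters $W_1, W_2$ of $S_\phi$ land on the \emph{same} basic stratum but pair $\phi_i$ to $\rho$ and to $\pi$ through the same representation $\delta_{\pi,\rho} = \tau_\pi^\vee\otimes\tau_\rho$ of $S_\phi$ appearing in Theorem 3.13, the pairing $\mathrm{Hom}_{S_\phi}(\delta_{\pi,\rho},\mathrm{std}\circ\phi)$ determines \emph{simultaneously} which $\phi_i$ shows up with $\rho_j$ and with $\pi^\pm$. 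Concretely, once the cohomology of $R\Gamma_c(G,b,\mu)[\rho_1]$ is known, Theorem 3.13 for the $\pi$-isotypic parts (read off from $\mathrm{Hom}_{S_\phi}(\delta_{\pi,\rho}^\vee,(\mathrm{std}\circ\phi)^\vee)$) forces the form of $R\Gamma_c(G,b,\mu)[\pi^+]$ and $R\Gamma_c(G,b,\mu)[\pi^-]$, and conversely; that both possibilities are actually realized follows because $\delta_{\pi^+,\rho_1}$ and $\delta_{\pi^-,\rho_1}$ differ by the non-trivial character of $A_\phi$, so they select the two different $\phi_i$. I expect the \textbf{main obstacle} to be the bookkeeping in this last step: making the $S_\phi$-equivariance in Corollary 3.11 and the transfer-factor normalizations of Section 2.2 talk to each other precisely enough to conclude the correlation without invoking a multiplicity formula (which, as Remark 1.2 notes, is not available here) — in particular, being careful that the Tate twist $|\cdot|^{-3/2}$ and the contragredients introduced by the two-towers isomorphism are tracked consistently on both the $\rho$- and $\pi$-sides.
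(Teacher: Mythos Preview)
Your first two steps are exactly the paper's approach: compatibility (Theorem 8.1) gives $\phi_\rho^{\mathrm{FS}} = \phi$ supercuspidal, so Theorem 3.17 and Remark 3.12 yield concentration in degree $3$ and the identification $R\Gamma_c \simeq R\Gamma_c^\flat$; then Corollary 3.11 decomposes $T_\mu(\rho)$ along the $S_\phi$-isotypic pieces of $\mathrm{std}\circ\phi$, and comparison with the Grothendieck-group count from Theorem 3.13 pins down the multiplicity spaces. The stable case and the \emph{shape} of the two endoscopic possibilities fall out exactly as you describe.

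The gap is in your rigidity step. Your plan is to extract the correlation between the $\rho$-side and the $\pi$-side from Theorem 3.13 via $\mathrm{Hom}_{S_\phi}(\delta_{\pi,\rho},\mathrm{std}\circ\phi)$, but that formula only records a \emph{dimension}: in the endoscopic case it produces $-2\pi^+ - 2\pi^-$ (resp.\ $-2\rho_1 - 2\rho_2$) with no memory of which $\phi_i$ is paired with which packet member. So knowing $R\Gamma_c(G,b,\mu)[\rho_1]$ and then invoking Theorem 3.13 for the $\pi$-isotypic parts does \emph{not} force the form of $R\Gamma_c(G,b,\mu)[\pi^\pm]$ --- the Grothendieck-group identity is symmetric in $\phi_1,\phi_2$. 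The Verdier-duality/two-towers suggestion has the same problem: it relates the full packages but does not break the $\phi_1\leftrightarrow\phi_2$ symmetry.

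The paper closes this gap with a different, and cleaner, mechanism that you have not used: the \emph{monoidality} of the spectral action, $\mathrm{Act}_W \circ \mathrm{Act}_{W'} \simeq \mathrm{Act}_{W \otimes W'}$ (Remark 3.7(3)). Writing $p_1,p_2$ for the two projection characters of $S_\phi$ and $p_+,p_-$ for the trivial and nontrivial characters of $A_\phi$, once one has (say) $\mathrm{Act}_{p_1}(\rho_1) \simeq \pi^+$ and $\mathrm{Act}_{p_2}(\rho_1) \simeq \pi^-$, one computes directly
\[
\mathrm{Act}_{p_1^\vee}(\pi^+) \;\simeq\; \mathrm{Act}_{p_1^\vee}\circ\mathrm{Act}_{p_1}(\rho_1) \;\simeq\; \mathrm{Act}_{p_+}(\rho_1) \;\simeq\; \rho_1,
\]
and feeding this into the Corollary 3.11 decomposition of $R\Gamma_c(G,b,\mu)[\pi^+]$ (plus one more appeal to Theorem 3.13 to rule out $\mathrm{Act}_{p_2^\vee}(\pi^+)\simeq\rho_1$) determines the $\pi^+$-answer. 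Then the relation $p_-\otimes p_1 = p_2$ propagates everything to $\pi^-$ and to $\rho_2$. This composition trick is the missing ingredient in your sketch; once you add it, the rest of your outline goes through.
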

\begin{proof}
We show the proof in the endoscopic case, with the stable case being strictly easier. We first note that, since $\phi = \phi_{\rho}^{\mathrm{FS}}$ by Theorem 8.1, it follows by assumption that the Fargues-Scholze parameter $\phi_{\rho}^{\mathrm{FS}}$ of $\rho$ is supercuspidal. Therefore, by Remark 3.12, we have an isomorphism
\[ R\Gamma_{c}(G,b,\mu)[\rho] \simeq R\Gamma_{c}^{\flat}(G,b,\mu)[\rho] \]
of $G(\mathbb{Q}_{p}) \times W_{L}$-modules. Moreover, by Theorem 3.17, we see that both are concentrated in middle degree $3$. Applying Theorem 3.13, we get the following chain of equalities in the Grothendieck group of admissible $G(\mathbb{Q}_{p})$-representations of finite length 
\[ -[H^{3}(R\Gamma_{c}(G,b,\mu)[\rho])] = [R\Gamma_{c}(G,b,\mu)[\rho]] = [R\Gamma^{\flat}_{c}(G,b,\mu)[\rho]] = -\sum_{\pi \in \Pi_{\phi}(G)} Hom_{S_{\phi}}(\delta_{\pi,\rho},\mathrm{std}\circ\phi_{\rho})\pi \]
Now we saw in the discussion proceeding Theorem 3.13 that the RHS takes the form:
\[ -2\pi^{+} - 2\pi^{-}. \] 
We set $p_{1}$ and $p_{2}$ to be the two representations of $S_{\phi} \simeq \{(a,b) \in \overline{\mathbb{Q}}_{\ell}^{*} \times \overline{\mathbb{Q}}_{\ell}^{*}| a^{2} = b^{2} \} \subset (\GL_{2}(\overline{\mathbb{Q}}_{\ell}) \times \GL_{2}(\overline{\mathbb{Q}}_{\ell}))^{0}$ given by projecting to the first and second $\overline{\mathbb{Q}}_{\ell}^{*}$-factor, respectively. By Corollary 3.3, Corollary 3.11, and Theorem 8.1, we have an isomorphism of $G(\mathbb{Q}_{p}) \times W_{L}$-modules
\[ R\Gamma_{c}(G,b,\mu)[\rho]  \simeq   \Act_{p_{1}}(\rho)[-3] \boxtimes \phi_{1} \otimes |\cdot|^{-3/2} \oplus \Act_{p_{2}}(\rho)[-3] \boxtimes \phi_{2} \otimes |\cdot|^{-3/2} \]
where $\Act_{p_{1}}(\rho)$ and $\Act_{p_{2}}(\rho)$ are a priori direct sum of shifts of supercuspidal representations of $G(\mathbb{Q}_{p})$ with Fargues-Scholze (= Gan-Takeda) parameter equal to $\phi$. However, since we know that the LHS is a complex concentrated in middle degree $3$, this implies, by the above description in the Grothendieck group, that one of the $\Act_{p_{1}}(\rho)$ and $\Act_{p_{2}}(\rho)$ is isomorphic to $\pi^{+}$ and the other is isomorphic to $\pi^{-}$. Without loss of generality, assume that  
\[ \Act_{p_{1}}(\rho_{1}) \simeq \pi^{+} \]
and 
\[ \Act_{p_{2}}(\rho_{1}) \simeq \pi^{-}. \]
We let $p_{+}$ and $p_{-}$ be the representation of $S_{\phi}$ determined by the trivial and non-trivial characters of the component group, respectively. Now, given two representations of $S_{\phi}$, denoted $W$ and $W'$, it follows from Remark 3.7 (3) that we have an isomorphism: 
\[ \Act_{W} \circ \Act_{W'}(\cdot) \simeq \Act_{W \otimes W'}(\cdot) \]
In turn, we get
\[ \Act_{p_{1}^{\vee}}(\pi^{+}) \simeq \Act_{p_{1}^{\vee}} \circ \Act_{p_{1}}(\rho_{1}) \simeq \Act_{p_{+}}(\rho_{1}) \simeq \rho_{1} \]
where the last isomorphism follows since $p_{+}$ is the trivial representation. Similarly, depending on the values of $\Act_{p_{1}}(\rho_{2})$ and $\Act_{p_{2}}(\rho_{2})$ we can deduce that $\Act_{p_{2}^{\vee}}(\pi^{+})$ is isomorphic to $\rho_{1}$ or $\rho_{2}$. Now by Corollary 3.3, Corollary 3.11, and Theorem 8.1, we have an isomorphism 
\[ R\Gamma_{c}(G,b,\mu)[\pi^{+}] \simeq \Act_{p_{1}^{\vee}}(\pi^{+})[-3] \boxtimes \phi_{1}^{\vee} \otimes |\cdot|^{-3/2} \oplus \Act_{p_{2}^{\vee}}(\pi^{+})[-3] \boxtimes \phi_{2}^{\vee} \otimes |\cdot|^{-3/2} \]
Since $\Act_{p_{1}^{\vee}}(\pi^{+}) \simeq \rho_{1}$ it therefore follows, by Theorem 3.13 and Remark 3.12, that $\Act_{p_{2}^{\vee}}(\pi^{+})$ must be isomorphic to $\rho_{2}$. Moreover, we know that $\Act_{p_{-}} \circ \Act_{p_{1}}(\rho_{1}) \simeq \Act_{p_{2}}(\rho_{1})$ and  $\Act_{p_{-}} \circ \Act_{p_{2}}(\rho_{1}) \simeq \Act_{p_{1}}(\rho_{1})$. Therefore, we obtain that $\Act_{p_{-}}(\pi^{+}) \simeq \pi^{-}$ and $\Act_{p_{-}}(\pi^{-}) \simeq \pi^{+}$. This allows us to determine that
\[ \Act_{p_{1}^{\vee}}(\pi^{-}) \simeq \Act_{p_{2}^{\vee}}\circ \Act_{p_{-}}(\pi^{-}) \simeq  \Act_{p_{2}^{\vee}}(\pi^{+}) \simeq \rho_{2} \]
and 
\[ \Act_{p_{2}^{\vee}}(\pi^{-}) \simeq \Act_{p_{1}^{\vee}}\circ \Act_{p_{-}}(\pi^{-}) \simeq  \Act_{p_{1}^{\vee}}(\pi^{+}) \simeq \rho_{1}, \]
which will determine the cohomology of $R\Gamma_{c}(G,b,\mu)[\pi^{-}]$. It only remains to show that the value of $R\Gamma_{c}(G,b,\mu)[\rho_{2}]$ is determined. However, this follows since
\[ \Act_{p_{2}}(\rho_{2}) \simeq \Act_{p_{2}}\circ \Act_{p_{2}^{\vee}}(\pi^{+}) \simeq \pi^{+} \]
and
\[ \Act_{p_{1}}(\rho_{2}) \simeq \Act_{p_{1}} \circ \Act_{p_{1}^{\vee}}(\pi^{-}) \simeq \pi^{-}. \]
\end{proof}
To conclude this section, we use Theorem 8.1 to deduce compatibility with the local Langlands correspondence for $\Sp_{4}$ and its unique non quasi-split inner form $\mathrm{SU}_{2}(D)$. These correspondences are described in the papers \cite{GT3} and \cite{Cho} by Gan-Takeda and Choiy, respectively. For $\Sp_{4}$, this is described as the unique correspondence which sits in the commutative diagram:
\[ \begin{tikzcd}
& \Pi(\GSp_{4}) \arrow[r,"\LLC_{\GSp_{4}}"] \arrow[d] & \Phi(\GSp_{4}) \arrow[d,"\alpha"] \\
& \Pi(\Sp_{4}) \arrow[r,"\LLC_{\Sp_{4}}"] & \Phi(\Sp_{4}) 
\end{tikzcd} \]
Here, the left vertical arrow is not a map at all, it is a correspondence defined by the subset of $\Pi(\GSp_{4}) \times \Pi(\Sp_{4})$ consisting of pairs $(\pi,\omega)$ such that $\omega$ is a constituent of the restriction of $\pi$ to $\Sp_{4}$, and the right vertical arrow is the map on $L$-parameters induced by the natural map $\GSpin_{5}(\mathbb{C}) \rightarrow \SO_{5}(\mathbb{C})$. One has a similar characterization of the local Langlands correspondence for $\SU_{2}(D)$. With this description of the correspondence, compatibility for $\Sp_{4}$ and $\SU_{2}(D)$ follows from Theorem 8.1 and Theorem 3.6 (7).
\begin{corollary}
For $\pi$ (resp. $\rho$) a smoooth irreducible representation of $\Sp_{4}/L$ (resp. $\SU_{2}(D)$), with associated Gan-Takeda (resp. Choiy) parameter $\phi_{\pi}: W_{L} \times \SL_{2}(\mathbb{C}) \rightarrow \SO_{5}(\mathbb{C})$ (resp. $\phi_{\rho}$) we have that:
\begin{enumerate}
    \item The Fargues-Scholze and Gan-Takeda (resp. Choiy) local Langlands correspondences are compatible for any representation  $\pi$ (resp. $\rho$) such that $\phi_{\pi}$ (resp. $\phi_{\rho}$) is not supercuspidal.
    \item If $L/\mathbb{Q}_{p}$ is unramified and $p > 2$ then the Fargues-Scholze and Gan-Takeda (resp. Choiy) local Langlands correspondences are compatible for any representation $\pi$ (resp. $\rho$) such that $\phi_{\pi}$ (resp. $\phi_{\rho}$) is supercuspidal. 
\end{enumerate}
\end{corollary}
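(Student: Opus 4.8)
The plan is to deduce this purely formally from Theorem 8.1 together with compatibility of the Fargues--Scholze correspondence with isogenies (Theorem 3.6 (7)), using that the inclusions $\Sp_{4} \hookrightarrow \GSp_{4}$ and $\SU_{2}(D) \hookrightarrow \GU_{2}(D)$ induce isomorphisms on adjoint groups, that the corresponding maps of dual groups are $\GSpin_{5}(\overline{\mathbb{Q}}_{\ell}) \twoheadrightarrow \SO_{5}(\overline{\mathbb{Q}}_{\ell})$, and (via Theorem 3.6 (6)) that parameters for $\mathrm{Res}_{L/\mathbb{Q}_{p}}(-)$ over $\mathbb{Q}_{p}$ are the same as parameters over $L$, so that we may work with $\Sp_{4}/L$ and $\SU_{2}(D)$ directly.

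First I would treat $\Sp_{4}$. Fix $\pi \in \Pi(\Sp_{4})$. By Clifford theory for the abelian quotient $\GSp_{4}(L)/\Sp_{4}(L) \xrightarrow{\lambda} L^{\times}$ (in the style of Gelbart--Knapp, Labesse--Langlands, Tadić), there exists $\tilde{\pi} \in \Pi(\GSp_{4})$ with $\pi$ an irreducible constituent of $\tilde{\pi}|_{\Sp_{4}(L)}$. By the defining commutative diagram of the Gan--Takeda correspondence for $\Sp_{4}$, the parameter $\phi_{\pi}$ is the image of $\phi_{\tilde{\pi}}$ under $\alpha\colon \Phi(\GSp_{4}) \to \Phi(\Sp_{4})$ induced by $\GSpin_{5}(\mathbb{C}) \to \SO_{5}(\mathbb{C})$. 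Since the kernel of $\GSpin_{5} \to \SO_{5}$ is central, hence contained in every Levi subgroup, a parameter $\phi_{\tilde{\pi}}$ factors through a proper Levi of $\GSpin_{5}$ (resp. has nontrivial $\SL_{2}(\mathbb{C})$-factor) if and only if $\alpha\circ\phi_{\tilde{\pi}}$ does; thus $\phi_{\pi}$ is supercuspidal if and only if $\phi_{\tilde{\pi}}$ is. In case (1), $\phi_{\pi}$ is not supercuspidal, so $\phi_{\tilde{\pi}}$ is not supercuspidal and Theorem 8.1 (1) gives $\phi^{\mathrm{FS}}_{\tilde{\pi}} = \phi^{\mathrm{ss}}_{\tilde{\pi}}$; in case (2), with $L/\mathbb{Q}_{p}$ unramified and $p>2$, Theorem 8.1 (1) and (2) together give $\phi^{\mathrm{FS}}_{\tilde{\pi}} = \phi^{\mathrm{ss}}_{\tilde{\pi}}$ for every $\tilde{\pi}$. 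In either case Theorem 3.6 (7) identifies $\phi^{\mathrm{FS}}_{\pi}$ with the image of $\phi^{\mathrm{FS}}_{\tilde{\pi}}$ under $\widehat{\GSp_{4}} = \GSpin_{5}(\overline{\mathbb{Q}}_{\ell}) \to \SO_{5}(\overline{\mathbb{Q}}_{\ell}) = \widehat{\Sp_{4}}$, so
\[ \phi^{\mathrm{FS}}_{\pi} = \alpha\circ\phi^{\mathrm{FS}}_{\tilde{\pi}} = \alpha\circ\phi^{\mathrm{ss}}_{\tilde{\pi}} = (\alpha\circ\phi_{\tilde{\pi}})^{\mathrm{ss}} = \phi^{\mathrm{ss}}_{\pi}, \]
which is the asserted compatibility. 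The case of $\SU_{2}(D)$ is verbatim the same, with $(\GSp_{4},\Sp_{4})$ replaced by $(\GU_{2}(D),\SU_{2}(D))$: the inclusion again induces an isomorphism of adjoint groups, Choiy's correspondence is defined by the analogous commutative diagram with parameter map induced by $\GSpin_{5} \to \SO_{5}$, the Clifford-theoretic lift $\tilde{\rho} \in \Pi(\GU_{2}(D))$ of $\rho$ exists, and one reruns the chase using Theorem 8.1 for $\GU_{2}(D)$ and Theorem 3.6 (7).

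The main obstacle is bookkeeping rather than mathematics: one must confirm (a) that every irreducible of $\Sp_{4}(L)$ (resp. $\SU_{2}(D)$) arises as a constituent of the restriction of some irreducible of $\GSp_{4}(L)$ (resp. $\GU_{2}(D)$), which is standard Clifford theory for the abelian similitude quotient; (b) that the explicit characterizations of the Gan--Takeda and Choiy correspondences via the stated commutative squares are exactly those of \cite{GT3} and \cite{Cho}; and (c) that supercuspidality of a $\GSpin_{5}$-valued parameter is detected by its image in $\SO_{5}$, so that the case division of Theorem 8.1 transports cleanly to the present statement. None of these steps requires input beyond the results already cited.
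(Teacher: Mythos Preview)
Your proposal is correct and follows precisely the approach the paper indicates: the paper states (without writing a formal proof) that compatibility for $\Sp_{4}$ and $\SU_{2}(D)$ ``follows from Theorem 8.1 and Theorem 3.6 (7)'' via the commutative diagram characterizing the Gan--Takeda and Choiy correspondences, and your argument spells out exactly this chase. The extra bookkeeping you flag in (a)--(c) is indeed routine and is implicit in the paper's one-line deduction.
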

\begin{remark}
We note that Corollary 3.11, Remark 3.12, Theorem 3.18, and \cite[Theorem~1.0.2]{KW} apply to a triple $(G,b,\mu)$, where $\mu$ is any cocharacter and $b \in B(G,\mu)$ is the unique basic element. Therefore, by applying the same kind of analysis as in the proof of Theorem 8.2, we can prove the analogue of Theorem 8.2 in the Grothendieck group of finite length admissible representations with a smooth action of $W_{E}$ (cf. \cite[Conjecture~1.0.1]{KW}) for the cohomology of the local shtuka spaces defined by the triple $(G,b,\mu)$. By Corollary 8.3, this works even in the case when $G = \Sp_{4}$ and there are no Shimura varieties that these spaces uniformize. Moreover, in the case that $G = \GSp_{4}$, one can also deduce from Theorem 8.2 that it is concentrated in middle degree $\langle 2\rho_{G}, \mu \rangle$, using the monoidal property of the $\Act$-functors and Corollary 3.11, for $\mu$ any cocharacter. This in particular will imply some form of Fargues' Conjecture for these groups (See e.g. \cite[Pages~37-40]{BMNH}, for this worked out in the more complicated case of $G = \U_{n}$). 
\end{remark}

\printbibliography
\end{document}